\theoremstyle{plain}
\newtheorem{theorem}{Theorem}[section]
\newtheorem*{theorem*}{Theorem}
\newtheorem{lemma}[theorem]{Lemma}
\newtheorem{corollary}[theorem]{Corollary}
\newtheorem{proposition}[theorem]{Proposition}
\newtheorem{remark}[theorem]{Remark}
\theoremstyle{definition}
\newtheorem{example}[theorem]{Example}
\newtheorem{definition}[theorem]{Definition}
\newtheorem{definitions}[theorem]{Definitions}
\newtheorem{notation}[theorem]{Notation}
\newcommand{\A}{\mathcal{A}}
\newcommand{\B}{\mathcal{B}}
\newcommand{\C}{\mathcal{C}}
\newcommand{\D}{\mathrm{D}}
\newcommand{\F}{\mathcal{F}}
\newcommand{\G}{\mathcal{G}}
\newcommand{\I}{\mathscr{I}}
\newcommand{\Ical}{\mathcal{I}}
\newcommand{\J}{\mathcal{J}}
\newcommand{\M}{\mathcal{M}}
\newcommand{\N}{\mathcal{N}}
\renewcommand{\S}{\mathcal{S}}
\newcommand{\T}{\mathcal{T}}
\newcommand{\U}{\mathcal{U}}
\newcommand{\V}{\mathcal{V}}
\newcommand{\X}{\mathcal{X}}
\newcommand{\Y}{\mathcal{Y}}
\newcommand{\Z}{\mathcal{Z}}
\renewcommand{\t}{\mathsf{t}}
\newcommand{\f}{\mathsf{f}}
\renewcommand{\u}{\mathsf{u}}
\renewcommand{\v}{\mathsf{v}}
\newcommand{\fp}[1]{{#1}^{\mathsf{fp}}}
\renewcommand{\c}{\mathrm{c}}
\newcommand{\op}{\mathrm{op}}
\newcommand{\Mod}[1]{\mathrm{Mod}(#1)}
\newcommand{\Modop}[1]{\mathrm{Mod}(#1^{\op})}
\renewcommand{\mod}[1]{\mathrm{mod}(#1)}
\newcommand{\Ab}{\mathrm{Ab}}
\newcommand{\Cogen}[1]{\mathrm{Cogen}(#1)}
\newcommand{\Der}[1]{\mathrm{D}(#1)}
\newcommand{\Db}[1]{\mathrm{D}^\mathrm{b}(\mod{#1})}
\newcommand{\K}[1]{\mathrm{K}^{[0,1]}(\mathrm{Inj}#1)}
\newcommand{\thick}[1]{\mathrm{thick}(#1)}
\newcommand{\Prod}[1]{\mathrm{Prod}(#1)}
\newcommand{\Kb}[1]{\mathrm{K}^{\mathrm{b}}(\mathrm{Inj}#1)}
\newcommand{\Inj}[1]{\mathrm{Inj}(#1)}
\newcommand{\torsf}[1]{\widetilde{\mathbf{F}}(#1)}
\newcommand{\y}{\mathrm{y}}
\newcommand{\ten}{\mathrm{t}}
\newcommand{\Ker}[1]{\mathrm{Ker}(#1)}
\renewcommand{\Im}[1]{\mathrm{Im}(#1)}
\newcommand{\Hom}[1]{\mathrm{Hom}_{#1}}
\newcommand{\Ext}[1]{\mathrm{Ext}_{#1}}
\newcommand{\End}[1]{\mathrm{End}_{#1}}
\newcommand{\Rad}{\mathrm{Rad}}
\newcommand{\Soc}{\mathrm{Soc}}
\newcommand{\tstr}[1]{\mathbb{T}_{#1}}
\newcommand{\ststr}[1]{\mathbb{D}_{#1}}
\newcommand{\heart}[1]{\mathcal{H}_{#1}}
\renewcommand{\H}[1]{\mathrm{H}^0_{#1}}
\newcommand{\Zg}[1]{\mathbf{Zg}(#1)}
\newcommand{\open}[1]{\mathscr{O}(#1)}
\newcommand{\closed}[1]{\mathcal{N}_{#1}}
\newcommand{\ZgInt}[1]{\mathbf{Zg}^{[0,1]}(D(#1))}
\newcommand{\InjInd}[1]{\mathbf{Inj}(#1)}
\newcommand{\Sp}[1]{\mathbf{Sp}(#1)}
\newcommand{\tors}[1]{\mathbf{tors}(#1)}
\newcommand{\Cosilt}[1]{\mathbf{Cosilt}(#1)}
\newcommand{\CosiltZg}[1]{\mathbf{MaxRigid}(#1)}
\newcommand{\Cosiltpair}[1]{\mathbf{CosiltPair}(#1)}
\newcommand{\Ann}[1]{\mathrm{Ann}(#1)}
\newcommand{\brick}[1]{\mathbf{brick}(#1)}
\newcommand{\Tpair}[2]{(\mathcal{#1}, \mathcal{#2 })}
\newcommand{\srigid}{{grain}}
\newcommand{\Ri}{{\mathbf{Grain}_{\rm cmi}}}
\title{Torsion pairs via the Ziegler spectrum}
\author{Lidia Angeleri H\"ugel, Rosanna Laking and Francesco Sentieri}
\address{Dipartimento di Informatica - Settore di Matematica, Universit\`a degli Studi di Verona, Strada le Grazie 15 - Ca' Vignal, I-37134 Verona, Italy} 
\email{lidia.angeleri@univr.it, rosanna.laking@univr.it, francesco.sentieri@univr.it}
\thanks{Acknowledgments: The authors were partially supported by the project \textit{REDCOM: Reducing complexity in algebra, logic, combinatorics} in the programme  Ricerca Scientifica di Eccellenza 2018 of the Fondazione Cariverona. The first and second named authors are members of the network INdAM-G.N.S.A.G.A and acknowledge support from the project  \textit{SQUARE: Structures for Quivers, Algebras and Representations}, PRIN~2022S97PMY,  funded by the  Italian Ministry of University and Research.}
\begin{document}
\def\theequation{\Alph{equation}}

\begin{abstract}
We establish a bijection between torsion pairs in the category of finite-dimensional  modules over a finite-dimensional algebra $A$ and pairs $(\Z,\mathcal I)$ formed by a closed rigid set $\Z$ in the Ziegler spectrum of $A$ and a set $\mathcal I$ of indecomposable injective $A$-modules.  This can be regarded as an extension of a result from $\tau$-tilting theory \cite{AdachiIyamaReiten:14} which parametrises the functorially finite torsion pairs over $A$. We also extend a  map from \cite{DemonetIyamaJasso:19} which assigns a brick to any indecomposable $\tau$-rigid module and obtain a one-one-correspondence between finite-dimensional bricks  and certain (possibly infinite-dimensional) indecomposable modules  satisfying a rigidity condition. Our results also hold when $A$ is an artinian ring.
\end{abstract}

\maketitle
\setcounter{tocdepth}{1}
\tableofcontents

\section{Introduction}

Torsion pairs are fundamental tools in the representation theory of finite-dimensional algebras $A$.  The collection $\tors{A}$ of all torsion pairs in the category of finite-dimensional $A$-modules is a complete lattice which encodes essential information on the category.  For example, completely meet-irreducible elements of the lattice are parametrised by certain finite-dimensional modules called bricks, and bricks also label the arrows in the Hasse quiver of  $\tors{A}$ \cite{BarnardCarrollZhu:19, DemonetIyamaReadingReitenThomas:23}. Moreover, elements of $\tors{A}$  arise naturally in geometric approaches to the representation theory of $A$. Indeed, the stability conditions introduced by King \cite{King:94} to study the geometric invariant theory of quiver representations induce torsion pairs in $\mod{A}$ \cite{Bridgeland:17}.  These torsion pairs determine an equivalence relation on the set of stability conditions  that controls  an associated wall and chamber structure \cite{Asai:21, BrustleSmithTreffinger:19}.  Furthermore, via the process of HRS tilting, $\tors{A}$   can  be employed to construct abelian categories closely related and often even derived equivalent to $\mod{A}$; a famous example of this is the derived equivalence between the so-called Beilinson algebras and coherent sheaves on $\mathbb{P}^n$ \cite{Beilinson:78, Bondal:89}.

Since its introduction in the seminal paper by Adachi, Iyama and Reiten \cite{AdachiIyamaReiten:14}, $\tau$-tilting theory has proved to be an incredibly powerful tool in the study of torsion pairs and their meaning for the representation theory of $A$.  Indeed, when $\tors{A}$ is finite, the lattice is completely determined by support $\tau$-tilting modules \cite{DemonetIyamaJasso:19} and hence the features of $\mod{A}$ mentioned in the previous paragraph can be completely understood in terms of $\tau$-tilting theory: the set of bricks is finite and in bijection with indecomposable $\tau$-rigid modules \cite{DemonetIyamaJasso:19}; the wall and chamber structure is completely determined by support $\tau$-tilting modules and their mutations \cite{Asai:21}; every tilt of $\mod{A}$ is a category of modules over the endomorphism ring of a silting complex \cite{KoenigYang:14, AdachiIyamaReiten:14, DemonetIyamaJasso:19}. If, however, the set $\tors{A}$ is infinite, then there are torsion pairs that are not controlled by $\tau$-tilting theory and important representation-theoretic or geometric aspects of $A$ remain mysterious.  

In this paper, we lift the finiteness conditions imposed by $\tau$-tilting theory and study the lattice $\tors{A}$ in terms of infinite-dimensional generalisations of support $\tau$-tilting modules.  It will be more convenient, however, to work with the infinite-dimensional generalisation of the dual concept of support $\tau^-$\!-tilting modules. While the behaviour of support $\tau$-tilting and support $\tau^-$-tilting modules are completely dual, their corresponding infinite-dimensional generalisations, silting and cosilting modules \cite{AngeleriMarksVitoria:16, BreazPop:17}, have very different properties to each other. The motivation for choosing cosilting rather than silting lies in the fact that cosilting modules and their associated 2-term cosilting complexes are pure-injective objects.  We therefore have access to the techniques afforded by the theory of purity; we give an overview of the relevant parts in the appendix.

Our techniques allow us to work outside the context of finite-dimensional algebras and many of our results hold in broader generality.  Here we state our main theorems for a finite-dimensional algebra $A$ for brevity, but  most results just require $A$ to be an artinian or even a noetherian ring. 

It is already known that equivalence classes of 2-term cosilting complexes parametrise $\tors{A}$ (cf. Theorem \ref{Thm: cosilt bij}).  However, since a non-compact 2-term cosilting complex $\sigma$ in general doesn't have an indecomposable decomposition, we cannot choose a basic representative of the equivalence class.  In this paper we show that we can replace $\sigma$ by the set $\closed{\sigma}$ formed by the  indecomposable objects in $\Prod{\sigma}$.  In fact, $\closed{\sigma}$ turns out to be a closed set in a topological space associated with $A$, the Ziegler spectrum $\Zg{\Der{A}}$ of $\Der{A}$. In Theorem \ref{thm:rigidCosiltBij}, the closed sets that arise in this way are characterised as maximal rigid sets of 2-term complexes of injectives inside $\Zg{\Der{A}}$.
Using this theorem as our starting point, we lay a foundation of cosilting theory for finite-dimensional algebras and, more generally, artinian rings.

In particular, we show that every  2-term cosilting complex $\sigma$ can be uniquely represented as a  pair $(\Z,\mathcal I)$  formed by a rigid (and closed) set $\Z$ in the Ziegler spectrum $ \Zg{A}$ of the module category $\Mod{A}$ and  a subset  $\Ical$  of the set $\InjInd{A}$ of isomorphism classes of indecomposable injective $A$-modules.  The pair $(\Z,\mathcal I)$ can be obtained directly from $\closed{\sigma}$ by setting $\Z = \H{}(\closed{\sigma})$ and $\Ical = \{I\in\InjInd{A} \mid I[-1]\in\closed{\sigma}\}$.  Such pairs are characterised by the following notion.

{\bf Definition} (cf.~Definition~\ref{rpair}) A \emph{cosilting pair} 
 is a pair $(\mathcal Z,\mathcal I)$ given by a subset $\mathcal Z$ of $\Zg{A}$ and a subset  $\Ical$  of $\InjInd{A}$ which is maximal with respect to the following properties:
 \begin{itemize} 
\item the minimal injective copresentations $\mu_X$ of the modules $X$  in $\mathcal Z$ form a rigid set in $\Der{A}$;
\item $\Hom{R}(\mathcal Z,\mathcal I)=0$. 
\end{itemize}

We obtain the following generalisation of the bijections between functorially finite torsion pairs and support $\tau$-tilting pairs from \cite{AdachiIyamaReiten:14}.

{\bf Theorem A} (cf. Theorems~\ref{Thm: cosilt bij}, \ref{thm:pairModBijection} and~\ref{thm: cosilting modules and pairs})
Let $ A$ be a finite-dimensional algebra. There are bijections between
\begin{enumerate}
\item[(a)] torsion pairs  in $\mod{A}$,
\item[(b)]  torsion pairs $(\T,\F)$ in $\Mod{A}$ with $\F$ closed under direct limits,
\item[(c)] cosilting pairs $(\Z, \Ical)$ with $\Z\subseteq\Zg{A}$ and $\Ical\subseteq \InjInd{A}$.
\end{enumerate}
The bijections are given as follows.
$$\begin{array}{c|rcccc}
\text{Bijection} &&&\text{Assignment}&&  \\
\hline
 (a)\rightarrow (b) &&& t = (\t,\f)&\mapsto& (\varinjlim \t, \varinjlim \f)\\
 (b)\rightarrow (c) &&& (\T,\F) &\mapsto& (\F\cap\F^{\perp_1}\cap\Zg{A},\: \:\F^{\perp_0}\cap\InjInd{A}) \\ 
 (c)\rightarrow (a) &&& (\Z, \Ical)&\mapsto&  t= ({}^{\perp_0}\Z \cap \mod{A}, \Cogen{\Z} \cap \mod{A}) \\
 \end{array}$$ 
 
 \medskip
 
Our second main result is a large version of the brick\,--\,$\tau$-rigid correspondence from \cite{DemonetIyamaJasso:19}, where every indecomposable $\tau$-rigid module is canonically assigned a brick. In general this assignment is only an injection into the set $\brick{A}$  of finite-dimensional bricks.  By including infinite-dimensional modules, we are able to extend this to a bijection. 

Again, it is more convenient to consider the dual version. The modules that occur in the set $\Z$ for some cosilting pair $(\Z,\mathcal I)$  are the large counterpart of the indecomposable $\tau^-$-rigid modules (cf.~Proposition \ref{prop: strongly rigid} and Lemma \ref{lem: ff tau}).  Such  modules  are like grains of sediment making up the (co)silt, so we will call them  \emph{\srigid s}. In order to obtain a bijection with bricks, we have to restrict to the class $\Ri(A)$ formed by the  \srigid s $N$ such that the associated torsion pair in $\mod{A}$ with torsion class given by $\t_N=\{X\in\mod{A}\mid \Hom{A}(X,N)=0\}$ is completely meet irreducible in $\tors{A}$. The pair $(\t_N,\f_N)$ is then cogenerated  by a brick $B_N$, and this gives rise to the desired bijection. 

{\bf Theorem B} (Theorem~\ref{Thm: brick-critical correspondence} and Proposition~\ref{Prop: DIJ dual})
Let $ A$ be a finite-dimensional algebra. 
There is a bijection 
	\[\Ri(A) \to \brick{A}\]
which maps a \srigid\  $N$ to the unique brick cogenerating the torsion pair $(\t_N,\f_N)$.
The restriction of this bijection to finite-dimensional modules is dual to the  brick\,--\,$\tau$-rigid correspondence from \cite{DemonetIyamaJasso:19}.

\medskip

In a forthcoming paper \cite{AngeleriLakingSentieri:24+} we will explain how the operation of cosilting mutation introduced in \cite{ALSV} induces an operation on cosilting pairs which can be used to describe the Hasse quiver of the lattice $\tors{A}$. We will see that this operation shares important features with classical $\tau$-tilting mutation. The process of mutating at a point of a cosilting pair will consist in removing that point and replacing it in a unique way. However, unlike classical mutation, in general not every point in a cosilting pair is mutable, i.e.~can be replaced. The topology of the Ziegler spectrum will play an important role in identifying the points  at which it is possible to mutate.

\medskip

The paper is organised as follows. We collect some preliminaries in Section 2. In Section 3  we establish a one-to-one correspondence between torsion pairs in $\mod{A}$ and  maximal rigid sets of 2-term complexes of injectives inside $\Zg{\Der{A}}$. In Section 4 we introduce cosilting pairs and prove Theorem A. Section 5 is devoted to the class of \srigid s and their interaction with the brick labels in the Hasse quiver of $\tors{A}$. The proof of Theorem B is given in Section 6.  

\section{Preliminaries}
 
\subsection{Notation.} All subcategories will be strict and full. Let $ R $ be a (unital, associative) ring. By $ \Mod{R} $ we denote the category of (all left) $ R$-modules, while $ \mod{R} $ is the full subcategory of  finitely presented $ R$-modules. Moreover, we denote by $ \Der{R} $ the unbounded derived category of $ \Mod{R} $ and by $\D^\c$ the subcategory of compact objects in $ \Der{R} $ formed by the complexes which are isomorphic to bounded complexes of finitely generated projective modules.

We further denote by $\Kb{R}$ the full subcategory of $\Der{R}$ consisting of objects isomorphic to complexes of injective modules that are non-zero in only finitely many degrees.  In fact, we will mainly deal with the full subcategory $\K{R}\subseteq \Kb{R}$ of $\Der{R}$ consisting of objects isomorphic to complexes of injective $R$-modules that are non-zero in degrees $0$ and $1$ only.  If we wish to consider an $R$-module $M$ as a complex concentrated in degree $n\in \mathbb Z$, then we will write $M[n]$.  In the case where $n=0$, we will simply write $M$.
 
Given a class of objects $ \S $ in $\Der{R}$, we write  $ \Prod{\S} $ for the subcategory of $ \Der{R} $ consisting of objects (isomorphic to) direct summands of a (possibly infinite) product of modules in $ \S $.  
Moreover, given $I \subseteq \mathbb{Z}$, we denote by $\S^{\perp_I}$ the full subcategory consisting of the objects $ X $ with $ \Hom{\Der{R}}(S, X[i]) = 0 $ for all $ S \in \mathcal{S} $ and all $i\in I$. The subcategory $ {}^{\perp_I}{\S} $ is defined analogously.  We will use the notation $>0$ for the interval $\{i\in\mathbb{Z}\mid i>0\}$ and similarly for $\leq 0$.

If   $ \S $ is a class of $R$-modules, we define analogously   the subcategory $ \Prod{\S} $ of $ \Mod{R} $ and write $ \Cogen{\S} $ for the class of modules isomorphic to submodules of products of modules in $ \S $.
 Moreover, for $i \geq 0$, we denote by $\S^{\perp_i}$ the full subcategory consisting of the modules $ M $ with $ \Ext{R}^i(S, M) = 0 $ for all $ S \in \mathcal{S} $, and define analogously $ {}^{\perp_i}{\S} $.

\subsection{Torsion pairs}
\begin{definition}
A pair of subcategories $ t=\Tpair{T}{F} $ in an abelian category $ \mathcal{A} $ is  a \textbf{torsion pair} if
\begin{enumerate}
\item  $ \Hom{\mathcal{A}}(T, F) = 0 $ for all $ T \in \mathcal{T} $ and $ F \in \mathcal{F} $;
\item for all the objects $ A \in \mathcal{A} $ there exists a short exact sequence
\[
\begin{tikzcd}
0 \ar[r] & T \ar[r] & A \ar[r] & F \ar[r] & 0
\end{tikzcd}
\]
with $ T \in \mathcal{T}$ and $F \in \mathcal{F} $.
\end{enumerate}
We call $ \mathcal{T} $ the \textbf{torsion class} and $ \mathcal{F} $ the \textbf{torsionfree class} of $t$.
\end{definition}
In every torsion pair $ \Tpair{T}{F} $ the torsion and torsionfree class determine each other, in fact $ \mathcal{T}^{\perp_0} = \mathcal{F} $ and $ \mathcal{T} = {}^{\perp_0}\mathcal{F} $. Moreover, for every object $ A $ the short exact sequence in point (2) of the definition is unique up to a unique isomorphism, in particular it is possible to associate with $ A $ an object $ T_A \in \mathcal{T} $ functorially. 

For torsion pairs in module categories we have the following well-known characterisations.

\begin{proposition}\label{prop: closure properties} Let $R$ be a ring.
\begin{enumerate}
\item Let $ \mathcal{C} $ be a full subcategory of $ \Mod{R} $. Then $ \mathcal{C} $ is a torsion class if and only if it is closed under quotients, extensions and coproducts. It is a torsionfree class if and only if it is closed under submodules, extensions and products.
\item Let $\C$ be a full subcategory of $\mod{R}$.  If $R$ is left artinian, then $\C$ is a torsionfree class if and only if it is closed under subobjects and extensions.  If $R$ is left noetherian, then $\C$ is a torsion class if and only if it is closed under quotients and extensions.
\end{enumerate}
\end{proposition}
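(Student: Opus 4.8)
The plan is, in each biconditional, to derive the ``only if'' implication directly from the definition and to prove the ``if'' implication by explicitly constructing the short exact sequence demanded by the second torsion-pair axiom.

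For the ``only if'' parts: if $\Tpair{T}{F}$ is a torsion pair and $\C=\T$, then, using $\T={}^{\perp_0}\F$ and exactness of $\Hom{R}(-,F)$ for $F\in\F$, closure under quotients holds because $\Hom{R}(T/K,F)$ embeds into $\Hom{R}(T,F)=0$, closure under extensions follows from the long exact sequence, and closure under coproducts follows from $\Hom{R}(\bigoplus_i T_i,F)\cong\prod_i\Hom{R}(T_i,F)$. Dually, for $\C=\F$ one uses $\F=\T^{\perp_0}$ and exactness of $\Hom{R}(T,-)$. The same computations yield the ``only if'' directions of (2), the finite direct sums appearing there being instances of extension-closure.

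For the ``if'' direction of (1), suppose $\C\subseteq\Mod{R}$ is closed under quotients, extensions and coproducts; I claim that $(\C,\C^{\perp_0})$ is a torsion pair, whence $\C$ is its torsion class by the observation recalled before the statement. The $\Hom$-vanishing axiom is immediate, so it remains, for every $M$, to build a short exact sequence $0\to T\to M\to M/T\to 0$ with $T\in\C$ and $M/T\in\C^{\perp_0}$. Take $T$ to be the trace of $\C$ in $M$, i.e.\ the sum of the images of all morphisms $C\to M$ with $C\in\C$; as these images form a set of quotients of $M$, the module $T$ is a quotient of a coproduct of objects of $\C$ and hence lies in $\C$. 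If some nonzero submodule $N/T\subseteq M/T$ belonged to $\C$, then $N$ would be an extension of $N/T$ by $T$, hence in $\C$, forcing $N\subseteq T$ by maximality of the trace, a contradiction; so $M/T$ has no nonzero submodule in $\C$, and since the image of any map $C\to M/T$ is such a submodule, $M/T\in\C^{\perp_0}$. The torsionfree statement is the formal dual: replace the trace by the reject $\mathrm{Rej}_{\C}(M)=\bigcap_{g\colon M\to C}\ker g$, note that $M/\mathrm{Rej}_{\C}(M)$ embeds into a product of objects of $\C$ and so lies in $\C$, and check $\mathrm{Rej}_{\C}(M)\in{}^{\perp_0}\C$ dually: given $h\colon\mathrm{Rej}_{\C}(M)\to C$, the module $M/\ker h$ is an extension of $\Im{h}\in\C$ by $M/\mathrm{Rej}_{\C}(M)\in\C$, hence lies in $\C$, so $\mathrm{Rej}_{\C}(M)\subseteq\ker h$ and therefore $h=0$.

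The role of the hypotheses in (2) is to compensate for the absence of infinite (co)products in $\mod{R}$, which prevents the trace and reject from being formed as above; the chain conditions supply the required substitutes. If $R$ is left noetherian and $\C\subseteq\mod{R}$ is closed under quotients and extensions (so $0\in\C$ as soon as $\C\neq\emptyset$), then for $M\in\mod{R}$ the submodules of $M$ lying in $\C$ are closed under finite sums, since $N_1+N_2$ is a quotient of $N_1\oplus N_2$, which is an extension of $N_2$ by $N_1$; by the ascending chain condition there is a largest such submodule $T$, and the argument of (1) then gives $M/T\in\C^{\perp_0}$ and that $(\C,\C^{\perp_0})$ is a torsion pair in $\mod{R}$. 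Dually, if $R$ is left artinian and $\C\subseteq\mod{R}$ is closed under subobjects and extensions, the submodules $N\subseteq M$ with $M/N\in\C$ are closed under finite intersections, as $M/(N_1\cap N_2)$ embeds into $M/N_1\oplus M/N_2\in\C$; by the descending chain condition there is a smallest such submodule $T$, and one checks as in (1) that $T\in{}^{\perp_0}\C$, yielding the torsion pair $({}^{\perp_0}\C\cap\mod{R},\C)$. In all cases $\C$ is recovered as the relevant class of the resulting torsion pair via the observation before the statement. The one point demanding care is precisely this replacement of infinite (co)products by maximal or minimal submodules; the remaining verifications are routine diagram chases.
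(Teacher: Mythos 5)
Your proof is correct. Note that the paper itself offers no argument for this proposition: it is quoted as a well-known characterisation (part (1) is Dickson's classical theorem, part (2) the standard finitely generated analogue), so there is nothing to compare against beyond the folklore proof, which is exactly what you give — the trace/reject construction for (1) and its replacement by a largest (resp.\ smallest) submodule via the chain condition for (2). Two small points worth making explicit if you write this up: the coproduct (resp.\ product) in the trace/reject step should be indexed by the \emph{set} of images (resp.\ kernels) inside $M$, as you hint; and the ACC/DCC you invoke holds for submodules of $M$ because a finitely generated module over a left noetherian (resp.\ left artinian) ring is a noetherian (resp.\ artinian) module, which also guarantees that the terms of the constructed short exact sequence stay in $\mod{R}$.
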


We will be interested in torsion pairs of \textbf{finite type} in $\Mod{R}$, i.e., those torsion pairs $(\T, \F)$ such that $\F$ is closed under direct limits.  Given the previous proposition, it is clear that the torsion-free classes associated to torsion pairs of finite type are exactly the subcategories of $\Mod{R}$ that are closed under submodules, extensions, products and direct limits.

{ \subsection{T-structures.}  

\begin{definition}
A pair of full subcategories $\mathbb{T} = (\X, \Y)$ of $\Der{R}$ is called a \textbf{t-structure} if \begin{enumerate} 
\item $\Hom{\Der{R}}(X, Y) = 0$ for all $X\in\X$ and $Y\in\Y$;
\item for every object $Z$ in $\Der{R}$ there is a triangle $Z_\X \to Z \to Z_\Y \to Z_\X[1]$ with $Z_\X \in \X$ and $Z_\Y \in \Y$; and
\item $X[1] \in \X$ for every $X\in \X$.
\end{enumerate}
We call $\X$ the \textbf{aisle} and $\Y$ the \textbf{coaisle} of $\mathbb{T}$. 

A t-structure $\mathbb{T} = (\X, \Y)$ is called \textbf{non-degenerate} if $\bigcap_{i\in\mathbb{Z}}\X[i] = \{0\} = \bigcap_{i\in\mathbb{Z}}\Y[i]$.
\end{definition}

It was shown in \cite{BeilinsonBernsteinDeligne:82} that, for any t-structure $\mathbb{T} = (\X, \Y)$, the full subcategory $\heart{\mathbb{T}} := \Y \cap \X[-1]$ is an abelian category called the \textbf{heart} and that there exists a cohomological functor $\H{\mathbb{T}} \colon \Der{R} \to \heart{\mathbb{T}}$.  We fix the notation $\mathrm{H}^i_\mathbb{T}$ for the composition $\H{\mathbb{T}} \circ [i]$ for all $i\in\mathbb{Z}$.

\begin{example}
The \textbf{standard t-structure} in $\Der{R}$ is denoted $\ststr{R} := (\D_R^{<0}, \D_R^{\geq 0})$ and is given by the subcategories 
	\[\D_R^{<0} = \{X \in \Der{R} \mid \mathrm{H}^i(X) = 0 \text{ for all } i\geq0\} \text{ and }\]
	\[\D_R^{\geq0} = \{X \in \Der{R} \mid \mathrm{H}^i(X) = 0 \text{ for all } i<0\}\]
where $\mathrm{H}^i$ is the standard ith cohomology functor.  The heart \[\heart{} = \{X \in \Der{R} \mid \mathrm{H}^i(X) = 0 \text{ for all } i\neq0\}\] is equivalent to $\Mod{R}$ and the cohomological functor associated to $\ststr{R}$ coincides with $\H{}$.

When the ring $R$ is clear from the context, we just write $\ststr{} = (\D^{<0}, \D^{\geq 0})$.
\end{example}

  It is well-known that t-structures with $\D^{<0} \subseteq \X \subseteq \D^{\le 0}$
 are precisely the right HRS-tilts of the standard t-structure  (see e.g.~\cite[Lem.~1.1.2]{Polishchuk:07} or \cite[Prop.~2.1]{Woolf:10}). Let us recall this construction. 

\begin{definition}\cite{HappelReitenSmalo:96}\label{HRS}
Let $\mathbb{T} = (\X, \Y)$ be a non-degenerate t-structure in $\Der{R}$ and let $t = (\T, \F)$ be a torsion pair in the heart $\heart{\mathbb{T}}$.  The \textbf{right HRS-tilt} of $\mathbb{T}$ at $t$ is the t-structure $\mathbb{T}_{t^-} = (\X_{t^-}, \Y_{t^-})$ given by 
	\[\X_{t^-} = \{ X \in \Der{R} \mid \H{\mathbb{T}}(X) \in \T \text{ and } \mathrm{H}^k_\mathbb{T}(X) = 0 \text{ for all } k>0\} \: \text{ and}\]
	\[\Y_{t^-} = \{ X \in \Der{R} \mid \H{\mathbb{T}}(X) \in \F \text{ and } \mathrm{H}^k_\mathbb{T}(X) = 0 \text{ for all } k<0\}.\]
The heart $\heart{t^-}$ of $\mathbb{T}_{t^-}$ is the full subcategory 
	\[\heart{t^-} = \{X \in \Der{R} \mid \text{ there exists a triangle } F \to X \to T[-1] \to F[1] \text{ where } F\in\F \text{ and } T\in \T\}\]
and it contains a torsion pair $\bar{t} = (\F, \T[-1])$. Let $\H{t^-}$ denote the associated cohomological functor.
\end{definition}}

\subsection{Essentials on purity.}  We recall very briefly the fundamental notions on purity we will use in the sequel. A more detailed account is given in the Appendix.

Let $R$ be a ring. A short exact sequence $0\to X\stackrel{f}{\to} Y\stackrel{g}{\to} Z\to 0$ in $\Mod{R}$ is \textbf{pure-exact} if $0\to \Hom{R}(F, X)\to \Hom{R}(F,Y)\to \Hom{R}(F,Z)\to 0$ is an exact sequence of abelian groups for every finitely presented $R$-module $F$. Similarly, a triangle $ X\stackrel{f}{\to} Y\stackrel{g}{\to} Z\to X[1]$ in $\Der{R}$ is pure-exact if $0\to \Hom{\Der{R}}(F, X)\to \Hom{\Der{R}}(F,Y)\to \Hom{\Der{R}}(F,Z)\to 0$ is an exact sequence of abelian groups  for every compact object $F\in\Der{R}$. In both cases $f$ is called a \textbf{pure-monomorphism} and $g$ a \textbf{pure-epimorphism}, and $X$ is a \textbf{pure subobject} of $Y$. 

An object $X$ in $\Mod{R}$ or $\Der{R}$ is \textbf{pure-injective} if every pure-monomorphism  starting at $X$ is a split monomorphism. 

A subcategory $\C$ of $\Mod{R}$ or $\Der{R}$ is \textbf{definable} if it is closed under direct products, pure subobjects and pure epimorphic images.

We denote by $\Zg{R}$ the set of isomorphism classes of indecomposable pure-injective objects in $\Mod{R}$.  
The subsets of $\Zg{{R}}$ of the form $\C \cap \Zg{{R}}$ where $\C$ is a definable subcategory of $\Mod{R}$ satisfy the axioms for the closed sets of a topology.  We refer to the set $\Zg{{R}}$ endowed with this topology as the \textbf{Ziegler spectrum of $\Mod{R}$}. 

The \textbf{Ziegler spectrum} $\Zg{\Der{R}}$ of $\Der{R}$  is defined correspondingly.
We will often focus on the subset   $\ZgInt{R} := \K{R}\cap\Zg{\Der{R}}$  formed by the isomorphism classes of indecomposable pure-injective complexes of injectives concentrated in degrees 0 and 1. The following result provides a link between the two spectra.

\begin{lemma} \cite[Prop.~7.1]{GarkushaPrest:05} \label{Lem: homologypi}
 If $\sigma \in \ZgInt{R}$, then the module $M:=\H{}(\sigma)$ is contained in $\Zg{R}$. If $I$ is indecomposable injective, then the stalk complexes $I[0]$ and $I[-1]$ are contained  in $\ZgInt{R}$.
\end{lemma}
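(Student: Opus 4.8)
The plan is to handle the two assertions separately; the second is immediate, while the first requires genuine work. For the second, an injective module $I$ is in particular pure-injective, and an indecomposable injective module has local endomorphism ring, so the stalk complexes $I[0]$ and $I[-1]$ are indecomposable and plainly lie in $\K R$; since the stalk functor $\Mod R\to\Der R$ carries pure-injective modules to pure-injective objects (Appendix) and the shift preserves pure-injectivity, $I[0]$ and $I[-1]$ are indecomposable pure-injective objects of $\Der R$, hence points of $\ZgInt R$.

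For the first assertion I would begin by representing $\sigma$ by a complex of injectives $(I^0\xrightarrow{d}I^1)$ concentrated in degrees $0$ and $1$ and setting $M:=\H{}(\sigma)=\ker d$. Decomposing any such representative as a minimal complex plus a contractible one, the hypothesis that $\sigma$ is indecomposable and nonzero forces it to be either a stalk $I[-1]$ with $I$ indecomposable injective — in which case $M=0$ and there is nothing to prove — or the complex attached to a minimal injective copresentation $0\to M\to I^0\xrightarrow{d}I^1$ of $M$, so that $M\hookrightarrow I^0$ and $\Im{d}\hookrightarrow I^1$ are essential. In that case $M$ is indecomposable, since a splitting $M=M_1\oplus M_2$ would split $I^0=E(M_1)\oplus E(M_2)$ and, after choosing minimal injective copresentations of $M_1$ and $M_2$ inside that of $M$, would split $d$ and hence $\sigma$. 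So it only remains to prove that $M$ is pure-injective, which I expect to be the main obstacle.

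To prove pure-injectivity, fix a pure-monomorphism $g\colon M\to Q$ into a pure-injective module $Q$ (for instance $Q=M^{++}$); it is enough to split $g$. Choosing a minimal injective copresentation $(J^0\xrightarrow{e}J^1)$ of $Q$ and extending $M\xrightarrow{g}Q\hookrightarrow J^0$ along $M\hookrightarrow I^0$ produces a chain map and hence a morphism $\hat g\colon\sigma\to(J^0\to J^1)$ in $\Der R$ with $\H{}(\hat g)=g$. The crux is that $\hat g$ is a \emph{pure} monomorphism in $\Der R$: completing it to a triangle $\sigma\xrightarrow{\hat g}(J^0\to J^1)\to C\xrightarrow{w}\sigma[1]$, the cone $C$ is the three-term complex $I^0\to I^1\oplus J^0\to J^1$ in degrees $-1,0,1$; injectivity of $g$ gives $\mathrm H^{-1}(C)=0$, and the long exact cohomology sequence expresses $\mathrm H^0(C)$ and $\mathrm H^1(C)$ through $\mathrm{coker}(g)=Q/M$ and the cokernels of $d$ and $e$. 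One must then check that the connecting morphism $w$ is phantom — equivalently that the triangle is pure-exact — and this should reduce to the purity of $0\to M\xrightarrow{g}Q\to Q/M\to 0$ in $\Mod R$ via the dictionary between purity in $\Mod R$ and purity in $\Der R$ recalled in the Appendix. Granting that $\hat g$ is a pure monomorphism, it splits because $\sigma$ is pure-injective in $\Der R$; applying $\H{}$ to a retraction of $\hat g$ yields a retraction of $g$, so $M$ is a direct summand of the pure-injective $Q$ and is therefore pure-injective, whence $M\in\Zg R$. The reductions above are routine homological algebra with injective copresentations; the one delicate point is transporting purity between the module category and the derived category, i.e.\ the verification that $w$ is phantom.
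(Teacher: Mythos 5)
The paper itself gives no proof of this lemma: it is quoted from \cite{GarkushaPrest:05} (Prop.~7.1), so your attempt is measured against that external argument rather than an in-paper one. Your reductions are fine as far as they go: the splitting of a two-term complex of injectives into $\mu_M\oplus(\text{contractible})\oplus I[-1]$, hence the dichotomy $\sigma\cong I[-1]$ or $\sigma\cong\mu_M$, the indecomposability of $M$, and the second assertion for an injective module $I$ (although the Appendix nowhere states that stalk complexes of pure-injective modules are pure-injective in $\Der{R}$; for an injective $I$ this is easy, e.g.\ from $\Hom{\Der{R}}(X,I[n])\cong\Hom{R}(\mathrm{H}^{-n}(X),I)$, but the general statement you invoke is itself essentially part of the content of the cited proposition, not of the Appendix).

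The genuine gap is exactly the step you flag and postpone: that some lift $\hat g\colon\sigma\to(J^0\to J^1)$ of $g$ is a pure monomorphism in $\Der{R}$. This does not ``reduce to the purity of $0\to M\to Q\to Q/M\to 0$''. Purity of $\hat g$ means $\Hom{\Der{R}}(X,\hat g)$ is injective for \emph{every} compact $X$; already $X=R[-1]$ forces $\mathrm{H}^1(\hat g)$, i.e.\ the induced map on second cosyzygies $\Omega^{-2}M\to\Omega^{-2}Q$, to be injective, and pure-exactness of the module sequence gives no control whatsoever over this map: it only sees degree-zero data, whereas your construction produces the degree-one component $f^1$ by an arbitrary injectivity extension. (The Appendix's ``dictionary'' consists only of the definitions of purity via $\y$ and $\ten$ and the two homeomorphism theorems; the only transport it supports is for stalk complexes.) Note also that such an injective map may fail to exist for any choice of lift (e.g.\ whenever $\Omega^{-2}M\neq 0$ but the chosen pure-injective $Q$ has injective dimension at most one), and that splitting $\hat g$ is essentially equivalent to the pure-injectivity of $M$ you are trying to prove, so this point cannot be waved through as routine homological algebra. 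A way to close the argument without any copresentation bookkeeping is the summation-map characterisation of pure-injectivity (an object $X$ is pure-injective iff for every set $I$ the summation map $X^{(I)}\to X$ factors through $X^{(I)}\to X^{I}$, valid both in $\Mod{R}$ and in the compactly generated category $\Der{R}$): since $\H{}=\Hom{\Der{R}}(R,-)$ preserves set-indexed products and coproducts, applying it to a factorisation witnessing pure-injectivity of $\sigma$ yields one for $M=\H{}(\sigma)$, so $M$ is pure-injective, and your minimality argument then gives indecomposability (when $M\neq 0$; for $\sigma\cong I[-1]$ one has $M=0$, which strictly speaking is not a point of $\Zg{R}$).
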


\subsection{Cosilting complexes and t-structures} 

In this section  we briefly review useful results on 2-term cosilting complexes in the derived category of a ring $R$.  As we will see in the next subsection,  equivalence classes of such complexes parametrise torsion pairs of finite type in $\Mod{R}$ and, if $R$ is a left noetherian ring, these objects also parametrise the torsion pairs in the category $\mod{R}$ of finitely presented modules. For details we refer to \cite{PsaroudakisVitoria:18,MarksVitoria:18,Laking:20,ZhangWei:17,Angeleri:18}

\begin{proposition}\cite[Proposition 3.10]{MarksVitoria:18}\label{prop: cosilting definition}
The following statements are equivalent for $\sigma \in \K{R}$. \begin{enumerate}
\item The pair of full subcategories $({}^{\perp_{\leq 0}} \sigma, {}^{\perp_{> 0}} \sigma)$ is a t-structure in $\Der{R}$.
\item For all sets $I$ we have that $\Hom{\Der{R}}(\sigma^I, \sigma[1]) = 0$ and $\thick{\Prod{\sigma}} = \Kb{R}$.
\end{enumerate}
\end{proposition}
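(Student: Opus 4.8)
The plan is to prove the two implications separately, after recording a few unconditional facts; I would write $\X={}^{\perp_{\leq 0}}\sigma$ and $\Y={}^{\perp_{>0}}\sigma$. Since $\sigma\in\K{R}$, the complex $\sigma[i]$ lies in $\D^{\geq 0}$ for every $i\leq 0$ and in $\D^{\leq 0}$ for every $i\geq 1$; hence $\D^{<0}\subseteq\X$, $\D^{\geq 1}\subseteq\Y$, $\X$ is closed under positive shifts and $\Y$ under negative shifts. Moreover, for every set $I$ and every $i\geq 2$ one has $\Hom{\Der{R}}(\sigma^{I},\sigma[i])=0$, because $\sigma^{I}$ is a complex of injectives concentrated in degrees $0,1$ while $\sigma[i]$ is one concentrated in degrees $\leq -1$, so this group is computed in the homotopy category and vanishes. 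In particular $\Prod{\sigma}\subseteq\Kb{R}$, and the first condition of (2) is equivalent to the assertion that $\sigma^{I}\in\Y$ for every set $I$, i.e.\ that $\Prod{\sigma}\subseteq\Y$.

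$(1)\Rightarrow(2)$. Assume $(\X,\Y)$ is a t-structure. By the equivalence just noted, and since $\Y=\X^{\perp_{0}}$ is closed under products and direct summands, for the first condition it suffices to show $\sigma\in\Y$. Take the approximation triangle $X_{\sigma}\to\sigma\to Y_{\sigma}\to X_{\sigma}[1]$ with $X_{\sigma}\in\X$ and $Y_{\sigma}\in\Y$. Since $X_{\sigma}\in\X$ we have $\Hom{\Der{R}}(X_{\sigma},\sigma)=0$, so applying $\Hom{\Der{R}}(-,\sigma)$ to the triangle shows that $\mathrm{id}_{\sigma}$ factors through $Y_{\sigma}$; hence $\sigma$ is a direct summand of $Y_{\sigma}$, so $\sigma\in\Y$. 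For the second condition, the inclusion $\thick{\Prod{\sigma}}\subseteq\Kb{R}$ is clear. For the reverse one, the unconditional facts give $\D^{<0}\subseteq\X\subseteq\D^{\leq 0}$ (the last inclusion because $\X={}^{\perp_{0}}\Y\subseteq{}^{\perp_{0}}\D^{\geq 1}=\D^{\leq 0}$), so by the description of such t-structures as right HRS-tilts recalled above, $(\X,\Y)$ is the right HRS-tilt $\mathbb{T}_{t^-}$ of the standard t-structure at a torsion pair $t=\Tpair{T}{F}$ of $\Mod{R}$ with $\F=\Y\cap\Mod{R}$. One then checks — this being the cosilting analogue of a cotilting coresolution — that each injective $R$-module $E$ admits a coresolution $0\to E\to P^{0}\to P^{1}\to 0$ in $\heart{t^-}$ with $P^{0},P^{1}\in\Prod{\sigma}$, so that the triangle $E\to P^{0}\to P^{1}\to E[1]$ places the stalk $E$ in $\thick{\Prod{\sigma}}$; since every bounded complex of injectives is built from injective stalks by finitely many triangles and shifts, $\Kb{R}\subseteq\thick{\Prod{\sigma}}$.

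$(2)\Rightarrow(1)$. Assume both conditions of (2); then $\Prod{\sigma}\subseteq\Y$. Set $\F=\Y\cap\Mod{R}=\{\,M\in\Mod{R}\mid\Hom{\Der{R}}(M,\sigma[1])=0\,\}$ and $\T={}^{\perp_{0}}\F$ inside $\Mod{R}$. Applying $\Hom{\Der{R}}(-,\sigma[1])$ to short exact sequences of modules (and using $\Hom{\Der{R}}(M,\sigma[i])=0$ for $i\geq 2$) one checks that $\F$ is closed under submodules, extensions, and — invoking pure-injectivity of $\sigma$ — products, so by Proposition~\ref{prop: closure properties} it is the torsionfree class of a torsion pair $t=\Tpair{T}{F}$; form its right HRS-tilt $\mathbb{T}_{t^-}=(\X_{t^-},\Y_{t^-})$ as in Definition~\ref{HRS}. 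It remains to identify $\X_{t^-}=\X$ and $\Y_{t^-}=\Y$. The inclusions $\X_{t^-}\subseteq\X$ and $\Y\subseteq\Y_{t^-}$ follow by induction along the standard truncation triangles, using $\D^{<0}\subseteq\X$, $\D^{\geq 1}\subseteq\Y$ and the definitions of $\T$ and $\F$. For the opposite inclusions it is enough to prove $\X\subseteq\X_{t^-}$, and this is exactly where $\thick{\Prod{\sigma}}=\Kb{R}$ is used: for $X\in\X$ one produces the $\mathbb{T}_{t^-}$-approximation triangle $X'\to X\to Y'\to X'[1]$ (with $Y'\in\Y_{t^-}$) by assembling a $\Prod{\sigma}$-coresolution of $X$ — which exists because $\Prod{\sigma}$ thickly generates $\Kb{R}$ — and then checks that $X'\in\X$.

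The step I expect to be the genuine obstacle, common to both directions, is the construction of the decomposition triangles $X_{Z}\to Z\to Y_{Z}\to X_{Z}[1]$ with $X_{Z}\in\X$ and $Y_{Z}\in\Y$; equivalently, the existence of $\Prod{\sigma}$-coresolutions of the appropriate length. By contrast, the closure properties of $\Y$, the orthogonality axiom and non-degeneracy are comparatively routine once these triangles are available. The natural tool for the coresolutions — and for the product-closure of $\F$ above — is the theory of purity: $\sigma$ is a complex of injective modules, which (at least when $R$ is noetherian, the setting the paper is primarily concerned with) makes it a pure-injective object of $\Der{R}$, so that $\Prod{\sigma}$ is closed under the relevant products and direct limits and $\F=\Y\cap\Mod{R}$ is a definable, hence torsionfree, class.
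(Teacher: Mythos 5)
First, a remark on the comparison: the paper does not prove this proposition at all --- it is quoted verbatim from Marks--Vitória --- so there is no internal proof to measure your argument against, and it must stand on its own. Your preliminary reductions and the first half of $(1)\Rightarrow(2)$ are correct and well executed: the observation that $\Hom{\Der{R}}(\sigma^I,\sigma[i])=0$ holds automatically for $i\geq 2$ by degree reasons, and the retraction argument showing $\sigma\in\Y=\X^{\perp_0}$ (hence $\Prod{\sigma}\subseteq\Y$, since a coaisle is closed under products and summands), are exactly right.

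However, the proposal leaves precisely the hard steps as assertions, and where it does offer a justification, the justification fails. (i) In $(1)\Rightarrow(2)$, the inclusion $\Kb{R}\subseteq\thick{\Prod{\sigma}}$ rests entirely on ``one then checks'' that every injective module $E$ admits a two-step $\Prod{\sigma}$-coresolution in $\heart{t^-}$. This is the crux of the implication, not a routine verification, and as formulated it does not even typecheck: the stalk $E$ lies in $\heart{t^-}$ only if $E\in\F$. What is actually needed is an analysis of the truncation triangle of $E$ with respect to $(\X,\Y)$ together with an identification of its pieces as objects of $\Prod{\sigma}$ up to shift, which in turn requires something like $\Prod{\sigma}=\Y\cap\Y^{\perp_1}$ --- i.e.\ the content of Theorem~\ref{Thm: cosilt summary}(2), which is itself a consequence of $\sigma$ being cosilting. (ii) In $(2)\Rightarrow(1)$, closure of $\F=\C_\sigma$ under products is obtained by ``invoking pure-injectivity of $\sigma$''. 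Pure-injectivity of $\sigma$ is not a hypothesis; it is one of the nontrivial consequences of $\sigma$ being cosilting (Theorem~\ref{Thm: cosilt summary}(1)), and your fallback claim that a 2-term complex of injectives over a noetherian ring is automatically pure-injective in $\Der{R}$ is not true --- only stalk complexes of injectives come for free (Lemma~\ref{Lem: homologypi}). The standard route is to first prove $\C_\sigma=\Cogen{\H{}(\sigma)}$, using the generation condition $\thick{\Prod{\sigma}}=\Kb{R}$ to embed an arbitrary $M\in\C_\sigma$ into a product of copies of $\H{}(\sigma)$, and only then deduce product-closure from that of $\Cogen{-}$. (iii) The step $\X\subseteq\X_{t^-}$ via ``assembling a $\Prod{\sigma}$-coresolution of $X$, which exists because $\Prod{\sigma}$ thickly generates $\Kb{R}$'' does not work as stated: thick generation yields finitely many cones, shifts and summands, not coresolutions, and $X$ is an arbitrary object of $\Der{R}$, not of $\Kb{R}$. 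In short, the architecture (reduce to an HRS-tilt of a torsion pair in $\Mod{R}$ and match aisles) is the right one, but the proposal defers exactly the steps that constitute the proof.
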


\begin{definition} We call a complex $\sigma \in \K{R}$ satisfying the equivalent conditions above a \textbf{(2-term) cosilting complex}.  We say that two such complexes $\sigma$ and $\sigma'$ are \textbf{equivalent} if they determine the same t-structure, that is, if $\Prod{\sigma} = \Prod{\sigma'}$.  We  call a t-structure of this form a \textbf{cosilting t-structure}.
\end{definition}

\begin{notation}
Given a 2-term cosilting complex $\sigma$ in $\Der{R}$, we denote the associated t-structure by $\tstr{\sigma} := ({}^{\perp_{\leq 0}} \sigma, {}^{\perp_{> 0}} \sigma)$. The heart of $\mathbb{T}_\sigma$ will be denoted by $\heart{\sigma}$ and the cohomological functor by $\H{\sigma} \colon \Der{R} \to \heart{R}$. 
\end{notation}

{We collect together some properties of  cosilting complexes that we will use in the sequel.

\begin{theorem}\cite{AngeleriMarksVitoria:17,MarksVitoria:18,Laking:20}\label{Thm: cosilt summary} 
Let $\sigma$ be a 2-term cosilting complex in $\Der{R}$.  Then the following statements hold.\begin{enumerate}
\item The complex $\sigma$ is a pure-injective object of $\Der{R}$.

\item Using the notation $\V = {}^{\perp_{> 0}} \sigma$, we have that $\Prod{\sigma} = \V \cap \V^{\perp_1}$.

\item The cohomological functor $\H{\sigma}$ induces an equivalence of categories 
	\[\H{\sigma} \colon \Prod{\sigma} \to \Inj{\heart{\sigma}}\] 
and there is a natural isomorphism
	\[\Hom{\heart{\sigma}}(\H{\sigma}(-), \H{\sigma}(\sigma)) \cong \Hom{\Der{R}}(-, \sigma).\]
	
\item There is an equivalence of categories $F \colon \Mod{\D^\c}/({}^{\perp_0}\y\sigma) \to \heart{\sigma}$ such that the following diagram commutes:
	\[\xymatrix{ \Der{R} \ar[r]^-\y \ar[d]_{\H{\sigma}} & \Mod{\D^\c} \ar[r]^-q & \Mod{\D^\c}/({}^{\perp_0}\y\sigma) \ar[dll]^-F \\ \heart{\sigma} & & }\]
where $\y$ is the restricted Yoneda functor (Definition \ref{Def: Purity Der}) and $q$ is the localisation functor.

\item The cosilting t-structures are exactly the t-structures $(\X, \Y)$ such that $$\D^{<0} \subseteq \X \subseteq \D^{\le 0}:=\D^{<0}[-1]$$ and whose heart is a Grothendieck category. In particular, every cosilting t-structure is a right HRS-tilt of the standard t-structure.
\end{enumerate}

\end{theorem}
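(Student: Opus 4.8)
These five assertions are collected from \cite{AngeleriMarksVitoria:17, MarksVitoria:18, Laking:20}; below is how one would assemble them, and which point carries the real weight. First I would pin down the inclusions $\D^{<0}\subseteq\X\subseteq\D^{\le0}$ and the HRS-tilt claim in (5). Since $\sigma$ is a bounded complex of injective modules, every shift $\sigma[i]$ is $K$-injective, so $\Hom{\Der{R}}(X,\sigma[i])$ is a group of homotopy classes of chain maps, for every $X$. If $X\in\D^{<0}$, replace it by its good truncation, concentrated in degrees $\le-1$; for $i\le0$ the complex $\sigma[i]$ is concentrated in degrees $\ge0$, so no nonzero chain components exist and $\D^{<0}\subseteq{}^{\perp_{\le0}}\sigma$. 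Dually, if $X\in\D^{\ge1}$, take it concentrated in degrees $\ge1$; for $i>0$ the complex $\sigma[i]$ sits in degrees $\le0$, so $\D^{\ge1}\subseteq{}^{\perp_{>0}}\sigma=\V$, whence ${}^{\perp_{\le0}}\sigma={}^{\perp_0}\V\subseteq{}^{\perp_0}\D^{\ge1}=\D^{\le0}$. By the description of such t-structures recalled before Definition~\ref{HRS} (see \cite{Polishchuk:07, Woolf:10}), $\tstr{\sigma}$ is therefore a right HRS-tilt of $\ststr{R}$.

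Next I would prove the pure-injectivity in (1) and deduce (2) from it. For (1) the plan is to follow Bazzoni's proof that cotilting modules are pure-injective: one checks that $\V={}^{\perp_{>0}}\sigma$ is closed under products (being a coaisle) and under direct limits, and that $\sigma$ cogenerates $\V$, in the sense that every object of $\V$ embeds, with cokernel again in $\V$, into a product of copies of $\sigma$; an argument with the pure-injective envelope then forces $\sigma$ to coincide with it. Given (1), statement (2) is almost formal. The inclusion $\Prod{\sigma}\subseteq\V\cap\V^{\perp_1}$ holds because $\sigma\in\V$ (a $2$-term complex of injectives has $\Hom{\Der{R}}(\sigma,\sigma[i])=0$ for $i\ge2$ by degree reasons, while $i=1$ is part of the cosilting condition) and because $\V\subseteq{}^{\perp_1}\sigma$ forces $\sigma\in\V^{\perp_1}$; both classes are closed under products and summands. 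For the reverse inclusion, a given $Y\in\V\cap\V^{\perp_1}$ fits, by the cogenerating property of $\sigma$, into a triangle $Y\to\sigma^{I}\to C\to Y[1]$ with $C\in\V$, and this triangle splits since $\Hom{\Der{R}}(C,Y[1])=0$ (because $Y\in\V^{\perp_1}$); hence $Y$ is a summand of $\sigma^{I}$, i.e.\ $Y\in\Prod{\sigma}$.

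The step I expect to be the main obstacle is the remaining assertion in (5): that $\heart{\sigma}$ is a Grothendieck category---equivalently, that a t-structure with $\D^{<0}\subseteq\X\subseteq\D^{\le0}$ is cosilting precisely when its heart is Grothendieck. This is a purity/definability statement and is the main theorem of \cite{Laking:20} (building on \cite{MarksVitoria:18}); in the paper I would cite it. Granting it, the rest follows. For (3): the natural isomorphism $\Hom{\heart{\sigma}}(\H{\sigma}(-),\H{\sigma}(\sigma))\cong\Hom{\Der{R}}(-,\sigma)$ identifies, on $\heart{\sigma}$, the functor $\Hom{\heart{\sigma}}(-,\H{\sigma}(\sigma))$ with $\Hom{\Der{R}}(-,\sigma)$; the latter is exact on $\heart{\sigma}$ (a short exact sequence there comes from a triangle whose outer terms lie in ${}^{\perp_{-1}}\sigma\cap{}^{\perp_1}\sigma$) and annihilates only $0$ (because $\thick{\Prod{\sigma}}=\Kb{R}$ and ${}^{\perp_{\mathbb{Z}}}\Kb{R}=0$), so $\H{\sigma}(\sigma)$ is an injective cogenerator of the Grothendieck category $\heart{\sigma}$; since there the injectives are exactly the retracts of products of copies of an injective cogenerator, $\H{\sigma}$ restricts to an equivalence $\Prod{\sigma}\xrightarrow{\ \sim\ }\Inj{\heart{\sigma}}$ (full and faithful by the displayed isomorphism, using that objects of $\Prod{\sigma}$ are retracts of products of copies of $\sigma$). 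Finally, (4) is the identification, again from \cite{Laking:20}, of $\heart{\sigma}$ with the Gabriel quotient of $\Mod{\D^\c}$ by the localising subcategory ${}^{\perp_0}\y\sigma$, the commuting triangle encoding the compatibility of $\H{\sigma}$, the restricted Yoneda functor $\y$, and the quotient functor $q$.
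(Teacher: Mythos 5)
The paper offers no proof of this statement: it is a summary theorem whose five parts are imported verbatim from \cite{AngeleriMarksVitoria:17,MarksVitoria:18,Laking:20}, so there is no argument in the text for your proposal to diverge from. Your assembly is consistent with that and essentially correct—the elementary verifications you supply (the inclusions $\D^{<0}\subseteq\X\subseteq\D^{\le 0}$ in (5), the splitting argument for $\Prod{\sigma}=\V\cap\V^{\perp_1}$, the injective-cogenerator argument in (3)) are sound, while the genuinely substantive points (pure-injectivity in (1), the Grothendieck-heart characterisation in (5), the identification (4), and also the natural isomorphism in (3), which you use rather than prove) are correctly delegated to the same cited sources.
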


}

\subsection{Cosilting modules and torsion pairs}

To any torsion pair  $t=(\T,\F)$ in $\Mod{R}$ we can assign the t-structure $\mathbb{\ststr}_{t^-}$ obtained as right HRS-tilt of the standard t-structure $\ststr{}$ at $t$. 
\begin{proposition}(\cite[Thm.~1.3]{ZhangWei:17}, \cite[Dual of Thm.~4.9]{AngeleriMarksVitoria:16} and \cite[Cor.~3.9]{Angeleri:18})\label{ft}
Let $R$ be a ring and let   $t=(\T,\F)$ be a torsion pair in $\Mod{R}$. The t-structure $\mathbb{\ststr}_{t^-}$ is a cosilting t-structure  if and only if $t$ is a torsion pair of finite type.
\end{proposition}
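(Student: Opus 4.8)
The plan is to establish the equivalence by unravelling the definition of the HRS-tilt $\mathbb{\ststr}_{t^-}$ and matching up the coaisle $\Y_{t^-}$ with the torsionfree class $\F$, then invoking the characterisation of cosilting $t$-structures recorded in Theorem~\ref{Thm: cosilt summary}(5). First I would note that, by construction, $\D^{<0}\subseteq\X_{t^-}\subseteq\D^{\leq 0}$, so by Theorem~\ref{Thm: cosilt summary}(5) the $t$-structure $\mathbb{\ststr}_{t^-}$ is cosilting if and only if its heart $\heart{t^-}$ is a Grothendieck category. Since $\heart{t^-}$ is always an abelian category with a torsion pair $\bar{t}=(\F,\T[-1])$ in which $\F\hookrightarrow\heart{t^-}$ is the torsionfree class, the key is to understand when $\heart{t^-}$ is Grothendieck in terms of $\F$.

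For the ``if'' direction, suppose $t$ is of finite type, i.e.\ $\F$ is closed under direct limits. I would argue that $\heart{t^-}$ has exact direct limits and a generator. Exactness of direct limits in $\heart{t^-}$: a direct limit taken in $\heart{t^-}$ can be computed using the cohomology functor $\H{t^-}$ together with the triangles defining objects of $\heart{t^-}$; the point is that for a direct system in $\heart{t^-}$, writing each object via its defining triangle $F\to X\to T[-1]\to F[1]$, one passes to the colimit degreewise. Direct limits in $\Mod R$ are exact, $\F$ is closed under them by hypothesis, and $\T$ is automatically closed under direct limits (being a torsion class, it is closed under coproducts and quotients, hence under direct limits), so the colimit triangle again has its outer terms in $\F$ and $\T[-1]$; this shows the colimit computed in $\Der R$ lands in $\heart{t^-}$ and is the colimit there, and exactness is inherited from exactness of filtered colimits in $\Mod R$. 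For a generator, one takes the coproduct of a generator of $\Mod R$ (e.g.\ $R$) with all objects of the form $S[-1]$ for $S$ in a set of generators of $\T$, or more cleanly one uses Theorem~\ref{Thm: cosilt summary}(4) identifying $\heart{t^-}$ with a localisation of a module category; since module categories are Grothendieck and Grothendieck categories are closed under localisation, this yields a generator. Combining, $\heart{t^-}$ is Grothendieck, hence $\mathbb{\ststr}_{t^-}$ is cosilting.

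For the ``only if'' direction, suppose $\mathbb{\ststr}_{t^-}$ is cosilting; then $\heart{t^-}$ is Grothendieck, so in particular direct limits in $\heart{t^-}$ are exact. I would then show directly that $\F$ must be closed under direct limits in $\Mod R$: given a direct system $(F_i)$ in $\F$ with colimit $F=\varinjlim F_i$ taken in $\Mod R$, each $F_i$ sits in $\heart{t^-}$ as an object of the torsionfree part, filtered colimits in $\heart{t^-}$ restricted to the (closed under subobjects and extensions) torsionfree class $\F$ agree with those in $\Mod R$ on the level of the underlying module since $\F$ is a torsionfree class and hence closed under subobjects and the colimit's $\T$-part must vanish; more precisely, the colimit of $(F_i)$ computed in $\heart{t^-}$ is some object $C$, and comparing with the module colimit $F$ via the cohomology functors shows $\mathrm H^0_{t^-}$ and $\mathrm H^{-1}_{t^-}$ of $C$ recover $F$ and $0$, forcing $F\in\F$. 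Exactness of colimits in $\heart{t^-}$ is what guarantees no spurious torsion part appears. The main obstacle I expect is precisely this bookkeeping: carefully relating colimits computed in $\heart{t^-}$ to colimits in $\Mod R$ through the cohomological functors $\mathrm H^k_{t^-}$ and the defining triangles, and verifying that the torsion part stays zero — this is where the exactness of filtered colimits in a Grothendieck category is used in an essential way, and it is the step that does the real work in both directions. The existence of a generator, by contrast, is essentially formal once Theorem~\ref{Thm: cosilt summary}(4) is invoked.
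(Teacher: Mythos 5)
The paper does not prove this proposition itself; it imports it from Zhang--Wei, the dual of Angeleri H\"ugel--Marks--Vit\'oria, and Angeleri H\"ugel's Cor.~3.9, where the argument runs through cosilting \emph{modules}: a finite-type torsion pair is cogenerated by a cosilting module $C$ with 2-term injective copresentation $\sigma$, the HRS-tilt is then $\tstr{\sigma}$, and conversely cosilting modules are pure-injective, so $\F=\Cogen{C}=\C_\sigma$ is definable and hence closed under direct limits. Your reduction via Theorem~\ref{Thm: cosilt summary}(5) to the claim ``$\heart{t^-}$ is Grothendieck iff $\F$ is closed under direct limits'' is a legitimate alternative framing, but the claim you then need is precisely the theorem of Parra and Saor\'{\i}n on hearts of HRS-tilts, and your sketch does not actually prove it: the ``if'' direction asserts that one ``passes to the colimit degreewise'' and that ``the colimit computed in $\Der{R}$ lands in $\heart{t^-}$ and is the colimit there, and exactness is inherited.'' Filtered colimits do not exist in $\Der{R}$ as categorical colimits, a directed system in $\Der{R}$ need not lift coherently to complexes, and direct limits in the heart are only related to homotopy colimits (via $\H{t^-}$) after substantial work; establishing the AB5 condition for $\heart{t^-}$ is exactly the hard content here, and it is the step you flag as ``bookkeeping'' without supplying it. The same caveat applies to the ``only if'' direction, where you compare colimits in $\heart{t^-}$ with colimits in $\Mod{R}$ without a mechanism for doing so.

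There is also a circularity in your generator step: Theorem~\ref{Thm: cosilt summary}(4) identifies $\heart{\sigma}$ with a localisation of $\Mod{\D^\c}$ \emph{for a 2-term cosilting complex} $\sigma$, so it cannot be invoked while proving that $\ststr{t^-}$ is cosilting; and the ad hoc alternative generator $R\oplus\bigoplus S[-1]$ need not even lie in $\heart{t^-}$, since the stalk complex $R$ belongs to the heart only when $R\in\F$. To repair the argument along your lines you would either have to reproduce the Parra--Saor\'{\i}n proof (Milnor colimits, vanishing of the torsion part of the comparison map, and an explicit generator for the tilted heart), or switch to the paper's cited route, where ``cosilting $\Rightarrow$ finite type'' follows from pure-injectivity of cosilting complexes (definability of $\C_\sigma$), and ``finite type $\Rightarrow$ cosilting'' from the explicit construction of the cosilting module cogenerating $\F$.
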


\begin{definition}
 A torsion pair as in Proposition~\ref{ft} will be called a  \textbf{cosilting torsion pair}. 
\end{definition}

Tautologically, there is a bijection between cosilting torsion pairs and equivalence classes of 2-term cosilting complexes. It maps a 2-term cosilting complex $\sigma$ to the torsion pair $ ({}^{\perp_0}C, \Cogen{C})$ cogenerated by the module $C=\H{}(\sigma)$. 
The modules $C$ that arise in this way are exactly those possessing an injective copresentation $\sigma$ that is a 2-term cosilting complex.  We recall that an injective copresentation of a module $M$ is a morphism $\sigma \colon E_0 \to E_1$ between injective modules such that $\Ker{\sigma} \cong M$.  Such a morphism can be identified with a complex in $\K{R}$ in the obvious way.

In order to characterise such modules, we need to introduce further  notation.

\begin{notation}\label{csigma}
Let $R$ be a ring and consider $\sigma \colon N_0 \to N_1$ a morphism in $\Mod{R}$.  We denote by $\C_\sigma$ the full subcategory of modules $M$ in $\Mod{R}$ such that 
	\[\Hom{R}(M, \sigma)\colon \Hom{R}(M, N_0) \to \Hom{R}(M, N_1)\] 
is an epimorphism.
\end{notation}

\begin{remark}\label{Rem: rigid C sigma}{\rm
(1) Let $\sigma, \mu \in \K{R}$.  We can consider $\sigma$ and $\mu$ as morphisms in $\Mod{R}$ between injective modules.  Since morphisms in $\Hom{\Der R}(\sigma, \mu[1])$ can be calculated as cochain maps up to homotopy, it follows that $\Hom{\Der R}(\sigma, \mu[1]) = 0$ if and only if $\H{}(\sigma)\in\C_\mu$.  See \cite[Lem.~3.3]{Angeleri:18}.

(2) Let $M$ and $N$ be $R$-modules and let $\mu_N$ be a minimal injective copresentation of $N$.  It follows from a minor adaptation of the proof of \cite[Lem.~4.13]{ZhangWei:17} that
   $ M \in \mathcal{C}_{\mu_N} $ if and only if all submodules of $M$ are contained in ${}^{\perp_1}N$.} 
\end{remark}

\begin{definition}
Let $R$ be a ring.  A module $C$ in $\Mod{R}$ is called a \textbf{cosilting module} if there exists an injective copresentation $\sigma \colon E_0 \to E_1$ of $C$ such that $\Cogen{C} = \C_\sigma$.  We say that $C$ is cosilting with respect to $\sigma$.  Cosilting modules $C$ and $D$ are declared to be \textbf{equivalent} if $\Prod{C} = \Prod{D}$.
Furthermore, an $R$-module $C$ is called a \textbf{cotilting module} if $\Cogen{C} = {}^{\perp_1}C$.
\end{definition}

Cotilting modules are always cosilting modules but the converse is not true in general.  The following proposition makes this more precise.

\begin{proposition}{\cite{ZhangWei:17}}\label{prop: cosilt is local cotilt}
The following statements are equivalent for an $R$-module $C$.
\begin{enumerate}
\item $C$ is a cosilting module over $R$.
\item $\Cogen{C}$ is a torsion-free class and $C$ is a cotilting module over $\overline{R} := R/\Ann{C}$.
\end{enumerate}
\end{proposition}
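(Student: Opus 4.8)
The strategy is to use Proposition \ref{prop: cosilt is local cotilt}'s hypothesis in both directions via the identity $\Cogen{C} = \C_\sigma$ and the description of $\C_{\mu_C}$ in Remark \ref{Rem: rigid C sigma}(2). The key observation is that, since the annihilator $\overline{R} := R/\Ann{C}$ acts faithfully on $C$, the class $\Cogen{C}$ consists entirely of $\overline{R}$-modules: a submodule of a product of copies of $C$ is annihilated by $\Ann{C}$. Hence the condition ``$\Cogen{C}$ is a torsion-free class'' and the condition ``$\Cogen{C} = {}^{\perp_1}C$'' may both be tested inside $\Mod{\overline{R}}$, and $\Ext{\overline{R}}^1(-,C)$ agrees with $\Ext{R}^1(-,C)$ on $\overline{R}$-modules. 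So the proposition reduces to the following two-part claim: $C$ is cosilting over $R$ if and only if (i) $\Cogen{C}$ is closed under submodules, extensions and products, and (ii) regarding $C$ as an $\overline{R}$-module, $\Cogen_{\overline{R}}{C} = {}^{\perp_1}C$.

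\textbf{$(1)\Rightarrow(2)$.} Assume $C$ is cosilting with respect to an injective copresentation $\sigma\colon E_0\to E_1$, so $\Cogen{C}=\C_\sigma$. First, $\Cogen{C}$ is automatically closed under submodules and products; the content is that it is closed under extensions, which follows because $\C_\sigma$ is closed under extensions (an epimorphism property that is stable under extensions by the snake lemma applied to $\Hom{R}(-,\sigma)$, using that $\Hom{R}(-,E_i)$ is exact). By the dual of \cite[Thm.~4.9]{AngeleriMarksVitoria:16}, or by an HRS-tilting argument, $\Cogen{C}$ being a torsion-free class closed under products is equivalent to $\sigma$ being a cosilting complex, so we may use Theorem \ref{Thm: cosilt summary}. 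Next, pass to $\overline{R}$: the inclusion $\Cogen{C}\subseteq{}^{\perp_1}C$ holds in $\Mod{\overline{R}}$ because any $M\in\Cogen{C}$ embeds in a product $C^\Lambda$ with cokernel again in $\Cogen{C}$, and a short argument with the long exact sequence of $\Ext{\overline{R}}$ together with $\Ext{\overline{R}}^1(C^\Lambda,C)$ vanishing (this is where one uses $\C_\sigma = \Cogen{C}$ most essentially: it forces $\Ext{R}^1(M,C)=0$ for $M\in\Cogen C$, by lifting along $E_0\to E_1$) gives the claim. Conversely ${}^{\perp_1}C\subseteq\Cogen{C}$: given $M$ with $\Ext{\overline{R}}^1(M,C)=0$, one shows $M\in\C_{\mu_C}$ where $\mu_C$ is a minimal injective $\overline{R}$-copresentation, using Remark \ref{Rem: rigid C sigma}(2); since $\C_{\mu_C}$ and $\C_\sigma$ coincide (both equal $\Cogen C$ — minimal and non-minimal copresentations of a cosilting module define the same class, cf.\ \cite{ZhangWei:17}), we get $M\in\Cogen{C}$.

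\textbf{$(2)\Rightarrow(1)$.} Assume $\Cogen{C}$ is a torsion-free class in $\Mod R$ and $C$ is cotilting over $\overline{R}$, i.e.\ $\Cogen_{\overline{R}}{C}={}^{\perp_1_{\overline R}}C$. Let $\mu\colon E_0\to E_1$ be a minimal injective copresentation of $C$ \emph{in $\Mod{\overline{R}}$}; since injectives over $\overline{R}$ need not be injective over $R$, one instead takes an injective copresentation of $C$ in $\Mod R$, or argues that $\C_\mu$ computed over $\overline R$ and over $R$ agree on $\overline R$-modules, which they do because the relevant $\Hom$-groups are unchanged. We must show $\Cogen{C} = \C_\mu$. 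The inclusion $\Cogen{C}\subseteq\C_\mu$: by Remark \ref{Rem: rigid C sigma}(2), $M\in\C_\mu$ iff every submodule of $M$ lies in ${}^{\perp_1}C$, and since $\Cogen C$ is closed under submodules, it suffices to check $\Cogen C\subseteq{}^{\perp_1}C$, which is the cotilting hypothesis over $\overline R$ (all modules involved are $\overline R$-modules). For $\C_\mu\subseteq\Cogen{C}$: again by Remark \ref{Rem: rigid C sigma}(2) any $M\in\C_\mu$ has all submodules in ${}^{\perp_1}C$, in particular $M\in{}^{\perp_1}C={}^{\perp_1_{\overline R}}C=\Cogen_{\overline R}C\subseteq\Cogen_R C$; here one must first note $M$ is an $\overline R$-module, which holds because $M\hookrightarrow E_0$ and $\Ker\mu\cong C$ is annihilated by $\Ann C$... more carefully, $M$ need not a priori be an $\overline R$-module, so one takes $\mu$ to be an injective copresentation of $C$ over $R$ and shows $\C_\mu$ consists of $\overline R$-modules separately, or uses that $\Hom R(M,C)$ only sees $M/\Ann(C)M$.

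\textbf{Main obstacle.} The delicate point is the interplay between injective envelopes over $R$ and over $\overline{R}$: a minimal injective copresentation of $C$ as an $R$-module is not the same as one over $\overline{R}$, and $\Ext{R}^1$ versus $\Ext{\overline{R}}^1$ must be reconciled on the nose. The clean way around this, which I would adopt, is to observe that $\C_\sigma$ depends only on $\sigma$ as a morphism of $R$-modules and that for any $M$, $\Hom{R}(M,\sigma)$ is an epimorphism iff $\Hom{R}(M/I M, \sigma)$ is, where $I=\Ann C$ — because $\sigma$ has entries killed by $I$ — so everything can be computed after restricting scalars, and then $\Ext{R}^1 = \Ext{\overline{R}}^1$ on $\overline R$-modules (as $\overline R$ is a quotient ring, there is a spectral sequence / base-change comparison that is an isomorphism in degree $1$ here). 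Once this bookkeeping is set up, both implications are essentially the characterisation ``$C$ cosilting $\iff$ $\Cogen C = \C_{\mu_C}$ $\iff$ $\Cogen C$ torsion-free and $= {}^{\perp_1}C$ over the faithful quotient'' combined with Remark \ref{Rem: rigid C sigma}(2), and the remaining steps are routine diagram chases with $\Hom$ and $\Ext$.
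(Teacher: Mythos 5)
The paper does not prove this statement at all: it is quoted from \cite{ZhangWei:17}, where the argument goes through quasi-cotilting modules and the pure-injectivity of cosilting modules and is considerably more involved than the ``routine diagram chases'' you anticipate. Measured on its own terms, your plan has genuine gaps at exactly the points where the real content lies. First, the ``clean bookkeeping'' you propose to adopt is false: for a quotient ring $\overline{R}=R/\Ann{C}$ one only has an inclusion $\Ext{\overline{R}}^1(M,C)\subseteq\Ext{R}^1(M,C)$ for $\overline{R}$-modules $M$, not an isomorphism (take $R=\mathbb{Z}$, $C=M=\mathbb{Z}/p$, so $\overline{R}=\mathbb{Z}/p$: then $\Ext{\overline{R}}^1(M,C)=0$ while $\Ext{R}^1(M,C)\neq 0$). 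This wrong identification is load-bearing in your $(2)\Rightarrow(1)$ direction, where you need $\Cogen{C}\subseteq{}^{\perp_1}C$ computed over $R$ (to apply Remark~\ref{Rem: rigid C sigma}(2) to the $R$-copresentation $\mu_C$), but the cotilting hypothesis only gives vanishing of $\Ext{\overline{R}}^1$; bridging that gap requires an actual argument, not a degree-one base-change isomorphism.

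Second, in $(1)\Rightarrow(2)$ the hard inclusion ${}^{\perp_1}_{\overline{R}}C\subseteq\Cogen{C}$ is obtained by misreading Remark~\ref{Rem: rigid C sigma}(2): that remark says $M\in\C_{\mu_C}$ if and only if \emph{every submodule} of $M$ lies in ${}^{\perp_1}C$, and this does not follow from $\Ext{}^1(M,C)=0$ alone unless one already knows ${}^{\perp_1}C$ is closed under submodules, i.e.\ that $C$ has injective dimension at most one over $\overline{R}$ --- which is precisely the substantive part of the cotilting claim you are trying to prove, so the step is circular. The auxiliary claim that ``minimal and non-minimal copresentations of a cosilting module define the same class'' is also unjustified: writing $\sigma\cong\mu_C\oplus\nu\oplus I[-1]$ one only gets $\C_\sigma=\C_{\mu_C}\cap{}^{\perp_0}I\subseteq\C_{\mu_C}$, and the equality $\C_{\mu_C}=\Cogen{C}$ fails in general (this is exactly why the injective part $\Ical$ of a cosilting pair carries information). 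Finally, the mismatch between injective copresentations over $R$ and over $\overline{R}$, which you flag as the ``main obstacle,'' is acknowledged but never actually resolved; resolving it is where the work of \cite{ZhangWei:17} happens, so the proposal as it stands does not constitute a proof.
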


We  now summarise the discussion above.
\begin{theorem}\label{Thm: cosilt bij} Let $R$ be a ring.
There are bijections between
\begin{enumerate}
\item[(a)] cosilting torsion pairs in $\Mod{R}$,
\item[(b)] equivalence classes of 2-term cosilting complexes in $\Der{R}$,
\item[(c)] equivalence classes of cosilting modules in $\Mod{R}$,
\end{enumerate}
and when $R$ is a left noetherian ring, there is  a further bijection with
\begin{enumerate}
\item[(d)] torsion pairs  in $\mod{R}$.
\end{enumerate}
The bijections are given as follows.
$$\begin{array}{c|rcccc}
\text{Bijection} &&&\text{Assignment}&&  \\
\hline
 (a)\rightarrow (b) &&& t&\mapsto& \sigma \text{ with } \tstr{\sigma}=\mathbb{\ststr}_{t^-} \\
 (b)\rightarrow (c) &&& \sigma&\mapsto& C=\H{}(\sigma)\\
 (c)\rightarrow (a) &&& C&\mapsto&  ({}^{\perp_0}C, \Cogen{C})\\
 (d)\rightarrow (a) &&& (\t, \f) &\mapsto&  (\varinjlim(\t), \varinjlim(\f)) \\
  (a)\rightarrow (d) &&& ( \mathcal{T}, \mathcal{F} ) & \mapsto &  (\mathcal{T} \cap \mod{R} , \mathcal{F} \cap \mod{R}  )
 \end{array}$$ 

\end{theorem}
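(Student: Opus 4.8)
The plan is to assemble Theorem~\ref{Thm: cosilt bij} from results already available: Proposition~\ref{ft} gives the correspondence between (a) and (b), Proposition~\ref{prop: cosilt is local cotilt} together with the discussion preceding Notation~\ref{csigma} handles (b)$\leftrightarrow$(c), and the reduction to $\mod{R}$ for left noetherian $R$ uses Proposition~\ref{prop: closure properties}(2). Concretely, I would proceed as follows.

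First I would establish the bijection between (a) and (b). A cosilting torsion pair $t=(\T,\F)$ is, by Proposition~\ref{ft}, precisely one for which $\mathbb{D}_{t^-}$ is a cosilting t-structure; by Theorem~\ref{Thm: cosilt summary}(5) every cosilting t-structure arises from a 2-term cosilting complex $\sigma$ via $\tstr{\sigma}=\mathbb{D}_{t^-}$, and two such $\sigma$ give the same t-structure iff $\Prod{\sigma}=\Prod{\sigma'}$, i.e.\ iff they are equivalent. So $t\mapsto \sigma$ with $\tstr{\sigma}=\mathbb{D}_{t^-}$ is a well-defined bijection from (a) to (b), with inverse sending $\sigma$ to the torsion pair in the heart of $\ststr{}$ determined by the HRS data. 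Next, (b)$\to$(c): given $\sigma$, set $C=\H{}(\sigma)$. Since $\D^{<0}\subseteq {}^{\perp_{\le 0}}\sigma\subseteq\D^{\le 0}$, the complex $\sigma$ is (homotopy equivalent to) an injective copresentation of $C$, and one checks from Remark~\ref{Rem: rigid C sigma} that $\Cogen{C}=\C_\sigma$, so $C$ is a cosilting module with respect to $\sigma$; equivalent complexes yield modules with the same $\Prod$, so the assignment descends to equivalence classes. For (c)$\to$(a) one sends a cosilting module $C$ to $({}^{\perp_0}C,\Cogen{C})$; that this is a torsion pair of finite type, and that these three maps are mutually inverse, is the content of the classical correspondence — I would cite \cite{ZhangWei:17}, \cite{AngeleriMarksVitoria:16}, \cite{Angeleri:18} and use Theorem~\ref{Thm: cosilt summary}(2),(3),(5) to glue the diagram together, verifying on objects that $\Cogen{\H{}(\sigma)}$ is exactly the torsionfree class of $\mathbb{D}_{t^-}$ in $\Mod{R}$.

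For the left noetherian case, I would show (a)$\leftrightarrow$(d). Given a cosilting torsion pair $(\T,\F)$ in $\Mod{R}$, restrict to $(\T\cap\mod{R},\F\cap\mod{R})$. The Hom-vanishing condition is inherited. For the approximation condition: for $M\in\mod{R}$ the canonical sequence $0\to tM\to M\to M/tM\to 0$ has $M/tM\in\F$; since $R$ is left noetherian and $\T$ is closed under quotients, $tM$ is finitely generated, hence in $\mod{R}$, and $M/tM\in\mod{R}$ as well, so this is a torsion pair in $\mod{R}$ by Proposition~\ref{prop: closure properties}(2). Conversely, given a torsion pair $(\t,\f)$ in $\mod{R}$, form $(\varinjlim\t,\varinjlim\f)$; $\varinjlim\f$ is closed under direct limits by construction, and one verifies using Proposition~\ref{prop: closure properties}(1) — together with the standard fact that over a noetherian ring a torsion class in $\mod{R}$ generates a torsion class in $\Mod{R}$ under coproducts and direct limits — that $(\varinjlim\t,\varinjlim\f)$ is a torsion pair, necessarily of finite type. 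The two constructions are mutually inverse because every finite-type torsion pair in $\Mod{R}$ is determined by its finitely presented part (the torsionfree class is definable, hence determined by $\f\cap\mod{R}$), and every torsion pair in $\mod{R}$ is recovered by intersecting its direct-limit closure with $\mod{R}$.

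I expect the main obstacle to be the careful bookkeeping in the left noetherian case: showing that $\varinjlim\f$ really is a torsionfree class (closure under products is the delicate point, since a product of direct limits need not obviously be a direct limit of modules in $\f$) and that the round trip $(\t,\f)\mapsto(\varinjlim\t,\varinjlim\f)\mapsto(\varinjlim\t\cap\mod{R},\varinjlim\f\cap\mod{R})$ returns $(\t,\f)$ rather than something larger. This hinges on the fact that a finitely presented module lying in $\varinjlim\f$ already lies in $\f$, which in turn uses that $\f$ is closed under subobjects together with a standard direct-limit argument; I would isolate this as a small lemma. Everything else is a matter of threading together the cited equivalences and checking the displayed assignments agree on objects, which is routine given Theorems~\ref{Thm: cosilt summary} and the propositions above.
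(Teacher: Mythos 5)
Your proposal follows essentially the same route as the paper: Theorem~\ref{Thm: cosilt bij} is presented there as a summary, with (a)$\leftrightarrow$(b) coming from Proposition~\ref{ft} and Theorem~\ref{Thm: cosilt summary}(5), (b)$\leftrightarrow$(c) being the ``tautological'' correspondence $\sigma\mapsto \H{}(\sigma)$ whose nontrivial content (that $\Cogen{\H{}(\sigma)}=\C_\sigma$ and that exactly the cosilting modules arise) is taken from \cite{ZhangWei:17,AngeleriMarksVitoria:16,Angeleri:18}, and (a)$\leftrightarrow$(d) being attributed to a result of \cite{Crawley-Boevey:94}. Your assembly of (a), (b), (c) matches this, including the citations; just be aware that deducing $\Cogen{C}=\C_\sigma$ ``from Remark~\ref{Rem: rigid C sigma}'' is quicker than it looks --- the remark only converts the rigidity condition, and the actual identification of $\Cogen{C}$ with the coaisle data is the cited result, not a one-line check. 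The one place you genuinely diverge is (a)$\leftrightarrow$(d), where you try to argue directly rather than cite Crawley-Boevey. There your sketch correctly isolates the hard point, but does not close it: the lemma you propose (a finitely presented module in $\varinjlim\f$ already lies in $\f$, via factoring the identity through a term of the colimit and using closure of $\f$ under submodules) settles the round trip $(\t,\f)\mapsto(\varinjlim\t,\varinjlim\f)\mapsto$ restriction, but it does not show that $(\varinjlim\t,\varinjlim\f)$ is a torsion pair in $\Mod{R}$ in the first place --- in particular closure of $\varinjlim\f$ under arbitrary products is not a formal consequence of closure under submodules (a finitely generated submodule of an infinite product need not embed in a finite subproduct), and the ``standard fact'' you invoke for the torsion side is essentially the Crawley-Boevey lemma itself. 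So either cite \cite{Crawley-Boevey:94} at that point, as the paper does, or supply the full argument (e.g.\ show $\varinjlim\t$ is a torsion class, set $\F:=(\varinjlim\t)^{\perp_0}$, and prove $\F=\varinjlim\f$ using noetherianness); the easier direction, that a cosilting (finite type) torsion pair in $\Mod{R}$ restricts to a torsion pair in $\mod{R}$ because torsion submodules of noetherian modules are finitely generated, is fine as you wrote it.
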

The bijection between (a) and (d) relies on a result in \cite{Crawley-Boevey:94} and  allows us to study the lattice $\tors{A}$ of an artin algebra $A$ in terms of the associated 2-term cosilting complexes and their mutations, as we will see later on in this paper and in \cite{AngeleriLakingSentieri:24+}.

\section{Closed sets associated to cosilting complexes}\label{sec: cosilting sets}

In this section, we add a further bijection to the list in Theorem~\ref{Thm: cosilt bij} by showing that every 2-term cosilting complex is completely  determined, up to equivalence, by a closed set in the Ziegler spectrum of $\Der{R}$. More precisely, we establish a one-to-one-correspondence between equivalence classes of  2-term cosilting complexes and maximal rigid (closed) subsets of $\ZgInt{R}$.

Our first aim is to show that every equivalence class of 2-term cosilting complexes over a noetherian ring uniquely determines a closed subset of the Ziegler spectrum of $\Der{R}$.  First we need a small lemma.  Recall that $\mathrm{Def}(\mu)$ denotes the smallest definable subcategory of $\Der{R}$ containing a given object $\mu\in\Der{R}$.

\begin{lemma}\label{Lem: compact}
Let $R$ be a left coherent ring.  Suppose $\mu \in \K{R}$ and $Y \in \Db{R}$.  There exists $\widetilde{Y}$ in $\D^\c$ such that $\Hom{\Der{R}}(Y, N) \cong \Hom{\Der{R}}(\widetilde{Y}, N)$ for all $N$ in  $\mathrm{Def}(\mu)$.
\end{lemma}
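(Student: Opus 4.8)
The statement asks, given $\mu \in \K{R}$ and $Y \in \Db{R}$ over a left coherent ring $R$, to find a compact object $\widetilde{Y} \in \D^\c$ with $\Hom{\Der{R}}(Y, N) \cong \Hom{\Der{R}}(\widetilde{Y}, N)$ functorially for all $N \in \mathrm{Def}(\mu)$. The plan is to reduce to the case where $Y$ is a single finitely presented module placed in one degree, and there replace the minimal injective resolution (or a truncation of it) by a finite projective resolution, using that over a left coherent ring finitely presented modules admit resolutions by finitely generated projectives and that objects of $\mathrm{Def}(\mu)$ have bounded-above injective-type behaviour.

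First I would use that $Y \in \Db{R}$ is a bounded complex of finitely presented modules, and argue by induction on the length of $Y$ using the stupid (brutal) truncation triangles $Y' \to Y \to M[-n] \to Y'[1]$ where $M$ is a finitely presented module and $Y'$ is shorter; applying $\Hom{\Der{R}}(-,N)$ gives a long exact sequence, and by the five lemma it suffices to treat each stalk $M[-n]$, i.e. to treat a single finitely presented module $M$. So fix such an $M$ and let $P_\bullet \to M$ be a resolution by finitely generated projective modules (which exists since $R$ is left coherent and $M$ is finitely presented).

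The key point is then the following: for $N \in \mathrm{Def}(\mu)$ and for $i$ sufficiently negative (below the range of cohomology of $\mu$, hence of all $N \in \mathrm{Def}(\mu)$, since definable subcategories are closed under products and pure subobjects and $\mu$ is concentrated in degrees $0,1$), one has $\Hom{\Der{R}}(M[-n], N[j]) = 0$ for $j$ in a controlled range. Concretely, every $N \in \mathrm{Def}(\mu) \subseteq \K{R}$ is (isomorphic to) a complex of injectives concentrated in degrees $0$ and $1$, so $\Hom{\Der{R}}(M[-n], N) = \Hom{\Der{R}}(M, N[n])$ can be computed from $P_\bullet$ and vanishes once we pass beyond two consecutive spots of the cochain complex $\Hom{R}(P_\bullet, N)$. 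This means that the brutal truncation $\widetilde{P} := \sigma_{\geq -k} P_\bullet$ at a level $k$ depending only on $n$ (and on the fact that $N$ lives in degrees $0,1$) is a bounded complex of finitely generated projectives — hence an object of $\D^\c$ — with $\Hom{\Der{R}}(M[-n], N) \cong \Hom{\Der{R}}(\widetilde{P}, N)$ for every $N \in \mathrm{Def}(\mu)$, since the truncation triangle $(\text{left part})[-\ast] \to P_\bullet \to \widetilde{P} \to$ has its left-hand term built from terms in degrees where $\Hom{\Der{R}}(-, N)$ vanishes. Assembling these $\widetilde{P}$'s via the truncation triangles from the induction (mapping cones of the connecting maps, all of which live in $\D^\c$ as $\D^\c$ is thick) produces the desired $\widetilde{Y} \in \D^\c$, and the isomorphism is natural in $N$ because it is induced by an actual morphism $\widetilde{Y} \to Y$ (or $Y \to \widetilde{Y}$) in $\Der{R}$ whose cone is $\mathrm{Def}(\mu)$-acyclic in the relevant sense.

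The main obstacle I anticipate is making precise and uniform the claim that all $N \in \mathrm{Def}(\mu)$ are concentrated (up to isomorphism in $\Der{R}$) in a fixed bounded range of degrees, so that a single truncation level $k$ works for the whole definable subcategory simultaneously; this uses that $\mathrm{Def}(\mu) \subseteq \K{R}$, i.e. that the definable closure of an object of $\K{R}$ stays inside $\K{R}$ — plausibly because $\K{R}$ is itself definable (or at least closed under the relevant operations) — together with the coherence hypothesis to keep the truncated projective complex finitely generated in each degree. Once that uniformity is in hand, the rest is a routine dévissage plus the standard fact that $\Hom{\Der{R}}(-, N)$ applied to a complex of projectives is just the naive cochain Hom.
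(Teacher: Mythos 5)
Your core strategy is the same as the paper's: use left coherence to replace objects of $\Db{R}$ by finitely generated projectives, use that everything in $\mathrm{Def}(\mu)$ has cohomology only in degrees $0,1$, and brutally truncate so that only finitely many degrees can interact with $N$. However, the dévissage into stalk complexes, which is where your write-up does real work, is also where it has a genuine gap. The five lemma cannot be applied to bare isomorphisms $\Hom{\Der{R}}(M[-n],N)\cong\Hom{\Der{R}}(\widetilde{P},N)$: you need an actual morphism of triangles $\widetilde{Y'}\to\widetilde{Y}\to\widetilde{Y''}\to\widetilde{Y'}[1]$ over $Y'\to Y\to Y''\to Y'[1]$, which forces you to lift the connecting morphism $Y''\to Y'[1]$ along $\widetilde{Y'}[1]\to Y'[1]$ (an obstruction that only vanishes if the truncation depths of the pieces are chosen compatibly), and the long exact sequences involve $\Hom{\Der{R}}(-,N[j])$ for various shifts $j$, while $N[j]\notin\mathrm{Def}(\mu)$, so the statement being induced on must first be strengthened to a controlled range of shifts. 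None of this is impossible, but as written it is asserted rather than proved, and it is exactly the bookkeeping the paper avoids: since $R$ is left coherent and $Y\in\Db{R}$, the whole complex $Y$ is quasi-isomorphic to a single bounded-above complex $P$ of finitely generated projectives; taking $\widetilde{Y}$ to be the brutal truncation of $P$ keeping degrees $\geq -1$ gives an object of $\D^\c$, and since the complementary piece is a complex of projectives in degrees $\leq -2$ while $N$ may be represented by a complex concentrated in degrees $\geq 0$, cochain maps and homotopies $P\to N$ only involve degrees $\geq -1$, so $\Hom{\Der{R}}(Y,N)\cong\Hom{\Der{R}}(\widetilde{Y},N)$ directly, with no induction or reassembly.

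A second, smaller point: your claim that every $N\in\mathrm{Def}(\mu)$ is isomorphic to a two-term complex of injectives in degrees $0,1$ amounts to $\mathrm{Def}(\mu)\subseteq\K{R}$, which you flag only as plausible; it is neither justified in your argument nor needed. The fact actually used (and all your truncation bound requires) is the weaker one that $\bigcap_{i\neq 0,1}\Ker{\mathrm{H}^i}$ is a definable subcategory containing $\mu$, hence containing $\mathrm{Def}(\mu)$; one then represents $N$ by a bounded-below complex of injectives vanishing in negative degrees, which gives all the degree control needed.
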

\begin{proof}
Let $N\in \mathrm{Def}(\mu)$.  Since $\bigcap_{i\neq 0,1}\Ker{H^i_R}$ is a definable subcategory containing $\mu$, we have that $N$ is concentrated in degrees $0$ and $1$.  We can therefore assume without loss of generality that $Y$ is a bounded above complex of finitely presented projective modules and $N$ is a bounded below complex of injective modules with zeroes in negative degrees.  Let $\widetilde{Y}$ be the brutal truncation of $Y$ at degree $-1$ and note that $\widetilde{Y}\in \D^\c$.  Since the homomorphisms $Y \to N$ and $\widetilde{Y} \to N$ may be calculated as cochain maps up to homotopy, we have that $\Hom{\Der{R}}(Y, N) \cong \Hom{\Der{R}}(\widetilde{Y}, N)$.
\end{proof}

We are now able to prove the desired result. For unexplained terminology and background on the Ziegler spectrum we refer to Appendix \ref{App: Zg}.  Notice that statement (2) below is already known: see \cite[Theorem 5.2]{Saorin:2017}. 

\begin{proposition}\label{Prop: ele cog consequences}
Let $R$ be a left noetherian ring and let $\sigma$ be a 2-term cosilting complex.  Then the following statements hold.
\begin{enumerate}
\item The set $\closed{\sigma} := \Prod{\sigma}\cap\Zg{\Der{R}}$ is a closed subset of $\Zg{\Der{R}}$.  In other words, $\sigma$ is an elementary cogenerator.
\item The heart $\heart{\sigma}$ is a locally coherent Grothendieck category.
\item The set $\closed{\sigma}$ is homeomorphic to the spectrum $ \Sp{\heart{\sigma}}$ of the heart $\heart{\sigma}$ via the cohomological functor $\H{\sigma} \colon \closed{\sigma} \to \Sp{\heart{\sigma}}$.
\item The complex $\bar{\sigma} := \prod_{\mu\in\closed{\sigma}} \mu$ is a 2-term cosilting complex equivalent to $\sigma$.
\item We have the following equality: $({}^{\perp_{\leq 0}} \sigma, {}^{\perp_{> 0}} \sigma) = ({}^{\perp_{\leq 0}} \closed{\sigma}, {}^{\perp_{> 0}} \closed{\sigma})$.
\end{enumerate}
\end{proposition}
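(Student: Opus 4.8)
The plan is to prove the five statements together, exploiting the fact that $\sigma$ is a pure-injective object (Theorem~\ref{Thm: cosilt summary}(1)) together with the description $\Prod{\sigma} = \V \cap \V^{\perp_1}$ where $\V = {}^{\perp_{>0}}\sigma$ (Theorem~\ref{Thm: cosilt summary}(2)). First I would show that $\Prod{\sigma}$ is a \emph{definable} subcategory of $\Der{R}$. The subcategory $\V = {}^{\perp_{>0}}\sigma$ is defined by the vanishing of $\Hom{\Der{R}}(-, \sigma[i])$ for $i>0$, and since $R$ is left noetherian (hence left coherent), Lemma~\ref{Lem: compact} lets us replace the relevant test objects by compact ones: the condition $\Hom{\Der{R}}(\mu, \sigma[i])=0$ can be detected on $\mathrm{Def}(\sigma)$ by maps out of a compact $\widetilde{\sigma[i]}\in\D^\c$, and maps out of compact objects commute with products and direct limits, so $\V$ is definable. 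A parallel argument handles $\V^{\perp_1}$: being in $\V^{\perp_1}$ means $\Ext$-vanishing against the objects of $\V$, and since $\V$ is generated (as a definable category) by a set of compact-like test data, this too is a definable closure condition. Intersecting two definable subcategories gives a definable subcategory, so $\Prod{\sigma}$ is definable, and therefore $\closed{\sigma} = \Prod{\sigma}\cap\Zg{\Der{R}}$ is closed by definition of the Ziegler topology; this is statement (1).

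For statement (4), the key point is that $\sigma$, being pure-injective and indecomposable-decomposable in the appropriate sense, lies in $\Prod{\closed{\sigma}}$: a general structure result (every pure-injective object is $\Prod$-generated by the indecomposable pure-injectives in its definable closure, cf.~the appendix) gives $\sigma \in \Prod{\closed{\sigma}}$, and conversely each $\mu\in\closed{\sigma}$ lies in $\Prod{\sigma}$ by definition, so $\Prod{\sigma} = \Prod{\bar\sigma}$ where $\bar\sigma = \prod_{\mu\in\closed{\sigma}}\mu$. Since being a 2-term cosilting complex depends only on $\Prod{(-)}$ (Proposition~\ref{prop: cosilting definition}(2): the conditions $\Hom{\Der{R}}(\sigma^I,\sigma[1])=0$ and $\thick{\Prod{\sigma}}=\Kb{R}$ are invariant under replacing $\sigma$ by an equivalent complex), $\bar\sigma$ is again 2-term cosilting and equivalent to $\sigma$. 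Statement (5) is then immediate: the associated t-structure $\tstr{\sigma} = ({}^{\perp_{\leq 0}}\sigma, {}^{\perp_{>0}}\sigma)$ depends only on the equivalence class, and since $\Prod{\sigma}$ and $\closed{\sigma}$ generate the same $\Prod$, and the orthogonals ${}^{\perp_{\leq 0}}(-)$, ${}^{\perp_{>0}}(-)$ only see $\Prod{(-)}$, we get $({}^{\perp_{\leq 0}}\sigma, {}^{\perp_{>0}}\sigma) = ({}^{\perp_{\leq 0}}\closed{\sigma}, {}^{\perp_{>0}}\closed{\sigma})$.

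For statements (2) and (3), I would combine definability with Theorem~\ref{Thm: cosilt summary}(3), which says $\H{\sigma}$ restricts to an equivalence $\Prod{\sigma} \to \Inj{\heart{\sigma}}$. Since $\Prod{\sigma}$ is definable and the pure-injective objects of a definable category correspond, under an equivalence coming from a coherent functor, to the injective objects of the heart, the Ziegler-closed set $\closed{\sigma}$ maps bijectively to the indecomposable injectives of $\heart{\sigma}$, i.e.\ to $\Sp{\heart{\sigma}}$. The topology transports correctly because definable subcategories of $\Prod{\sigma}$ correspond to Serre-localising data in $\heart{\sigma}$; one then checks this bijection is a homeomorphism $\closed{\sigma}\to\Sp{\heart{\sigma}}$. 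Local coherence of $\heart{\sigma}$ in (2) follows from Lemma~\ref{Lem: compact} together with Theorem~\ref{Thm: cosilt summary}(4), which presents $\heart{\sigma}$ as a localisation $\Mod{\D^\c}/({}^{\perp_0}\y\sigma)$ of a locally coherent category at a definable (Serre, localising) subcategory — such localisations are again locally coherent precisely when the localising subcategory is of finite type, and the noetherian hypothesis on $R$ via Lemma~\ref{Lem: compact} guarantees this.

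The main obstacle I anticipate is establishing definability of $\Prod{\sigma}$ cleanly — specifically, verifying that the $\Ext^1$-orthogonality condition cutting out $\V^{\perp_1}$ inside $\V$ is a definable (and not merely closed-under-products) condition. This is where the left noetherian hypothesis is genuinely used, via Lemma~\ref{Lem: compact}, to reduce the relevant $\Hom$- and $\Ext$-tests to compact objects, after which closure under products, pure subobjects and pure quotients is routine. Once definability is in hand, statements (1), (4) and (5) are formal, and (2)--(3) follow by transporting the definable structure across the equivalence of Theorem~\ref{Thm: cosilt summary}(3)--(4). It is worth noting, as the statement already remarks, that (2) is \cite[Theorem 5.2]{Saorin:2017}, so for that item one may simply cite; I would still sketch the argument for completeness since the ingredients are already assembled.
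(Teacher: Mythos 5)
Your argument fails at its central step: $\Prod{\sigma}$ is \emph{not} a definable subcategory of $\Der{R}$ in general, so statement (1) cannot be obtained "by definition of the Ziegler topology" from definability of $\Prod{\sigma}$. Every object of $\Prod{\sigma}$ is pure-injective (Theorem~\ref{Thm: cosilt summary}(1)), and a pure subobject of a pure-injective object need not be pure-injective, so $\Prod{\sigma}$ is not closed under pure subobjects; only the ambient class $\mathrm{Def}(\sigma)$ is definable, and it is strictly larger than $\Prod{\sigma}$ as a rule. The same problem sits inside your claim that $\V^{\perp_1}$ is definable: this is precisely the hard point, and the sketched reduction does not work, because Lemma~\ref{Lem: compact} only identifies $\Hom{\Der{R}}(Y,-)$ with $\Hom{\Der{R}}(\widetilde{Y},-)$ on objects of $\mathrm{Def}(\mu)$ (complexes of injectives concentrated in degrees $0,1$), not on all of $\Der{R}$, and vanishing of $\Hom{\Der{R}}(Y_i,X[1])$ on a generating family $\{Y_i\}$ of $\V$ only propagates to all of $\V$ when the target $X[1]$ is pure-injective. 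What the paper proves is the weaker statement that suffices for closedness: $\mathrm{Def}(\sigma)\cap\Zg{\Der{R}} = \Prod{\sigma}\cap\Zg{\Der{R}}$. One inclusion is clear since $\V$ is definable (\cite[Thm.~3.14]{MarksVitoria:18}) and $\Prod{\sigma}=\V\cap\V^{\perp_1}$; for the other, one takes an \emph{indecomposable pure-injective} $N\in\mathrm{Def}(\sigma)$, writes any $Y\in\V$ as a homotopy colimit of bounded complexes of finitely presented modules $Y_i\in\V$ (here the left noetherian hypothesis enters, via \cite{MarksZvonareva:23}), uses pure-injectivity of $N[1]$ to reduce to $\Hom{\Der{R}}(Y_i,N[1])=0$, and then applies Lemma~\ref{Lem: compact} twice: $\sigma\in(\widetilde{Y_i[-1]})^{\perp_0}$, this class is definable, hence contains $\mathrm{Def}(\sigma)$ and in particular $N$.

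There is a second gap in your treatment of (4): the "general structure result" you invoke — that every pure-injective object lies in the product closure of the indecomposable pure-injectives in its definable closure — is false in general (superdecomposable pure-injectives exist, and no such statement appears in the appendix). The appendix fact you want, Remark~\ref{prods}, requires its input to be an \emph{elementary cogenerator}, which is exactly statement (1); so (4) and (5) must be deduced \emph{from} (1), as the paper does by applying $\y$ and Remark~\ref{prods} to $\y\sigma$ in $\Mod{\D^\c}$ to get $\Prod{\sigma}=\Prod{\closed{\sigma}}$. Similarly, your justification of local coherence in (2) ("the noetherian hypothesis via Lemma~\ref{Lem: compact} guarantees finite type") is not an argument: finite type of the localising subcategory ${}^{\perp_0}\y\sigma$ is again a consequence of (1), via the correspondence in Theorem~\ref{Thm: fundamental Zg}, after which Theorem~\ref{Thm: serre bij} gives local coherence and Theorem~\ref{Thm: fundamental Zg}(2) gives the homeomorphism in (3). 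So the architecture "prove (1) first, then everything else follows through the functor category" is right, but your route to (1) — definability of $\Prod{\sigma}$ — is not repairable as stated and must be replaced by the pointwise argument on indecomposable pure-injectives described above.
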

\begin{proof}
(1) To show that $\closed{\sigma} := \Prod{\sigma} \cap \Zg{\Der{R}}$ is a closed subset of $\Zg{\Der{R}}$, it is sufficient (in fact equivalent) to show that 
	\[\mathrm{Def}(\sigma) \cap \Zg{\Der{R}} = \Prod{\sigma} \cap \Zg{\Der{R}}. \]	
	 By Theorem \ref{Thm: cosilt summary}(2) this is the same as showing that 
	\[\mathrm{Def}(\sigma) \cap \Zg{\Der{R}} = \V \cap \V^{\perp_1} \cap \Zg{\Der{R}}\]
where $\V = {}^{\perp_{> 0}} \sigma$.

The non-trivial inclusion is $\subseteq$ and so let $N\in \mathrm{Def}(\sigma) \cap \Zg{\Der{R}}$.  By \cite[Thm.~3.14]{MarksVitoria:18}, we have that $\V$ is a definable subcategory so $\mathrm{Def}(\sigma) \cap \Zg{\Der{R}} \subseteq \V \cap \Zg{\Der{R}}$ holds.  It remains to show that $N \in \V^{\perp_1}$.

Let $Y\in \V$.  Since $R$ is left noetherian, we can apply \cite[Thm.~3.2, Prop.~5.1]{MarksZvonareva:23} to obtain a directed system $\{Y_i\}_{i\in I}$ in the category of bounded complexes of finitely presented modules with $Y_i \in \V$ such that $Y$ is the homotopy colimit $\mathrm{hocolim}Y_i$ of the corresponding coherent diagram in $\Der{R}$.  Since $N[1]$ is pure-injective, we have that, if $\Hom{\Der{R}}(Y_i, N[1]) = 0$ holds for all $i\in I$, then $\Hom{\Der{R}}(Y, N[1]) = 0$ holds (see, for example, the proof of \cite[Prop.~4.5]{Laking:20}).

For each $Y_i$, consider the object $\widetilde{Y_i[-1]} \in \D^\c$ given by Lemma \ref{Lem: compact}.  Clearly $(\widetilde{Y_i[-1]})^{\perp_0}$ is a definable subcategory and $\sigma \in (\widetilde{Y_i[-1]})^{\perp_0}$ by the lemma and {definition of $\V$}.  Therefore 
	\[N \in \mathrm{Def}(\sigma) \subseteq (\widetilde{Y_i[-1]})^{\perp_0}\]
and so, by another application of Lemma \ref{Lem: compact}, we have $\Hom{\Der{R}}(Y_i, N[1]) = 0$ as desired.

(2)  The restricted Yoneda functor $\y:\Der{R}\to \Mod{\D^\c}$ (cf.~Definition~\ref{Def: Purity Der}) maps the pure-injective object $\sigma$ to an injective object $E:=\y\sigma$ in the functor category $\Mod{\D^\c}$ and induces a homeomorphism
$ \y \colon \Zg{\Der{R}} \to \Sp{\Mod{\D^\c}}$ by Theorem~\ref{Thm: Zg Der}. 
In particular,  $\y$ takes the closed set $\closed{\sigma}$ from (1)  to 
the closed set $\Z_\sigma:=\Prod E\cap\Sp{\Mod{\D^\c}}$. 
This shows that $E$ is an elementary cogenerator, or in other words, that $(\T,\F):=({}^{\perp_0}E, \Cogen{E})$ is a hereditary torsion pair of finite type in $\Mod{\D^\c}$, cf.~Proposition \ref{Prop: ele cogen}. From Theorem \ref{Thm: serre bij}  we infer that  
 $\Mod{\D^\c}/\T$ is a locally coherent Grothendieck category, and the statement follows from
the equivalence  $\Mod{\D^\c}/\T \simeq \heart{\sigma}$ in Theorem \ref{Thm: cosilt summary}(4).	

(3) By Theorem~\ref{Thm: fundamental Zg}(2) the spectrum of $\Mod{\D^\c}/\T$ is homeomorphic to $\closed{\sigma}$ via the composite of $\y$ with the quotient functor $q$. We now obtain the statement by applying the equivalence   $F \colon \Mod{\D^\c}/\T \to \heart{\sigma}$ and using the commutative diagram in Theorem \ref{Thm: cosilt summary}(4).

(4) and (5): Applying Remark~\ref{prods} to the elementary cogenerator $E=\y\sigma$ and the closed subset $\Z_\sigma$ of $\Sp{\Mod{\D^\c}}$, we obtain $\Prod{\y\sigma}=\Prod{\Z_\sigma}$. This shows that $\Prod\sigma=\Prod{\closed{\sigma}}$, and the statements follow immediately.
\end{proof}

\begin{corollary}
\label{lem:cosiltElemCog}
Let $ R $ be a left noetherian ring and let $ C \in \Mod{R} $ be a cosilting module. Then $\Z_C:= \Prod{C} \cap \Zg{R}$ is a closed subset of $\Zg{R}$. In other words, $C$ is an elementary cogenerator.
\end{corollary}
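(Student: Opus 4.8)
The plan is to reduce the statement to its derived analogue, Proposition~\ref{Prop: ele cog consequences}(1), via the homology functor. Since $C$ is cosilting, there is an injective copresentation $\sigma\colon E_0\to E_1$ of $C$ which is a $2$-term cosilting complex, and we may take $\sigma$ to be the minimal injective copresentation $\mu_C$ of $C$, since passing to the minimal complex in the homotopy class does not change $\Prod{\sigma}$; thus $\H{}(\sigma)=C$. By Proposition~\ref{Prop: ele cog consequences} the set $\closed{\sigma}=\Prod{\sigma}\cap\Zg{\Der{R}}$ is closed and $\Prod{\sigma}=\Prod{\closed{\sigma}}$; moreover $\Prod{\sigma}\subseteq\K{R}$, so $\closed{\sigma}\subseteq\ZgInt{R}$ and Lemma~\ref{Lem: homologypi} applies to its points.

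The goal then becomes the identity $\Z_C=\mathrm{Def}(C)\cap\Zg{R}$, where $\mathrm{Def}(C)$ is the smallest definable subcategory of $\Mod{R}$ containing $C$; the inclusion $\Z_C\subseteq\mathrm{Def}(C)\cap\Zg{R}$ is automatic since $\Prod{C}\subseteq\mathrm{Def}(C)$. For the converse I would use the assignment $M\mapsto\mu_M$ sending a module to a complex representing its minimal injective copresentation, together with two facts: first, $\H{}(\mu_M)=M$, and when $M$ is indecomposable pure-injective, $\mu_M$ lies in $\ZgInt{R}$ (the Garkusha--Prest correspondence, Lemma~\ref{Lem: homologypi} and its converse); second, $M\in\Prod{C}$ if and only if $\mu_M\in\Prod{\sigma}$. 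The ``only if'' is easy: if $C^{I}=M\oplus M'$, then taking minimal injective copresentations gives $\mu_{C^{I}}=\mu_M\oplus\mu_{M'}$, and $\mu_{C^{I}}$ is a direct summand of $\sigma^{I}=\mu_C^{I}$, so $\mu_M\in\Prod{\sigma}$; the ``if'' direction holds because $\H{}=\mathrm{H}^0$ commutes with products and direct summands, whence $\H{}(\Prod{\sigma})\subseteq\Prod{\H{}(\sigma)}=\Prod{C}$. Granting in addition that $\mu_M\in\mathrm{Def}(\sigma)$ whenever $M\in\mathrm{Def}(C)$, we conclude: for $M\in\mathrm{Def}(C)\cap\Zg{R}$ we get $\mu_M\in\mathrm{Def}(\sigma)\cap\Zg{\Der{R}}=\closed{\sigma}=\Prod{\sigma}$, hence $M=\H{}(\mu_M)\in\Prod{C}$; thus $\Z_C=\mathrm{Def}(C)\cap\Zg{R}$ is closed, which is exactly the assertion that $C$ is an elementary cogenerator.

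The main obstacle is this last input, namely that $M\mapsto\mu_M$ transports $\mathrm{Def}_{\Mod{R}}(C)$ into $\mathrm{Def}_{\Der{R}}(\sigma)$. One checks that it commutes with products up to summands (the product of the copresentations is an injective copresentation of the product, and $\mu_{\prod M_i}$ is a summand of $\prod\mu_{M_i}$) and that it respects purity, so that the definable-closure operations are preserved; the delicate point is the interaction of minimal injective copresentations with pure-exact sequences and with direct limits of modules. An alternative that sidesteps this is to pass to $\overline{R}=R/\Ann{C}$, which is again left noetherian: by Proposition~\ref{prop: cosilt is local cotilt}, $C$ is a cotilting $\overline{R}$-module, $\Zg{\overline{R}}$ is a closed subspace of $\Zg{R}$, and $\mathrm{Prod}_{R}(C)=\mathrm{Prod}_{\overline{R}}(C)$, so it suffices to know that cotilting modules are elementary cogenerators. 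Finally, one should keep track of the points $I[-1]\in\closed{\sigma}$, for $I$ indecomposable injective, on which $\H{}$ vanishes, and rule out a superdecomposable direct summand of $C$; neither affects the computation of $\Z_C$.
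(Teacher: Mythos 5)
Your main route stalls exactly at the step you flag as the ``main obstacle'', and that step is a genuine gap rather than a routine verification: nothing in the paper (and no standard argument) shows that $M\mapsto\mu_M$ carries $\mathrm{Def}_{\Mod{R}}(C)$ into $\mathrm{Def}_{\Der{R}}(\sigma)$ --- minimal injective copresentations are not functorial and do not interact in any evident way with pure-exact sequences or direct limits. Two further inputs you use are also not available: the ``converse'' of Lemma~\ref{Lem: homologypi} (that $\mu_M$ is pure-injective in $\Der{R}$ whenever $M\in\Zg{R}$) is not stated in the paper, where pure-injectivity of complexes $\mu_M$ is only obtained via Bongartz completion (Lemma~\ref{lem: completion}) in a rigid, artinian setting; and you may not take $\sigma=\mu_C$: a cosilting complex for $C$ has in general the form $\mu_C\oplus I[-1]$ with $I$ injective (cf.\ Lemmas~\ref{Lem: complexes determined by modules} and~\ref{lem: artin alg injectives}), the summand $I[-1]$ is not contractible, so $\mu_C$ need not be cosilting and $\Prod{\mu_C}\neq\Prod{\sigma}$ in general (already for the simple injective at vertex $1$ over the path algebra of $1\to 2$, where the cosilting complex is $I_1\oplus I_2[-1]$ while $\mu_C=I_1$). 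The latter point is repairable by working with the genuine $\sigma$, but the definability transport and the pure-injectivity of $\mu_M$ are not.

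Your alternative is the paper's first step --- pass to $\overline{R}=R/\Ann{C}$, which is again left noetherian, use Proposition~\ref{prop: cosilt is local cotilt} to make $C$ cotilting, and use that $\Mod{\overline{R}}$ is definable in $\Mod{R}$ --- but you then stop at ``it suffices to know that cotilting modules are elementary cogenerators'', which is not a known input here; it is precisely the remaining content of the corollary. The missing idea is that a cotilting module $C$, viewed as a stalk complex in degree zero, is itself isomorphic in $\Der{R}$ to a 2-term cosilting complex (it has injective dimension at most one), so Proposition~\ref{Prop: ele cog consequences}(1) applies directly to the object $C\in\Der{R}$ and yields that $\Prod{C}\cap\Zg{\Der{R}}$ is closed --- no comparison between $M$ and $\mu_M$ is ever needed. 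One then argues as in Garkusha--Prest (proof of Thm.~7.3(2)) that intersecting this closed subset of $\Zg{\Der{R}}$ with $\Zg{R}$ gives a closed subset of $\Zg{R}$, and observes that $\Prod{C}\cap\Zg{\Der{R}}\cap\Zg{R}=\Prod{C}\cap\Zg{R}$, since products and direct summands of stalk complexes of modules are computed in $\Mod{R}$; by Theorem~\ref{Thm: Zg Mod} this says $C$ is an elementary cogenerator, and your reduction then finishes the general cosilting case.
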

\begin{proof}
Since for any ideal $ I $ the subcategory $ \Mod{R/I} $ is definable in $ \Mod{R} $ and every cosilting module is cotilting in a subcategory of this form by Proposition \ref{prop: cosilt is local cotilt}, it is enough to show the statement for  cotilting modules.

Notice that a cotilting module $C$, considered as an object of $\Der{R}$ concentrated in degree zero, is isomorphic to a 2-term cosilting  complex. By Proposition~\ref{Prop: ele cog consequences}(1), we have that 
$\Prod{C}\cap\Zg{\Der{R}}$ is a closed subset of $\Zg{\Der{R}}$. We can then argue as in  \cite[proof of Thm.~7.3(2)]{GarkushaPrest:05} to conclude  that $ \Prod{C} \cap \Zg{\Der{R}} \cap \Zg{R}$  is closed in $\Zg{R}$.
 But, this is the same as  $ \Prod{C} \cap \Zg{R} $, and so our claim is proven.
Finally, by Theorem \ref{Thm: Zg Mod}, our claim means that $C$ is an elementary cogenerator.
\end{proof}

With Proposition~\ref{Prop: ele cog consequences}  we showed that the equivalence class of a given 2-term cosilting complex $\sigma$ in the derived category $\Der{R}$ of a noetherian ring $R$ is completely determined by the set $\closed{\sigma} := \Prod{\sigma}\cap \Zg{\Der{R}}$ of indecomposable pure-injective objects of $\Der{R}$.  
We now determine the closed subsets of $\Zg{\Der{R}}$ that arise in this way when the ring is left artinian.  We begin by noting that  $\K{R}$ is closed under products and summands (for any ring $R$, cf.~\cite[Lem.~2.6]{IyamaYang:18}), and so $\closed{\sigma}$ is contained in $ \ZgInt{R}$ for any 2-term cosilting complex $\sigma$.  

\begin{definition}\label{cosset} 
We say that an object $M$ in $\Der{R}$ is \textbf{rigid} if $\Hom{\Der{R}}(M, M[1]) = 0$. 
Furthermore, a set $\M$ of indecomposable objects in $\Der{R}$ is \textbf{rigid} if $\Hom{\Der{R}}(M, N[1]) = 0$ for all $M, N \in \M$.
If $\N \subseteq \ZgInt{R}$ is a rigid subset  that is maximal amongst all rigid  subsets  of $\ZgInt{R}$, we call $\N$ a \textbf{maximal rigid set}.\end{definition}

We want to show that $\closed{\sigma}$ is a maximal rigid set for any 2-term cosilting complex $\sigma$ over a left artinian ring.  In order to do this we  need the following result concerning the   subcategory $\C_\sigma$ of $\Mod{R}$ introduced in Notation~\ref{csigma}.

\begin{lemma}\label{cor:cSigmaCosilting}
Let $ \sigma : E_0 \to E_1 $ be a 2-term complex of injective modules in $ \Der{R} $, and assume that $ H^0(\sigma) $ pure-injective (in particular, this holds when $\sigma$ is  pure-injective). Then $ \mathcal{C}_\sigma \subseteq \Mod{R} $ is closed under filtered colimits. Moreover, if $ R $ is left artinian, then $ \mathcal{C}_\sigma $ is a cosilting torsion-free class in $ \Mod{R} $. 
\end{lemma}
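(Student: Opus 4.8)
The plan is to treat the two assertions separately. For the first, suppose $H^0(\sigma)$ is pure-injective; I want to show $\C_\sigma$ is closed under filtered colimits. Fix a directed system $(M_i)_{i\in I}$ in $\C_\sigma$ with colimit $M=\varinjlim M_i$. By Remark~\ref{Rem: rigid C sigma}(1), $M\in\C_\sigma$ is equivalent to $\Hom{\Der{R}}(\widetilde M, \sigma[1])=0$, where I write $\widetilde M$ for $M$ viewed as a stalk complex in degree $0$ (so that $\H{}(\widetilde M)=M$). Since $H^0(\sigma)$ is pure-injective, $\sigma[1]\in\Prod{\sigma[1]}$ is pure-injective as an object of $\Der{R}$ (one may also invoke that $\sigma$ itself is pure-injective, which is the situation covered by Theorem~\ref{Thm: cosilt summary}(1) when $\sigma$ is cosilting); in any case it suffices that $\sigma[1]$ is a pure-injective object of $\Der{R}$. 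The stalk complex $\widetilde M$ is the homotopy colimit of the $\widetilde{M_i}$, so $\Hom{\Der{R}}(\widetilde M,\sigma[1])$ fits into a Milnor sequence built from the groups $\Hom{\Der{R}}(\widetilde{M_i},\sigma[1])$ and $\Hom{\Der{R}}(\widetilde{M_i},\sigma)$; alternatively, as in the proof of \cite[Prop.~4.5]{Laking:20}, pure-injectivity of $\sigma[1]$ gives directly that $\Hom{\Der{R}}(\mathrm{hocolim}\,\widetilde{M_i},\sigma[1])=0$ whenever all $\Hom{\Der{R}}(\widetilde{M_i},\sigma[1])=0$. Either way, all the groups $\Hom{\Der{R}}(\widetilde{M_i},\sigma[1])$ vanish because $M_i\in\C_\sigma$, hence $\Hom{\Der{R}}(\widetilde M,\sigma[1])=0$ and $M\in\C_\sigma$.

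For the second assertion, assume $R$ is left artinian. I must show $\C_\sigma$ is a cosilting torsion-free class, i.e.\ that it is the torsionfree class of a torsion pair of finite type. By Proposition~\ref{prop: closure properties}(1) and the remark following Proposition~\ref{prop: closure properties}, it suffices to show that $\C_\sigma$ is closed under submodules, extensions, products and direct limits. Closure under products is immediate from the definition of $\C_\sigma$ (a product of epimorphisms of the relevant Hom-groups is again epi, using that $\Hom{R}(-,N_k)$ commutes with products in the first variable). Closure under direct limits is the first part of the lemma, since over a noetherian (in particular artinian) ring every direct limit is a filtered colimit of the given system. For closure under submodules and extensions I would use Remark~\ref{Rem: rigid C sigma}(2): over a left artinian ring, passing to the minimal injective copresentation $\mu=\mu_{H^0(\sigma)}$ of $H^0(\sigma)$ does not change $\C_\sigma$ up to the relevant identification (here one needs that $\sigma$ and $\mu$ have the same kernel $H^0(\sigma)$ and that dropping injective summands of the copresentation affects neither $\C_\sigma$ nor the pure-injectivity hypothesis), so that $M\in\C_\sigma$ iff every submodule of $M$ lies in ${}^{\perp_1}H^0(\sigma)$. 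This characterisation is visibly closed under submodules. For extensions, given $0\to M'\to M\to M''\to 0$ with $M',M''\in\C_\sigma$ and a submodule $U\le M$, the module $U$ sits in a short exact sequence $0\to U\cap M'\to U\to U/(U\cap M')\to 0$ with $U\cap M'\le M'$ and $U/(U\cap M')\le M''$, so both ends lie in ${}^{\perp_1}H^0(\sigma)$; since ${}^{\perp_1}H^0(\sigma)$ is closed under extensions, $U\in{}^{\perp_1}H^0(\sigma)$, whence $M\in\C_\sigma$.

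Finally I would assemble these: $\C_\sigma$ is closed under submodules, extensions, products and direct limits, so by Proposition~\ref{prop: closure properties}(1) it is a torsionfree class, and being closed under direct limits it is a torsionfree class of finite type, i.e.\ a cosilting torsion-free class in the sense following Proposition~\ref{ft}.

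The main obstacle I anticipate is the reduction from the arbitrary $2$-term complex $\sigma$ of injectives to the minimal injective copresentation of $H^0(\sigma)$ needed to invoke Remark~\ref{Rem: rigid C sigma}(2): one must check carefully that this replacement preserves both $\C_\sigma$ and the pure-injectivity of $H^0(\sigma)$, and that it is legitimate over a left artinian ring (where minimal injective copresentations exist). A secondary technical point is the homotopy-colimit / pure-injectivity argument in the first part — making sure the stalk complex $\varinjlim\widetilde{M_i}$ really is the homotopy colimit of the $\widetilde{M_i}$ in $\Der{R}$ and that the vanishing of the $\mathrm{Hom}$-groups passes to the (ho)colimit using only pure-injectivity of $\sigma[1]$, exactly as in \cite[Prop.~4.5]{Laking:20}.
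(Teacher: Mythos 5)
There are two genuine gaps. The first is in your argument for closure under filtered colimits: it hinges on $\sigma[1]$ being a pure-injective \emph{object of} $\Der{R}$, and you justify this by asserting that pure-injectivity of $\H{}(\sigma)$ implies pure-injectivity of $\sigma$, or by invoking Theorem~\ref{Thm: cosilt summary}(1), which requires $\sigma$ to be cosilting --- not a hypothesis here, and assuming it would be circular in the places where the lemma is applied (e.g.\ Lemma~\ref{lem: completion}). The implication available in the paper goes the other way: pure-injectivity of $\sigma$ gives pure-injectivity of $\H{}(\sigma)$ (Lemma~\ref{Lem: homologypi}), and the lemma is deliberately stated under the strictly weaker hypothesis on $\H{}(\sigma)$ only. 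So the homotopy-colimit step is not justified as written. It can be repaired within your framework: in the Milnor sequence for $\mathrm{hocolim}\,M_i$ the obstruction is $\varprojlim^1$ of the system $\Hom{\Der{R}}(M_i,\sigma)\cong\Hom{R}(M_i,\H{}(\sigma))$, and this vanishes because $\H{}(\sigma)$ is pure-injective (apply $\Hom{R}(-,\H{}(\sigma))$ to the pure-exact sequence $0\to\bigoplus M_i\to\bigoplus M_i\to\varinjlim M_i\to 0$); this is essentially the argument of \cite[Lemma 4.2]{AngeleriSentieri:23+}, which is what the paper cites. But your proof, as it stands, uses a hypothesis you do not have.

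The second gap is in the artinian case: your route needs closure of $\C_\sigma$ under products, and your justification --- that $\Hom{R}(-,N_k)$ commutes with products in the first variable --- is false (contravariant Hom turns coproducts into products, not products into products). Product-closure of $\C_\sigma$ is not immediate from the definition; in the paper it emerges only as part of the conclusion, by a different route: one checks that $\C_\sigma\cap\mod{R}$ is closed under submodules and extensions, hence is a torsionfree class in $\mod{R}$ by Proposition~\ref{prop: closure properties}(2), notes that $\C_\sigma=\varinjlim(\C_\sigma\cap\mod{R})$ using the first part, and then invokes the bijection between (a) and (d) in Theorem~\ref{Thm: cosilt bij}, which supplies the product- and extension-closure of the limit closure. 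Your argument has no substitute for this step. A smaller inaccuracy: $\C_\sigma$ need not coincide with $\C_{\mu_{\H{}(\sigma)}}$; by Lemma~\ref{Lem: complexes determined by modules}(1) one has $\C_\sigma=\C_{\mu_{\H{}(\sigma)}}\cap{}^{\perp_0}I$ where $I[-1]$ is the injective summand of $\sigma$, so your reduction via Remark~\ref{Rem: rigid C sigma}(2) needs this extra factor (harmless for submodule/extension closure, which in any case admits a direct diagram-chase proof without passing to the minimal copresentation).
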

\begin{proof}
Denote by $M= \H{}(\sigma) $  the zeroth cohomology of $ \sigma $, and assume that $M$ is pure-injective. This is always true when
 $\sigma$ is a pure-injective complex by Lemma~\ref{Lem: homologypi}.

The same argument as in the proof of \cite[Lemma 4.2]{AngeleriSentieri:23+} shows that $\C_\sigma$ is closed under direct limits. 

Now, if we assume that $ R $ left artinian, we have that a subcategory of  $ \mod{R} $ is a torsionfree class if and only if it is closed under submodules and extensions, cf.~Proposition~\ref{prop: closure properties}. Since  $ \mathcal{C}_\sigma  \cap \mod{R} $ has these properties and 
  $ \mathcal{C}_\sigma $ coincides with its direct limit closure, we infer from the bijection between (a) and (d) in Theorem~\ref{Thm: cosilt bij}  that 
 $ \mathcal{C}_\sigma $ is a cosilting torsionfree class. \end{proof}

We have just seen that the subcategory $\C_\sigma$ given by an object $\sigma$ in $\ZgInt{A}$ over a left artinian ring $A$ is a torsionfree class. In the next two results, we collect some consequences.

\begin{lemma}\label{lem:orthInProdRigSys}
Let $ R $ be a left artinian ring and let $ \mathcal{N} \subseteq \ZgInt{R}$ be a rigid set in $ \Der{R} $. Then for all $ \sigma_1, \sigma_2 \in \Prod{\mathcal{ N }} $ we have $ \Hom{ \Der{R} }(\sigma_1, \sigma_2[1]) = 0 $.
\end{lemma}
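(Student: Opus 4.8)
Let $R$ be a left artinian ring and let $\mathcal{N} \subseteq \ZgInt{R}$ be a rigid set in $\Der{R}$. Then for all $\sigma_1, \sigma_2 \in \Prod{\mathcal{N}}$ we have $\Hom{\Der{R}}(\sigma_1, \sigma_2[1]) = 0$.

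The plan is as follows.

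First reduce to products over $\mathcal{N}$ itself. Since $\Hom{\Der{R}}(-, -[1])$ is additive in both variables and kills direct summands on both sides, it suffices to prove $\Hom{\Der{R}}(\prod_{i} \mu_i, (\prod_{j} \nu_j)[1]) = 0$ for arbitrary families $(\mu_i), (\nu_j)$ of objects in $\mathcal{N}$. Now I want to push the products out of the Hom. Pushing the product in the second variable out is easy: $[1]$ commutes with products and $\Hom{\Der{R}}(X, -)$ sends products to products, so the target becomes $\prod_j \Hom{\Der{R}}(\prod_i \mu_i, \nu_j[1])$, and it is enough to treat the case of a single $\nu = \nu_j \in \mathcal{N}$ in the second slot.

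The remaining, and main, point is the first variable: I must show $\Hom{\Der{R}}(\prod_i \mu_i, \nu[1]) = 0$ knowing $\Hom{\Der{R}}(\mu_i, \nu[1]) = 0$ for each $i$. This is where the torsionfree class $\C_\nu$ enters. By Remark~\ref{Rem: rigid C sigma}(1), for any $\tau \in \K{R}$ we have $\Hom{\Der{R}}(\tau, \nu[1]) = 0$ if and only if $\H{}(\tau) \in \C_\nu$. Since each $\mu_i \in \ZgInt{R}$ is pure-injective, Lemma~\ref{cor:cSigmaCosilting} applies to $\nu$ (which is pure-injective, being in $\ZgInt{R}$), so $\C_\nu$ is a cosilting torsionfree class in $\Mod{R}$; in particular it is closed under products. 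The rigidity hypothesis gives $\H{}(\mu_i) \in \C_\nu$ for all $i$, hence $\prod_i \H{}(\mu_i) \in \C_\nu$. It remains to identify $\H{}(\prod_i \mu_i)$ with $\prod_i \H{}(\mu_i)$: cohomology commutes with products in a module category (products are exact in $\Mod{R}$), and $\prod_i \mu_i$ is computed degreewise as a complex of injectives, so indeed $\H{}(\prod_i \mu_i) \cong \prod_i \H{}(\mu_i) \in \C_\nu$. Applying Remark~\ref{Rem: rigid C sigma}(1) once more (noting $\prod_i \mu_i \in \K{R}$, since $\K{R}$ is closed under products), we conclude $\Hom{\Der{R}}(\prod_i \mu_i, \nu[1]) = 0$, which finishes the proof.

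I expect the only genuinely delicate step to be the interchange of $\H{}(-)$ with the product $\prod_i \mu_i$ and the verification that $\prod_i \mu_i$ still lies in $\K{R}$ so that Remark~\ref{Rem: rigid C sigma}(1) is applicable; both are routine given that $\K{R}$ is closed under products (as recorded just before Definition~\ref{cosset}, citing \cite[Lem.~2.6]{IyamaYang:18}) and that products are exact in $\Mod{R}$. Everything else is formal manipulation of Hom-functors together with the fact, supplied by Lemma~\ref{cor:cSigmaCosilting}, that $\C_\nu$ is a torsionfree class and hence product-closed.
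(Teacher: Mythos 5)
Your proof is correct and follows essentially the same route as the paper: reduce the second variable to a single $\nu\in\mathcal N$ via the product–Hom isomorphism, then handle the first variable by translating rigidity into $\H{}(\mu_i)\in\C_\nu$ (Remark~\ref{Rem: rigid C sigma}(1)), using that $\C_\nu$ is a torsionfree class closed under products (Lemma~\ref{cor:cSigmaCosilting}, which is where the left artinian hypothesis and pure-injectivity of $\nu$ enter), and that $\H{}$ commutes with products of complexes of injectives. The paper's proof is exactly this argument, phrased with $\sigma_1$ a direct summand of $\prod_j\omega_j$ and using closure of $\C_\omega$ under summands instead of your summand reduction on Hom groups.
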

\begin{proof}

It is enough to show that, for $ \sigma \in \Prod{\mathcal{N}} $ and $ \omega \in \mathcal{N} $, we have $ \Hom{\Der{R}}(\sigma, \omega[1] ) = 0 $.  By Remark \ref{Rem: rigid C sigma} this amounts to showing that
$ \H{}(\sigma) \in \mathcal{C}_{\omega} $. 
Since $ \omega $ is pure-injective, we know from  Lemma \ref{cor:cSigmaCosilting} that $ \mathcal{C}_{\omega} $ is closed under products. 
Now assume $ \sigma $ is a direct summand of $ \prod_j \omega_j $ with $ \omega_j \in \mathcal{N} $. By hypothesis $ \Hom{\Der{R}}(\omega_j, \omega[1]) = 0 $. Thus $ \H{}(\prod_j\omega_j) = \prod_j \H{}(\omega_j) \in \mathcal{C}_\omega $, and consequently $ \H{}(\sigma) \in \mathcal{C}_{\omega} $.
\end{proof}

\begin{lemma}\label{lem: completion}
Let $\sigma$ be an object in $\K{R}$.  If $\sigma$ is rigid and $\C_\sigma$ is a torsion-free class, then there exists an object $\rho$ in $\K{R}$ such that $\sigma\oplus\rho$ is a 2-term cosilting complex and $\C_\sigma = \C_{\sigma\oplus\rho}$.  
\\
In particular, if the ring $R$ is left artinian, then $\sigma$ is rigid and $ H^0(\sigma) $ is pure-injective if and only if $\sigma$ is a direct summand of a 2-term cosilting complex.
\end{lemma}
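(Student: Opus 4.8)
The plan is to reduce the first assertion to producing a single $2$-term cosilting complex $\tau$ with $\C_\tau=\C_\sigma$, and then to set $\rho:=\tau$. Granting such a $\tau$, I would verify that $\sigma\oplus\tau$ is a $2$-term cosilting complex via the criterion of Proposition~\ref{prop: cosilting definition}(2). The equality $\thick{\Prod{\sigma\oplus\tau}}=\Kb{R}$ holds because $\Prod{\sigma\oplus\tau}\supseteq\Prod{\tau}$ gives $\thick{\Prod{\sigma\oplus\tau}}\supseteq\thick{\Prod{\tau}}=\Kb{R}$, while $\Prod{\sigma\oplus\tau}\subseteq\K{R}\subseteq\Kb{R}$ (using that $\K{R}$ is closed under products and summands) gives the reverse containment. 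For the vanishing of $\Hom{\Der{R}}((\sigma\oplus\tau)^I,(\sigma\oplus\tau)[1])$, I would write $(\sigma\oplus\tau)^I=\sigma^I\oplus\tau^I$ and split the group into the four summands $\Hom{\Der{R}}(\sigma^I,\sigma[1])$, $\Hom{\Der{R}}(\sigma^I,\tau[1])$, $\Hom{\Der{R}}(\tau^I,\sigma[1])$, $\Hom{\Der{R}}(\tau^I,\tau[1])$. The last is zero since $\tau$ is cosilting, and by Remark~\ref{Rem: rigid C sigma}(1) the first three vanish precisely because $\H{}(\sigma^I)=\H{}(\sigma)^I$ lies in $\C_\sigma$ and in $\C_\tau$, and $\H{}(\tau^I)=\H{}(\tau)^I$ lies in $\C_\sigma$. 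All of these hold: $\H{}(\sigma)\in\C_\sigma$ because $\sigma$ is rigid, $\H{}(\tau)\in\C_\tau$ because a cosilting complex is rigid, $\C_\sigma=\C_\tau$, and a torsionfree class is closed under products. Finally $\C_{\sigma\oplus\tau}=\C_\sigma\cap\C_\tau=\C_\sigma$.

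To produce $\tau$ I would take the $2$-term cosilting complex associated by Theorem~\ref{Thm: cosilt bij} to the torsion pair $t=({}^{\perp_0}\C_\sigma,\C_\sigma)$ of $\Mod{R}$ whose torsionfree class is $\C_\sigma$ — this requires $t$ to be of finite type, which is the only non-formal point, addressed below. For such a $\tau$ one has $\C_\tau=\C_\sigma$: for a module $X$, $X\in\C_\tau$ iff $\Hom{\Der{R}}(X,\tau[1])=0$ (cf.~Remark~\ref{Rem: rigid C sigma}(1), computing via cochain maps into the complex of injectives $\tau$) iff $X$ lies in the coaisle of $\tstr{\tau}$, which by construction is the right HRS-tilt of the standard t-structure at $t$; and by Definition~\ref{HRS} that coaisle meets the stalk complexes exactly in the torsionfree class of $t$, i.e.~in $\C_\sigma$.

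The crux is therefore to see that the torsionfree class $\C_\sigma$ is of finite type, i.e.~closed under direct limits. Being a torsionfree class, $\C_\sigma$ is closed under products, and since $\sigma$ is rigid we have $\H{}(\sigma)\in\C_\sigma$ (Remark~\ref{Rem: rigid C sigma}(1)); hence $\H{}(\sigma^I)=\H{}(\sigma)^I\in\C_\sigma$ for every set $I$, equivalently $\Hom{\Der{R}}(\sigma^I,\sigma[1])=0$ for all $I$ (Remark~\ref{Rem: rigid C sigma}(1) again). At this point I would invoke the purity input that a complex in $\K{R}$ with this property is pure-injective in $\Der{R}$ — this is the ingredient underlying Theorem~\ref{Thm: cosilt summary}(1) — so that $\H{}(\sigma)$ is a pure-injective module (cf.~Lemma~\ref{Lem: homologypi} and \cite[Prop.~7.1]{GarkushaPrest:05}). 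By Lemma~\ref{cor:cSigmaCosilting}, $\C_\sigma$ is then closed under filtered colimits, and a torsionfree class with this property is exactly the torsionfree class of a torsion pair of finite type (Proposition~\ref{prop: closure properties} and the discussion following it), hence of a cosilting torsion pair by Proposition~\ref{ft}. I expect this ``precosilting implies pure-injective'' step (or, alternatively, a direct proof that $\Hom{\Der{R}}(\sigma^I,\sigma[1])=0$ for all $I$ already forces $\C_\sigma$ to be closed under direct limits — a mild strengthening of Lemma~\ref{cor:cSigmaCosilting}) to be the main obstacle. Note, however, that in the applications — in particular whenever $R$ is left artinian and $\H{}(\sigma)$ is pure-injective — Lemma~\ref{cor:cSigmaCosilting} already yields that $\C_\sigma$ is a \emph{cosilting} torsionfree class, so this step is then unnecessary.

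For the final equivalence I would argue as follows, with $R$ left artinian. If $\sigma$ is a direct summand of a $2$-term cosilting complex $\omega$, then $\Hom{\Der{R}}(\sigma,\sigma[1])$ is a direct summand of $\Hom{\Der{R}}(\omega,\omega[1])=0$, so $\sigma$ is rigid; and $\omega$ is pure-injective by Theorem~\ref{Thm: cosilt summary}(1), so its direct summand $\sigma$ is pure-injective, whence so is $\H{}(\sigma)$ by \cite[Prop.~7.1]{GarkushaPrest:05}. Conversely, if $\sigma$ is rigid and $\H{}(\sigma)$ is pure-injective, then $\C_\sigma$ is a cosilting torsionfree class by Lemma~\ref{cor:cSigmaCosilting}, so the first part of the lemma applies and provides $\rho\in\K{R}$ with $\sigma\oplus\rho$ a $2$-term cosilting complex; in particular $\sigma$ is a direct summand of one.
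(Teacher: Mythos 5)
Your reduction is clean in its formal parts: granted a $2$-term cosilting complex $\tau$ with $\C_\tau=\C_\sigma$, the verification that $\sigma\oplus\tau$ is cosilting and that $\C_{\sigma\oplus\tau}=\C_\sigma$ is correct, and your treatment of the ``in particular'' statement (left artinian $R$, $\H{}(\sigma)$ pure-injective) goes through, since there Lemma~\ref{cor:cSigmaCosilting} really does give that $\C_\sigma$ is a cosilting torsionfree class, so the torsion pair $({}^{\perp_0}\C_\sigma,\C_\sigma)$ is of finite type and Theorem~\ref{Thm: cosilt bij}/Proposition~\ref{ft} produce your $\tau$ with coaisle meeting the stalk complexes exactly in $\C_\sigma$.

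The gap is exactly where you locate it, and it is not a removable one in your framework: the first assertion of the lemma is stated for an arbitrary ring $R$ under the sole hypotheses that $\sigma$ is rigid and $\C_\sigma$ is a torsionfree class, whereas your construction of $\tau$ needs $\C_\sigma$ to be of \emph{finite type}. Your proposed bridge --- that $\Hom{\Der{R}}(\sigma^I,\sigma[1])=0$ for all sets $I$ forces $\sigma$ (hence $\H{}(\sigma)$) to be pure-injective --- is not available: Theorem~\ref{Thm: cosilt summary}(1) and its sources prove pure-injectivity for genuine cosilting complexes, and the proofs use the full cosilting hypothesis (the t-structure $({}^{\perp_{\leq 0}}\sigma,{}^{\perp_{>0}}\sigma)$ with its definable coaisle and Grothendieck heart), not just product-rigidity; for ``partial'' (co)silting objects such a statement is an analogue of Bazzoni's theorem on cotilting modules and is not known in this generality, so it cannot be quoted as ``the ingredient underlying'' that theorem. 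The same applies to your fallback of strengthening Lemma~\ref{cor:cSigmaCosilting} to deduce closure of $\C_\sigma$ under direct limits from product-rigidity alone. The paper avoids this entirely: the first assertion is the Bongartz-type completion of Zhang--Wei \cite[Prop.~3.10]{ZhangWei:17}, which is proved by an explicit co-Bongartz construction of $\rho$ and needs no finite-type or pure-injectivity input; the second assertion is then deduced from Lemma~\ref{cor:cSigmaCosilting} exactly as you do. So your argument proves the lemma only in the finite-type situation (which happens to cover the paper's applications and the ``in particular'' clause), but not the first assertion as stated; to repair it you would either have to prove the pure-injectivity/finite-type claim or revert to the cited Bongartz completion.
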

\begin{proof}
The first part of the proposition states the existence of a Bongartz completion and is a rephrasing of {\cite[Prop.~3.10]{ZhangWei:17}}.  The second part follows from Lemma \ref{cor:cSigmaCosilting}.
\end{proof}

We are now ready to prove the fundamental correspondence between maximal rigid sets and 2-term cosilting complexes. In essence this result says that we can study the indecomposable objects in $\Prod{\sigma}$ in place of  cosilting complexes $\sigma$.

\begin{theorem}
\label{thm:rigidCosiltBij}
Let $ A $ be a left artinian ring. The assignments
\begin{align*}
\sigma & \mapsto \mathcal{N}_{\sigma} \\
\mathcal{N} & \mapsto \sigma_{\mathcal{N}} := \prod_{\omega \in \mathcal{N}} \omega
\end{align*}
are mutually inverse bijections between:
\begin{itemize}
\item equivalence classes of 2-term cosilting complexes in $ \Der{A} $; and
\item maximal rigid sets  in $\ZgInt{A}$.
\end{itemize}
\end{theorem}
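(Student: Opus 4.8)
The plan is to show the two assignments are well-defined and mutually inverse, using Proposition~\ref{Prop: ele cog consequences} as the main engine. First I would check that $\sigma\mapsto\closed{\sigma}$ lands in the target. By Proposition~\ref{Prop: ele cog consequences}(1) the set $\closed{\sigma}$ is closed in $\Zg{\Der{A}}$, and as remarked just before Definition~\ref{cosset} it is contained in $\ZgInt{A}$; that $\closed{\sigma}$ is rigid is immediate from Theorem~\ref{Thm: cosilt summary}(2), since $\Prod{\sigma}=\V\cap\V^{\perp_1}$ forces $\Hom{\Der{A}}(\mu,\nu[1])=0$ for all $\mu,\nu\in\closed{\sigma}$. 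For \emph{maximality}: suppose $\omega\in\ZgInt{A}$ is rigid over $\closed{\sigma}$, i.e.\ $\Hom{\Der{A}}(\omega,\mu[1])=\Hom{\Der{A}}(\mu,\omega[1])=0$ for all $\mu\in\closed{\sigma}$. The second family of vanishings, together with Proposition~\ref{Prop: ele cog consequences}(4)--(5) giving $\Prod{\sigma}=\Prod{\closed{\sigma}}$, yields $\Hom{\Der{A}}(\sigma,\omega[1])=0$; by Remark~\ref{Rem: rigid C sigma}(1) this says $\H{}(\omega)\in\C_\sigma=\Cogen{\H{}(\sigma)}$, so $\omega\in{}^{\perp_1}\sigma\cap\V$ (the inclusion $\omega\in\V={}^{\perp_{>0}}\sigma$ coming from the first family of vanishings and $\sigma\in\Prod{\sigma}$, since $\omega$ is a $[0,1]$-complex). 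Hence $\omega\in\V\cap\V^{\perp_1}=\Prod{\sigma}$ by Theorem~\ref{Thm: cosilt summary}(2), so $\omega\in\closed{\sigma}$ and $\closed{\sigma}$ is maximal rigid.

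Next I would show $\N\mapsto\sigma_\N:=\prod_{\omega\in\N}\omega$ lands in the target, i.e.\ that $\sigma_\N$ is a 2-term cosilting complex when $\N$ is maximal rigid. Since $\K{A}$ is closed under products, $\sigma_\N\in\K{A}$. By Lemma~\ref{lem:orthInProdRigSys}, $\Hom{\Der{A}}(\sigma_1,\sigma_2[1])=0$ for all $\sigma_1,\sigma_2\in\Prod{\N}$; in particular $\sigma_\N$ is rigid. Each $\omega\in\N$ is pure-injective with $\H{}(\omega)$ pure-injective (Lemma~\ref{Lem: homologypi}), hence so is $\sigma_\N$ (products of pure-injectives are pure-injective, and $\H{}$ commutes with products). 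By Lemma~\ref{lem: completion}, $\sigma_\N$ is then a direct summand of some 2-term cosilting complex $\tau$; write $\tau=\sigma_\N\oplus\rho$ with $\C_{\sigma_\N}=\C_\tau$. Now $\closed{\tau}$ is a rigid subset of $\ZgInt{A}$ by the first paragraph, and it contains $\N$: indeed each $\omega\in\N$ is an indecomposable summand of the product $\sigma_\N$, hence lies in $\Prod{\tau}\cap\Zg{\Der{A}}=\closed{\tau}$. By maximality of $\N$ we get $\N=\closed{\tau}$, so by Proposition~\ref{Prop: ele cog consequences}(4) $\tau$ is equivalent to $\prod_{\mu\in\closed{\tau}}\mu=\sigma_\N$; in particular $\sigma_\N$ is a 2-term cosilting complex.

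Finally I would verify the two composites are the identity. Starting from a 2-term cosilting complex $\sigma$: $\sigma_{\closed{\sigma}}=\prod_{\mu\in\closed{\sigma}}\mu=\bar\sigma$ is equivalent to $\sigma$ by Proposition~\ref{Prop: ele cog consequences}(4), so the composite is the identity on equivalence classes. Starting from a maximal rigid set $\N$: the previous paragraph already showed $\closed{\sigma_\N}=\N$. Hence the two maps are mutually inverse bijections. The only genuinely delicate point — the main obstacle — is the maximality argument in the first paragraph, specifically extracting $\Hom{\Der{A}}(\sigma,\omega[1])=0$ from the pointwise vanishing $\Hom{\Der{A}}(\mu,\omega[1])=0$ over $\mu\in\closed{\sigma}$; this is exactly where one needs $\Prod{\sigma}=\Prod{\closed{\sigma}}$ (Proposition~\ref{Prop: ele cog consequences}(4)) rather than merely $\sigma\in\Prod{\closed{\sigma}}$, since a single product summand would not a priori control $\Hom(\sigma,-)$ unless $\sigma$ is literally a product of members of $\closed{\sigma}$, which it need not be before invoking (4).
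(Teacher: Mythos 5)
Your second and third paragraphs (well-definedness of $\N\mapsto\sigma_\N$ via Lemma~\ref{lem:orthInProdRigSys} and Lemma~\ref{lem: completion}, and the mutual-inverse check via Proposition~\ref{Prop: ele cog consequences}(4)) follow the paper's argument and are fine. The gap is in your maximality argument for $\closed{\sigma}$, and it is twofold. First, the step you yourself flag as delicate is justified incorrectly: knowing $\Hom{\Der{A}}(\mu,\omega[1])=0$ for every $\mu\in\closed{\sigma}$ does not control morphisms out of a product, because contravariant $\Hom$ does not turn infinite products into anything computable factorwise; so replacing $\sigma$ by the ``literal product'' $\bar\sigma=\prod_{\mu\in\closed{\sigma}}\mu$ via Proposition~\ref{Prop: ele cog consequences}(4) resolves nothing. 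The correct tool is Lemma~\ref{lem:orthInProdRigSys} applied to the enlarged rigid set $\closed{\sigma}\cup\{\omega\}$, whose proof is genuinely module-theoretic ($\H{}$ commutes with products and $\C_\omega$ is closed under products by Lemma~\ref{cor:cSigmaCosilting}); note also that your appeal to Remark~\ref{Rem: rigid C sigma}(1) reverses the direction: $\Hom{\Der{A}}(\sigma,\omega[1])=0$ says $\H{}(\sigma)\in\C_\omega$, not $\H{}(\omega)\in\C_\sigma$.

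Second, and more seriously, even once you have $\Hom{\Der{A}}(\xi,\omega[1])=0$ for all $\xi\in\Prod{\sigma}$ together with $\omega\in\V={}^{\perp_{>0}}\sigma$, you cannot conclude $\omega\in\V^{\perp_1}$ and then invoke Theorem~\ref{Thm: cosilt summary}(2): what you have is orthogonality against $\Prod{\sigma}$ (equivalently against $\sigma$), whereas $\V^{\perp_1}$ requires orthogonality against the entire coaisle, and ${}^{\perp_1}\sigma$ (which contains all of $\V$, so carries no information here) is not $\V^{\perp_1}$. Bridging exactly this gap is where the paper does its real work: by \cite[Lem.~2.12]{ZhangWei:17} there are triangles $\omega\to\xi_0\to\omega_1\to\omega[1]$ and $\omega_1\to\xi_1\to\omega_2\to\omega_1[1]$ with $\xi_0,\xi_1\in\Prod{\sigma}$ and $\omega_2\in\D^{\geq 0}$; then $\Hom{\Der{A}}(\xi_1,\omega[1])=0$ (this is where Lemma~\ref{lem:orthInProdRigSys} is really needed) and $\Hom{\Der{A}}(\omega_2,\omega[2])=0$ (degree reasons, as $\omega\in\K{A}$) force $\Hom{\Der{A}}(\omega_1,\omega[1])=0$, so the first triangle splits and $\omega$ is a direct summand of $\xi_0\in\Prod{\sigma}$, hence lies in $\closed{\sigma}$. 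Your proposal contains no substitute for this Bongartz-type coresolution argument, so the maximality of $\closed{\sigma}$ — and with it the claim that $\sigma\mapsto\closed{\sigma}$ lands in the set of maximal rigid sets — remains unproved.
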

\begin{proof}
We show that  $\closed{\sigma}$ is a maximal rigid set for every 2-term cosilting complex $\sigma$. 
Recall that $\Hom{\Der{A}}(\sigma^I, \sigma[1]) = 0$ for any set $I$. It follows that $ \Hom{ \Der{A} }(\nu_1, \nu_2[1]) = 0 $ for all $ \nu_1, \nu_2 \in \mathcal{N}_\sigma $, showing that $ \mathcal{N}_\sigma$ is rigid.
It remains to show maximality. Let $ \mathcal{L} \subseteq \ZgInt{A}$ be a rigid  set of $\Zg{\Der{A}}$ with $\closed{\sigma} \subseteq \mathcal{L} $. 
Pick $ \lambda \in \mathcal{L}  $.  We have that $ \Hom{\Der{A}}(\lambda, \mu[1]) = 0 $ for all $\mu \in \closed{\sigma}$, and since these are 2-term complexes of injectives, $ \lambda \in {}^{\perp_{>0}} \closed{\sigma} = {}^{\perp_{>0}} \sigma $, where the last equality holds  by Proposition \ref{Prop: ele cog consequences}(5). 
Now we can apply \cite[Lem.~2.12]{ZhangWei:17} to find two triangles in $ \Der{A} $
\[
\begin{tikzcd}[row sep=small]
\lambda = \lambda_0 \arrow[r] & \xi_0 \arrow[r] & \lambda_1 \arrow[r] & \lambda_0[1] \\
\lambda_1 \arrow[r] & \xi_1 \arrow[r] & \lambda_2 \arrow[r] & \lambda_1[1]
\end{tikzcd}
\]
with $ \xi_0, \xi_1 \in \Prod{\sigma} = \Prod{\mathcal{N}_\sigma} $ and $ \lambda_2 \in \mathrm{D}^{\ge 0} $.
Applying $ \Hom{ \Der{A} }(-, \lambda) $ to the second triangle we obtain the exact sequence
\[
\Hom{ \Der{A} }(\xi_1, \lambda[1]) \to \Hom{ \Der{A} }(\lambda_1, \lambda[1]) \to \Hom{ \Der{A} }(\lambda_2, \lambda[2]).
\]
Note that $ \Hom{ \Der{A} }(\xi_1, \lambda[1]) = 0 $ by Lemma \ref{lem:orthInProdRigSys}, because $ \xi_1 \in \Prod{\mathcal{N}_\sigma} \subseteq \Prod{\mathcal{L}} $. Moreover, $ \Hom{ \Der{A} }(\lambda_2, \lambda[2]) = \Hom{ \mathrm{K}^b(\mathrm{Inj}(A)) }( \lambda_2, \lambda[2] ) = 0 $ since $ \lambda \in \K{A}$.

It follows that $ \Hom{ \Der{A} }(\lambda_1, \lambda[1]) = 0 $, so that the first triangle must split and $ \lambda $ is a summand of $ \xi_0 \in \Prod{\mathcal{N}_\sigma} $. By Proposition \ref{Prop: ele cog consequences}, the set $\closed{\sigma} \subseteq \ZgInt{A}$ is a closed subset of $\Zg{\Der{A}}$, thus it is of the form $\closed{\sigma}=\C\cap \Zg{\Der{A}}$ for a definable subcategory $\C$ of $\Der{A}$. Since $\C$ is closed under direct products and direct summands, we infer that also  $\lambda$ belongs to  $\closed{\sigma}$.
This shows that
$\mathcal{L} = \mathcal{N}_\sigma $, and the maximality of $\closed{\sigma}$ is verified.

Next, we verify that $\sigma=\sigma_\N$ is a cosilting complex for every cosilting closed set $\N$.  By Lemma \ref{lem:orthInProdRigSys}, we have that $\Hom{\Der{A}}(\sigma^I, \sigma[1]) = 0$ for any set $I$.  In particular, the complex $\sigma_\N$ is rigid and so, by Lemma \ref{lem: completion}, there exists $\rho \in \K{A}$ such that $\gamma = \sigma_\N\oplus\rho$ is a 2-term cosilting complex.  Moreover $\N \subseteq \closed{\gamma}$ and so $\N = \closed{\gamma}$ by maximality of $\N$.  By Proposition \ref{Prop: ele cog consequences}(4), we have that $\Prod{\closed{\gamma}} = \Prod{\gamma}$ and so $\Prod{\sigma_\N} = \Prod{\N} = \Prod{\closed{\gamma}} = \Prod{\gamma}$.  It follows that $\sigma_\N$ satisfies the conditions in Proposition \ref{prop: cosilting definition}(2) and so it is a 2-term cosilting complex.  

So the assignments $\sigma \mapsto \closed{\sigma}$ and $\N\mapsto\sigma_\N$ are well-defined, and they are mutually inverse bijections as a consequence of Proposition \ref{Prop: ele cog consequences}(4).
\end{proof}

Combining the theorem with Proposition \ref{Prop: ele cog consequences}, we obtain the following corollary.

\begin{corollary}\label{cor: cosilting sets are closed}
Let $A$ be a left artinian ring.  Every maximal rigid set $\N$ in $\Der{A}$ is a closed subset of the Ziegler spectrum $\Zg{\Der{A}}$ of $\Der{A}$ and $({}^{\perp_{\leq 0}}\N, {}^{\perp_{> 0}}\N)$ is a t-structure in $\Der{A}$.
\end{corollary}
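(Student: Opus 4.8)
The plan is to reduce everything to the two preceding results, which have already done all the work. First I would invoke Theorem~\ref{thm:rigidCosiltBij}: since $A$ is left artinian and $\N$ is a maximal rigid set in $\ZgInt{A}$ (this is how maximal rigid sets were defined in Definition~\ref{cosset}), the inverse bijection of the theorem tells us that $\N = \closed{\sigma}$ for the $2$-term cosilting complex $\sigma := \sigma_\N = \prod_{\omega\in\N}\omega$. Once this identification is in place, both assertions follow by quoting Proposition~\ref{Prop: ele cog consequences}; note that a left artinian ring is in particular left noetherian, so the hypotheses of that proposition are satisfied.

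For the closedness statement, Proposition~\ref{Prop: ele cog consequences}(1) says precisely that $\closed{\sigma} = \Prod{\sigma}\cap\Zg{\Der{A}}$ is a closed subset of $\Zg{\Der{A}}$, and this is exactly $\N$. For the t-structure statement, I would combine two facts: on the one hand, $\sigma$ is a $2$-term cosilting complex, so by Proposition~\ref{prop: cosilting definition}(1) the pair $\tstr{\sigma} = ({}^{\perp_{\leq 0}}\sigma, {}^{\perp_{> 0}}\sigma)$ is a t-structure in $\Der{A}$; on the other hand, Proposition~\ref{Prop: ele cog consequences}(5) gives the equality $({}^{\perp_{\leq 0}}\sigma, {}^{\perp_{> 0}}\sigma) = ({}^{\perp_{\leq 0}}\closed{\sigma}, {}^{\perp_{> 0}}\closed{\sigma}) = ({}^{\perp_{\leq 0}}\N, {}^{\perp_{> 0}}\N)$. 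Hence $({}^{\perp_{\leq 0}}\N, {}^{\perp_{> 0}}\N)$ is a t-structure, which completes the argument.

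There is essentially no obstacle here beyond correctly chaining the cited results; the only points requiring a moment's care are the observation that ``left artinian'' supplies the ``left noetherian'' hypothesis needed for Proposition~\ref{Prop: ele cog consequences}, and the bookkeeping that Theorem~\ref{thm:rigidCosiltBij} is phrased in terms of $\ZgInt{A}$ while the corollary speaks of $\Der{A}$ — which is harmless, since maximal rigid sets live inside $\ZgInt{A}$ by definition and $\closed{\sigma}$ was already shown in that proposition to be closed in the ambient spectrum $\Zg{\Der{A}}$.
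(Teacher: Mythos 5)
Your proposal is correct and follows exactly the paper's route: the paper obtains this corollary by combining Theorem~\ref{thm:rigidCosiltBij} (identifying $\N$ with $\closed{\sigma_\N}$ for the cosilting complex $\sigma_\N$) with Proposition~\ref{Prop: ele cog consequences}, just as you do. Your additional remarks on the artinian/noetherian hypothesis and on $\ZgInt{A}$ versus $\Zg{\Der{A}}$ are accurate bookkeeping, not a deviation.
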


Another consequence of the last theorem is that we can replace the equivalence classes of cosilting complexes in item (b) of Theorem~\ref{Thm: cosilt bij} by maximal rigid sets. In fact, we note that each of the sets below admits a partial order turning it into a complete lattice, and the bijections in Theorems~\ref{Thm: cosilt bij} and \ref{thm:rigidCosiltBij} respect the ordering.

\begin{itemize}
\item Let $\tors{A}$ denote the set of torsion pairs in $\mod{A}$ with the following ordering:
	\[(\u,\v) \leq (\t,\f) \hspace{5mm} \Leftrightarrow \hspace{5mm}  \u \subseteq \t.\]
\item Let $\Cosilt{A}$ denote the set of cosilting torsion pairs in $\Mod{A}$ with the following ordering: 
	\[(\U,\V) \leq (\T,\F) \hspace{5mm} \Leftrightarrow \hspace{5mm}  \U \subseteq \T.\]
\item Let $\CosiltZg{A}$ denote the set of maximal rigid  sets in $\ZgInt{A}$ with the following ordering:
	\[ \M \leq \N \hspace{5mm} \Leftrightarrow \hspace{5mm}  \Hom{\Der{A}}(\N, \M[1]) = 0. \]
\end{itemize}

\begin{remark}\label{rem: order aisles}{\rm
The ordering on $\CosiltZg{A}$ corresponds to the ordering of the associated t-structures by inclusion of the aisles.  Indeed, suppose $\M \leq \N$.  Then ${\Hom{\Der{A}}(\N, \M[1]) = 0}$ implies that $\N \subseteq {}^{\perp_{>0}}\M$.  By \cite[Prop.~4.9]{PsaroudakisVitoria:18} we have that ${}^{\perp_{>0}}\N \subseteq {}^{\perp_{>0}}\M$ and therefore ${}^{\perp_{\leq0}}\M \subseteq {}^{\perp_{\leq0}}\N$.  }
\end{remark}

\begin{corollary}
\label{cor: CB and cosilting set bijections}
Let $A$ be a left artinian ring. There exist order-preserving bijections
	\[\tors{A} \overset{\phi}{\rightarrow} \Cosilt{A} \overset{\psi}{\rightarrow} \CosiltZg{A}\]
where $\phi$ is the bijection given in Theorem~\ref{Thm: cosilt bij}, and $\psi$ is a composition of the bijections in Theorems~\ref{Thm: cosilt bij} and \ref{thm:rigidCosiltBij}, that is, $\psi$  maps a cosilting torsion pair $t=(\T,\F)$ to the maximal rigid set $\N_{\sigma}$ which is associated to the cosilting complex $\sigma$ corresponding to $t$.
Moreover, the inverse assignment of $\psi$ is given by $\psi^{-1}(\N) = ({}^{\perp_0}\H{}(\N), \Cogen{\H{}(\N)})$.
\end{corollary}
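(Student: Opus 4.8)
The plan is to assemble the claimed order-preserving bijections from the three bijections already established, namely the correspondence $\phi$ between $\tors{A}$ and $\Cosilt{A}$ from Theorem~\ref{Thm: cosilt bij}, the correspondence between $\Cosilt{A}$ and equivalence classes of 2-term cosilting complexes from the same theorem, and the bijection $\sigma \mapsto \closed{\sigma}$ between equivalence classes of 2-term cosilting complexes and maximal rigid sets from Theorem~\ref{thm:rigidCosiltBij}. Composing the latter two gives the map $\psi$ sending a cosilting torsion pair $t$ to $\closed{\sigma}$, where $\sigma$ is the cosilting complex with $\tstr{\sigma} = \ststr{t^-}$. Since each of the three constituent maps is a bijection, $\psi$ and $\phi$ are bijections and it remains only to check that they are order-preserving and to identify the inverse of $\psi$.

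First I would verify that $\phi$ is order-preserving: by the construction in Theorem~\ref{Thm: cosilt bij}, a torsion pair $(\u,\v)$ in $\mod{A}$ maps to $(\varinjlim\u, \varinjlim\v)$, and since direct limit closure is monotone, $\u\subseteq\t$ in $\mod{A}$ implies $\varinjlim\u \subseteq \varinjlim\t$, which is exactly the ordering on $\Cosilt{A}$. Next I would check that $\psi$ is order-preserving. Suppose $t = (\T,\F) \leq s = (\U,\V)$ in $\Cosilt{A}$, so $\T\subseteq\U$. Writing $\sigma, \tau$ for the associated cosilting complexes and $\mathbb{T}_\sigma = \ststr{t^-}$, $\mathbb{T}_\tau = \ststr{s^-}$ for the HRS-tilts, the condition $\T\subseteq\U$ translates via Definition~\ref{HRS} into an inclusion of aisles ${}^{\perp_{\leq 0}}\sigma \subseteq {}^{\perp_{\leq 0}}\tau$; concretely, both aisles sit between $\D^{<0}$ and $\D^{\leq 0}$, and membership is detected by the standard cohomology landing in $\T$ respectively $\U$. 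By Remark~\ref{rem: order aisles}, inclusion of aisles is precisely the order $\closed{\sigma} \leq \closed{\tau}$ on $\CosiltZg{A}$, so $\psi(t)\leq\psi(s)$. (If one prefers to argue directly in terms of the $\Hom$-vanishing definition of the order on $\CosiltZg{A}$, one uses ${}^{\perp_{>0}}\tau \subseteq {}^{\perp_{>0}}\sigma$, which follows from the aisle inclusion together with the fact that a cosilting t-structure is determined by either of its two halves.)

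Finally I would identify $\psi^{-1}$. Given a maximal rigid set $\N$, Theorem~\ref{thm:rigidCosiltBij} gives the cosilting complex $\sigma_\N = \prod_{\omega\in\N}\omega$ with $\closed{\sigma_\N} = \N$, and by Proposition~\ref{Prop: ele cog consequences}(4) we have $\Prod{\sigma_\N} = \Prod{\N}$. The module $C = \H{}(\sigma_\N)$ is the corresponding cosilting module, and by the $(b)\rightarrow(c)\rightarrow(a)$ steps of Theorem~\ref{Thm: cosilt bij} the cosilting torsion pair is $({}^{\perp_0}C, \Cogen{C})$. It remains to observe that $\H{}(\N) = \H{}(\closed{\sigma_\N})$ determines the same torsion pair: since $\H{}$ is additive and commutes with products (the relevant products being of injectives, computed degreewise), $\H{}(\sigma_\N) = \prod_{\omega\in\N}\H{}(\omega)$, so $\Prod{\H{}(\N)} = \Prod{C}$ and hence $\Cogen{\H{}(\N)} = \Cogen{C}$ and ${}^{\perp_0}\H{}(\N) = {}^{\perp_0}C$; this yields $\psi^{-1}(\N) = ({}^{\perp_0}\H{}(\N), \Cogen{\H{}(\N)})$ as claimed. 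The main obstacle is the order-preservation of $\psi$: one must carefully translate the inclusion of torsion classes through the HRS-tilting construction into the combinatorial order on $\CosiltZg{A}$, and Remark~\ref{rem: order aisles} is the key bridge, so the bulk of the work is bookkeeping with aisles and coaisles rather than any substantive new argument.
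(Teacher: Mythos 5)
Your proposal is correct and follows essentially the same route as the paper: identify $\psi^{-1}(\N)$ by composing the bijections of Theorems~\ref{Thm: cosilt bij} and~\ref{thm:rigidCosiltBij} and using $\Prod{\H{}(\N)}=\Prod{\H{}(\sigma_\N)}$, note that $\phi$ is obviously order-preserving, and deduce order-preservation of $\psi$ by translating inclusion of torsion classes into inclusion of aisles of the right HRS-tilts and invoking Remark~\ref{rem: order aisles}. The only cosmetic difference is that you spell out the easy direction of that remark (aisle inclusion implies $\Hom$-vanishing via $\closed{\sigma}\subseteq\Prod{\sigma}\subseteq{}^{\perp_{>0}}\sigma$), which the paper leaves implicit.
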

\begin{proof}  First we note that the composition of the bijections in Theorems~\ref{Thm: cosilt bij} and \ref{thm:rigidCosiltBij} yields that $\psi^{-1}(\N) = ({}^{\perp_0}C_\N, \Cogen{C_\N})$ where $C_\N :=\prod_{\mu\in\N}\H{}(\mu)$.  Since $\Prod{\H{}(\N)} = \Prod{C_\N}$, this yields that $\psi^{-1}(\N) = ({}^{\perp_0}\H{}(\N), \Cogen{\H{}(\N)})$.  We already know that $\phi$ and $\psi$ are well-defined bijections and it remains to prove that they are order-preserving.

The  bijection $\phi$ is clearly order-preserving. 
Now let  $u=(\U,\V) \leq t=(\T,\F)$ be two torsion pairs in $\Cosilt{A}$ with images $\M=\psi(u)$ and $\N=\psi(t)$. According to the bijections in Theorem~\ref{Thm: cosilt bij}, the t-structures $\tstr{\sigma_\M}$ and $\tstr{\sigma_\N}$ are precisely the right HRS-tilts of the standard t-structure at $u$ and $t$, respectively. By construction, when 
$\U\subseteq\T$, the aisle of the right HRS-tilt at $u$ is contained in the aisle of the right HRS-tilt at $t.$  By Remark~\ref{rem: order aisles} we infer that $\M\le \N$. 
\end{proof}

\section{Cosilting pairs in the module category}

Throughout this section we will use $A$ to denote a left artinian ring.  
In Corollary \ref{cor: CB and cosilting set bijections} we saw that the cosilting torsion pairs in $\Mod{A}$ are exactly those of the form $t = ({}^{\perp_0}\H{}(\N), \Cogen{{\H{}(\N)}})$ for a maximal rigid set $\N$ in $\Der{A}$ and that $\N$ can be recovered from $t$.  In other words, the set $\N$ is determined by the indecomposable pure-injective modules in $\H{}(\N)$.  In this section we characterise the sets of modules that arise in this way. The resulting notion will in fact consist of a pair of sets of indecomposable modules; this approach is inspired by the notion of a support $\tau$-tilting pair (or rather its dual version) introduced in \cite{AdachiIyamaReiten:14}.  

\begin{notation}\label{Not: minimal inj cop}
From now on we will use the notation $\mu_M$ for the 2-term complex given by the minimal injective copresentation of a module $M$. Moreover, we denote by  $\InjInd{A}$ the set of isomorphism classes of indecomposable injective $A$-modules. 
\end{notation}

\begin{definition}\label{rpair} A  \textbf{rigid pair} is a pair $(\mathcal Z,\mathcal I)$ such that  
\begin{enumerate}
\item[(i)]  $\mathcal Z$ is a subset of $\Zg{A}$, and   $\Ical$ is a subset of $\InjInd{A}$; 
\item[(ii)] the complexes $\mu_X$ such that $X$ is in $\mathcal Z$, form a rigid set in $\Der{A}$;
\item[(iii)] $\Hom{A}(\mathcal Z,\mathcal I)=0$. 
\end{enumerate}
A rigid pair $ \Tpair{Z}{I} $ is a \textbf{cosilting pair} if it satisfies the following maximality condition
\begin{itemize}
\item[(iv)]  every rigid pair $ \Tpair{Z'}{I'} $ with $ \Z \subseteq \Z' $ and $ \mathcal{I} \subseteq \mathcal{I}' $ equals  $ \Tpair{Z}{I}$.
\end{itemize}
We denote the set of cosilting pairs over $A$ by $\Cosiltpair{A}$.
\end{definition}

Our next aim is to prove the following result.
 \begin{theorem}
\label{thm:pairModBijection}
Let $ A$ be a left artinian ring. There exists a bijection
\[\Cosiltpair{A}\rightarrow \tors{A}\] 
which takes a cosilting pair  $(\Z, \Ical)$
to the torsion  pair $t= ({}^{\perp_0}\Z \cap \mod{A}, \Cogen{\Z} \cap \mod{A})$.
\end{theorem}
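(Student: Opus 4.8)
The strategy is to produce an explicit inverse to the stated assignment by using the combinatorial/topological data encoded in a maximal rigid set, and then to show the two maps are mutually inverse. Concretely, recall from Corollary~\ref{cor: CB and cosilting set bijections} that $\tors{A}$ is in order-preserving bijection with $\CosiltZg{A}$, the set of maximal rigid sets $\N \subseteq \ZgInt{A}$, via $\N \mapsto ({}^{\perp_0}\H{}(\N), \Cogen{\H{}(\N)})\cap\mod{A}$. So it suffices to set up a bijection between $\Cosiltpair{A}$ and $\CosiltZg{A}$. Given a maximal rigid set $\N$, every indecomposable object of $\ZgInt{A}$ is, by Lemma~\ref{Lem: homologypi}, either a stalk $I[-1]$ with $I$ indecomposable injective, or has the form $\sigma$ with $\H{}(\sigma)\in\Zg{A}$ nonzero (and then $\sigma$ is, up to isomorphism in $\Der{A}$, the minimal injective copresentation $\mu_{\H{}(\sigma)}$, after discarding the injective summand that splits off). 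Thus I would define the map $\CosiltZg{A}\to\Cosiltpair{A}$ by
\[
\N \;\longmapsto\; \bigl(\Z_\N,\ \Ical_\N\bigr), \qquad \Z_\N := \{\H{}(\mu)\mid \mu\in\N,\ \H{}(\mu)\neq 0\},\quad \Ical_\N := \{I\in\InjInd{A}\mid I[-1]\in\N\},
\]
and the map in the other direction by $(\Z,\Ical)\mapsto \{\mu_X \mid X\in\Z\}\cup\{I[-1]\mid I\in\Ical\}$.

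\textbf{Key steps.} First I would check that $(\Z_\N,\Ical_\N)$ is indeed a rigid pair: conditions (i) is clear, (ii) follows since $\{\mu_{\H{}(\mu)}\mid\mu\in\N\}$ differs from $\N$ only by injective stalk summands and rigidity is inherited, and (iii) follows because $\Hom{\Der{A}}(\mu,I[-1][1])=\Hom{\Der{A}}(\mu,I)=\Hom{A}(\H{}(\mu),I)$ for $\mu\in\ZgInt{A}$ (a 2-term complex of injectives in degrees $0,1$ maps to an injective stalk in degree $0$ only through its degree-0 cohomology — this is exactly the computation in Remark~\ref{Rem: rigid C sigma}(1) combined with the observation that $\Hom{\Der A}(-,I)$ kills $E_1$-summands). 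Second, I would show maximality: if $(\Z',\Ical')\supseteq(\Z_\N,\Ical_\N)$ is a rigid pair, then $\N' := \{\mu_X\mid X\in\Z'\}\cup\{I[-1]\mid I\in\Ical'\}$ is a rigid set in $\ZgInt{A}$ containing $\N$ — here I need that each $\mu_X$ for $X\in\Z\subseteq\Zg{A}$ lies in $\ZgInt{A}$, i.e. is pure-injective, which by Lemma~\ref{lem: completion} holds iff $\mu_X$ is rigid with $\H{}(\mu_X)=X$ pure-injective, and rigidity of $\mu_X$ is condition (ii) — so by maximality of $\N$ we get $\N'=\N$, forcing $(\Z',\Ical')=(\Z_\N,\Ical_\N)$. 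Conversely, starting from a cosilting pair $(\Z,\Ical)$, the set $\N := \{\mu_X\mid X\in\Z\}\cup\{I[-1]\mid I\in\Ical\}$ is rigid by (ii), (iii) and the fact that the $I[-1]$ are mutually rigid (they are stalks in the same degree, so $\Hom{\Der{A}}(I[-1],J[-1][1])=\Hom{\Der A}(I[-1],J)=0$), and it is contained in $\ZgInt{A}$ by Lemma~\ref{lem: completion} applied to each $\mu_X$; maximality of $\N$ as a rigid set then follows from maximality (iv) of the pair, again by transporting a strictly larger rigid set back through the correspondence $\nu\leftrightarrow(\H{}(\nu)\text{ or }I)$. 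Finally, composing with Corollary~\ref{cor: CB and cosilting set bijections} and unwinding, $\N\mapsto({}^{\perp_0}\H{}(\N),\Cogen{\H{}(\N)})\cap\mod{A}$; since $\H{}(\N)=\Z_\N$ up to zero objects (the stalks $I[-1]$ contribute $\H{}(I[-1])=0$), this is exactly $({}^{\perp_0}\Z_\N\cap\mod{A},\ \Cogen{\Z_\N}\cap\mod{A})$, matching the formula in the statement.

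\textbf{Main obstacle.} The delicate point is the precise bookkeeping in the correspondence between indecomposables of $\ZgInt{A}$ and the data $(\Z,\Ical)$: an object $\sigma\in\ZgInt{A}$ with $\H{}(\sigma)\neq 0$ need not literally equal $\mu_{\H{}(\sigma)}$, only be isomorphic to it up to a contractible (injective stalk in degree 1) summand — but such a summand is itself an object of $\ZgInt{A}$ that is \emph{not} indecomposable-nonzero-homology, and in fact an injective $E$ in degree $1$, i.e. $E[-1][1]$... one must be careful that these "extra" pieces don't secretly enlarge $\Ical$ or spoil injectivity/maximality. Wait — more carefully: a 2-term complex of injectives in degrees $0,1$ that is indecomposable in $\Der{A}$ is either acyclic, in which case it is $0$ or an injective stalk $E[-1]$ with $E$ indecomposable injective (its degree-1 part), or has $\H{}(\sigma)\neq 0$ and, being indecomposable and pure-injective, must \emph{be} a minimal copresentation (a non-minimal one splits off an acyclic indecomposable summand $E[-1]$, contradicting indecomposability unless $\H{}(\sigma)=0$). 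So the correspondence is in fact a clean bijection on indecomposables: $\ZgInt{A} = \{I[-1]\mid I\in\InjInd{A}\}\ \sqcup\ \{\mu_X\mid X\in\Zg{A}\text{ with }\mu_X\text{ pure-injective}\}$, and this is where Lemma~\ref{lem: completion} and Lemma~\ref{cor:cSigmaCosilting} do the essential work of identifying the second set. Once this decomposition is firmly in hand, the rest is the routine verification sketched above that rigid pairs correspond to rigid sets and maximality matches maximality.
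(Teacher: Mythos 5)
Your route is the same as the paper's: it reduces via Corollary~\ref{cor: CB and cosilting set bijections} to a dictionary between cosilting pairs and maximal rigid subsets of $\ZgInt{A}$, given by exactly the assignments $\N\mapsto(\Z_\N,\Ical_\N)$ and $(\Z,\Ical)\mapsto\{\mu_X\mid X\in\Z\}\sqcup\{I[-1]\mid I\in\Ical\}$; this is precisely the content of Proposition~\ref{prop: bij rigid} in the paper, and your verifications of conditions (i)--(iii), of the Hom-computation $\Hom{\Der{A}}(\mu, (I[-1])[1])\cong\Hom{A}(\H{}(\mu),I)$, of the decomposition of indecomposables of $\ZgInt{A}$ into $\mu_M$'s and injective stalks (the paper's Lemma~\ref{Lem: complexes determined by modules}, via Lemma~\ref{Lem: homologypi}), and of the matching of the two maximality conditions all agree with the paper's argument, as does the final unwinding $\H{}(\N_{(\Z,\Ical)})=\Z$.

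There is one genuine gap: to conclude that $\N_{(\Z,\Ical)}$ is a subset of $\ZgInt{A}$ you must show that each $\mu_X$ with $X\in\Z$ is an \emph{indecomposable} pure-injective object of $\Der{A}$, and you justify membership only ``by Lemma~\ref{lem: completion}''. That lemma (Bongartz completion) yields pure-injectivity of $\mu_X$, since it exhibits $\mu_X$ as a direct summand of a 2-term cosilting complex, which is pure-injective by Theorem~\ref{Thm: cosilt summary}(1); it says nothing about indecomposability, and your ``main obstacle'' discussion only treats the converse direction (that an indecomposable object of $\ZgInt{A}$ with nonzero cohomology is a minimal copresentation). Indecomposability of $\mu_X$ in $\Der{A}$ for $X\in\Zg{A}$ is not free: the paper devotes Lemma~\ref{lem: 2-term Zg to modules Zg} to it, showing that $\mathrm{End}(\mu_X)$ is local whenever $\mathrm{End}(X)$ is. The gap is patchable — either invoke that lemma, or argue directly that a direct summand of $\mu_X$ in $\Der{A}$ with vanishing $\H{}$ is isomorphic to $J[-1]$ with $J$ injective, and that minimality (the image of the differential is essential in the degree-one term) forces $J=0$, so indecomposability of $X$ passes to $\mu_X$ — but as written this step is asserted on the strength of a lemma that does not deliver it.
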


We will prove the theorem by establishing in Proposition~\ref{prop: bij rigid} a bijection between cosilting pairs and maximal rigid sets in $\ZgInt{A}$ and composing it with the bijection in Corollary \ref{cor: CB and cosilting set bijections}.

First we observe that the complexes contained in $\ZgInt{A} = \K{A}\cap\Zg{\Der{A}}$ are determined by subsets of $\Zg{A}$ and  $\InjInd{A}$.  As usual we will identify the elements of these sets with an arbitrary representative of the corresponding isomorphism class.

\begin{lemma}\label{Lem: complexes determined by modules}
The following statements hold.
\begin{enumerate}
\item If $\sigma$ is a 2-term complex of injective $A$-modules and $M:= \H{}(\sigma)$, then $\sigma$ is isomorphic (as a complex) to $\mu_M \oplus \nu \oplus I[-1]$, where $\nu$ is an isomorphism and $ I $ an injective module.
\item $\ZgInt{A} \subseteq \{\mu_M \mid M\in\Zg{A}\} \sqcup \{I[-1]\mid I\in \InjInd{A}\}$.
\end{enumerate}
\end{lemma}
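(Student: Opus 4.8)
The plan is to prove both statements by reducing an arbitrary 2-term complex of injectives to a normal form. For (1), let $\sigma\colon E_0\to E_1$ be a morphism of injective $A$-modules with $M=\H{}(\sigma)=\Ker{\sigma}$. First I would observe that since $A$ is left artinian, $A$ is left noetherian, so minimal injective copresentations exist; let $\mu_M\colon E(M)\to E_1'$ be the minimal injective copresentation of $M$, where $E(M)$ is the injective envelope of $M$. Because $M\hookrightarrow E_0$ is an injective map into an injective module, $E(M)$ is a direct summand of $E_0$, say $E_0\cong E(M)\oplus E_0''$, and the restriction of $\sigma$ to $E_0''$ is a monomorphism (as $\Ker{\sigma}=M\subseteq E(M)$). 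Thus $\sigma$ decomposes as $\mu$ on $E(M)$ and a monomorphism $\iota\colon E_0''\hookrightarrow E_1$ of injectives on the complement. A split monomorphism of injective modules splits off as $\nu\oplus (0\to I)$ where $\nu$ is an isomorphism onto a summand of $E_1$ and $I[-1]$ is the remaining injective in degree $1$. Finally, to replace $\mu$ by the \emph{minimal} copresentation $\mu_M$, I would use the standard fact that any injective copresentation of $M$ decomposes as the minimal one plus a contractible complex (an isomorphism between injectives), which then gets absorbed into the $\nu$ summand. Putting these pieces together gives $\sigma\cong\mu_M\oplus\nu\oplus I[-1]$ as claimed.

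For (2), let $\sigma\in\ZgInt{A}$, so $\sigma$ is an indecomposable pure-injective object of $\Der{A}$ lying in $\K{A}$. Apply part (1) to write $\sigma\cong\mu_M\oplus\nu\oplus I[-1]$ with $M=\H{}(\sigma)$. Since $\sigma$ is indecomposable, exactly one of the three summands is nonzero (and the other two vanish). If the $\nu$ summand were the nonzero one, then $\sigma$ would be an isomorphism of injective modules, hence a contractible complex, hence zero in $\Der{A}$ — contradicting indecomposability (the zero object is not indecomposable). So either $\sigma\cong\mu_M$ with $M\neq 0$, or $\sigma\cong I[-1]$ with $I$ an indecomposable injective (indecomposability of $\sigma$ forces $I$ indecomposable). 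In the first case, $M=\H{}(\sigma)$ is pure-injective by Lemma~\ref{Lem: homologypi}, and it is indecomposable: indeed $\mu_M$ is the minimal injective copresentation, so $\End{\Der{A}}(\mu_M)$ surjects onto $\End{A}(M)$ with the kernel consisting of homotopies, and one checks that idempotents lift, so $M$ indecomposable follows from $\mu_M$ indecomposable (alternatively, $\H{}$ restricted to $\ZgInt{A}$ lands in $\Zg{A}$ and is injective on the complexes of the form $\mu_M$). Hence $M\in\Zg{A}$ and $\sigma=\mu_M$ lies in the first set; in the second case $\sigma=I[-1]$ with $I\in\InjInd{A}$ lies in the second set. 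The union is disjoint since $\H{}(I[-1])=0\neq\H{}(\mu_M)$ whenever $M\neq 0$.

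The main obstacle I anticipate is the bookkeeping in part (1): carefully justifying that an arbitrary injective copresentation of $M$ (not just some clever choice) splits as $\mu_M$ plus a contractible complex, and tracking how the two "extra" pieces $\nu$ and $I[-1]$ interact when $E_0$ and $E_1$ both have decompositions. This is entirely standard — it is the uniqueness of minimal injective resolutions up to a contractible summand, together with the observation that a monomorphism of injectives splits — but it requires a clean statement of the decomposition $E_0\cong E(M)\oplus E_0''$ and the fact that $\sigma|_{E_0''}$ is a split mono because its target is injective. Once that is set up, everything else is formal, and part (2) is an immediate corollary via indecomposability.
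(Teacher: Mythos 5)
Your argument for part (2) is exactly the paper's: decompose via part (1), use indecomposability to rule out the $\nu$ summand, and invoke Lemma~\ref{Lem: homologypi} for $\mu_M$ (note that this lemma already places $\H{}(\sigma)$ in $\Zg{A}$, i.e.\ it gives indecomposability and pure-injectivity at once, so your idempotent-lifting digression is unnecessary). For part (1) the paper offers no details ("follows easily from the defining properties of $\mu_M$"), and your outline is a reasonable way to flesh it out, but two steps are not correct as literally written. First, from $E_0=E(M)\oplus E_0''$ you conclude that ``$\sigma$ decomposes as $\mu$ on $E(M)$ and a monomorphism $\iota$ on the complement''. A decomposition of the degree-zero term does not give a decomposition of the complex: the component of $\sigma$ carrying $E(M)$ into the summand $\sigma(E_0'')$ of $E_1$ need not vanish. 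For instance, over $k[x]/(x^2)$ take $\sigma\colon A\oplus A\to A\oplus A$, $(a,b)\mapsto (xa,\,xa+b)$; here $M=\mathrm{soc}(A)\oplus 0$, $E(M)=A\oplus 0$, $E_0''=0\oplus A$, and with the obvious complements the complex does not split along $E(M)\oplus E_0''$. The repair is routine but has to be carried out: either replace the complement of $\sigma(E_0'')$ in $E_1$ by the graph-corrected one containing $\sigma(E(M))$, or build the chain retraction $(r_0,r_1)$ onto the subcomplex $(E_0''\to\sigma(E_0''))$ with $r_1$ the projection onto $\sigma(E_0'')$ and $r_0=(\sigma|_{E_0''})^{-1}\circ r_1\circ\sigma$; this is precisely the ``bookkeeping'' you defer in your last paragraph.

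Second, the ``standard fact'' you invoke is misstated: an arbitrary injective copresentation of $M$ is isomorphic to $\mu_M$ plus a complex with vanishing $\H{}$, that is, plus a (split) monomorphism of injectives, which is an isomorphism $\oplus$ a stalk complex $I[-1]$ --- not merely plus a contractible complex (a contractible $2$-term complex of injectives is exactly an isomorphism, and the fact as you state it fails already for $\mu_M\oplus I[-1]$). In particular the $I[-1]$ term can also arise from this comparison, not only from the $E_0''$ part of your decomposition. Since the target normal form $\mu_M\oplus\nu\oplus I[-1]$ absorbs both contributions, your conclusion is unaffected, but the correct statement (proved via comparison maps $\mu_M\to\sigma$ and $\sigma\to\mu_M$ extending $\mathrm{id}_M$, with minimality of $\mu_M$ forcing the composite to be an automorphism) is exactly where the content of part (1) lies, so it should not be quoted in the weaker, false form.
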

\begin{proof} (1) follows easily from the defining properties of $\mu_M$.

(2) Let $\sigma\in\ZgInt{A}$.  Since $\sigma$ is indecomposable, it follows from (1) that $\sigma$ is either isomorphic to $I[-1]$ for some $I\in \InjInd{A}$ or to $\mu_M$ for some indecomposable $A$-module $M$.  In the latter case the module $M = \H{}(\mu_M)$ is contained in $\Zg{A}$ by Lemma~\ref{Lem: homologypi}.
\end{proof}

\begin{lemma}\label{lem: 2-term Zg to modules Zg}
Let $R$ be a ring. If $M\in\Mod{R}$ has local endomorphism ring, so has the 2-term complex $\mu_M$. In particular, if $M \in \Zg{R}$, then  $ \mu_M $ is indecomposable both as a complex and as an object of $\Der{R}$.
\end{lemma}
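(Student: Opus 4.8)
The plan is to exploit the description of morphisms in $\K{R}$ as cochain maps up to homotopy, and the fact that $\mu_M$ is a \emph{minimal} injective copresentation, so that homotopies contribute nothing. Concretely, write $\mu_M \colon E_0 \to E_1$ with $E_0 = E(M)$ the injective envelope. First I would show that $\End{\Der{R}}(\mu_M) \cong \End{R}(M)$, or at least that a local endomorphism ring for $M$ forces a local endomorphism ring for $\mu_M$. An endomorphism of $\mu_M$ in $\K{R}$ is a pair $(f_0, f_1)$ of $R$-linear maps making the square commute, taken modulo a homotopy $h \colon E_1 \to E_0$. By minimality, any homotopy $h$ satisfies $\mu_M h \in \Rad(\End{R}(E_1))$ and $h\mu_M \in \Rad(\End{R}(E_0))$; in fact, since the copresentation is minimal, $\Im{\mu_M} \subseteq \Rad(E_1)$ in the sense that $\mu_M$ factors through no injective summand of $E_1$, so nullhomotopic endomorphisms form an ideal inside the Jacobson radical of the (honest, non-derived) endomorphism ring of the two-term complex.

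The cleanest route is the following. The functor $\H{} = \Ker(-)$ from two-term complexes of injectives to modules gives a ring homomorphism $\End{\Der{R}}(\mu_M) \to \End{R}(M)$, $(f_0,f_1) \mapsto f_0|_M$. I would check this is well-defined (nullhomotopic maps restrict to $0$ on $M = \Ker{\mu_0}$, since if $f_0 = h\mu_0$ then $f_0|_M = h\mu_0|_M = 0$) and surjective (any $g \in \End{R}(M)$ extends along the injective envelope $M \hookrightarrow E_0$ to some $f_0$, and then $\mu_1 f_0$ kills $M$ hence, since $E_1$ receives a copresentation, factors through $E_0/M \hookrightarrow E_1$ to give $f_1$). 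Then I would argue that the kernel of this surjection is contained in the Jacobson radical of $\End{\Der{R}}(\mu_M)$: if $(f_0,f_1)$ restricts to $0$ on $M$, then $f_0$ factors as $E_0 \twoheadrightarrow E_0/M \hookrightarrow E_1 \xrightarrow{\text{lift}} E_0$ using injectivity — exactly producing a homotopy showing $(f_0,f_1)$ is nullhomotopic, wait, I must be careful here. Let me instead note that $f_0(M)=0$ means $f_0$ factors through $E_0/M$, and since $\mu_0$ induces an embedding $E_0/M \hookrightarrow E_1$ one can lift along this embedding using injectivity of $E_0$; the resulting map $h\colon E_1 \to E_0$ is then a homotopy with $h\mu_0 = f_0$, and commutativity plus minimality forces $\mu_1 h = f_1$ as well (up to a further correction that is itself nullhomotopic). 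Hence the ring homomorphism $\End{\Der{R}}(\mu_M) \to \End{R}(M)$ is in fact an isomorphism, and local-ness transfers. The ``in particular'' clause then follows: a complex in $\K{R}$ with local endomorphism ring is indecomposable in $\K{R}$, hence indecomposable in $\Der{R}$; and if $M \in \Zg{R}$ then $M$ is in particular pure-injective with local endomorphism ring, so $\mu_M$ is indecomposable in $\Der{R}$ by the first part (pure-injectivity of $\mu_M$ follows from Lemma~\ref{Lem: homologypi} since $\mu_M \in \ZgInt{R}$ once we know it is indecomposable, or can be argued directly).

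The main obstacle will be handling the homotopies carefully: showing precisely that the kernel of $\End{\Der{R}}(\mu_M)\to\End{R}(M)$ consists of (classes of) nullhomotopic honest endomorphisms and is contained in the radical, which is where minimality of the copresentation is essential. A map $(f_0,f_1)$ with $f_0|_M = 0$ need not a priori be nullhomotopic as a map of two-term complexes unless we use that $\mu_0 \colon E_0 \to E_1$ has image a copresentation — i.e. that $E_1$ contains (a copy of) the injective envelope of $E_0/M$ — which lets us lift $f_0$ (viewed via $E_0 \twoheadrightarrow E_0/M \hookrightarrow E_1$) back to $E_0$ along that embedding by injectivity. Granting this, the statement follows formally, since any idempotent of $\End{\Der{R}}(\mu_M)$ lifts an idempotent of $\End{R}(M)$ and conversely, so the two rings have the same idempotents and $\mu_M$ decomposes if and only if $M$ does.
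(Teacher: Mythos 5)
Your reduction to the restriction homomorphism $\End{\Der{R}}(\mu_M)\to\End{R}(M)$, $(f_0,f_1)\mapsto f_0|_M$, is fine as far as well-definedness and surjectivity go, but the central claim that this map is an \emph{isomorphism} is false, and that is exactly where the argument breaks. After subtracting the homotopy $h$ with $h\mu_M=f_0$, you are left with a chain map of the form $(0,g)$ with $g\mu_M=0$; such a map is in general \emph{not} nullhomotopic (a nullhomotopy would require $h'$ with $h'\mu_M=0$ and $\mu_M h'=g$), and it can induce a nonzero map on $\mathrm{H}^1$. Concretely, take $R=k[x]/(x^2)$ and $M=k$: then $\mu_M$ is $R\to R$ given by multiplication by $x$, the pair $(0,x)$ is a chain endomorphism which is not nullhomotopic, and one computes $\End{\Der{R}}(\mu_M)\cong k[\epsilon]/(\epsilon^2)$, which is two-dimensional, while $\End{R}(M)=k$. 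So the step ``commutativity plus minimality forces the second component to agree up to a nullhomotopic correction'' fails, and with it the closing argument comparing idempotents.

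What is true, and all you need, is the weaker statement you floated first: the kernel of the restriction map is contained in the Jacobson radical of $\End{\Der{R}}(\mu_M)$. This does require an argument, and it is where minimality in degree $1$ enters: a kernel element can be represented (as you showed) by $(0,g)$ with $g\mu_M=0$, hence $g$ vanishes on $\Im{\mu_M}\cong E_0/M$, so $1_{E_1}-g$ restricts to the identity on the submodule $E_0/M\subseteq E_1$, which is \emph{essential} because $E_1$ is its injective envelope; therefore $1_{E_1}-g$ is an automorphism of $E_1$, and $(1_{E_0},1_{E_1}-g)$ is invertible already as a chain map. Since the kernel is a two-sided ideal, this gives kernel $\subseteq$ radical, and locality of $\End{R}(M)$ then transfers to $\End{\Der{R}}(\mu_M)$. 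This is in essence the paper's proof, except that the paper works with the endomorphism ring in $\mathrm{Ch}(R)$ and exploits minimality of both $M\hookrightarrow E_0$ and $E_0/M\hookrightarrow E_1$, which in addition yields that $\mu_M$ is indecomposable \emph{as a complex} — a part of the statement your proposal does not address, since locality of the endomorphism ring in $\Der{R}$ alone does not immediately rule out chain-level decompositions (one would still have to argue that a contractible summand contradicts minimality).
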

\begin{proof}
Assume that $\mathrm{End}_{R}(M)$ is local, and let $ S = \mathrm{End}_{\mathrm{Ch}(R)}(\mu_M) $ be the endomorphism ring of $\mu_M$ in the category of chain complexes $\mathrm{Ch}(R)$. We have to show that for every $ \phi \in S $ either $ \phi $ or $ 1_S - \phi $ is invertible. Notice that any such $\phi$ restricts to an endomorphism $f\in\mathrm{End}_{R}(M)$, that is, we have a commutative diagram
\[
\begin{tikzcd}
0 \ar[r] & M \ar[r, "i_0"] \ar[dd, dashed, "f"] & I_0 \ar[dd, "f_0"] \ar[rr, "\mu_M"] \ar[rd,  two heads, "p_0"] & & I_1  \ar[dd, "f_1"] \\
& & & Z_0 \ar[ru,  hook, "i_1"] \ar[dd, dashed,  bend left, "z"]
\\
0 \ar[r] & M \ar[r, "i_0"] & I_0 \ar[rd,  two heads, "p_0"'] \ar[rr, "\mu_M"] & &  I_1 \\
& & & Z_0 \ar[ru,  hook, "i_1"']
\end{tikzcd}
\]
where $ f_0 $ and $ f_1 $ are the components of $\phi$, the map $ f $ is the unique factorization of $ f_0 \circ i_0 $ through the kernel $ i_0 $ of $ \mu_M $, and $ z : Z_0 \to Z_0 $ is the cokernel factorization of $ p_0 \circ \f_0 $. 

We claim that if $ f $ is an isomorphism, then $ \phi $ is invertible. 
To this end, assume that $g$ is the inverse map of $f$. By the injectivity of $I_0$, there is a map $ g_0 : I_0 \to I_0 $ such that $ g_0 \circ i_0 = i_0 \circ g $. 
Thus, $ g_0 \circ f_0 \circ i_0 = g_0 \circ i_0 \circ f = i_0 \circ g \circ f = i_0 \circ 1_M = i_0 $. Now left minimality of $ i_0 $ yields that $ g_0 \circ f_0 $ is an isomorphism. In the same way we verify that $ f_0 \circ g_0 $ is an isomorphism, thus the induced map $ f_0 $ is an isomorphism.
In fact, we can use the inverse map $ f_0^{-1} $ as a factorisation in place of a generic $ g_0 $, indeed:
 $ f_0^{-1}i_0f = f_0^{-1}f_0i_0 = i_0 = i_0gf $, which yields $f_0^{-1}i_0 = i_0g $.
Moreover, $z$ is an isomorphism, and
the cokernel factorisation of $ p_0 \circ g_0 $ yields a map $ \gamma : Z_0 \to Z_0 $ such that $ \gamma \circ p_0 = p_0 \circ g_0 $. Then $ \gamma \circ z \circ p_0 = \gamma \circ p_0 \circ f_0 = p_0 \circ g_0 \circ f_0 = p_0 $, thus $\gamma = z^{-1}$. 

Now, repeating the same argument, starting with $ z $ and using the left minimality of $ i_1 $, we see that that $ f_1 $ is also an isomorphism and that the induced map $ g_1 $ can be chosen to be $ f_1^{-1} $. This shows that the map $ \phi $ is invertible and yields  the claim.

At this point, let us assume that $ \phi \in S $ is not invertible. It follows that the induced endomorphism $f$ can't be invertible, and by assumption on  $\mathrm{End}_{R}(M)$ we infer that $1_M - f$ is  invertible.  Notice that the map $ 1_{S} - \phi $ restricts to $1_M - f$.  By the previous claim we conclude that 
 $ 1_S - \phi $ is invertible. 

We have shown that the endomorphism ring $S$ is local, thus the complex $ \mu_M $ is indecomposable. Moreover, as the endomorphism ring of the object $\mu_M$ in the derived category is a quotient of the ring $ S $, we infer that this new ring is again local, so that $ \mu_{M} $ is also indecomposable in $\Der{R}$.
For the last statement, we just need to recall that the endomorphism ring of any indecomposable pure-injective module is local.
\end{proof}

\begin{notation}\label{not: bij cosilting sets pairs} As a consequence of Lemma \ref{Lem: complexes determined by modules}, every subset $\N \subseteq\ZgInt{A}$ uniquely determines the following pair consisting of a subset of $\Zg{A}$ and a subset of $\InjInd{A}$: \[(\Z_\N, \Ical_\N) := (\{M\in \Zg{A} \mid \mu_M\in\N\}, \{I\in \InjInd{A}\mid I[-1]\in\N\}).\] 
We note that $\Z_\N$ coincides with $\H{}(\N)$.
Conversely, for any pair $(\Z, \Ical)$ with $\Z\subseteq \Zg{A}$ and $\Ical\subseteq\InjInd{A}$, we use the following notation for the corresponding subset of $\ZgInt{A}$: \[\N_{(\Z, \Ical)} := \{ \mu_M \mid M\in \Z\} \sqcup \{I[-1]\mid I\in \Ical\}.\]
\end{notation}

\begin{proposition}\label{prop: bij rigid}
Let $A$ be a left artinian ring. Then the assignments \[\N \mapsto (\Z_\N, \Ical_\N) \text{  and  } (\Z, \Ical) \mapsto \N_{(\Z, \Ical)}\] define mutually inverse bijections between the set of rigid subsets of $\ZgInt{A}$ and the set of rigid pairs $(\Z, \Ical)$ with $\Z\subseteq\Zg{A}$ and $\Ical\subseteq \InjInd{A}$.
Moreover, the assignments restrict to mutually inverse bijections between the set of maximal rigid sets in $\ZgInt{A}$ and the set of all cosilting pairs over $A$.
\end{proposition}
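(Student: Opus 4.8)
\emph{Overall strategy.} The plan is to establish the two assignments as mutually inverse bijections \emph{before} imposing any rigidity or maximality, and only then upgrade them. The one raw ingredient is Lemma~\ref{Lem: complexes determined by modules}(2): every $\N\subseteq\ZgInt{A}$ is a \emph{disjoint} union $\{\mu_M\mid M\in\Z_\N\}\sqcup\{I[-1]\mid I\in\Ical_\N\}$, the two parts not overlapping because $\H{}(\mu_M)=M\neq0$ whereas $\H{}(I[-1])=0$. Since $M$ is recovered from $\mu_M$ as $\H{}(\mu_M)$ and $I$ from $I[-1]$ via $(I[-1])[1]=I$, the maps $\N\mapsto(\Z_\N,\Ical_\N)$ and $(\Z,\Ical)\mapsto\N_{(\Z,\Ical)}$ are mutually inverse, modulo the fact that $\N_{(\Z,\Ical)}$ is genuinely a subset of $\ZgInt{A}$ (postponed to the third paragraph). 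The same disjoint decomposition gives $\N\subseteq\N'$ if and only if $\Z_\N\subseteq\Z_{\N'}$ and $\Ical_\N\subseteq\Ical_{\N'}$, so the correspondence is an isomorphism of posets, where subsets of $\ZgInt{A}$ carry the inclusion order and pairs carry componentwise inclusion.

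\emph{Translating rigidity.} Next I would make explicit when a set of the form $\{\mu_M\mid M\in\Z\}\sqcup\{I[-1]\mid I\in\Ical\}$ is rigid. The objects $\mu_M$ ($M\in\Zg{A}$) and $I[-1]$ ($I\in\InjInd{A}$) are indecomposable by Lemmas~\ref{lem: 2-term Zg to modules Zg} and~\ref{Lem: homologypi}, so only pairwise shift-orthogonality must be checked, in four cases. The $\mu$--$\mu$ case is exactly condition (ii) of Definition~\ref{rpair}. For the $\mu$--$I[-1]$ case one has $\Hom{\Der{A}}(\mu_M,(I[-1])[1])=\Hom{\Der{A}}(\mu_M,I)$, and I would record the natural isomorphism $\Hom{\Der{A}}(\mu_M,I)\cong\Hom{A}(M,I)$: either by computing maps in $\Der{A}$ between complexes of injectives as cochain maps modulo homotopy (a map $E_0\to I$ killing $\Ker{\mu_M}=M$ is precisely one factoring through the differential of $\mu_M$), or from the truncation triangle $M\to\mu_M\to\mathrm{H}^1(\mu_M)[-1]\to M[1]$ together with $\Ext{A}^1(-,I)=\Ext{A}^2(-,I)=0$. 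The two remaining cases, $I[-1]$--$\mu$ and $I[-1]$--$I'[-1]$, vanish for degree reasons, again by computing cochain maps between complexes of injectives sitting in one or two consecutive degrees. Conclusion: such a set is rigid if and only if $(\Z,\Ical)$ satisfies conditions (ii) and (iii) of Definition~\ref{rpair}. Applied to $\N$ this shows $(\Z_\N,\Ical_\N)$ is a rigid pair whenever $\N$ is rigid; applied to $\N_{(\Z,\Ical)}$ it gives the converse.

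\emph{The one non-formal point.} It remains to show $\N_{(\Z,\Ical)}\subseteq\ZgInt{A}$ for a rigid pair $(\Z,\Ical)$. The stalks $I[-1]$, $I\in\Ical$, lie in $\ZgInt{A}$ by Lemma~\ref{Lem: homologypi}. For $M\in\Z$ the delicate point is that $\mu_M$ is a priori only known to be indecomposable (Lemma~\ref{lem: 2-term Zg to modules Zg}), not pure-injective, so I would argue: rigidity of $\{\mu_X\mid X\in\Z\}$ yields in particular $\Hom{\Der{A}}(\mu_M,\mu_M[1])=0$, i.e.\ $\mu_M$ is rigid, while $\H{}(\mu_M)=M$ is pure-injective since $M\in\Zg{A}$; the ``in particular'' clause of Lemma~\ref{lem: completion} then makes $\mu_M$ a direct summand of a 2-term cosilting complex $\gamma$, so $\mu_M\in\Prod{\gamma}$, and since $\gamma$ is pure-injective (Theorem~\ref{Thm: cosilt summary}(1)) and pure-injectivity passes to products and direct summands, $\mu_M$ is pure-injective, hence $\mu_M\in\ZgInt{A}$. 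I expect this to be the main obstacle: it is the only genuinely non-formal ingredient and the only place where left artinianness of $A$ is used in an essential way. With it, the second paragraph yields the first bijection of the proposition.

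\emph{Passing to maximal objects.} Finally, the bijection just constructed is order-preserving and order-reflecting between the poset of rigid subsets of $\ZgInt{A}$ (ordered by inclusion) and the poset of rigid pairs (ordered by componentwise inclusion, which is the order governing condition (iv) of Definition~\ref{rpair}), so it carries maximal elements to maximal elements. A maximal element of the first poset is, by Definition~\ref{cosset}, a maximal rigid set; a maximal element of the second is, by condition (iv), exactly a cosilting pair. Hence the correspondence restricts to the asserted bijection between maximal rigid sets in $\ZgInt{A}$ and cosilting pairs over $A$.
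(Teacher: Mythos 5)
Your proposal is correct and follows essentially the same route as the paper: both check the four orthogonality cases (with $\Hom{\Der{A}}(\mu_M,I)\cong\Hom{A}(M,I)$ for condition (iii) and degree/injectivity arguments for the rest), and both obtain pure-injectivity of $\mu_M$ by combining Lemma~\ref{lem: 2-term Zg to modules Zg}, the Bongartz completion in Lemma~\ref{lem: completion} and Theorem~\ref{Thm: cosilt summary}(1). Your only departure is organisational — setting up the poset isomorphism on all subsets first and then translating rigidity once for both directions, which slightly sharpens the paper's "it is clear" step for the maximality restriction.
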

\begin{proof}
Clearly the assignments are mutually inverse, so it suffices to show that they are well-defined.  

Suppose $\N$ is a rigid set in $\ZgInt{A}$.  We must show that the conditions (i)-(iii) of Definition \ref{rpair} hold for $(\Z_\N, \Ical_\N)$.  The first two conditions are clear, we verify  (iii).  A straightforward argument shows that any morphism $Z \to I$ with $Z\in\Z_\N$ and $I\in \Ical_\N$ yields a morphism $\mu_Z \to I$ in $\Der{A}$, and $\Hom{A}(Z, I) =0$ if and only if every morphism of compexes $\mu_Z \to I$ is null-homotopic. The latter condition holds due to the fact that $\N$ is rigid: we have that $\Hom{\Der{A}}(\mu_Z, I) \cong \Hom{\Der{A}}(\mu_Z, (I[-1])[1])  = 0$.

Now we consider a rigid pair $(\Z, \Ical)$ and show that the set $\N_{(\Z, \Ical)}$ is rigid. We first  prove that every object in $\N_{(\Z, \Ical)}$ is indecomposable and pure-injective.  The complexes $I[-1]$ are clearly all indecomposable and they are also pure-injective by Lemma~\ref{Lem: homologypi}.  The complexes $\mu_M$ are all indecomposable summands of cosilting complexes due to Lemma \ref{lem: completion} and Lemma \ref{lem: 2-term Zg to modules Zg}, and so they are indecomposable pure-injective  by Theorem \ref{Thm: cosilt summary}.
Next, we verify rigidity. Pick  $M,N \in \Z$ and $I, J\in\Ical$.  By definition we have that $\Hom{\Der{A}}(\mu_N, \mu_M[1])=0=\Hom{\Der{A}}(\mu_N, (I[-1])[1])$.  Moreover $\Hom{\Der{A}}(I[-1], (J[-1])[1]) \cong \Ext{A}(I, J) =0$ because $J$ is an injective module.  Finally $\Hom{\Der{A}}(I[-1], \mu_M[1]) =0$ for degree reasons.  
  
Since maximal rigid sets and cosilting pairs are defined to be maximal among rigid sets and rigid pairs respectively, it is clear that the bijection between rigid sets and rigid pairs restricts to a bijection between maximal rigid sets and cosilting pairs.
\end{proof}

We saw in Section \ref{sec: cosilting sets} that there is a partial order on $\CosiltZg{A}$. The bijection in Proposition \ref{prop: bij rigid} now induces a partial order on $\Cosiltpair{A}$. 

\begin{corollary}\label{cor: cosilt pair poset}
The set $ \Cosiltpair{A} $ of cosilting pairs over $A$ admits the following partial order 
	\[   (\Z', \mathcal{I}')\leq (\Z, \mathcal{I}) \hspace{5mm} \Leftrightarrow \hspace{5mm}  \Cogen{\Z} \subseteq \Cogen{\Z'} . \]
\end{corollary}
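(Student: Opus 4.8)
The plan is to transport the partial order on $\CosiltZg{A}$ introduced in Section~\ref{sec: cosilting sets} along the bijection of Proposition~\ref{prop: bij rigid}, and then to verify that the induced relation on $\Cosiltpair{A}$ is the one in the statement. Since the image of a poset under a bijection is again a poset, the only thing to prove is the following identification: writing $\M := \N_{(\Z',\Ical')}$ and $\N := \N_{(\Z,\Ical)}$ for the maximal rigid sets corresponding under Proposition~\ref{prop: bij rigid} to two cosilting pairs, we must show
\[\M \leq \N \quad\Longleftrightarrow\quad \Cogen{\Z} \subseteq \Cogen{\Z'}.\]

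First I would reformulate the right-hand side in terms of torsion pairs in $\Mod{A}$. By construction of $\N_{(\Z,\Ical)}$ one has $\H{}(\mu_M) = M$ and $\H{}(I[-1]) = 0$, so $\H{}(\N)$ and $\Z$ differ at most by the zero module, which is irrelevant for $\Cogen{(-)}$; hence $\Cogen{\H{}(\N)} = \Cogen{\Z}$, and likewise $\Cogen{\H{}(\M)} = \Cogen{\Z'}$. By Corollary~\ref{cor: CB and cosilting set bijections}, the cosilting torsion pair $\psi^{-1}(\N)$ in $\Mod{A}$ equals $({}^{\perp_0}\H{}(\N), \Cogen{\H{}(\N)})$, and similarly for $\M$. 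So the claim becomes: $\M \leq \N$ holds if and only if the torsionfree class of $\psi^{-1}(\N)$ is contained in that of $\psi^{-1}(\M)$; equivalently, since for two torsion pairs $(\mathcal{T}_1,\mathcal{F}_1)$, $(\mathcal{T}_2,\mathcal{F}_2)$ one has $\mathcal{T}_1 \subseteq \mathcal{T}_2 \Leftrightarrow \mathcal{F}_2 \subseteq \mathcal{F}_1$, if and only if the torsion class of $\psi^{-1}(\M)$ is contained in the torsion class of $\psi^{-1}(\N)$.

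The core step is then to match $\leq$ on $\CosiltZg{A}$ with inclusion of the torsion classes of the corresponding cosilting torsion pairs. For the implication from $\M \leq \N$ to the inclusion of torsion classes, I would use Remark~\ref{rem: order aisles}: $\M \leq \N$ forces ${}^{\perp_{\leq 0}}\M \subseteq {}^{\perp_{\leq 0}}\N$, i.e.\ the aisle of $\tstr{\sigma_\M}$ is contained in the aisle of $\tstr{\sigma_\N}$. By Corollary~\ref{cor: CB and cosilting set bijections} and Theorem~\ref{Thm: cosilt bij}, these aisles are exactly the right HRS-tilts of the standard t-structure at $\psi^{-1}(\M)$ and $\psi^{-1}(\N)$; and from the description of the HRS-tilt aisle in Definition~\ref{HRS}, a module regarded as a complex in degree $0$ lies in such an aisle precisely when it belongs to the corresponding torsion class. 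Intersecting the inclusion of aisles with $\Mod{A}$ then yields the inclusion of torsion classes. For the converse implication I would simply invoke that $\psi$ is order-preserving (Corollary~\ref{cor: CB and cosilting set bijections}): if $\Cogen{\H{}(\N)} \subseteq \Cogen{\H{}(\M)}$, i.e.\ the torsionfree class of $\psi^{-1}(\N)$ is contained in that of $\psi^{-1}(\M)$, then $\psi^{-1}(\M) \leq \psi^{-1}(\N)$ in $\Cosilt{A}$, whence $\M = \psi(\psi^{-1}(\M)) \leq \psi(\psi^{-1}(\N)) = \N$.

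I expect the main (and rather mild) obstacle to be keeping track of the several order reversals in play — torsionfree classes reverse the inclusion of torsion classes, coaisles reverse aisles — together with the harmless discrepancy caused by the zero module possibly appearing in $\H{}(\N)$ via the stalks $I[-1]$. Beyond that the argument is a direct assembly of Proposition~\ref{prop: bij rigid}, Corollary~\ref{cor: CB and cosilting set bijections}, Remark~\ref{rem: order aisles} and Definition~\ref{HRS}.
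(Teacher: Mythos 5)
Your proposal is correct and follows essentially the same route as the paper: transport the partial order on $\CosiltZg{A}$ through the bijection of Proposition~\ref{prop: bij rigid}, identify $\H{}(\N_{(\Z,\Ical)})$ with $\Z$, and match the orders via Corollary~\ref{cor: CB and cosilting set bijections}. Your only addition is to spell out explicitly (via Remark~\ref{rem: order aisles} and the HRS-tilt aisle description) the direction from $\M\leq\N$ to $\Cogen{\Z}\subseteq\Cogen{\Z'}$, which the paper subsumes in its citation of Corollary~\ref{cor: CB and cosilting set bijections}; this is a harmless and indeed welcome elaboration, not a different argument.
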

\begin{proof}
 Consider $(\Z, \mathcal{I})$ and $(\Z', \mathcal{I}')$ in $\Cosiltpair{A}$ and the corresponding maximal rigid sets $\N:=\N_{(\Z, \mathcal I)}$ and $\N'=\N_{(\Z', \mathcal I')}$.  By Corollary~\ref{cor: CB and cosilting set bijections}, we have that $\N' \leq \N$ if and only if $\Cogen{\H{}(\N)}\subseteq\Cogen{\H{}(\N')}$.  Since $\H{}(\N) = \Z$ and $\H{}(\N') = \Z'$, we have that the induced partial order on $\Cosiltpair{A}$ is the one stated.
\end{proof}

\begin{proof}[Proof of Theorem \ref{thm:pairModBijection}]
Combining Proposition~\ref{prop: bij rigid}  with Corollary~\ref{cor: CB and cosilting set bijections} we obtain a bijection
	\[\Cosiltpair{A}\longrightarrow \CosiltZg{A} \overset{\psi^{-1}}{\longrightarrow}\Cosilt{A}\overset{\phi^{-1}}{\longrightarrow} \tors{A} \]
which takes a cosilting pair $(\Z, \Ical)$ first  to  $\N:=\N_{(\Z, \Ical)} = \{ \mu_M \mid M\in \Z\} \sqcup \{I[-1]\mid I\in \Ical\}$  and then to the torsion pair  $\psi^{-1}(\N)=({}^{\perp_0}\H{}(\N), \Cogen{\H{}(\N)})= ({}^{\perp_0}\Z , \Cogen{\Z} )$, and finally $\phi^{-1}$  restricts this torsion pair to $\mod{A}$. So the composite assignment is 
\[\Cosiltpair{A}\longrightarrow  \tors{A}, \; (\Z,\mathcal I)\mapsto t=({}^{\perp_0}\Z \cap \mod{A}, \Cogen{\Z} \cap \mod{A}).\] This proves the statement.
\end{proof}

Concatenating the bijection in Proposition~\ref{prop: bij rigid} with the bijections in Theorems \ref{Thm: cosilt bij} and \ref{thm:rigidCosiltBij}, we obtain the next corollary.

\begin{corollary}\label{cor: cosilting mods and pairs}
Let $A$ be a left artinian ring.  The assignment $(\Z, \Ical) \mapsto C_{(\Z, \Ical)} := \prod_{Z\in\Z}Z$ yields a bijection between $\Cosiltpair{A}$ and the set of equivalence classes of cosilting $A$-modules. 
\end{corollary}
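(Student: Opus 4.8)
The plan is to obtain Corollary~\ref{cor: cosilting mods and pairs} by simply composing bijections that are already in place, so the real content is bookkeeping: identifying which composite assignment the stated map $(\Z,\Ical)\mapsto C_{(\Z,\Ical)}:=\prod_{Z\in\Z}Z$ actually computes. First I would invoke Proposition~\ref{prop: bij rigid} to pass from a cosilting pair $(\Z,\Ical)$ to the maximal rigid set $\N:=\N_{(\Z,\Ical)}=\{\mu_M\mid M\in\Z\}\sqcup\{I[-1]\mid I\in\Ical\}$ in $\ZgInt{A}$. Then Theorem~\ref{thm:rigidCosiltBij} sends $\N$ to the $2$-term cosilting complex $\sigma_\N=\prod_{\omega\in\N}\omega$, and Theorem~\ref{Thm: cosilt bij} (bijection $(b)\to(c)$) sends $\sigma_\N$ to the cosilting module $C:=\H{}(\sigma_\N)$, taken up to the equivalence $\Prod{C}=\Prod{C'}$. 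Since each of these three arrows is a bijection onto its target, the composite is a bijection from $\Cosiltpair{A}$ to the set of equivalence classes of cosilting $A$-modules.

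It remains to check that this composite really is the map $(\Z,\Ical)\mapsto C_{(\Z,\Ical)}=\prod_{Z\in\Z}Z$ announced in the statement. Here I would use that $\H{}$ is additive and commutes with products (it is a right adjoint on the hearts, or more concretely $\H{}$ of a product of $2$-term complexes of injectives is the product of the $\H{}$'s, as already used in the proof of Lemma~\ref{lem:orthInProdRigSys}), so
	\[\H{}(\sigma_\N)=\H{}\Bigl(\prod_{M\in\Z}\mu_M\ \times\ \prod_{I\in\Ical}I[-1]\Bigr)\cong\prod_{M\in\Z}\H{}(\mu_M)\ \times\ \prod_{I\in\Ical}\H{}(I[-1]).\]
Now $\H{}(\mu_M)\cong M$ by definition of the minimal injective copresentation, while $\H{}(I[-1])=\mathrm{H}^0(I[-1])=0$ since $I[-1]$ is concentrated in degree $1$. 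Hence $\H{}(\sigma_\N)\cong\prod_{M\in\Z}M=C_{(\Z,\Ical)}$, which identifies the composite bijection with the stated assignment. This also makes transparent why the $\Ical$-component does not appear in $C_{(\Z,\Ical)}$: the injective stalks $I[-1]$ are invisible to $\H{}$, and the information they carry is instead encoded in the equivalence class of $C$ via $\Prod{}$ (equivalently, via $\Ann{C}$, by Proposition~\ref{prop: cosilt is local cotilt}), rather than in the module $C$ itself.

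The only point requiring any care — and the mild obstacle — is the passage to equivalence classes: two cosilting pairs can give genuinely different products $\prod_{Z\in\Z}Z$ only up to the relation $\Prod{-}$, and one must be sure the bijection is stated with respect to that relation on both sides. But this is automatic from the chain of bijections: Theorem~\ref{Thm: cosilt bij} already parametrises cosilting modules only up to $\Prod$-equivalence, and Proposition~\ref{Prop: ele cog consequences}(4) guarantees $\Prod{\sigma_\N}=\Prod{\N}=\Prod{\sigma}$ for the representative $\sigma$, so no information is lost or spuriously added. Thus, assembling Proposition~\ref{prop: bij rigid}, Theorem~\ref{thm:rigidCosiltBij} and Theorem~\ref{Thm: cosilt bij}, and computing $\H{}$ of the relevant product as above, completes the proof.
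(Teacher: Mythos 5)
Your proof is correct and follows exactly the route the paper intends: the corollary is obtained by concatenating the bijections of Proposition~\ref{prop: bij rigid}, Theorem~\ref{thm:rigidCosiltBij} and Theorem~\ref{Thm: cosilt bij}, with equivalence classes handled via Proposition~\ref{Prop: ele cog consequences}(4). Your explicit check that $\H{}(\sigma_\N)\cong\prod_{Z\in\Z}Z$ (using that $\H{}$ commutes with products, $\H{}(\mu_M)\cong M$ and $\H{}(I[-1])=0$) is a welcome spelling-out of a step the paper leaves implicit.
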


Over a finite dimensional algebra we can now identify the cosilting pairs which correspond to functorially finite torsion pairs under the bijection in Theorem \ref{thm:pairModBijection}. 
\begin{proposition} \label{rem: small cosilt pair}
Let $\Lambda$ be a finite-dimensional algebra. 

(1) A module $C\in\mod{\Lambda}$  is  $\tau^{-}-$rigid if and only if it has property (ii) in Definition~\ref{rpair}, and it is support $\tau^{-}$-tilting if and only if it is a cosilting module.

(2) In a cosilting pair $(\Z,\mathcal{I})$,  the set $\Z$    is a finite set of finitely generated indecomposable modules if and only if $C_{(\Z, \Ical)}$ is a support $\tau^{-}-$tilting module.

(3) Under the bijection in Theorem~\ref{thm:pairModBijection}, the cosilting pairs in (2) correspond to the functorially finite torsion pairs in $\tors{\Lambda}$.
\end{proposition}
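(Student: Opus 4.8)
The plan is to deduce all three statements from a dual version of the main results of \cite{AdachiIyamaReiten:14} (cf.\ also \cite{DemonetIyamaJasso:19}), together with the bijections already established in Theorem~\ref{thm:pairModBijection} and Corollary~\ref{cor: cosilting mods and pairs}. The only genuinely non-formal input is (1): once we know that a finite-dimensional module $C$ is $\tau^{-}$-rigid exactly when $\mu_C$ is a rigid complex, and that the finite-dimensional cosilting $\Lambda$-modules are precisely the support $\tau^{-}$-tilting modules, statements (2) and (3) reduce to bookkeeping.

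For the first assertion of (1), note that $\{\mu_X\}$, with $X$ ranging over the indecomposable summands of $C$, is a rigid set in $\Der{\Lambda}$ if and only if $\Hom{\Der{\Lambda}}(\mu_C, \mu_C[1]) = 0$; by Remark~\ref{Rem: rigid C sigma}(1) this holds iff $C = \H{}(\mu_C) \in \C_{\mu_C}$, and by Remark~\ref{Rem: rigid C sigma}(2) the latter says that every submodule of $C$ lies in ${}^{\perp_1}C$, which is the standard characterisation of $\tau^{-}$-rigidity, dual to the description of $\tau$-rigid modules in \cite{AdachiIyamaReiten:14}. For the equivalence ``support $\tau^{-}$-tilting $\Leftrightarrow$ cosilting'' of a module $C \in \mod{\Lambda}$: if $(C,I)$ is a support $\tau^{-}$-tilting pair then, dualising through $D = \Hom{k}(-,k)$ the bijection between support $\tau$-tilting pairs and $2$-term silting complexes in \cite[Thm.~3.2]{AdachiIyamaReiten:14} (see also \cite{AngeleriMarksVitoria:16, BreazPop:17}), the complex $\mu_C \oplus I[-1]$ is a $2$-term cosilting complex, so $C = \H{}(\mu_C \oplus I[-1])$ is a cosilting module by Theorem~\ref{Thm: cosilt bij}. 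Conversely, let $C \in \mod{\Lambda}$ be cosilting. By Proposition~\ref{prop: cosilt is local cotilt}, $C$ is a finite-dimensional cotilting module over $\overline{\Lambda} = \Lambda/\Ann{C}$; a finite-dimensional cotilting module is a classical cotilting module and hence $\tau^{-}$-tilting over $\overline{\Lambda}$, and it remains to check that $\Ann{C}$ is generated by an idempotent (equivalently, that the torsionfree class $\Cogen{C} \cap \mod{\Lambda}$ is functorially finite), so that $C$ is support $\tau^{-}$-tilting over $\Lambda$. This converse is the step requiring the most care, as it is where the purity-theoretic notion of a cosilting module must be matched with the classical notion of support $\tau^{-}$-tilting module; an alternative route is to use Lemma~\ref{Lem: complexes determined by modules} and Lemma~\ref{cor:cSigmaCosilting} to reduce, up to equivalence, the cosilting complex $\sigma$ with $\H{}(\sigma) = C$ to the form $\mu_C \oplus I[-1]$ with $I$ a finite-dimensional injective, observe that $\closed{\sigma}$ is then finite, and conclude via the dualised \cite[Thm.~3.2]{AdachiIyamaReiten:14}.

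Granting (1), statement (2) follows at once: by Corollary~\ref{cor: cosilting mods and pairs} the module $C_{(\Z,\Ical)} = \prod_{Z \in \Z} Z$ is always cosilting, and it is finite-dimensional precisely when $\Z$ is a finite set of finite-dimensional modules (the elements of $\Z \subseteq \Zg{\Lambda}$ being automatically indecomposable), so by (1) this happens exactly when $C_{(\Z,\Ical)}$ is support $\tau^{-}$-tilting. For (3), observe that when $\Z$ is a finite set of finite-dimensional indecomposables we have $\prod_{Z \in \Z} Z = \bigoplus_{Z \in \Z} Z = C_{(\Z,\Ical)}$, whence ${}^{\perp_0}\Z = {}^{\perp_0}C_{(\Z,\Ical)}$ and $\Cogen{\Z} \cap \mod{\Lambda} = \Cogen{C_{(\Z,\Ical)}} \cap \mod{\Lambda}$; so the bijection of Theorem~\ref{thm:pairModBijection} carries such a pair to the torsion pair with torsionfree class $\Cogen{C_{(\Z,\Ical)}} \cap \mod{\Lambda}$, and since $C_{(\Z,\Ical)}$ is support $\tau^{-}$-tilting by (2) this pair is functorially finite by the dual of \cite[Thm.~2.7]{AdachiIyamaReiten:14}. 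Conversely, every functorially finite torsion pair in $\mod{\Lambda}$ has torsionfree class of the form $\Cogen{C} \cap \mod{\Lambda}$ for a support $\tau^{-}$-tilting module $C$; tracing this back through Theorem~\ref{thm:pairModBijection} and Corollary~\ref{cor: cosilting mods and pairs}, and using that for the finite-dimensional cosilting module $C$ the closed set $\Prod{C} \cap \Zg{\Lambda}$ of Corollary~\ref{lem:cosiltElemCog} consists only of the finitely many indecomposable summands of $C$ (a consequence of $C$ having finite endolength), one sees that the cosilting pair attached to the given torsion pair indeed has $\Z$ finite and finite-dimensional.
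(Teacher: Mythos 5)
Your treatment of the rigid part of (1) and of statements (2) and (3) is essentially the paper's: the paper proves $\tau^-$-rigid $\Leftrightarrow$ property (ii) via the same $\Ext^1(\mathrm{Sub}\,X,Y)=0$ characterisation (quoting Auslander--Smal\o) together with Remark~\ref{Rem: rigid C sigma}, and it disposes of (2) and (3) by exactly the bookkeeping you describe, citing the duals of \cite[Prop.~2.3]{AdachiIyamaReiten:14} and \cite[Thm.~2.15]{AdachiIyamaReiten:14}; your extra argument for the converse of (3) (that $\Prod{C}\cap\Zg{\Lambda}$ is the finite set of indecomposable summands of a finite-dimensional $C$, by endofiniteness) is correct and harmless.

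The genuine problem is in your converse of the second half of (1). For a finite-dimensional cosilting module $C$ you reduce, via Proposition~\ref{prop: cosilt is local cotilt}, to $C$ being cotilting (hence $\tau^-$-tilting) over $\overline{\Lambda}=\Lambda/\Ann{C}$, and then claim that ``it remains to check that $\Ann{C}$ is generated by an idempotent (equivalently, that $\Cogen{C}\cap\mod{\Lambda}$ is functorially finite)''. This step cannot be carried out: support $\tau^-$-tilting is defined as $\tau^-$-tilting over $\Lambda/(e)$ for \emph{some} idempotent $e$ with $(e)\subseteq\Ann{C}$, and the annihilator of a support $\tau^-$-tilting (equivalently, finite-dimensional cosilting) module is in general \emph{not} idempotent-generated --- dually to the fact that $\tau$-tilting modules are sincere but need not be faithful (e.g.\ the $\tau$-tilting module $P_1\oplus S_1$ over the preprojective algebra of $A_2$ has nilpotent nonzero annihilator; its $k$-dual is a finite-dimensional cosilting module of the same kind). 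So your ``remaining check'' is a false statement in general, and the claimed equivalence with functorial finiteness of $\Cogen{C}\cap\mod{\Lambda}$ also fails, since that class \emph{is} functorially finite for every support $\tau^-$-tilting $C$. Your fallback route (reduce $\sigma$ to $\mu_C\oplus I[-1]$ with $I$ a finite-dimensional injective and invoke the dual of \cite[Thm.~3.2]{AdachiIyamaReiten:14}) is the right idea --- the reduction is exactly Lemma~\ref{lem: artin alg injectives}(2) --- but as written it silently needs the bridge between the paper's large 2-term cosilting complexes (conditions on $\sigma^I$ and $\thick{\Prod{\sigma}}=\Kb{\Lambda}$) and the compact 2-term (co)silting complexes of \cite{AdachiIyamaReiten:14}; that bridge is precisely the content of the dual of \cite[Prop.~3.15]{AngeleriMarksVitoria:16}, which is what the paper cites for the whole of (1). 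Either cite that result directly, or supply the missing comparison (e.g.\ using endofiniteness of $\mu_C\oplus I[-1]$ to identify $\Prod{}$ with $\add{}$ and to check the product-rigidity and thickness conditions); with the idempotent-annihilator detour deleted, the rest of your argument then goes through.
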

 \begin{proof}
 (1) is the dual version of \cite[Proposition 3.15]{AngeleriMarksVitoria:16}.
  We give an argument for the reader's convenience. By definition a module $C\in\mod{\Lambda}$  is $\tau^{-}-$rigid if and only if for any two indecomposable summands $X,Y$ of $C$ we have that $\Hom{\Lambda}( \tau^{-}Y,  X) = 0 $, which means precisely that $ \Ext{\Lambda}^1(\mathrm{Sub}\,X, Y) = 0$, where $\mathrm{Sub}\,X$ is the class of all submodules of finite products  of copies of $X$, see  \cite[Proposition 5.6]{AuslanderSmalo:81}.  By Remark~\ref{Rem: rigid C sigma}, the latter condition amounts to property (ii) in Definition~\ref{rpair}. 
  
 (2) follows immediately from (1) or from the dual version of \cite[Proposition 2.3]{AdachiIyamaReiten:14}.

 (3) Under the bijection in Theorem~\ref{thm:pairModBijection}, a cosilting pair $(\Z,\mathcal{I})$ as in (2) is taken to the torsion pair $({}^{\perp_0}C\cap\mod{\Lambda}, \mathrm{Sub}\,C)$ where $C=C_{(\Z, \Ical)}$ is the associated support $\tau^{-}-$tilting module.
 The statement thus follows from  \cite[Theorem 2.15]{AdachiIyamaReiten:14}.
\end{proof}

The bijective assignment in Theorem \ref{Thm: cosilt bij} taking cosilting modules to cosilting complexes is not constructive. It is therefore not obvious how to explicitly write down the inverse $C \mapsto (\Z_C, \Ical_C)$ to the bijection in Corollary~\ref{cor: cosilting mods and pairs}.  We spend the remainder of this section addressing this question.

\begin{proposition}\label{prop: cosilt pair prod cosilt mod}
Let $A$ be a left artinian ring $A$ and let $C$ be a cosilting $A$-module with associated cosilting torsion pair  $ (\T, \F)$ in $\Mod{A}$. If $(\Z_C, \Ical_C)$ is the cosilting pair which is mapped to $C$ in Corollary \ref{cor: cosilting mods and pairs}, then 
$\Z_C=\Prod{C} \cap \Zg{A}=\F \cap \F^{\perp_1} \cap \Zg{A}$.

In particular, the set  $\Z$ in a cosilting pair $(\Z,\mathcal I)$ is always a closed set in $\Zg{A}$.

\end{proposition}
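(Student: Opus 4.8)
The plan is to identify the three sets $\Z_C$, $\Prod{C}\cap\Zg{A}$ and $\F\cap\F^{\perp_1}\cap\Zg{A}$ one equality at a time, using the machinery already assembled in the excerpt. First I would recall from Corollary~\ref{cor: cosilting mods and pairs} that the cosilting pair mapped to $C$ is obtained by first passing to the maximal rigid set $\N$ corresponding to the cosilting torsion pair $(\T,\F)$ (equivalently to the cosilting complex $\sigma$ with $\H{}(\sigma)=C$) and then reading off $\Z_\N=\{M\in\Zg{A}\mid \mu_M\in\N\}$. By Theorem~\ref{thm:rigidCosiltBij} and Proposition~\ref{Prop: ele cog consequences}(4) we have $\N=\closed{\sigma}=\Prod{\sigma}\cap\Zg{\Der{A}}$, and by Notation~\ref{not: bij cosilting sets pairs} the set $\Z_\N$ coincides with $\H{}(\N)$. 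The indecomposable modules in $\H{}(\N)$ are precisely the $\H{}(\mu)$ for $\mu\in\closed{\sigma}$ that are not of the form $I[-1]$; since $\H{}$ restricts to an equivalence $\Prod{\sigma}\to\Inj{\heart{\sigma}}$ by Theorem~\ref{Thm: cosilt summary}(3), these are exactly the indecomposable pure-injective modules lying in $\H{}(\Prod{\sigma})$. The key point, which I would spell out, is that $\H{}(\Prod{\sigma})$ (restricted to honest modules, i.e. discarding the shifted injective summands) is exactly $\Prod{C}$ as a subcategory of $\Mod{A}$: indeed $C=\H{}(\sigma)$ and $\H{}$ commutes with products, so any product of copies of $C$ together with injective summands is $\H{}$ of a product in $\Prod{\sigma}$, and conversely. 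Hence $\Z_C=\Prod{C}\cap\Zg{A}$. This is essentially the translation, via $\H{}$, of the identity $\closed{\sigma}=\Prod{\sigma}\cap\Zg{\Der{A}}$ down to the module level, and the one subtlety is keeping track of the shifted-injective summands $I[-1]$, which contribute to $\Ical_C$ but not to $\Z_C$; I would handle this by appealing to Lemma~\ref{Lem: complexes determined by modules}.

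For the second equality $\Prod{C}\cap\Zg{A}=\F\cap\F^{\perp_1}\cap\Zg{A}$ I would argue that $C$ is a \emph{cotilting} module over $\overline{A}=A/\Ann{C}$ by Proposition~\ref{prop: cosilt is local cotilt}, with $\Cogen{C}=\F$; for a cotilting module the equality $\Prod{C}=\Cogen{C}\cap{}^{\perp_1}C$ is the standard description of $\Prod{C}$, and $\Cogen{C}\cap{}^{\perp_1}C=\F\cap\F^{\perp_1}$ follows because $\F={}^{\perp_1}$-closure behaves well: for $X\in\F=\Cogen{C}$ one has $\Ext{A}^1(X,C)=0$ iff $\Ext{A}^1(X,F')=0$ for all $F'\in\F$, using that every module in $\F$ embeds in a product of copies of $C$ and $\Ext{A}^1(X,-)$ is left exact on such embeddings up to the cokernel which again lies in $\F$. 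Intersecting with $\Zg{A}$ gives the claimed equality. Alternatively — and this is probably cleaner — I would note that $\Prod{C}=\V\cap\V^{\perp_1}$ at the level of $\Der{A}$ by Theorem~\ref{Thm: cosilt summary}(2) with $\V={}^{\perp_{>0}}\sigma$, and that the torsion pair $(\T,\F)$ is recovered as $(\H{}(\D^{<0}\cap\X_\sigma),\H{}(\V\cap\D^{\ge0}))$ from the HRS description; truncating to degree $0$ translates $\V\cap\V^{\perp_1}\cap\Zg{\Der{A}}$ (on the non-shifted part) precisely into $\F\cap\F^{\perp_1}\cap\Zg{A}$.

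The final assertion — that $\Z$ in any cosilting pair is a closed subset of $\Zg{A}$ — is then immediate: by Corollary~\ref{cor: cosilting mods and pairs} every cosilting pair is $(\Z_C,\Ical_C)$ for some cosilting module $C$, and $\Z_C=\Prod{C}\cap\Zg{A}$ is closed in $\Zg{A}$ by Corollary~\ref{lem:cosiltElemCog} (i.e. $C$ is an elementary cogenerator). I expect the main obstacle to be the bookkeeping in the first equality: carefully separating, inside $\closed{\sigma}$, the genuine modules $\mu_M$ from the stalks $I[-1]$ and checking that $\H{}$ sets up the correspondence $\mu_M\leftrightarrow M$ compatibly with $\Prod$, rather than any deep new argument. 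Everything else is an application of results already in the excerpt, principally Theorem~\ref{Thm: cosilt summary}, Proposition~\ref{Prop: ele cog consequences}, Proposition~\ref{prop: cosilt is local cotilt} and Corollary~\ref{lem:cosiltElemCog}.
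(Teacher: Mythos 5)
Your first equality and the closing assertion are handled essentially as in the paper: the paper also identifies $\Z_C$ with $\H{}(\N_\sigma)$ via Theorem~\ref{thm:rigidCosiltBij} and Proposition~\ref{prop: bij rigid}, checks the inclusion $\H{}(\N_\sigma)\subseteq\Prod{C}\cap\Zg{A}$ using that $\H{}$ is additive and commutes with products together with Lemma~\ref{Lem: homologypi}, and for the converse observes that for $N\in\Prod{C}\cap\Zg{A}$ the minimal injective copresentation $\mu_N$ lies in $\Prod{\sigma}$ and is indecomposable by Lemma~\ref{lem: 2-term Zg to modules Zg}, hence belongs to $\N_\sigma$; your bookkeeping via Lemma~\ref{Lem: complexes determined by modules} is the same idea, though you should make the converse inclusion explicit rather than leaving it at ``and conversely''. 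The closedness of $\Z$ via Corollaries~\ref{cor: cosilting mods and pairs} and~\ref{lem:cosiltElemCog} is exactly the paper's argument.

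The genuine gap is in the second equality. Your ``standard description'' $\Prod{C}=\Cogen{C}\cap{}^{\perp_1}C$ is false: for a cotilting module $C$ over $\overline{A}=A/\Ann{C}$ one has $\Cogen{C}={}^{\perp_1}C$ inside $\Mod{\overline{A}}$, so that intersection is all of $\F$, which properly contains $\Prod{C}$ in general. Moreover, the verification you sketch tests the condition $\Ext{A}^1(X,C)=0$, i.e.\ membership of $X$ in ${}^{\perp_1}C$, whereas membership in $\F^{\perp_1}$ means $\Ext{A}^1(F',X)=0$ for all $F'\in\F$ --- the Ext-vanishing is in the other variable, so your argument does not address the class you need (and the stated ``iff'' is not correct as argued either). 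What is actually required is that $\Prod{C}$ coincides with the Ext-injective objects of the cotilting class, $\Prod{C}=\F\cap\F^{\perp_1}$; the paper obtains this by citing \cite[Rem.~3.4]{Angeleri:18}, and a proof needs the special approximation property of cotilting classes (every $F\in\F$ embeds in a product of copies of $C$ with cokernel again in $\F$) rather than the reduction you wrote down. Your alternative route via Theorem~\ref{Thm: cosilt summary}(2) does not close this gap: that statement describes $\Prod{\sigma}$, not $\Prod{C}$, and the claim that ``truncating to degree $0$ translates $\V\cap\V^{\perp_1}$ into $\F\cap\F^{\perp_1}$'' is precisely the assertion to be proved. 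Once the middle step is replaced by the correct statement $\Prod{C}=\F\cap\F^{\perp_1}$ (by citation or proof), the remainder of your plan goes through.
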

\begin{proof}
The bijection in Theorem \ref{Thm: cosilt bij} associates $C$ with a cosilting complex $\sigma$ in $\Der{A}$ which is an injective copresentation of $C$. Theorem \ref{thm:rigidCosiltBij} maps $\sigma$ to the maximal rigid set $\N_\sigma = \Prod{\sigma}\cap \Zg{\Der{A}}$.  Now, concatenating these bijections  with the assignment in Proposition \ref{prop: bij rigid}, we get that $\Z_C = \H{}(\N_\sigma)$.
Let us verify that $\H{}(\N_\sigma)=\Prod C\cap\Zg{A}$.  If $\mu\in\N_\sigma$, then  $\H{}(\mu)\in\Prod C\cap\Zg{A}$ by construction and Lemma~\ref{Lem: homologypi}. Conversely, if $N\in\Prod C\cap\Zg{A}$, then the 2-term complex $\mu_N$ corresponding to its injective copresentation lies in $\Prod{\sigma}$, and it is indecomposable by Lemma~\ref{lem: 2-term Zg to modules Zg}, hence $\mu_N\in\N_\sigma$ and $N\in\H{}(\N_\sigma)$. 
This proves the first equality. For the second equality, recall from \cite[Rem.~3.4]{Angeleri:18} that $\Prod{C} = \F \cap \F^{\perp_1}$.  
The last statement follows from Corollaries~\ref{cor: cosilting mods and pairs} and~\ref{lem:cosiltElemCog}.
\end{proof}

To identify the set $\Ical_C$, we need to specialise the ring to an artin algebra $\Lambda$.  

\begin{lemma}\label{lem: artin alg injectives}
Let $\Lambda$ be an artin algebra and $\sigma$  a cosilting complex with $C: =\H{}(\sigma)$.  Then the following statements hold for the set $\J_C := \InjInd{\Lambda} \cap C^{\perp_0}$.
\begin{enumerate}
\item $\Cogen{C} \subseteq {}^{\perp_0}\J_C$.
\item The cosilting complex $\sigma$ is equivalent to the cosilting complex $ \omega = \mu_C \oplus I_C[-1]$ where  $ I_C $ is the direct sum of the modules in $\mathcal{J}_C$.
\end{enumerate}
\end{lemma}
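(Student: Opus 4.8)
The plan is to exploit the fact that, over an artin algebra, the torsionfree class $\Cogen{C}$ of a cosilting torsion pair $(\T,\F)$ is determined by its finitely generated members together with the behaviour of injectives. For part (1), I would argue as follows. Let $J \in \J_C$, so $J$ is an indecomposable injective with $\Hom{\Lambda}(C,J) = 0$. Since $\Cogen{C} = {}^{\perp_0}\mathcal I$-type classes are closed under submodules, products and direct limits (being a torsionfree class of finite type, cf.~Proposition \ref{prop: closure properties} and the discussion following it), it suffices to show $\Hom{\Lambda}(X, J) = 0$ for every $X \in \Cogen{C}$. First one reduces to finitely generated $X$ using that $\Cogen{C}$ is generated under direct limits and submodules by $\Cogen{C}\cap\mod{\Lambda}$, and that $J$ is finitely generated (hence $\Hom{\Lambda}(-,J)$ commutes with the relevant direct limits, or more simply: a nonzero map from any $X$ restricts to a nonzero map from a finitely generated submodule of $X$, which again lies in $\Cogen{C}$). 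So take $X \in \Cogen{C}\cap\mod{\Lambda}$ and suppose $f\colon X \to J$ is nonzero; then $\Im{f}$ is a nonzero finitely generated submodule of the injective indecomposable $J$, hence its injective envelope is $J$ itself, so there is a nonzero map $\Im{f}\to J$ and thus, composing, a nonzero map $C^{(\kappa)} \twoheadrightarrow$ (some quotient mapping onto a submodule of $J$) — more carefully: since $X \in \Cogen{C}$ embeds in a product $C^{\kappa}$, and since the socle of $J$ is simple and finitely generated, one finds a simple submodule $S\subseteq \Im{f}\subseteq J$ which is also a quotient of $X$ hence of $C^{\kappa}$, so $\Hom{\Lambda}(C,S)\neq 0$; but $S = \Soc{J}$, and any nonzero map $C \to S$ lifts along $S\hookrightarrow J$ to a nonzero map $C\to J$ (as $J$ is the injective envelope of $S$), contradicting $\Hom{\Lambda}(C,J)=0$. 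Hence $f = 0$ and $\Cogen{C}\subseteq {}^{\perp_0}\J_C$.

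For part (2), the strategy is to show that $\omega = \mu_C \oplus I_C[-1]$ is itself a $2$-term cosilting complex and that it is equivalent to $\sigma$, i.e.\ $\Prod{\omega} = \Prod{\sigma}$. By Lemma \ref{Lem: complexes determined by modules}(1), $\sigma$ decomposes as $\mu_C \oplus \nu \oplus I[-1]$ with $\nu$ a contractible isomorphism and $I$ injective; since $\Prod{\sigma}$ only depends on $\sigma$ up to the homotopy isomorphism class, we may assume $\sigma = \mu_C \oplus I[-1]$. Now $I$ is some injective module, and each indecomposable summand $I'$ of $I$ satisfies $I'[-1]\in\Prod{\sigma}\subseteq {}^{\perp_{>0}}\sigma\cap({}^{\perp_{>0}}\sigma)^{\perp_1}$ (Theorem \ref{Thm: cosilt summary}(2)); applying $\H{\sigma}$ and using that $\H{\sigma}\colon\Prod{\sigma}\to\Inj{\heart{\sigma}}$ is an equivalence with $\Hom{\heart{\sigma}}(\H{\sigma}(-),\H{\sigma}(\sigma))\cong\Hom{\Der{\Lambda}}(-,\sigma)$ (Theorem \ref{Thm: cosilt summary}(3)), one gets $\Hom{\Der{\Lambda}}(I'[-1],\sigma)$ controls whether $I'$ is "seen" by $C$; concretely $\Hom{\Der{\Lambda}}(I'[-1],\mu_C) \cong \Hom{\Der{\Lambda}}(I', \mu_C[1])$, which by Remark \ref{Rem: rigid C sigma}(1) vanishes iff $\H{}(I') = I' \in \C_{\mu_C}$, and one checks (using Remark \ref{Rem: rigid C sigma}(2)) that this forces every submodule of $I'$ — in particular $I'$ itself and $\Soc{I'}$ — to lie in ${}^{\perp_1}C$; combined with $I'[-1]$ actually being a summand of $\sigma$, which forces $\Hom{\Lambda}(C, I') = 0$ (otherwise a nonzero map $C\to I'$ would give a nonzero $\mu_C \to I'$ in $\Der{\Lambda}$, contradicting rigidity of $\Prod{\sigma}$, cf.\ Lemma \ref{lem:orthInProdRigSys}), we conclude $I'\in\J_C$. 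This shows $I$ is a direct sum of copies of modules in $\J_C$, hence $\Prod{I[-1]} \subseteq \Prod{I_C[-1]}$, giving $\Prod{\sigma}\subseteq\Prod{\omega}$. For the reverse, one must show $I_C[-1]\in\Prod{\sigma}$, i.e.\ every $J\in\J_C$ satisfies $J[-1]\in\Prod{\sigma} = \V\cap\V^{\perp_1}$ with $\V = {}^{\perp_{>0}}\sigma$: the condition $J[-1]\in\V$ unwinds via Remark \ref{Rem: rigid C sigma} to $J\in\C_{\mu_C}$, equivalently every submodule of $J$ lies in ${}^{\perp_1}C$, which one derives from part (1) together with the fact that $\F = \Cogen{C}$ is contained in $\F^{\perp_1}$ is false in general — so instead one uses that for $J\in\J_C$, $\Hom{\Lambda}(C,J)=0$ means $J\in\T^{\perp_0}=\F$ in the heart...

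The main obstacle I anticipate is precisely this last point: showing $I_C[-1]\in\Prod{\sigma}$, i.e.\ that \emph{all} the injectives killed by $C$ on the left are already accounted for in $\Prod{\sigma}$, rather than just the ones appearing in the chosen copresentation. The clean way around it is probably not to compare $\sigma$ and $\omega$ directly but to show $\omega$ is a cosilting complex with $\H{}(\omega) = \mu_C\oplus I_C[-1]$ having zeroth cohomology $C$, apply the uniqueness in Theorem \ref{Thm: cosilt bij} (the map $\sigma\mapsto\H{}(\sigma)=C$ is a bijection on equivalence classes), and conclude $\omega\sim\sigma$. For that I need: $\omega$ rigid (clear: $\Hom{\Der{\Lambda}}(\mu_C,\mu_C[1])=0$ since $\mu_C$ is a summand of the cosilting complex $\sigma$; $\Hom{\Der{\Lambda}}(\mu_C, J[-1][1]) = \Hom{\Der{\Lambda}}(\mu_C, J) $ which vanishes because... hmm, this needs $\Hom{\Lambda}(C,J)=0$ plus a homotopy argument; $\Hom{\Der{\Lambda}}(J[-1], \mu_C[1])=0$ for degree reasons; $\Hom{\Der{\Lambda}}(J[-1], J'[1]) = \Ext{\Lambda}^2(J,J')=0$), together with $\C_\omega = \Cogen{C}$, which is where part (1) gets used: $\C_\omega = \C_{\mu_C}\cap\C_{I_C[-1]}$, the first factor equals $\{X : \text{every submodule of } X \in {}^{\perp_1}C\}$ and the second is all of $\Mod{\Lambda}$, and one checks this intersection equals $\Cogen{C}$ using that $C$ is cotilting over $\Lambda/\Ann{C}$ (Proposition \ref{prop: cosilt is local cotilt}). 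Then Lemma \ref{lem: completion} (or directly Lemma \ref{cor:cSigmaCosilting}) promotes $\omega$ to a cosilting complex, and the bijection forces equivalence with $\sigma$. This route sidesteps the delicate $\Prod$ computation and is the one I would write up.
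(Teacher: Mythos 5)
Your fallback architecture for part (2) --- show that $C$ is cosilting with respect to $\omega$, i.e.\ $\C_\omega = \Cogen{C}$, and then invoke the bijection of Theorem \ref{Thm: cosilt bij} --- is exactly the paper's route, but your execution of the key equality breaks down. Since $I_C[-1]$ is the two-term complex $0\to I_C$, a module $M$ lies in $\C_{I_C[-1]}$ precisely when $\Hom{\Lambda}(M,I_C)=0$; so $\C_{I_C[-1]}={}^{\perp_0}I_C$, not all of $\Mod{\Lambda}$ as you claim. With that miscomputation your plan reduces to proving $\C_{\mu_C}=\Cogen{C}$, which is false in general: for $\Lambda$ the path algebra of $1\to 2$ and $C=S_1$ (simple injective), $\mu_C$ is $I_1\to 0$, so $\C_{\mu_C}=\Mod{\Lambda}$ while $\Cogen{C}=\mathrm{Add}(S_1)$. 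The inclusion $\C_\omega=\C_{\mu_C}\cap{}^{\perp_0}I_C\subseteq\Cogen{C}$ does not follow from the cotilting description over $\Lambda/\Ann{C}$ alone; the paper obtains it by decomposing the given complex via Lemma \ref{Lem: complexes determined by modules} as $\sigma\cong\mu_C\oplus\nu\oplus I[-1]$, so that $\Cogen{C}=\C_\sigma=\C_{\mu_C}\cap{}^{\perp_0}I$, and then observing that $\Cogen{C}\subseteq{}^{\perp_0}I$ together with $C\in\Cogen{C}$ forces every indecomposable summand of $I$ to lie in $\J_C$, whence ${}^{\perp_0}I_C\subseteq{}^{\perp_0}I$ and $\C_\omega\subseteq\C_\sigma=\Cogen{C}$. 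You actually recorded the crucial observation (the summands of $I$ belong to $\J_C$) in your abandoned $\Prod$-comparison, but it is absent from the plan you propose to write up, and without it the argument does not close.

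Part (1) also has a gap: the simple socle $S\subseteq\Im{f}$ is a submodule of the quotient $\Im{f}$, not a quotient of $X$, and the inference from a nonzero map out of $C^{\kappa}$ to $\Hom{\Lambda}(C,S)\neq 0$ is exactly the product-closure statement you are trying to avoid, so it cannot simply be asserted. The paper instead notes that $C\in{}^{\perp_0}\J_C$, that ${}^{\perp_0}\J_C={}^{\perp_0}I_C$ is closed under submodules, and that it is closed under products because $I_C$ is finitely presented, hence ${}^{\perp_0}I_C$ is definable. Alternatively, your reduction can be repaired without any product-closure input: a finitely generated $X\in\Cogen{C}$ has finite length and therefore embeds in a finite power $C^m$, so a nonzero $f\colon X\to J$ extends along this embedding by injectivity of $J$ and yields a nonzero map $C\to J$, contradicting $J\in\J_C$. (A small slip elsewhere: $\Hom{\Der{\Lambda}}(J[-1],(J'[-1])[1])\cong\Ext{\Lambda}^1(J,J')$, not $\Ext{\Lambda}^2(J,J')$; both vanish, so this is harmless.)
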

\begin{proof}
(1) The set ${}^{\perp_0}\J_C$ is clearly closed under submodules and so it suffices to show it is closed under products. We observe that $\InjInd{\Lambda}$ is a finite set of (equivalence classes of) finitely presented modules and so, for any $\Ical \subset \InjInd{\Lambda}$, the direct sum $I$ of representatives of the elements of $\Ical$ is a finitely presented module.  Thus the subcategory $ {}^{\perp_0}I$ is definable \cite[Example 2.3]{Dean:19} and hence closed under products. 

(2) By assumption and Theorem~\ref{Thm: cosilt bij} we know that $C$ is a cosilting module with $\Cogen{C}=\C_\sigma$. By Lemma~\ref{Lem: complexes determined by modules} we can decompose $ \sigma$ as a 2-term complex into $\mu_C \oplus \nu \oplus I[1] $ where $ \nu $ is an isomorphism and $ I $ an injective module. Since $\mathcal{C}_\nu = \Mod{\Lambda}$ and $\mathcal{C}_\sigma = \mathcal{C}_{\mu_C} \cap \mathcal{C}_\nu \cap {}^{\perp_0} I$, we may assume that $ \nu = 0 $.
Moreover, since $ \Cogen{C} \subseteq {}^{\perp_0} I $, every indecomposable summand of $ I $ must occur as a summand of $ I_C $.  
Thus $ {}^{\perp_0}I_C \subseteq {}^{\perp_0}I $ and $ \mathcal{C}_\omega \subseteq \Cogen{C} $.  
Conversely, $ \Cogen{C} \subseteq {}^{\perp_0}I_C$ by (1), and  $ \Cogen{C} = \mathcal{C}_{\sigma} \subseteq \mathcal{C}_{\mu_C} $. So, we conclude that $ \mathcal{C}_\omega = \Cogen{C} $. This shows that $C$ is also cosilting with respect to  $\omega$. Thus $\sigma$ and $\omega$ are equivalent cosilting complexes by Theorem~\ref{Thm: cosilt bij}.
\end{proof}

\begin{theorem}\label{thm: cosilting modules and pairs}
Let $\Lambda$ be an artin algebra.  The assignment $(\Z, \Ical) \mapsto C_{(\Z, \Ical)} := \prod_{Z\in\Z}Z$ yields a bijection between $\Cosiltpair{\Lambda}$ and the set of equivalence classes of cosilting $\Lambda$-modules.  The inverse assignment is given by $C \mapsto (\Z_C, \Ical_C)$ where 
	\[\Z_C = \Prod{C}\cap\Zg{\Lambda} \text{ and } \Ical_C = \InjInd{\Lambda} \cap C^{\perp_0}.\]
Moreover, if $(\T,\F)$ is the cosilting torsion pair cogenerated by $C$, then	
\[\Z_C = \F\cap\F^{\perp_1}\cap\Zg{\Lambda}  \text{ and } \Ical_C = \InjInd{\Lambda} \cap \F^{\perp_0}.\]
\end{theorem}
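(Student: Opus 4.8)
The plan is to deduce Theorem~\ref{thm: cosilting modules and pairs} by combining the already-established bijection in Corollary~\ref{cor: cosilting mods and pairs} with a careful identification of the inverse map, most of which has already been done in the preceding results. First I would recall that Corollary~\ref{cor: cosilting mods and pairs} gives, for a left artinian ring, a bijection between $\Cosiltpair{A}$ and equivalence classes of cosilting modules via $(\Z,\Ical)\mapsto C_{(\Z,\Ical)}=\prod_{Z\in\Z}Z$; since an artin algebra $\Lambda$ is in particular left artinian, this bijection is available and it only remains to describe its inverse explicitly. Proposition~\ref{prop: cosilt pair prod cosilt mod} already identifies the first component of the inverse: if $C$ is cosilting with associated torsion pair $(\T,\F)$, then the set $\Z$ in the cosilting pair mapped to $C$ equals $\Prod{C}\cap\Zg{\Lambda}=\F\cap\F^{\perp_1}\cap\Zg{\Lambda}$. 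So the only genuinely new content is the identification of the second component $\Ical_C$ with $\InjInd{\Lambda}\cap C^{\perp_0}$, and the reformulation $\InjInd{\Lambda}\cap C^{\perp_0}=\InjInd{\Lambda}\cap\F^{\perp_0}$.

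Next I would trace through the chain of bijections used to define the inverse, exactly as in the proof of Proposition~\ref{prop: cosilt pair prod cosilt mod}. Given the cosilting module $C$, Theorem~\ref{Thm: cosilt bij} associates to it a cosilting complex $\sigma$ which is an injective copresentation of $C$; Theorem~\ref{thm:rigidCosiltBij} sends $\sigma$ to the maximal rigid set $\N_\sigma=\Prod{\sigma}\cap\Zg{\Der{\Lambda}}$; and Proposition~\ref{prop: bij rigid} sends $\N_\sigma$ to the cosilting pair $(\Z_{\N_\sigma},\Ical_{\N_\sigma})$, where by Notation~\ref{not: bij cosilting sets pairs} we have $\Ical_{\N_\sigma}=\{I\in\InjInd{\Lambda}\mid I[-1]\in\N_\sigma\}=\{I\in\InjInd{\Lambda}\mid I[-1]\in\Prod{\sigma}\}$. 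The task is therefore to show $\{I\in\InjInd{\Lambda}\mid I[-1]\in\Prod{\sigma}\}=\InjInd{\Lambda}\cap C^{\perp_0}$. Here Lemma~\ref{lem: artin alg injectives} does essentially all the work: part (2) says $\sigma$ is equivalent to $\omega=\mu_C\oplus I_C[-1]$ where $I_C$ is the direct sum of the modules in $\J_C:=\InjInd{\Lambda}\cap C^{\perp_0}$, so $\Prod{\sigma}=\Prod{\omega}=\Prod{\mu_C\oplus I_C[-1]}$. Since $\Prod{}$ of a direct sum of a 2-term complex with zero homology below degree $0$ and stalks in degree $1$ is controlled degreewise, an indecomposable injective stalk $I[-1]$ lies in $\Prod{\omega}$ precisely when $I$ is (up to isomorphism) a summand of $I_C$, i.e. precisely when $I\in\J_C$. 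This gives $\Ical_C=\J_C=\InjInd{\Lambda}\cap C^{\perp_0}$, as claimed.

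Finally I would handle the two reformulations in terms of the torsion pair $(\T,\F)=({}^{\perp_0}C,\Cogen{C})$ cogenerated by $C$. The equality $\Z_C=\F\cap\F^{\perp_1}\cap\Zg{\Lambda}$ is immediate from Proposition~\ref{prop: cosilt pair prod cosilt mod} together with the fact (recalled there from \cite[Rem.~3.4]{Angeleri:18}) that $\Prod{C}=\F\cap\F^{\perp_1}$. For $\Ical_C$, I need $\InjInd{\Lambda}\cap C^{\perp_0}=\InjInd{\Lambda}\cap\F^{\perp_0}$. One inclusion is trivial since $C\in\F$. For the other, if $I$ is an indecomposable injective with $\Hom{\Lambda}(C,I)=0$, then $I\in C^{\perp_0}$; but $C^{\perp_0}$ is closed under submodules (as $\Hom{\Lambda}(-,I)$ is left exact) and $\F=\Cogen{C}$ consists of submodules of products of copies of $C$, and $C^{\perp_0}$ is closed under products because $\Hom{\Lambda}(-,\prod I)=\prod\Hom{\Lambda}(-,I)$; hence $\F\subseteq C^{\perp_0}$, which means $I$ annihilates all of $\F$, i.e. $I\in\F^{\perp_0}$. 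This closes the argument. The main obstacle, such as it is, is purely bookkeeping: making sure the composite of the three bijections is unwound correctly so that the second component really is $\{I\mid I[-1]\in\Prod{\sigma}\}$ and then invoking Lemma~\ref{lem: artin alg injectives}(2) at the right moment; there is no substantial new idea needed beyond what the preceding lemmas supply.
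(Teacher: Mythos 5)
Your route is the one the paper itself takes: Corollary \ref{cor: cosilting mods and pairs} for the bijection, Proposition \ref{prop: cosilt pair prod cosilt mod} for $\Z_C$, unwinding the bijections to identify $\Ical_C=\{I\in\InjInd{\Lambda}\mid I[-1]\in\Prod{\sigma}\}$, and Lemma \ref{lem: artin alg injectives}(2) to replace $\sigma$ by $\omega=\mu_C\oplus I_C[-1]$. However, two steps are asserted rather than proved. First, the claim that $I[-1]\in\Prod{\omega}$ forces $I$ to be a summand of $I_C$ because $\Prod{-}$ is ``controlled degreewise'' is not an argument: a direct summand in $\Der{\Lambda}$ of $\omega^\kappa=\mu_C^\kappa\oplus(I_C^\kappa)[-1]$ need not split compatibly with this decomposition, since $\Hom{\Der{\Lambda}}(I[-1],\mu_C)$ is in general nonzero, so the structure maps of a putative summand can mix the two pieces. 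What rules this out is the cosilting property of $\omega$: if $I[-1]\in\Prod{\omega}$ then $\Hom{\Der{\Lambda}}(\mu_C,(I[-1])[1])=0$ because $\Hom{\Der{\Lambda}}(\omega,\omega^\kappa[1])\cong\Hom{\Der{\Lambda}}(\omega,\omega[1])^\kappa=0$, and since $\Hom{\Der{\Lambda}}(\mu_C,I[0])\cong\Hom{\Lambda}(C,I)$ (the computation used in Proposition \ref{prop: bij rigid}), this gives $I\in\J_C$. This is exactly how the paper argues; you have the ingredients, but the step as written is a gap.

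Second, and more seriously, your proof of $\InjInd{\Lambda}\cap C^{\perp_0}\subseteq\F^{\perp_0}$ does not work as written. You need: for $I$ injective with $\Hom{\Lambda}(C,I)=0$, one has $\Hom{\Lambda}(X,I)=0$ for every $X\in\Cogen{C}$, i.e.\ the class ${}^{\perp_0}I=\mathrm{Ker}\,\Hom{\Lambda}(-,I)$ contains $C$ and is closed under submodules and products. Submodule closure is fine (use injectivity of $I$), but your justification of product closure, namely $\Hom{\Lambda}(-,\prod I)=\prod\Hom{\Lambda}(-,I)$, concerns a product in the \emph{second} variable and says nothing about $\Hom{\Lambda}(C^\kappa,I)$; in general $\mathrm{Ker}\,\Hom{\Lambda}(-,I)$ is \emph{not} closed under products. (There is also a notational slip: you write $\F\subseteq C^{\perp_0}$ where you mean $\F\subseteq{}^{\perp_0}I$.) The fact you need is precisely Lemma \ref{lem: artin alg injectives}(1): over an artin algebra $I$ is finitely presented, so ${}^{\perp_0}I$ is definable and hence product-closed; alternatively, write $I=D(P)$ with $D$ the standard duality and $P$ finitely generated projective, and use $\Hom{\Lambda}(M,D(P))\cong D(P\otimes_\Lambda M)$ together with the fact that $P\otimes_\Lambda-$ commutes with products. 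With these two repairs your argument coincides with the paper's proof.
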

\begin{proof}
The first part of the statement is given in Corollary \ref{cor: cosilting mods and pairs}.  We prove the inverse assignment is as in the statement.  Let $C$ be a cosilting $\Lambda$-module and let $(\Z_C, \Ical_C)$ be the cosilting pair determined by $C$ under the bijection in Corollary \ref{cor: cosilting mods and pairs}.
For the shape of $\Z_C$ we refer to
 Proposition \ref{prop: cosilt pair prod cosilt mod}. We verify the statements on $\Ical_C$.

Without loss of generality, we can assume that $C = \prod_{N\in\Z_C} N$.  By Proposition~\ref{prop: bij rigid} and Theorem~\ref{thm:rigidCosiltBij}, we have that 
	\[\sigma_C := \prod_{N\in\Z_C} \mu_N\oplus\prod_{I\in \Ical_C}I[-1]\] 
is a cosilting complex with $\H{}(\sigma_C)= C$.  By Lemma~\ref{lem: artin alg injectives}(2) we have that 
$\sigma_C$ is equivalent to the cosilting complex $\omega=\mu_C\oplus I_C[-1]$ where $I_C$ is the direct sum of the modules in $\mathcal{J}_C:= \InjInd{\Lambda} \cap C^{\perp_0}$.  Applying Proposition~\ref{prop: bij rigid}  and Theorem \ref{thm:rigidCosiltBij} again, we have that 
	\[\Ical_C = \{I\in\InjInd{\Lambda}\mid I[-1]\in\N_\omega\}\]
where $\N_\omega =\Prod{\omega}\cap\Zg{\Der{\Lambda}}$.  Clearly every complex $I[-1]$ with $I\in\mathcal{J}_C$ belongs to $\N_\omega$. Conversely, if $I[-1]$ lies in $\N_\omega$, then  $\Hom{\Der{A}}(\mu_C, I) \cong \Hom{\Der{A}}(\mu_C, (I[-1])[1])  = 0$ and it follows as in Proposition \ref{prop: bij rigid} that  $I\in\mathcal{J}_C$.  We conclude that $\Ical_C = \J_C$.

Finally, by Lemma \ref{lem: artin alg injectives}(1), we have that $\Ical_C\subseteq \F^{\perp_0}\cap\InjInd{\Lambda}$.  Since the other inclusion is clear, we have proved that $\Ical_C = \F^{\perp_0}\cap\InjInd{\Lambda}$.
\end{proof}

\section{Critical modules}

In the last section we have established a one-to-one correspondence between torsion pairs in $\tors{A}$ and cosilting pairs. 
In this section we will investigate the modules that arise as elements of $\Z$ for some cosilting pair $(\Z, \Ical)$. Our focus will lie on a special class of these modules, called critical, which are intimately related to the lattice structure of $\tors{A}$. 
In the next section, we will use these modules  to find an analogue of the brick\,--\,$\tau$-rigid correspondence established in \cite{DemonetIyamaJasso:19}. 

We start out by fixing some notation.  As before,  $R$ will denote an arbitrary ring and $A$ a left artinian ring.  We will write $\brick{A}$ for the set of (isoclasses of) bricks in $\mod{A}$, i.e.~of finitely generated modules whose endomorphism ring is a skew-field. Moreover, following the notation in \cite{Sentieri:22}, we will denote the smallest torsionfree class in $\mod{A}$ containing a module $M$ by $\torsf{M}$ (this torsionfree class exists by Proposition \ref{prop: closure properties}).

Next, we establish the appropriate generalisation of $\tau^-$-rigid modules.

\begin{proposition}\label{prop: strongly rigid}
Let $A$ be an artinian ring. The following statements are equivalent for a module $N\in\Zg{A}$.
\begin{enumerate}
\item $N$ is contained in $\Z$ for some cosilting pair $(\Z, \Ical)$.
\item $\mu_N$ is rigid in $\Der{A}$ where $\mu_N$ is as in Notation \ref{Not: minimal inj cop}.
\item $N$ has an injective copresentation $\sigma$ such that $\C_\sigma$ is a torsionfree class and $N\in\C_\sigma$.
\end{enumerate}
\end{proposition}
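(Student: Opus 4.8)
The plan is to prove the three conditions equivalent by showing $(1)\Rightarrow(2)\Rightarrow(3)\Rightarrow(1)$, using the bijective machinery already in place.

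\medskip

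\textbf{Approach.} For $(1)\Rightarrow(2)$: if $N$ lies in $\Z$ for a cosilting pair $(\Z,\Ical)$, then by Proposition~\ref{prop: bij rigid} the associated set $\N_{(\Z,\Ical)} = \{\mu_M \mid M\in\Z\}\sqcup\{I[-1]\mid I\in\Ical\}$ is a (maximal) rigid subset of $\ZgInt{A}$; in particular it is rigid, so $\Hom{\Der{A}}(\mu_N,\mu_N[1]) = 0$, i.e.~$\mu_N$ is rigid in $\Der{A}$. (Here condition~(ii) of Definition~\ref{rpair} already gives exactly this, since $N\in\Z$ means $\mu_N$ is one of the complexes forming the rigid set.)

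\medskip

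For $(2)\Rightarrow(3)$: suppose $\mu_N$ is rigid. Since $N\in\Zg{A}$ it is pure-injective, so $H^0(\mu_N) = N$ is pure-injective; because $A$ is left artinian, Lemma~\ref{cor:cSigmaCosilting} applies to the 2-term complex $\mu_N$ (whose zeroth cohomology is the pure-injective module $N$) and tells us that $\C_{\mu_N}$ is a cosilting torsionfree class in $\Mod{A}$. It remains to note $N\in\C_{\mu_N}$: by Remark~\ref{Rem: rigid C sigma}(1), $N = \H{}(\mu_N)\in\C_{\mu_N}$ is equivalent to $\Hom{\Der{A}}(\mu_N,\mu_N[1]) = 0$, which is precisely the rigidity hypothesis. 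So $\sigma := \mu_N$ witnesses~(3).

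\medskip

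For $(3)\Rightarrow(1)$: given an injective copresentation $\sigma$ of $N$ with $\C_\sigma$ a torsionfree class and $N\in\C_\sigma$, first observe $\sigma$ is rigid: since $N = \H{}(\sigma)\in\C_\sigma$, Remark~\ref{Rem: rigid C sigma}(1) gives $\Hom{\Der{A}}(\sigma,\sigma[1]) = 0$. By Lemma~\ref{lem: completion} there is $\rho\in\K{A}$ with $\gamma := \sigma\oplus\rho$ a 2-term cosilting complex and $\C_\sigma = \C_\gamma$. Then $\closed{\gamma} = \Prod{\gamma}\cap\Zg{\Der{A}}$ is a maximal rigid set in $\ZgInt{A}$ by Theorem~\ref{thm:rigidCosiltBij}. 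Now $\sigma$ is a summand of $\gamma$, hence $\sigma\in\Prod{\gamma}$; since $N\in\Zg{A}$, Lemma~\ref{lem: 2-term Zg to modules Zg} shows $\mu_N$ is indecomposable in $\Der{A}$, and $\sigma$ decomposes (as a complex, by Lemma~\ref{Lem: complexes determined by modules}(1)) as $\mu_N\oplus\nu\oplus I[-1]$ with $\nu$ an isomorphism; the indecomposable summand $\mu_N$ therefore lies in $\Prod{\gamma}\cap\Zg{\Der{A}} = \closed{\gamma}$, so $\mu_N\in\closed{\gamma}$. Applying the bijection of Proposition~\ref{prop: bij rigid} to the maximal rigid set $\closed{\gamma}$ produces a cosilting pair $(\Z,\Ical) = (\Z_{\closed{\gamma}}, \Ical_{\closed{\gamma}})$ with $N = \H{}(\mu_N)\in\Z$, giving~(1).

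\medskip

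\textbf{Main obstacle.} The routine parts are the translations via Remark~\ref{Rem: rigid C sigma} between rigidity of complexes and membership in $\C_\sigma$. The step requiring the most care is $(3)\Rightarrow(1)$: one must be sure that the Bongartz-type completion $\gamma = \sigma\oplus\rho$ keeps $\sigma$ as an honest \emph{summand} in $\K{A}$ so that its indecomposable constituent $\mu_N$ survives in $\Prod{\gamma}$, and then that $\mu_N$ (as opposed to some other complex with the same cohomology) is the object that actually appears in $\closed{\gamma}$ — this is where indecomposability of $\mu_N$ in $\Der{A}$ via Lemma~\ref{lem: 2-term Zg to modules Zg} and the decomposition in Lemma~\ref{Lem: complexes determined by modules}(1) are essential. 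One should also note that passing from $\sigma$ to $\mu_N$ is harmless because any injective copresentation of $N$ differs from the minimal one only by a contractible summand and an extra injective stalk in degree $1$, neither of which affects rigidity or the subcategory $\C$.
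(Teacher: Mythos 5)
Your proof is correct and follows essentially the same route as the paper: (1)$\Rightarrow$(2) from Definition~\ref{rpair}, (2)$\Rightarrow$(3) via Lemma~\ref{cor:cSigmaCosilting} and Remark~\ref{Rem: rigid C sigma}(1), and (3)$\Rightarrow$(1) by Bongartz completion (Lemma~\ref{lem: completion}) together with Theorem~\ref{thm:rigidCosiltBij} and the passage to cosilting pairs. Your extra care in (3)$\Rightarrow$(1) --- extracting $\mu_N$ as an indecomposable summand via Lemmas~\ref{Lem: complexes determined by modules}(1) and~\ref{lem: 2-term Zg to modules Zg} so that it lands in $\closed{\gamma}$ --- just makes explicit what the paper leaves implicit.
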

A module satisfying the above equivalent conditions will be called a \textbf{\srigid}.
\begin{proof}
(1)$\Rightarrow$(2): This follows by the definition of cosilting pair.

(2)$\Rightarrow$(3) Take $\sigma =\mu_N$.  Then $\C_\sigma$ is a torsionfree class by Lemma \ref{cor:cSigmaCosilting} and $N\in\C_\sigma$ by Remark \ref{Rem: rigid C sigma}(1).

(3)$\Rightarrow$(1) By Lemma \ref{lem: completion} there exists a cosilting complex $\sigma\oplus\rho$.  By Theorems \ref{thm:rigidCosiltBij} and \ref{thm:pairModBijection}, there exists a cosilting pair $(\H{}(\N_{\sigma\oplus\rho}), \Ical)$ where $\N_{\sigma\oplus\rho} = \Prod{\sigma\oplus\rho}\cap \Zg{\Der{A}}$.  Since $\mu_N$ is an indecomposable summand of $\sigma$, we have that $N \in \H{}(\N_{\sigma\oplus\rho})$.
\end{proof}

Note that $\Cogen{N}\subseteq{}^{\perp_1}N$ for any \srigid\  $N$ by \cite[Lem.~3.3]{Angeleri:18}, and so the pair $({}^{\perp_0}N, \Cogen{N})$ is a (not necessarily cosilting) torsion pair by the dual of \cite[Lem.~2.3]{AngeleriMarksVitoria:16}.

\begin{notation}
Let $N$ be a \srigid\ in $\Mod{A}$. We denote \[(\t_N, \f_N):=({}^{\perp_0}N\cap\mod{A}, \Cogen{N}\cap\mod{A})\] the torsion pair induced by $N$ in $\tors{A}$. It determines the following mathematical objects:
\begin{itemize}
\item a cosilting torsion pair $(\T_N, \F_N) := (\varinjlim \t_N, \varinjlim \f_N)\in\Cosilt{A}$ by Theorem \ref{Thm: cosilt bij};
\item a cosilting pair $(\Z_N, \Ical_N)$ where $\Z_N := \F_N \cap \F_N^{\perp_1} \cap \Zg{A}$ by  Proposition \ref{prop: cosilt pair prod cosilt mod}, while $\Ical_N$ is a set of indecomposable injective modules, and $\Ical_N=\InjInd{A} \cap \F_N^{\perp_0}$ when $A$ is an artin algebra by Theorem \ref{thm: cosilting modules and pairs};
\item a cosilting module $C_N := \prod_{Z \in \Z_N}Z$ where $\Z_N$ is as above, by Corollary \ref{cor: cosilting mods and pairs}.
\end{itemize}
We remark that $\F_N$ is the smallest cosilting torsionfree class containing $N$.
\end{notation}

 \begin{lemma}\label{lem: N is tf}
For every \srigid\  $N$ in $\Mod{A}$, we have that $N\in \Z_N$.\end{lemma}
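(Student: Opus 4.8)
We want to show that a \srigid\ $N$ lies in its own associated set $\Z_N = \F_N \cap \F_N^{\perp_1} \cap \Zg{A}$. Since $N \in \Zg{A}$ by definition of \srigid, it suffices to verify the two conditions $N \in \F_N$ and $N \in \F_N^{\perp_1}$. The inclusion $N \in \F_N$ is essentially by construction: $\F_N = \varinjlim \f_N = \varinjlim(\Cogen{N}\cap\mod{A})$, and $N$ cogenerates itself, so $N$ belongs to $\Cogen{N}$; the point is just that $N$ itself (possibly infinite-dimensional) lies in the direct-limit closure of the finite-dimensional modules in $\Cogen{N}$. This follows because over an artinian ring every module is a direct limit of its finitely generated (hence finite-length) submodules, and submodules of a module in $\Cogen{N}$ are again in $\Cogen{N}$, so $N = \varinjlim N_i$ with each $N_i \in \Cogen{N}\cap\mod{A} = \f_N$.

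For the rigidity condition $N \in \F_N^{\perp_1}$, I would use the characterisation in Proposition~\ref{prop: strongly rigid}: $\mu_N$ is rigid in $\Der{A}$, equivalently $\C_{\mu_N}$ is a torsionfree class containing $N$. By Remark~\ref{Rem: rigid C sigma}(2), a module $M$ lies in $\C_{\mu_N}$ if and only if every submodule of $M$ lies in ${}^{\perp_1}N$. In particular, taking $M = N$, the fact that $N \in \C_{\mu_N}$ means every submodule of $N$ — and so $N$ itself — is in ${}^{\perp_1}N$; more generally it shows $\Cogen{N} \subseteq {}^{\perp_1}N$ (submodules of products of copies of $N$ embed, in particular their submodules are submodules of products of $N$, hence in ${}^{\perp_1}N$). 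The remaining step is to propagate this to the direct-limit closure: I need $\Ext{A}^1(\varinjlim_i F_i, N) = 0$ for a direct system $\{F_i\}$ in $\f_N$. Since $N$ is pure-injective (being in $\Zg{A}$), the functor $\Hom{A}(-,N)$ sends direct limits to inverse limits and, crucially, $\varprojlim^1$ over the relevant system vanishes against a pure-injective target, so $\Ext{A}^1(\varinjlim F_i, N) \hookrightarrow \varprojlim^1 \Hom{A}(F_i, N) = 0$. Alternatively, and perhaps more cleanly, one can note that $\C_{\mu_N}$ is closed under direct limits by Lemma~\ref{cor:cSigmaCosilting}, so $\F_N = \varinjlim\f_N \subseteq \C_{\mu_N}$, and then again apply Remark~\ref{Rem: rigid C sigma}(2) to conclude $\F_N \subseteq {}^{\perp_1}N$; in particular $N \in \F_N^{\perp_1}$ because $N \in \F_N$.

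Putting these together: $N \in \F_N$, $N \in \F_N^{\perp_1}$, and $N \in \Zg{A}$, hence $N \in \Z_N$.

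\textbf{Main obstacle.} The only genuine subtlety is the passage from the finite-dimensional statement "$\f_N \subseteq {}^{\perp_1}N$" to the infinite-dimensional one "$\F_N \subseteq {}^{\perp_1}N$". The cleanest route is to avoid $\varprojlim^1$ arguments entirely and instead exploit that $\C_{\mu_N}$ is already known to be closed under direct limits (Lemma~\ref{cor:cSigmaCosilting}, applicable since $N$ is pure-injective) and to coincide with its own direct-limit closure; since $\f_N = \Cogen{N}\cap\mod{A} \subseteq \C_{\mu_N}$, we get $\F_N = \varinjlim \f_N \subseteq \C_{\mu_N}$, and Remark~\ref{Rem: rigid C sigma}(2) then forces $\F_N \subseteq {}^{\perp_1}N$. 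This sidesteps any homological bookkeeping. The other ingredient, $N \in \F_N$, is a soft observation once one recalls that over an artinian ring $N = \varinjlim N_i$ with $N_i$ ranging over finite-length submodules, all of which lie in $\Cogen{N}$.
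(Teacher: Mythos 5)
Your proof is correct, and the first half ($N\in\F_N$ via $N=\varinjlim N_i$ with $N_i$ the finite-length submodules, all lying in $\f_N$) is exactly the paper's argument. For the Ext-vanishing half you take a genuinely different route: the paper notes $\f_N\subseteq\Cogen{N}\subseteq{}^{\perp_1}N$ and then computes directly, for $M=\varinjlim M_i\in\F_N$ with $M_i\in\f_N$, that $\Ext{A}^1(M,N)\cong\varprojlim\Ext{A}^1(M_i,N)=0$, using pure-injectivity of $N$ via the standard isomorphism for Ext against direct limits (G\"obel--Trlifaj, Lem.~6.28). You instead stay at the level of the subcategory $\C_{\mu_N}$: since $\C_{\mu_N}$ is a torsionfree class containing $N$ (Proposition~\ref{prop: strongly rigid}) it contains $\Cogen{N}$, hence $\f_N$, and since it is closed under direct limits (Lemma~\ref{cor:cSigmaCosilting}, applicable because $N=\H{}(\mu_N)$ is pure-injective) it contains $\F_N=\varinjlim\f_N$; Remark~\ref{Rem: rigid C sigma}(2) then gives $\F_N\subseteq{}^{\perp_1}N$, so $N\in\F_N^{\perp_1}$. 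This buys you a ``soft'' argument that hides the homological bookkeeping inside Lemma~\ref{cor:cSigmaCosilting}, at the cost of being slightly less self-contained; the paper's computation is more direct but leans on the external lemma about $\Ext{A}^1(\varinjlim M_i,N)$. One small caveat: your first, abandoned sketch is misstated --- $\Ext{A}^1(\varinjlim F_i,N)$ does not embed into $\varprojlim^1\Hom{A}(F_i,N)$; in the Milnor-type sequence the $\varprojlim^1$ term is the subobject and one also needs $\varprojlim\Ext{A}^1(F_i,N)=0$, i.e.\ precisely $\f_N\subseteq{}^{\perp_1}N$. Since you explicitly discard that route in favour of the $\C_{\mu_N}$ argument, the proof as a whole stands.
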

\begin{proof}
We know from  Proposition~\ref{prop: cosilt pair prod cosilt mod} that $\Z_N=\F_N \cap \F_N^{\perp_1} \cap \Zg{A}$, so we need to show that $N$ is Ext-injective in $\F_N$. Since $N$ is the direct limit of its finite-dimensional subobjects $\{N_i\}$, which are all contained in $\f_N$, we have that $N$ belongs to $\F_N$. Moreover, if we pick $M\in\F_N$ and write it as $M=\varinjlim M_i$ with $M_i\in\f_N$,   then $\Ext{A}^1(M,N)\cong \Ext{A}^1(\varinjlim M_i, N) \cong \varprojlim\Ext{A}^1(M_i, N) = 0$ because $N$ is pure-injective (cf.~\cite[Lem.~6.28]{GoebelTrlifaj:12}).
\end{proof}

Our next aim is to relate  \srigid s with certain bricks. To this end 
we need the notions of torsion-free, almost torsion and critical modules, first defined in \cite{AHL}.
 
\begin{definition}\label{almost}
Let $\A$ be an abelian category and let $t = (\T, \F)$ be a torsion pair in $\A$.  We say that $ B $ is \textbf{torsion-free almost torsion} with respect to $t$, if:
\begin{enumerate}
\item $ B \in \mathcal{F} $, but every proper quotient of $ B $ is contained in $ \mathcal{ T } $. 
\item For every short exact sequence $ 0 \to B \to F \to M \to 0 $, if $ F \in \mathcal{F} $, then $ M \in \mathcal{F} $.
\end{enumerate} 
\end{definition}

Dually, one defines \textbf{torsion, almost torsion-free} modules.
When $\A=\mod{\Lambda}$ for a finite-dimensional algebra $\Lambda$, these modules coincide with  the  minimal extending and minimal coextending modules  considered in \cite{BarnardCarrollZhu:19}. 

\begin{proposition}\label{simples in the heart} Let $A$ be a left artinian ring and consider a torsion pair $t = (\T, \F)$ in 
$\Cosilt{A}$, and  $(\t, \f)$ the torsion pair in $\mod{A}$ associated to $t$ in Theorem \ref{Thm: cosilt bij}.
\begin{enumerate}
\item \cite[Proposition 2.11]{Sentieri:22} 
A finitely generated $A$-module  is torsion-free almost torsion with respect to $t$ if and only if it is torsion-free almost torsion with respect to $(\t,\f)$. 
\item \cite[Theorem 3.6]{AHL} Let  $\heart{}$ be the heart of the HRS-tilt $\mathbb{\ststr}_{t^-}$ of $t$ in $\Der{A}$.
The finitely presented simple objects in  $\heart{}$  are precisely the objects of the form $S = F$ or $S = T [-1]$   where $F\in\mod{A}$ is torsion-free, almost torsion, or $T\in\mod{A}$ is torsion, almost torsion-free with respect to $t$. 
\end{enumerate}
\end{proposition}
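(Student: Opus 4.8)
Both statements are quoted from the literature, so the plan is to deduce them from the cited results after checking that they apply verbatim to a left artinian ring $A$ and a torsion pair $t=(\T,\F)$ in $\Cosilt{A}$. For part~(1) I would invoke \cite[Proposition~2.11]{Sentieri:22}; the only point to verify is that its argument uses nothing about $A$ beyond the facts that quotients of finitely generated modules are finitely generated, that $\t=\T\cap\mod{A}$ and $\f=\F\cap\mod{A}$, and that $\F=\varinjlim\f$ (Theorem~\ref{Thm: cosilt bij}). The substantive content is that the direct-limit form of condition~(2) of Definition~\ref{almost} follows from its finitely generated form: given a short exact sequence $0\to B\to F'\to M\to 0$ with $B$ finitely generated and $F'\in\F$, one writes $F'$ as a directed union of finitely generated submodules lying in $\f$, notes that $B$ is contained in one of them, runs the almost-torsion argument there, and concludes using that $M$ is then a direct limit of modules in $\f$ and that $\F$ is closed under direct limits.

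For part~(2) the reference is \cite[Theorem~3.6]{AHL}. Before applying it I would record the structural inputs already available in the excerpt: since $t$ is a cosilting torsion pair, $\ststr{t^-}$ is a cosilting t-structure, so by Theorem~\ref{Thm: cosilt summary}(5) its heart $\heart{}$ is a Grothendieck category and, as $A$ is artinian hence noetherian, by Proposition~\ref{Prop: ele cog consequences}(2) it is moreover locally coherent, so that the phrase \emph{finitely presented simple object} is meaningful; and by Definition~\ref{HRS} the heart $\heart{}$ carries the torsion pair $\bar t=(\F,\T[-1])$. If \cite[Theorem~3.6]{AHL} is only stated in a narrower setting, the argument to extend it runs as follows. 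The torsion pair $\bar t$ forces any simple object $S$ of $\heart{}$ to lie either in $\F$ (then $S$ is a module) or in $\T[-1]$ (then $S=T[-1]$ for a module $T\in\T$). In the first case one translates the condition ``$S$ has no proper nonzero subobject or quotient in $\heart{}$'' into module language, using the $\heart{}$-cohomology of $\ststr{t^-}$ to compute kernels, cokernels and extensions in $\heart{}$ in terms of ordinary short exact sequences of modules and their torsion decompositions with respect to $t$; carrying this out identifies $S$ with a torsion-free almost torsion module in the sense of Definition~\ref{almost}, and conversely. The case $S\in\T[-1]$ is dual. Restricting to finitely presented objects of $\heart{}$ — which, using local coherence, one identifies with the objects of the heart of the HRS-tilt of $\mod{A}$ at $(\t,\f)$ (the restriction of $\ststr{t^-}$ to $\Db{A}$) — together with part~(1), which matches ``almost torsion with respect to $t$'' and ``almost torsion with respect to $(\t,\f)$'' for finitely generated modules, then yields the stated description.

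\textbf{Main obstacle.} The difficulty is not the splitting of simple objects via $\bar t$, which is formal, but the careful translation between $\heart{}$-theoretic and module-theoretic exactness. A module $F\in\F$, viewed in $\heart{}$, typically acquires new subobjects coming from $\T[-1]$, so that simplicity in $\heart{}$ is genuinely strictly stronger than the requirement that every proper module quotient of $F$ be torsion — and it is precisely condition~(2) of Definition~\ref{almost} that encodes the difference. Managing this bookkeeping, and checking that passage to the finitely presented objects of the locally coherent heart $\heart{}$ (Proposition~\ref{Prop: ele cog consequences}(2)) is compatible with the description of simple objects, is where the real work lies. If, on the other hand, \cite[Proposition~2.11]{Sentieri:22} and \cite[Theorem~3.6]{AHL} are already available for left artinian rings, the proof reduces to citing them directly.
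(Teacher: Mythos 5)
Your proposal is correct and matches the paper's treatment: the paper gives no independent argument for this proposition, it simply quotes \cite[Prop.~2.11]{Sentieri:22} for (1) and \cite[Thm.~3.6]{AHL} for (2), exactly as you do. Your supplementary sketches (the direct-limit reduction of condition (2) of Definition~\ref{almost} to finitely generated modules, and the splitting of simples in the heart via the torsion pair $(\F,\T[-1])$) are sound but not needed, since the cited results already cover this setting.
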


It follows from Proposition~\ref{simples in the heart}(2) that the torsionfree, almost torsion modules with respect to a given torsion pair form a set of pairwise Hom-orthogonal bricks, that is, a semibrick in the sense of \cite{Asai:20}.  In particular, every torsionfree, almost torsion module is a brick. By a dual of the proof given in \cite[Lem.~3.11]{Sentieri:22} (restricted to $\mod{A}$), we also have a converse statement summarised in the following lemma. 

\begin{lemma}\label{lem: brick unique tf/t}
Let $B\in \brick{A}$.  Then $B$ is the unique torsionfree, almost torsion in $\torsf{B}$. \end{lemma}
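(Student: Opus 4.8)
The plan is to exhibit $B$ explicitly as a torsionfree, almost torsion object in $\torsf{B}$, and then to invoke the semibrick property from Proposition~\ref{simples in the heart}(2) to get uniqueness. First I would identify the relevant torsion pair: since $\torsf{B}$ is by definition the smallest torsionfree class in $\mod{A}$ containing $B$ (this exists by Proposition~\ref{prop: closure properties}), we may form the torsion pair $t = ({}^{\perp_0}\torsf{B}\cap\mod{A},\ \torsf{B})$ in $\mod{A}$ and check that $B$ is torsionfree, almost torsion with respect to it. Condition (1) of Definition~\ref{almost}, namely that every proper quotient of $B$ lies in the torsion class, should follow because $B$ is a brick: any proper quotient $B\twoheadrightarrow Q$ with $Q\neq 0$ cannot be cogenerated by $B$ (a nonzero map $Q\to B^{n}$ would lift along a section-type argument using $\End{A}(B)$ being a skew-field to contradict properness), so $Q\notin\torsf{B}$, and since $\torsf{B}$ is closed under subobjects, $\Hom{A}(Q, X)$ can be pushed down to show $Q$ has no nonzero maps to $\torsf{B}$; hence $Q$ lies in the torsion class. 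The key point here is to show $\Hom{A}(B', B)=0$ for every proper quotient $B'$ of $B$, which is the standard ``bricks have no proper quotients mapping back'' fact.

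For condition (2), given a short exact sequence $0\to B\to F\to M\to 0$ with $F\in\torsf{B}$, I would argue that $M\in\torsf{B}$. Here I would lean on the characterisation of $\torsf{B}$ as obtained from $B$ by closing under subobjects and extensions (Proposition~\ref{prop: closure properties}(2), valid since $A$ is left artinian), together with the fact that $B$ is a brick. The most economical route is probably the dual of the argument in \cite[Lem.~3.11]{Sentieri:22}: one shows that the torsionfree class $\torsf{B}$ has $B$ as its unique ``new'' simple-type object, i.e.\ that $B$ generates $\torsf{B}$ minimally in a way forcing the almost-torsion extension property. Concretely, since $F$ is filtered by subobjects of powers of $B$, the quotient $M$ is as well, unless the extension is non-split; and a non-split extension of $M$ by $B$ landing in $\torsf{B}$ can be analysed via the long exact $\Hom$ sequence against objects of $\torsf{B}$ to conclude $M$ is still cogenerated appropriately. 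I expect this verification of property (2) to be the main obstacle, since it is exactly where the minimality of $\torsf{B}$ among torsionfree classes containing $B$ has to be used in an essential way.

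Once $B$ is known to be torsionfree, almost torsion with respect to $t$, uniqueness is immediate: by the remark following Proposition~\ref{simples in the heart}, the torsionfree, almost torsion modules with respect to any fixed torsion pair form a semibrick, i.e.\ are pairwise Hom-orthogonal bricks. If $B'$ were another torsionfree, almost torsion object in $\torsf{B}$ (with respect to the same torsion pair $t$ whose torsionfree class is $\torsf{B}$), then $\Hom{A}(B, B')=0=\Hom{A}(B', B)$. But $B'\in\torsf{B}=\Cogen{B}\cap\mod{A}$ forces a nonzero map $B'\to B^{n}$ for some $n$, contradicting $\Hom{A}(B', B)=0$ unless $B'=0$, which is excluded. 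Hence $B'\cong B$, completing the proof. The one subtlety to be careful about is matching the torsion pair: I should confirm that ``torsionfree, almost torsion in $\torsf{B}$'' is meant with respect to the torsion pair having $\torsf{B}$ as torsionfree class, and invoke Proposition~\ref{simples in the heart}(1) if needed to pass between $\mod{A}$ and $\Cosilt{A}$ versions of the statement.
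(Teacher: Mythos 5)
Your plan (verify directly that $B$ is torsionfree, almost torsion for the torsion pair whose torsionfree class is $\torsf{B}$, then get uniqueness from Hom-orthogonality) is a reasonable route, and it is in fact more self-contained than the paper, which proves the lemma only by citing the dual of \cite[Lem.~3.11]{Sentieri:22}. However, as written your argument has two genuine gaps. First, the identity $\torsf{B}=\Cogen{B}\cap\mod{A}$ on which your uniqueness step rests is false in general: $\torsf{B}$ is the closure of the submodules of finite powers of $B$ under extensions, and already for $A=k[x]/(x^2)$ and $B$ the simple module one has $\torsf{B}=\mod{A}$ while $\Cogen{B}\cap\mod{A}$ contains only semisimple modules. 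What is true, and all you need, is that every nonzero $X\in\torsf{B}$ admits a nonzero map to $B$. This follows because the torsion class of the pair is exactly ${}^{\perp_0}B\cap\mod{A}$: it is contained in ${}^{\perp_0}B$ since $B$ is torsionfree, and conversely ${}^{\perp_0}B\cap\mod{A}$ is closed under quotients and extensions, so its torsionfree counterpart is a torsionfree class containing $B$ and hence contains $\torsf{B}$. With this identification your verification of condition (1) of Definition~\ref{almost} also becomes clean: a nonzero map $Q\to B$ from a proper quotient $Q$ of $B$ composes with $B\twoheadrightarrow Q$ to a nonzero non-invertible endomorphism of $B$, impossible for a brick, so $Q\in{}^{\perp_0}B\cap\mod{A}$.

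Second, and more seriously, condition (2) of Definition~\ref{almost} is never actually proved: the passage about ``$M$ is filtered as well, unless the extension is non-split'' and ``analysed via the long exact Hom sequence'' is not an argument, and you yourself flag this step as the main obstacle before deferring to the same external reference the paper quotes. The gap can be closed as follows. Given $0\to B\to F\to M\to 0$ with $F\in\torsf{B}$, let $N\subseteq F$ be the preimage of the torsion part $tM$ of $M$; then $N\in\torsf{B}$ by closure under submodules, and it suffices to show $tM=0$ in the sequence $0\to B\to N\to tM\to 0$. Since $N\neq 0$ lies in $\torsf{B}$, there is a nonzero $f\colon N\to B$. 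If $f$ restricts to a nonzero map on $B$, that restriction is invertible because $\End{A}(B)$ is a skew-field, so the sequence splits and $tM$ is a torsion direct summand of $N\in\torsf{B}$, hence zero; if $f$ vanishes on $B$, it factors through $tM$, contradicting $\Hom{A}(tM,B)=0$. With these two repairs, your uniqueness step goes through either via the semibrick property from Proposition~\ref{simples in the heart} or, even more elementarily, by noting that any other torsionfree almost torsion $B'$ must embed in $B$ with cokernel in $\torsf{B}$, forcing $B'\cong B$.
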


We now turn to the notion of a critical module.  Critical modules in a general definable subcategory were defined in \cite{AHL} and, in the case where the definable subcategory is a torsionfree class, the next definition is equivalent to the one given there by \cite[Prop.~5.14]{AHL}.

\begin{definition}
Let $ \mathcal{C} \subseteq \Mod{R} $ be an additive subcategory. A morphism $ f : C \to C' $ in $ \mathcal{C} $ is \textbf{left almost split in $ \mathcal{C} $}  if it is not a split monomorphism and for every morphism $ g : C \to C'' $ which is not a split monomorphism there exists $ h : C' \to C'' $ such that $ g = h \circ f $. 
\end{definition}

\begin{definition}
Let $(\T, \F)\in\Cosilt{R}$ be a cosilting torsion pair.  We say that $N\in\F$ is \textbf{critical} in $\F$ if there exists a left almost split map $N\to{N'}$ in $\F$ that is an epimorphism.
\end{definition}

We wish to make use of the results in \cite{AHL}  which describe the connection between
 torsionfree, almost torsion modules and critical modules with respect to a torsion pair $(\T, \F)\in\Cosilt{A}$ in the case where $(\T, \F) = ({}^{\perp_0}C, \Cogen{C})$ for a cotilting module $C$. 
In order to upgrade the results  from cotilting to cosilting, we will use
that every cosilting module is a cotilting module over the factor ring $\bar{R} = R/\Ann{C}$, cf.~Proposition~\ref{prop: cosilt is local cotilt}. Identifying $\Mod{\bar{R}}$ with the corresponding full subcategory of $\Mod{R}$, we note that $\Cogen{C}$ is contained $\Mod{\bar{R}}$ and $({}^{\perp_0}C\cap\Mod{\bar{R}}, \Cogen{C})$ is the cotilting torsion pair associated to $C$ in $\Mod{\bar{R}}$.  See \cite{Angeleri:18} for more details.

\begin{notation}\label{fixC} 
For any cosilting torsion pair $(\T, \F) \in \Cosilt{R}$ with associated cosilting module $C$, set $ \bar{R} = R/\Ann{C}$, and denote by  $(\overline{\T}, \overline{\F}) := (\mathcal{T} \cap \Mod{\bar{R}},\, \mathcal{F}) $ the cotilting torsion pair in $ \Mod{\bar{R}}$.
\end{notation}

We summarise the results we will need from \cite{AHL} in the next proposition.

\begin{proposition}\label{prop: upgrade tf/t critical}
The following statements hold for a cosilting torsion pair $t=(\T, \F)$ with associated cosilting module $C$.
\begin{enumerate}
\item An $R$-module $ B $ is torsion-free, almost torsion with respect to $ (\T, \F)  $ if and only if it is torsion-free, almost torsion with respect to $  (\overline{\T}, \overline{\F})  $.
\item Consider a short exact sequence
\[
\begin{tikzcd}
0 \arrow[r] & F \arrow[r, "g"] & M \arrow[r, "a"] & {M'} \arrow[r] & 0
\end{tikzcd}
\] in $\Mod{R}$.  We have that $ F $ is torsion-free, almost torsion with respect to $ t $ and $ g $ is a $ \Prod{C}-$envelope if and only if $M$ is critical in $\F$ and $a$ is a left almost split map in $\F$.
\end{enumerate}
\end{proposition}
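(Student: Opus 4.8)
The plan is to reduce both parts to their already-known counterparts for \emph{cotilting} torsion pairs and to transfer these along the inclusion $\Mod{\bar{R}}\hookrightarrow\Mod{R}$, where $\bar{R}=R/\Ann{C}$ as in Notation~\ref{fixC}. Recall from Proposition~\ref{prop: cosilt is local cotilt} and the discussion preceding Notation~\ref{fixC} that $C$ is a cotilting $\bar{R}$-module with associated cotilting torsion pair $(\overline{\T},\overline{\F})$ in $\Mod{\bar{R}}$, that $\overline{\F}=\F=\Cogen{C}$, and that, under the standard identification, $\Mod{\bar{R}}$ is the full subcategory of $\Mod{R}$ consisting of the modules killed by $\Ann{C}$; in particular $\Mod{\bar{R}}$ is closed under submodules and quotients in $\Mod{R}$ and contains both $\F$ and $\Prod{C}$. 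The first thing I would record is that all four notions occurring in the statement are intrinsic to this full subcategory: for modules lying in $\Mod{\bar{R}}$ the groups $\Hom{R}$ agree with $\Hom{\bar{R}}$, so a map $g\colon F\to M$ with $M\in\Prod{C}$ is a $\Prod{C}$-(pre)envelope in $\Mod{R}$ precisely when it is one in $\Mod{\bar{R}}$, while ``critical in $\F$'' and ``left almost split in $\F$'' depend only on the category $\F$ and not on the ambient module category.

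For part~(1), condition~(2) of Definition~\ref{almost} refers only to the class $\F=\overline{\F}$ (and a short exact sequence $0\to B\to F\to M\to 0$ with $B,F\in\Mod{\bar{R}}$ automatically has all terms in $\Mod{\bar{R}}$), so it holds with respect to $(\T,\F)$ if and only if it holds with respect to $(\overline{\T},\overline{\F})$. For condition~(1), if $B\in\F$ then $B\in\Mod{\bar{R}}$, hence every quotient $Q$ of $B$ lies in $\Mod{\bar{R}}$, and therefore $Q\in\T$ if and only if $Q\in\T\cap\Mod{\bar{R}}=\overline{\T}$. Thus ``$B\in\F$ and every proper quotient of $B$ lies in $\T$'' is equivalent to the same condition with $\overline{\T}$ in place of $\T$, and (1) follows.

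For part~(2), I would first observe that in either direction the whole short exact sequence lies in $\Mod{\bar{R}}$. If $F$ is torsion-free, almost torsion with respect to $t$ and $g$ is a $\Prod{C}$-envelope, then $F\in\F\subseteq\Mod{\bar{R}}$ and $M\in\Prod{C}\subseteq\Mod{\bar{R}}$, so $M'\in\Mod{\bar{R}}$ as well. Conversely, if $a$ is left almost split in $\F$ then $M\in\F\subseteq\Mod{\bar{R}}$, and since $\Mod{\bar{R}}$ is closed under submodules and quotients we get $F,M'\in\Mod{\bar{R}}$. In both cases the sequence is a short exact sequence in $\Mod{\bar{R}}$, the map $g$ is a $\Prod{C}$-envelope there exactly when it is one in $\Mod{R}$, and $M$ is critical in $\F$ (resp.\ $a$ is left almost split in $\F$) exactly as in $\Mod{R}$. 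Hence, after using part~(1) to pass between ``torsion-free, almost torsion with respect to $t$'' and ``\dots\ with respect to $(\overline{\T},\overline{\F})$'', the asserted equivalence is precisely the corresponding statement for the cotilting torsion pair $(\overline{\T},\overline{\F})$ of the cotilting $\bar{R}$-module $C$, which is imported from \cite{AHL}.

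The genuine mathematical content — that a short exact sequence exhibiting a $\Prod{C}$-envelope of a torsion-free, almost torsion module is the same datum as one exhibiting a left almost split epimorphism out of a critical module — is supplied by \cite{AHL} in the cotilting case and is not reproved here. Accordingly, there is no serious obstacle: the only work is the bookkeeping of the paragraphs above, which rests on the two facts that $\Mod{\bar{R}}$ is a full subcategory of $\Mod{R}$ closed under sub- and quotient modules and that $\F$ and $\Prod{C}$ are both contained in it.
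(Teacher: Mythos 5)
Your proposal is correct and follows essentially the same route as the paper: both reduce to the cotilting case over $\bar{R}=R/\Ann{C}$ by observing that all notions involved (torsion-free almost torsion via part (1), criticality, left almost split maps in $\F$, $\Prod{C}$-envelopes) are intrinsic to $\Mod{\bar{R}}$ and that under either hypothesis the short exact sequence lies entirely in $\Mod{\bar{R}}$ (the paper even notes it lies in $\F$), then import the cotilting statement from \cite{AHL}. The only cosmetic difference is that the paper also records the small translation that in the cotilting case the special $\F^{\perp_1}$-envelope in $\Mod{\bar{R}}$ coincides with the $\Prod{C}$-envelope, which your citation of the cotilting result implicitly absorbs.
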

\begin{proof}
(1)  The statement is immediate, as all the conditions which must be checked can be verified in the torsion-free class $\F$, which is contained in $ \Mod{\bar{R}} $.

(2) The statement is proved in \cite[Thm.~4.2, Cor.~5.18]{AHL} for cotilting modules, noting that the special $\F^{\perp_1}$-envelope in $\Mod{\overline{R}}$ coincides with a $\Prod{C}$-envelope in this case. It can be extended to the cosilting case by using (1). 
We begin by observing that an $R$-module $ M $ is critical with respect to $t$ if and only if it is critical with respect to $(\bar{\T},\bar{\F}) $ as an $ \bar{R}-$module, as the conditions only involve the torsion-free class $ \mathcal{F} =\bar{\F}$. Thus it suffices to show that such a sequence is contained in $\Mod{\overline{R}}$ under either assumption in the if-and-only-if statement.  In fact, the whole sequence is contained in $\F$ in both cases.  Indeed, if $ F $ is torsion-free, almost torsion and $ g $ is a $ \Prod{C}-$envelope, then ${M'}$ is contained in $\F$ by definition of torsionfree, almost torsion.  If $M$ is critical in $\F$ and $a$ is a left almost split map in $\F$, then $M,{M'}$ and hence $F$ are all contained in $\F$ as desired.
\end{proof}

\begin{theorem}\label{thm: upgrade}
Given  a cosilting torsion pair $t=(\T, \F)$ with associated cosilting module $C$, there is a bijection between the torsion-free, almost torsion modules $F$ and the critical modules $M$ with respect to $t$. The bijection assigns to $F$  the unique module $M\in \Z_C:=\Prod{C} \cap \Zg{R}$ such that $\Hom{R}(F, M)\neq 0$.
\end{theorem}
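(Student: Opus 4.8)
The plan is to assemble the bijection from results already in hand. By Proposition~\ref{prop: upgrade tf/t critical}(2), a torsion-free, almost torsion module $F$ with respect to $t$ fits into a short exact sequence $0\to F\xrightarrow{g} M\xrightarrow{a} M'\to 0$ in which $g$ is a $\Prod{C}$-envelope and $M$ is critical in $\F$; conversely every critical module $M$ arises this way from a unique $F$ (namely the kernel of its left almost split epimorphism). Since every module has a $\Prod{C}$-envelope (because $C$ is pure-injective, so $\Prod{C}$ is a definable, hence (pre)enveloping, subcategory of $\Mod{R}$), the assignment $F\mapsto M$ is well-defined and its inverse is $M\mapsto \ker(a)$. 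This is the skeleton of the bijection; what remains is to identify $M$ as the claimed element of $\Z_C=\Prod{C}\cap\Zg{R}$ and to pin down the characterisation via $\Hom{R}(F,M)\neq 0$.

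First I would check that $M$ is indecomposable pure-injective, i.e.~that $M\in\Zg{R}$. Since $g\colon F\to M$ is a $\Prod{C}$-envelope, $M$ lies in $\Prod{C}$ and is therefore pure-injective. For indecomposability one uses that $a\colon M\to M'$ is \emph{left almost split} in $\F$: the standard Auslander--Reiten-type argument shows that the source of a left almost split morphism has local endomorphism ring (any non-invertible endomorphism of $M$, precomposed with... —rather, any endomorphism of $M$ that is not an isomorphism cannot be a split monomorphism, so it factors through $a$; a short diagram chase then forces it into the radical), so $\End{R}(M)$ is local and $M$ is indecomposable. Hence $M\in\Prod{C}\cap\Zg{R}=\Z_C$.

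Next I would verify the Hom-characterisation. Applying $\Hom{R}(-,M)$ to $0\to F\xrightarrow{g}M\xrightarrow{a}M'\to 0$ and noting that $g$ is not a split monomorphism (else $F\cong$ a summand of $M$, contradicting that $M'\neq 0$ is torsion-free while $M/F\cong M'$... more simply, $g$ is an envelope that is not an isomorphism since $F\notin\Prod{C}$: indeed $F$ is a brick by Proposition~\ref{simples in the heart}, and if $F\in\Prod{C}=\F\cap\F^{\perp_1}$ then the sequence would split), the identity $1_M$ does not factor through... wait: we actually want $\Hom{R}(F,M)\neq 0$, which is immediate since $g\neq 0$. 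The real content is \emph{uniqueness}: I must show $M$ is the \emph{only} element of $\Z_C$ receiving a nonzero map from $F$. For this, take any $N\in\Z_C$ with $\Hom{R}(F,N)\neq 0$, pick $0\neq h\colon F\to N$; since $F$ is a brick and torsion-free almost torsion, $h$ must be a monomorphism (its image is a nonzero submodule of $N\in\F$, hence torsion-free, so $F/\ker h\in\F$; but proper quotients of $F$ are torsion, forcing $\ker h=0$). Then $h$ factors through the $\Prod{C}$-envelope $g$ as $h=h'g$ with $h'\colon M\to N$; conversely, using that $g$ is an envelope and $N\in\Prod{C}$, and that $M$ is indecomposable with $a$ left almost split, one shows $h'$ is a split monomorphism, whence $h'$ is an isomorphism by indecomposability of $N$. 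This gives $N\cong M$ in $\Z_C$, establishing uniqueness.

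The main obstacle I anticipate is this last uniqueness/rigidity step — showing that any $N\in\Z_C$ with $\Hom{R}(F,N)\neq 0$ is forced to be isomorphic to $M$. The delicate point is arguing that the induced map $h'\colon M\to N$ is a split monomorphism; the natural route is to use minimality of the envelope $g$ together with the left almost split property of $a$ (if $h'$ were not a split monomorphism it would factor through $a$, i.e.~$h'=k a$ for some $k\colon M'\to N$, and then $h=h'g=k a g=0$, a contradiction), so $h'$ is a split monomorphism into the indecomposable $N$ and hence an isomorphism. Once this is in place, combining it with Proposition~\ref{prop: upgrade tf/t critical}(2) — which already gives the bijection between torsion-free almost torsion modules and critical modules — yields the stated bijection and its description, completing the proof.
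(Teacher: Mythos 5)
Your proof follows the same skeleton as the paper's: Proposition~\ref{prop: upgrade tf/t critical}(2) carries the whole correspondence, and the remaining content is (a) the existence of a $\Prod{C}$-envelope for each torsion-free, almost torsion module $F$, and (b) the uniqueness of the module in $\Z_C$ receiving a nonzero map from $F$. For (b) you take a genuinely different and valid route: any nonzero $h\colon F\to N$ with $N\in\Z_C\subseteq\F$ factors as $h=h'g$ through the preenvelope $g$, and if $h'$ were not a split monomorphism it would factor through the left almost split epimorphism $a$, giving $h=h'ag'=0$ -- wait, correctly: $h'=ka$ and $h=h'g=kag=0$, a contradiction; so $h'$ splits and indecomposability of $N$ gives $N\cong M$. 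The paper instead passes to the cotilting heart and uses that $M$ is the injective envelope of the simple object $F$ there \cite[Prop.~4.1]{AHL}, so no other indecomposable injective receives a nonzero map from $F$. Your argument is more elementary and stays entirely inside $\Mod{R}$; the paper's buys the statement almost for free from the heart picture already set up in Proposition~\ref{simples in the heart}.

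Two justifications need repair. First, for (a) you claim that $\Prod{C}$ is definable, hence (pre)enveloping. In general $\Prod{C}=\F\cap\F^{\perp_1}$ is not closed under pure submodules, so it is not definable; and in any case a preenvelope is not enough, since Proposition~\ref{prop: upgrade tf/t critical}(2) requires a genuine (left minimal) $\Prod{C}$-envelope -- without minimality neither the criticality of $M$ nor the almost split property of the cokernel follows. This existence is precisely the point the paper attends to: for $F\in\F$ the $\Prod{C}$-envelope is obtained as the special $\F^{\perp_1}$-envelope in $\Mod{\overline{R}}$ provided by the cotilting theory over $\overline{R}$, as recorded in \cite{AHL}. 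Second, your indecomposability sketch relies on ``an endomorphism of $M$ that is not an isomorphism is not a split monomorphism'', which fails for infinite-dimensional modules. The step is easily fixed: for a nontrivial idempotent $e\in\End{R}(M)$ neither $e$ nor $1-e$ is a monomorphism, while the left almost split property of $a$ forces one of $e$, $1-e$ to be a split monomorphism (otherwise both factor through $a$ and $a$ splits); alternatively one can simply quote \cite{AHL}, where indecomposability of critical modules is proved -- which is what the paper does, with locality of $\End{R}(M)$ then following from pure-injectivity. With these two points patched, your argument is a complete and somewhat more self-contained proof of the theorem.
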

\begin{proof}
 Note that every torsionfree, almost torsion $F$ has a $\Prod{C}$-envelope (that coincides with the special $\F^{\perp_1}$-envelope in $\Mod{\overline{R}}$), and so a short exact sequence as in Proposition~\ref{prop: upgrade tf/t critical}(2) exists for every such $F$.  It is also shown in \cite{AHL} that every critical module is indecomposable, so $M \in \Z_C$ with $\Hom{R}(F, M)\neq 0$. Moreover, $\Hom{R}(F,X)=\Hom{\bar{R}}(F,X)=\Hom{\Der{\bar{R}}}(F, X) = 0$ for all $X\in \Z_C$ with $X \neq M$ because $M$ is the injective envelope of the simple object $F$ in the cotilting heart by \cite[Prop.~4.1]{AHL}.  Thus the assignment  is well-defined, and it is a bijection by Proposition~\ref{prop: upgrade tf/t critical}(2).
\end{proof}

\begin{corollary}\label{lem: assignment injective}
For any \srigid\  $N$ in $\Mod{A}$, there is at most one torsionfree, almost torsion module $B$ in $\f_N$, and $N$ is the critical module associated to $B$.  
\end{corollary}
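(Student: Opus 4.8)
The plan is to deduce this corollary from Theorem~\ref{thm: upgrade} applied to the cosilting torsion pair $(\T_N,\F_N)$ with associated cosilting module $C_N$, so the main point is simply to identify the relevant data. First I would recall from Lemma~\ref{lem: N is tf} that $N\in\Z_N=\Prod{C_N}\cap\Zg{A}$, so $N$ is one of the indecomposable pure-injectives appearing on the ``critical side'' of the bijection in Theorem~\ref{thm: upgrade}. That theorem provides a bijection between torsionfree, almost torsion modules with respect to $(\T_N,\F_N)$ and critical modules in $\F_N$; moreover, since torsionfree almost torsion modules with respect to $(\T_N,\F_N)$ are finitely generated (they are bricks in $\mod{A}$ by Proposition~\ref{simples in the heart}), they coincide by Proposition~\ref{simples in the heart}(1) and Proposition~\ref{prop: upgrade tf/t critical}(1) with the torsionfree, almost torsion modules with respect to the torsion pair $(\t_N,\f_N)$ in $\mod{A}$.

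Next I would argue that $\f_N=\torsf{N}$, i.e.\ that $\f_N$ is the \emph{smallest} torsionfree class in $\mod{A}$ containing $N$ (restricted to finitely generated modules: the relevant statement is that $\F_N$ is the smallest cosilting torsionfree class containing $N$, already recorded after the definition of $(\Z_N,\Ical_N)$, and hence $\f_N=\F_N\cap\mod{A}$ is the smallest torsionfree class in $\mod{A}$ containing all finite-dimensional subobjects of $N$; since $N=\varinjlim N_i$ with $N_i\in\f_N$ this class is generated by the $N_i$). Actually, for the corollary it is enough to know that $N\in\F_N$ and that $N$ is critical in $\F_N$: the latter is exactly the content of the correspondence, since $N\in\Z_N$ and Theorem~\ref{thm: upgrade} says the modules in $\Z_C$ occurring in the bijection are precisely the critical ones. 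So from Theorem~\ref{thm: upgrade} there is \emph{at most one} torsionfree, almost torsion $B$ (with respect to $(\T_N,\F_N)$, equivalently with respect to $(\t_N,\f_N)$ by the reductions above) mapping to $N$, and $N$ is the critical module associated to that $B$ when it exists. This already gives the stated ``at most one'' and the identification of $N$ as the associated critical module.

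The one subtlety to address carefully is that the corollary phrases uniqueness as ``at most one torsionfree, almost torsion module $B$ in $\f_N$'', so I should check that every torsionfree, almost torsion $B'$ \emph{in $\f_N$} that could a priori exist is in fact torsionfree, almost torsion with respect to the torsion pair $(\t_N,\f_N)$, and conversely. Being ``in $\f_N$'' here should be read as ``torsionfree, almost torsion with respect to $(\t_N,\f_N)$'' — by Lemma~\ref{lem: brick unique tf/t} a brick is the unique torsionfree, almost torsion module in $\torsf{B}$, and a torsionfree, almost torsion module for $(\t_N,\f_N)$ satisfies $\torsf{B}\subseteq\f_N$; combined with Theorem~\ref{thm: upgrade}, which sends each such $B$ to a distinct critical module, and with the fact that $N$ is critical, at most one $B$ can be sent to $N$. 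I expect the main obstacle to be purely expository: making precise the (slightly overloaded) phrase ``torsionfree, almost torsion in $\f_N$'' and marshalling the chain of equivalences (Proposition~\ref{simples in the heart}(1), Proposition~\ref{prop: upgrade tf/t critical}(1), and the finite generation of these bricks) so that Theorem~\ref{thm: upgrade} applies cleanly to the cosilting torsion pair $(\T_N,\F_N)$; the mathematical content is immediate once these identifications are in place.
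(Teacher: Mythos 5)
There is a genuine gap in the uniqueness part. Your argument only invokes the injectivity of the bijection in Theorem~\ref{thm: upgrade}, which yields that \emph{at most one} torsionfree, almost torsion module can be sent to $N$; but the corollary claims that at most one torsionfree, almost torsion module exists in $\f_N$ at all, and that any such module automatically has $N$ as its associated critical module. A priori the torsion pair $(\t_N,\f_N)$ could admit several torsionfree, almost torsion modules (in general they form a semibrick with many elements), each assigned under Theorem~\ref{thm: upgrade} to a \emph{different} critical module in $\F_N$, and nothing in your proposal rules this out. The missing step, which is the actual content of the paper's proof, is the observation that any torsionfree, almost torsion $B$ with respect to $(\t_N,\f_N)$ lies in $\f_N=\Cogen{N}\cap\mod{A}$, so that $\Hom{A}(B,N)\neq 0$; since $N\in\Z_N$ by Lemma~\ref{lem: N is tf}, and since Theorem~\ref{thm: upgrade} characterises the critical module associated to $B$ as the \emph{unique} element of $\Z_N$ receiving a nonzero map from $B$ (after passing from $(\t_N,\f_N)$ to $(\T_N,\F_N)$ via Proposition~\ref{simples in the heart}(1), as you do), the critical module associated to $B$ must be $N$ itself. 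Only then does injectivity of the bijection give that there is at most one such $B$ in $\f_N$.

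A related inaccuracy: you assert that $N$ is critical in $\F_N$ on the grounds that ``the modules in $\Z_C$ occurring in the bijection are precisely the critical ones''. Theorem~\ref{thm: upgrade} gives a bijection \emph{onto} the set of critical modules, but it does not say that every element of $\Z_{C_N}$ is critical, so $N\in\Z_N$ alone does not make $N$ critical; indeed the corollary only claims that $N$ is the critical module associated to $B$ when such a $B$ exists. The Hom-nonvanishing argument above is what simultaneously exhibits $N$ as critical (namely as the image of $B$ under the bijection) and proves the uniqueness of $B$; your reductions via Proposition~\ref{simples in the heart}(1) and Lemma~\ref{lem: N is tf} are correct but do not substitute for it.
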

\begin{proof}
Suppose $B$ is torsionfree, almost torsion in $\f_N$ and hence, by Proposition \ref{simples in the heart}(1), also in $\F_N$.  Then $\Hom{A}(B, N) \neq 0$ by definition, and $N\in\Z_N$ by Lemma~\ref{lem: N is tf}.  By Theorem \ref{thm: upgrade}, we have that $N$ is the critical module associated to $B$ and $B$ is the unique torsionfree, almost torsion in $\f_N$.
\end{proof}

\section{Brick--critical correspondence}

Let again  $A$ denote a left artinian ring and $\Lambda$ a finite-dimensional algebra.  
  We aim to 
extend the brick\,--\,$\tau$-rigid correspondence established in \cite{DemonetIyamaJasso:19}; however, for a cosilting pair $(\Z, \Ical)$ over a finite-dimensional algebra $\Lambda$, the finite-dimensional elements of $\Z$ are $\tau^-$-rigid rather than $\tau$-rigid (see Proposition~\ref{rem: small cosilt pair}).  As such, our correspondence will instead generalise the dual version of Demonet, Iyama and Jasso's correspondence: the brick\,--\,$\tau^-$-rigid correspondence, which is stated as Proposition \ref{Prop: DIJ dual} below, cf.~\cite[Thm.~2.3]{Asai:20} for the related semibrick\,--\,support $\tau^-$-tilting correspondence.

Recall that $\tors{A}$ is a complete lattice ordered by inclusion of torsion classes. Let us recall some lattice--theoretic notions we will use in the sequel.
\begin{definition}
Let $(L,\ge)$ be a lattice.
Given $x,y\in L$, we say that $x$ \textbf{covers}  $y$ if $x> y$ and for all $z\in L$ with $x\ge z>y$ we have $x=z$.
Moreover, an element $x$ in  $L$ is said to be  \textbf{completely meet irreducible} if, whenever $x = \bigwedge_{i\in I} y_i$ with $y_i \in L$, we must have $x = y_j$ for some $j \in I$.  This condition can be restated as follows: there is a unique element $x^*$ covering $x$, and for every $y>x$ we have $y \geq x^*$.  
\end{definition}

 The following result was shown independently in \cite{BarnardCarrollZhu:19} and \cite{DemonetIyamaReadingReitenThomas:23}  for a finite-dimensional algebra $\Lambda$, the same arguments work over the left artinian ring $A$.
 \begin{proposition}\label{prop: cmi}
 There is a bijection between $\brick{A}$ and the completely meet irreducible elements of the lattice $\tors{A}$ which associates to $B\in\brick{A}$  
the
 torsion pair $({}^{\perp_0}B\,\cap\,\mod{A}, \torsf{B})$.\end{proposition}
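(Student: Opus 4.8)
The plan is to establish the bijection by pairing each brick $B$ with the torsion pair $t_B := ({}^{\perp_0}B \cap \mod{A}, \torsf{B})$ and showing this lands among the completely meet irreducible elements of $\tors{A}$, then producing an inverse. First I would check that $t_B$ is indeed a torsion pair in $\mod{A}$: since $A$ is left artinian, by Proposition~\ref{prop: closure properties}(2) a torsionfree class is exactly a subcategory of $\mod{A}$ closed under submodules and extensions, and $\torsf{B}$ is by definition the smallest such class containing $B$; one then verifies that its left $\perp_0$-orthogonal inside $\mod{A}$ is the matching torsion class and that $({}^{\perp_0}\torsf{B})\cap\mod A = {}^{\perp_0}B\cap\mod A$ (the nontrivial inclusion uses that every object of $\torsf{B}$ is a subobject of an extension of copies of $B$, or a filtration argument). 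So $t_B\in\tors{A}$.

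The heart of the matter is showing $t_B$ is completely meet irreducible and that this assignment is bijective. Here I would exploit the machinery already set up: by Lemma~\ref{lem: brick unique tf/t}, $B$ is the unique torsionfree, almost torsion module in $\torsf{B}$. By the covering/meet-irreducibility characterisation recalled just before the proposition, showing $t_B$ is completely meet irreducible amounts to exhibiting a unique torsion pair $t_B^*$ covering it. The standard fact (from \cite{BarnardCarrollZhu:19, DemonetIyamaReadingReitenThomas:23}, which the statement says carries over verbatim to left artinian $A$) is that covers in the Hasse quiver of $\tors{A}$ are labelled by bricks, and that the brick labelling the unique arrow out of a completely meet irreducible torsion pair $(\t,\f)$ is precisely the unique torsionfree, almost torsion module in $\f$ --- which by Proposition~\ref{simples in the heart}(1) is the same whether computed in $\mod A$ or after passing to $\Cosilt{A}$. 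Thus $B \mapsto t_B$ and $(\t,\f)\mapsto$ (its torsionfree, almost torsion module) are mutually inverse: given a completely meet irreducible $(\t,\f)$ with associated brick $B$, one has $\torsf{B}\subseteq\f$, and equality follows because any torsion pair strictly above $(\t,\f)$ must dominate its unique cover, forcing $\f$ to be minimal among torsionfree classes whose associated brick-label is $B$, i.e. $\f=\torsf B$; conversely starting from $B$, the torsion pair $t_B$ has $B$ as its torsionfree almost torsion module (it lies in $\torsf B=\f_{t_B}$ and is the unique such by Lemma~\ref{lem: brick unique tf/t}), and minimality of $\torsf B$ gives that $t_B$ is meet irreducible.

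Concretely the steps are: (1) verify $t_B$ is a torsion pair in $\mod A$, identifying the torsion class as ${}^{\perp_0}B\cap\mod A$; (2) recall from \cite{BarnardCarrollZhu:19, DemonetIyamaReadingReitenThomas:23} the brick-labelling of Hasse arrows and the bijection between bricks and completely meet irreducible elements, noting the arguments are valid over left artinian $A$; (3) identify, via Lemma~\ref{lem: brick unique tf/t} and Proposition~\ref{simples in the heart}(1), the brick label of a completely meet irreducible $(\t,\f)$ with the unique torsionfree, almost torsion module in $\f$; (4) conclude that $B\mapsto t_B$ is the inverse of the label map, in particular that $t_B$ is completely meet irreducible with $\f_{t_B}=\torsf B$. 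I expect step (3)--(4), pinning down that $\torsf B$ is exactly the torsionfree class of the meet-irreducible torsion pair attached to $B$ (rather than some larger torsionfree class sharing the same brick label), to be the main obstacle; it rests on the minimality built into the definition of $\torsf B$ together with the uniqueness in Lemma~\ref{lem: brick unique tf/t}, and on the fact that distinct completely meet irreducible torsion pairs carry distinct brick labels.
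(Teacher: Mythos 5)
The first thing to note is that the paper does not prove Proposition~\ref{prop: cmi} at all: it is stated as a result shown in \cite{BarnardCarrollZhu:19,DemonetIyamaReadingReitenThomas:23} for finite-dimensional algebras, with the remark that the same arguments work over a left artinian ring. So your proposal can only be measured against those arguments and against how the surrounding machinery (Proposition~\ref{simples in the heart}, Lemma~\ref{lem: brick unique tf/t}) is used elsewhere in the paper. Your step (1) is fine (the cleanest way to see ${}^{\perp_0}\torsf{B}\cap\mod{A}={}^{\perp_0}B\cap\mod{A}$ is that, for fixed $X$, the class $\{F\mid \Hom{A}(X,F)=0\}$ is closed under submodules and extensions), and your injectivity argument via Lemma~\ref{lem: brick unique tf/t}, as well as the surjectivity argument using the brick label of the unique cover of a completely meet irreducible pair, can be made precise along the lines you indicate.

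The genuine gap is in the forward direction, i.e.\ in showing that $t_B=({}^{\perp_0}B\cap\mod{A},\torsf{B})$ is completely meet irreducible. You assert this ``amounts to exhibiting a unique torsion pair covering it'', but in a general complete lattice a unique cover does not imply complete meet irreducibility, and $\tors{A}$ may well be infinite here: the paper's own definition additionally requires that \emph{every} torsion pair strictly above $t_B$ dominates that cover, and this is exactly what your sketch never verifies (the appeal to ``minimality of $\torsf{B}$'' does not address it; note also that invoking from \cite{BarnardCarrollZhu:19,DemonetIyamaReadingReitenThomas:23} the bijection between bricks and completely meet irreducible elements, as in your step (2), is circular, since that is the statement to be proved). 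The missing step has a short direct argument you could insert: if $\u\supsetneq\t_B$ is a torsion class, choose $X\in\u$ with $\Hom{A}(X,B)\neq 0$ and let $Y\subseteq B$ be the image of a nonzero map; then $Y\in\u$, every proper quotient $B/Y$ of the brick $B$ lies in ${}^{\perp_0}B=\t_B\subseteq\u$ (a nonzero map $B/Y\to B$ would produce a nonzero non-invertible endomorphism of $B$), and the exact sequence $0\to Y\to B\to B/Y\to 0$ forces $B\in\u$. Hence the smallest torsion class containing $\t_B$ and $B$ is contained in every torsion class strictly containing $\t_B$, which gives simultaneously the existence and uniqueness of the cover and complete meet irreducibility. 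With this added, your outline becomes a complete proof, essentially the one in the cited sources.
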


Let $\Ri(A)$ denote the set of isomorphism classes of  \srigid s $N$ such that the torsion class $\t_N$ is completely meet irreducible, and for every $N\in \Ri(A)$, let $B_N$ denote the unique brick such that $\f_N = \torsf{B_N}$.

The main result of this section is the following correspondence.

\begin{theorem}\label{Thm: brick-critical correspondence}
Let 
$A$ be a left artinian ring. There is a bijection 
	\[\Ri(A) \to \brick{A}\]
given by $N\mapsto B_N$ where $B_N$ denotes the unique brick such that $\f_N = \torsf{B_N}$.
\end{theorem}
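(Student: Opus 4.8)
The plan is to construct the map $\Ri(A) \to \brick{A}$ and its inverse, using the results already assembled in Sections 5 and 6. For the forward direction, given $N \in \Ri(A)$, the torsion pair $(\t_N, \f_N)$ in $\mod{A}$ is completely meet irreducible by definition, so by Proposition~\ref{prop: cmi} there is a unique brick $B_N$ with $\f_N = \torsf{B_N}$ and $\t_N = {}^{\perp_0}B_N \cap \mod{A}$; this makes $N \mapsto B_N$ well-defined. For the inverse direction, given $B \in \brick{A}$, consider the completely meet irreducible torsion pair $(\t, \f) := ({}^{\perp_0}B \cap \mod{A}, \torsf{B})$ of Proposition~\ref{prop: cmi}, pass to the associated cosilting torsion pair $(\T, \F) = (\varinjlim\t, \varinjlim\f)$ and its cosilting pair $(\Z_\F, \Ical_\F)$ with $\Z_\F = \F \cap \F^{\perp_1} \cap \Zg{A}$ (Theorems~\ref{Thm: cosilt bij}, \ref{thm: cosilting modules and pairs}). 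By Lemma~\ref{lem: brick unique tf/t}, $B$ is the unique torsionfree, almost torsion module in $\torsf{B} = \f$, hence (Proposition~\ref{simples in the heart}(1)) also torsionfree, almost torsion with respect to $(\T,\F)$. Then Theorem~\ref{thm: upgrade} assigns to $B$ a unique module $N_B \in \Z_\F$ with $\Hom{A}(B, N_B) \neq 0$; this $N_B$ is a \srigid\ by Proposition~\ref{prop: strongly rigid}(1) (it lies in a cosilting pair). Define $B \mapsto N_B$.

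The core of the argument is checking that these two assignments are mutually inverse. Starting from $N \in \Ri(A)$: by Corollary~\ref{lem: assignment injective}, $B_N$ is torsionfree, almost torsion in $\f_N$ and $N$ is precisely the critical module that Theorem~\ref{thm: upgrade} associates to $B_N$ with respect to $(\T_N, \F_N)$. But $(\T_N, \F_N)$ is the cosilting torsion pair obtained from $(\t_N, \f_N) = ({}^{\perp_0}B_N \cap \mod{A}, \torsf{B_N})$, which is exactly the torsion pair of Proposition~\ref{prop: cmi} attached to the brick $B_N$; hence the inverse construction applied to $B_N$ returns $N$, i.e.\ $N_{B_N} = N$. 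Conversely, starting from $B \in \brick{A}$ and producing $N_B$: I must show $N_B \in \Ri(A)$, i.e.\ that $\t_{N_B}$ is completely meet irreducible, and that $B_{N_B} = B$. For the latter, since $N_B \in \Z_\F$ and $B$ is torsionfree, almost torsion in $\F$ with $\Hom{A}(B, N_B) \neq 0$, Corollary~\ref{lem: assignment injective} forces $B$ to be the unique torsionfree, almost torsion module in $\f_{N_B}$, and by Lemma~\ref{lem: brick unique tf/t} together with the uniqueness in Proposition~\ref{prop: cmi} this gives $\f_{N_B} = \torsf{B}$ and $\t_{N_B} = {}^{\perp_0}B \cap \mod{A}$; in particular $\t_{N_B}$ coincides with the completely meet irreducible torsion class of Proposition~\ref{prop: cmi}, so $N_B \in \Ri(A)$ and $B_{N_B} = B$.

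The main obstacle I anticipate is the compatibility between the two torsion pairs attached to a brick $B$: on one side the lattice-theoretic one $({}^{\perp_0}B \cap \mod{A}, \torsf{B})$ from Proposition~\ref{prop: cmi}, and on the other side the torsion pair $(\t_{N_B}, \f_{N_B})$ built from the \srigid\ $N_B = C_N$ via $\f_{N_B} = \Cogen{N_B} \cap \mod{A}$. One must verify that $\torsf{B} = \Cogen{N_B} \cap \mod{A}$; this is where the precise statement of Theorem~\ref{thm: upgrade} (that $N_B$ is the injective envelope of the simple object $B$ in the cosilting heart, and that $\F$ is the \emph{smallest} cosilting torsionfree class containing $B$, hence $\F = \F_{N_B}$) has to be used carefully, combined with the fact that $\torsf{B}$ is by construction the smallest torsionfree class in $\mod{A}$ containing $B$. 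Once the two torsionfree classes are identified, the rest follows formally from the bijections in Proposition~\ref{prop: cmi} and Theorem~\ref{thm: upgrade} and the uniqueness statements in Lemma~\ref{lem: brick unique tf/t} and Corollary~\ref{lem: assignment injective}.
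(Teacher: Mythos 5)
Your proposal is correct and follows essentially the same route as the paper: well-definedness via Proposition~\ref{prop: cmi}, the inverse map via the critical module of Theorem~\ref{thm: upgrade} applied to $\varinjlim(\torsf{B})$, and injectivity via Corollary~\ref{lem: assignment injective}. The only point to reorder is that in the surjectivity step you should first establish $\f_{N_B}=\torsf{B}$ --- exactly as you sketch in your final paragraph, using that $B$ embeds into $N_B$ via the envelope of Proposition~\ref{prop: upgrade tf/t critical}(2) together with the minimality of $\torsf{B}$ and of $\varinjlim(\torsf{B})$ --- before invoking Corollary~\ref{lem: assignment injective} for $N_B$, since that corollary concerns torsionfree, almost torsion modules with respect to $(\t_{N_B},\f_{N_B})$, which is not yet known to coincide with the torsion pair attached to $B$.
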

\begin{proof}  
The assignment is well-defined by definition.  If $N\in \Ri(A)$, then $B_N$ is torsionfree, almost torsion in $\f_N$ by Lemma \ref{lem: brick unique tf/t} and $N$ is the associated critical module by  
 Corollary \ref{lem: assignment injective}. So the assignment is injective, and it 
 remains to show that it is surjective. Given  $B\in\brick{A}$, we have to find a module $N\in \Ri(A)$ such that $\f_N = \torsf{B}$.
Consider the cosilting torsionfree class given by $\mathcal{V}_B:=\varinjlim(\torsf{B})$ and note that $B$ is torsionfree, almost torsion with respect to the corresponding torsion pair by Lemma \ref{lem: brick unique tf/t} and Proposition \ref{simples in the heart}(1). If $N$ is the critical module in $\mathcal{V}_B$ associated to $B$, then $B$ is a submodule of $N$ by Proposition~\ref{prop: upgrade tf/t critical}(2), and so $B\in\Cogen{N}\cap\mod{A} = \f_N$.  Therefore $\torsf{B} \subseteq \f_N \subseteq \mathcal{V}_B\cap \mod{\Lambda} = \torsf{B}$ and so $\f_N = \torsf{B}$.  In other words, we have shown that $N\in\Ri(\Lambda)$ is a preimage of $B$.
\end{proof}
The final part of this section is dedicated to proving that Theorem \ref{Thm: brick-critical correspondence} does indeed extend the dual of the brick\,--\,$\tau$-rigid correspondence found in \cite{DemonetIyamaJasso:19}.  

\begin{definition}\label{Def: reject}
Let $N \in\Zg{A}$ be a \srigid\ over a left artinian ring $A$ and consider the canonical monomorphism $\Phi \colon N\to N^{\Rad(E)}$ where $\Rad(E)$ denotes the Jacobson radical of the endomorphism ring $E := \End{A}(N)$ of $N$.  Let $\tilde{N} := \Im{\Phi}$ and $S_N := \Ker{\Phi}$.  We consider the \textbf{reject sequence}
	\[0 \to S_N \overset{f}{\rightarrow} N \overset{g}{\rightarrow} \tilde{N} \to 0. \]
\end{definition}

\begin{lemma}\label{lem: envelope and las}
Let $N\in\Zg{A}$ be a \srigid.  If $S_N\neq 0$ in the reject sequence defined in Definition \ref{Def: reject}, we have that $g$ is left almost split in $\Cogen{N}$ and $f$ is an $(\F_N\cap\F_N^{\perp_1})$-envelope of $S_N$.
\end{lemma}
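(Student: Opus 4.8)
The strategy is to unpack the reject sequence and recognise it as an instance of the critical-module machinery in Proposition~\ref{prop: upgrade tf/t critical}(2) and Theorem~\ref{thm: upgrade}, applied to the cosilting torsion pair $t = (\T_N, \F_N)$. First I would observe that, since $N$ is a \srigid, Lemma~\ref{lem: N is tf} gives $N \in \Z_N = \F_N \cap \F_N^{\perp_1} \cap \Zg{A} = \Prod{C_N} \cap \Zg{A}$, so $N$ is one of the indecomposable objects in $\Prod{C_N}$. The quotient $\tilde N = \Im \Phi$ is obtained by factoring out the reject $S_N = \Ker \Phi$, which is the intersection of the kernels of all non-split-mono endomorphisms of $N$ (equivalently, all endomorphisms lying in $\Rad(E)$); this is precisely the module-theoretic incarnation of "dividing out everything that maps into the radical", and it is the standard construction producing a minimal non-trivial quotient. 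The point is that because $\End{A}(N)$ is local (as $N$ is indecomposable pure-injective) and $N$ is critical in $\F_N$ by Corollary~\ref{lem: assignment injective}, the map $g \colon N \to \tilde N$ should coincide with the left almost split map $N \to N'$ in $\F_N$ guaranteed by the definition of critical, up to the identification $\tilde N = N'$.

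Concretely, I would argue as follows. By Corollary~\ref{lem: assignment injective}, $N$ is the critical module associated to the unique torsionfree, almost torsion module $B \in \f_N$; let $0 \to B \overset{j}{\to} N \overset{a}{\to} N' \to 0$ be the short exact sequence of Proposition~\ref{prop: upgrade tf/t critical}(2), so that $j$ is a $\Prod{C_N}$-envelope of $B$ and $a$ is left almost split in $\F_N$. I claim $\Ker a = S_N$, i.e.\ $B \cong S_N$ and $a$ identifies with $g$. Indeed, $a$ being left almost split in $\F_N$ means every endomorphism of $N$ that is not a split mono factors through $a$; since $\End{A}(N)$ is local, "not a split mono" is the same as "lies in $\Rad(E)$", so every $\varphi \in \Rad(E)$ factors through $a$, hence $\Ker a \subseteq \Ker \varphi$ for all such $\varphi$, giving $\Ker a \subseteq \bigcap_{\varphi \in \Rad(E)} \Ker \varphi = S_N$. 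For the reverse inclusion, I would use that $g \colon N \to \tilde N$ is not a split mono (its kernel $S_N \neq 0$ by hypothesis) and that $\tilde N \in \Cogen{N}$: since $S_N \neq 0$, the map $g$ is a non-split-mono endomorphism-related quotient, and the left almost split property of $a$ in $\F_N$ forces $g$ to factor through $a$ — but we must first know $g$ lands in an appropriate class. The cleanest route is to show $\tilde N \in \F_N$ directly (it is a quotient of $N \in \F_N$, and $\F_N$ is a torsionfree class, hence closed under... no — torsionfree classes are not closed under quotients).

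So the more careful approach to the reverse inclusion: I would instead show $B \cong S_N$ by identifying $B$ as the reject of $N$. We know $B$ is a submodule of $N$ with $N/B = N' \in \F_N$, and $B$ is torsionfree almost torsion, so every proper quotient of $B$ is torsion; moreover $B \to N$ is a $\Prod{C_N}$-envelope. The key is minimality: the reject sequence is characterised by $\tilde N$ having the property that $\Hom{A}(N, \tilde N)$ "sees" only the invertible endomorphisms, which translates (via the natural isomorphism of Theorem~\ref{Thm: cosilt summary}(3), or rather via the description of the heart) into $N$ being the injective envelope of the simple object $B$ and $\tilde N$ corresponding to the quotient $N/B$ in the cotilting heart. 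I would make this precise using \cite[Prop.~4.1]{AHL} (which says $N$ is the injective envelope of the simple $B$ in the cotilting heart of $\overline{A} = A/\Ann{C_N}$): the radical of $\End{\overline{A}}(N) = \End{A}(N)$ corresponds to the radical of the endomorphism ring of this injective envelope, whose "socle-killing" quotient is exactly $N/\Soc = N/B$; combined with the fact that taking the reject by $\Rad(E)$ is functorial and commutes with this identification, we get $S_N = B$ and hence the reject sequence coincides with the sequence of Proposition~\ref{prop: upgrade tf/t critical}(2). Both conclusions then follow: $g = a$ is left almost split in $\F_N$, and its restriction to $\Cogen{N} \cap \mod A = \f_N \subseteq \F_N$ is still left almost split in $\Cogen{N}$ (since any test object in $\Cogen{N}$ embeds in a product of copies of $N$, and one reduces to maps into $N$ itself, which is pure-injective); and $f = j$ is the $\Prod{C_N}$-envelope of $S_N = B$, which is the $(\F_N \cap \F_N^{\perp_1})$-envelope since $\Prod{C_N} = \F_N \cap \F_N^{\perp_1}$ by \cite[Rem.~3.4]{Angeleri:18}.

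**Main obstacle.** The delicate point is the precise identification $S_N \cong B$, i.e.\ that the purely ring-theoretic reject construction (kernel of $N \to N^{\Rad(E)}$) produces exactly the socle-type submodule $B$ coming from the critical-module theory. This requires translating "left almost split in $\F_N$" into a statement about $\Rad(\End{A}(N))$ and invoking the description of $N$ as an injective envelope of a simple object in the cotilting heart from \cite{AHL}; one must check that the equivalence $\H{\sigma} \colon \Prod{\sigma} \to \Inj{\heart{\sigma}}$ of Theorem~\ref{Thm: cosilt summary}(3) carries the reject filtration to the socle filtration, which is where the real content lies. Everything else — that $g$ is left almost split in the larger class $\Cogen{N}$, and that $f$ is the claimed envelope — is then a routine reduction using pure-injectivity of $N$ and the identification $\Prod{C_N} = \F_N \cap \F_N^{\perp_1}$.
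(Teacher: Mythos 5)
There is a genuine gap, and it sits exactly where you flag it yourself. Your whole strategy rests on identifying $S_N$ with the torsionfree, almost torsion module $B$ of Corollary~\ref{lem: assignment injective}, i.e.\ on showing that the reject sequence coincides with the short exact sequence of Proposition~\ref{prop: upgrade tf/t critical}(2). You never prove this: the "socle versus reject" comparison in the cotilting heart is asserted, not carried out, and it is genuinely delicate because $S_N$ is a kernel computed in $\Mod{A}$ while the socle of $N$ as an injective object lives in the heart. In fact the paper only establishes this identification under the extra hypothesis that $S_N$ is finitely presented (Proposition~\ref{prop: reject}), explicitly states that it does not know whether it holds in general, and --- crucially --- proves it there by \emph{using} Lemma~\ref{lem: envelope and las}. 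So your route is circular relative to the intended development and requires a statement strictly stronger than the lemma. A second, independent problem: Corollary~\ref{lem: assignment injective} only gives \emph{uniqueness} of a torsionfree, almost torsion module in $\f_N$, not existence; your argument starts from "let $B$ be the unique torsionfree, almost torsion module in $\f_N$" and from "$N$ is critical in $\F_N$", neither of which is available under the lemma's sole hypothesis $S_N \neq 0$. (There is also a smaller slip along the way: $\tilde N \in \F_N$ is immediate from $\tilde N \subseteq N^{\Rad(E)}$ and closure of $\F_N$ under products and submodules, not via closure under quotients.)

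The paper's proof is elementary and avoids all of this. For the left-almost-split claim: given $h \colon N \to M$ not a split monomorphism with $M \in \Cogen{N}$, embed $M$ into some $N^I$; each composite $\pi_i k h \colon N \to N$ is a non-isomorphism, hence lies in $\Rad(E)$ since $E = \End{A}(N)$ is local, hence vanishes on $S_N = \Ker{\Phi}$; therefore $hf = 0$ and $h$ factors through $g$. For the envelope claim: $N \in \F_N \cap \F_N^{\perp_1}$ by Lemma~\ref{lem: N is tf} and $\tilde N \in \F_N$, so $f$ is a preenvelope; minimality follows because any $m \in E$ with $f = mf$ is either a split monomorphism (hence invertible, by locality) or factors as $m = g'g$ by the first part, forcing $f = g'gf = 0$ and contradicting $S_N \neq 0$. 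If you want a correct proof, this direct argument is the one to give; your heart-theoretic identification should be reserved for the situation of Proposition~\ref{prop: reject}, where it is a consequence of the lemma rather than an ingredient of its proof.
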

\begin{proof}
We begin by proving that $g$ is left almost split in $\Cogen{N}$.  By definition, both $N$ and $\tilde{N}$ belong to $\Cogen{N}$.  Now consider a morphism $h \colon N \to M$ with $M\in\Cogen{N}$ that is not a split monomorphism and consider some monomorphism $k \colon M\to N^I$.  For each $i\in I$, let $\pi_i\colon N^I \to N$ be the $i$th projection and consider the composition $\pi_ikh$.  Then $\pi_ikh$ belongs to $\Rad(E)$ because $E:=\End{A}(N)$ is local and so $\Rad(E)$ consists of all of the non-isomorphisms.  Indeed, if $\pi_ikh$ were an isomorphism, then $h$ would be a split monomorphism.  Clearly $\pi_ikhf = 0$ for all $i\in I$ and hence $ khf = 0$. Since $k$ is a monomorphism we conclude that $hf =0$ and so there exists a unique $g'$ such that $h=g'g$ as required.

Next we show that $f$ is an $(\F_N\cap\F_N^{\perp_1})$-envelope of $S_N$.  By Lemma \ref{lem: N is tf}, we have that $N$ belongs to $\F_N\cap\F_N^{\perp_1}$, and  $\tilde{N} \in \F_N$ by construction.  A standard argument yields that $f$ is therefore a  $(\F_N\cap\F_N^{\perp_1})$-preenvelope.  It remains to show minimality.  Suppose $m\in E$ such that $f = mf$.  If $m$ is a split monomorphism, then $m$ is an isomorphism because $E$ is local.  If $m$ is not a split monomorphism, then there exists some $g'$ such that $m=g'g$ because $g$ is left almost split in $\Cogen{N}$.  But then $f = mf = g'gf = 0$, which contradicts our assumption that $S_N \neq 0$.
\end{proof}

We are now ready to state and prove that our bijection restricts to the dual of \cite[Thm.~4.1]{DemonetIyamaJasso:19}. Let $\Lambda$ be a finite-dimensional algebra and let $\tau$ denote the Auslander-Reiten translate.  We fix the notation 
	\[\tau^-\!\text{-}\mathbf{rigid}(\Lambda) := \{X\in\mod{\Lambda} \mid X \text{ is indecomposable and } \Hom{\Lambda}(\tau^-X, X) = 0\}.\] 
	\begin{lemma}\label{lem: ff tau} Over a finite dimensional algebra $\Lambda$, we have that
\[\tau^-\!\text{-}\mathbf{rigid}(\Lambda) = \Ri(\Lambda) \,\cap\, \mod{\Lambda}=\{N\!\in\!\Mod{\Lambda}\mid N \text{ is a grain with $\f_N$ functorially finite} \}.\] 
	 Moreover,	
if $N\in\tau^-\!\text{-}\mathbf{rigid}(\Lambda)$, then $\F_N = \Cogen{N}$.
\end{lemma}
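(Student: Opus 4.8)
The statement to prove is Lemma~\ref{lem: ff tau}: for a finite-dimensional algebra $\Lambda$, the three sets
$\tau^-\text{-}\mathbf{rigid}(\Lambda)$, $\Ri(\Lambda)\cap\mod{\Lambda}$, and $\{N\in\Mod{\Lambda}\mid N \text{ a grain with } \f_N \text{ functorially finite}\}$ coincide, and moreover $\F_N=\Cogen{N}$ for $N\in\tau^-\text{-}\mathbf{rigid}(\Lambda)$. The plan is to establish the chain of equalities by proving three inclusions in a cycle and then deduce the last assertion along the way.

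\textbf{Step 1: $\tau^-\text{-}\mathbf{rigid}(\Lambda)\subseteq\{N \text{ grain with }\f_N \text{ functorially finite}\}$.} By Proposition~\ref{rem: small cosilt pair}(1), any indecomposable $\tau^-$-rigid module $N$ has property (ii) of Definition~\ref{rpair}, i.e. $\mu_N$ is rigid, hence $N$ is a grain by Proposition~\ref{prop: strongly rigid}. A finite-dimensional $\tau^-$-rigid module is the summand of a support $\tau^-$-tilting module via its Bongartz-type completion (the dual of \cite[Thm.~2.10]{AdachiIyamaReiten:14}); by Proposition~\ref{rem: small cosilt pair}(1)--(2) this support $\tau^-$-tilting module is a cosilting module $C$ with $N$ a summand, and by Proposition~\ref{rem: small cosilt pair}(3) the associated torsion pair is functorially finite. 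Since $N$ is a grain, $(\t_N,\f_N)=({}^{\perp_0}N\cap\mod{\Lambda},\Cogen{N}\cap\mod{\Lambda})$; I would identify $\f_N$ with the torsionfree class of this functorially finite torsion pair (both equal $\mathrm{Sub}\,C\cap\mod\Lambda$ when restricted appropriately, using that $\Cogen N=\mathrm{Sub}\,N\subseteq\mathrm{Sub}\,C$ and $N$ is a summand of $C$), and conclude that $\f_N$ is functorially finite and that $\F_N=\varinjlim\f_N=\Cogen{N}$ — this last identity being exactly the ``moreover'' clause, which drops out here.

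\textbf{Step 2: $\{N \text{ grain with }\f_N\text{ functorially finite}\}\subseteq\Ri(\Lambda)\cap\mod\Lambda$.} First I would show such an $N$ is finite-dimensional: if $\f_N$ is functorially finite then the cosilting torsion pair $(\F_N,\cdot)$ arising from $\varinjlim\f_N$ is the limit closure of a functorially finite torsion pair, hence corresponds by Proposition~\ref{rem: small cosilt pair}(3) to a support $\tau^-$-tilting module $C_N$, so $\Z_N$ consists of finitely generated indecomposables (Proposition~\ref{rem: small cosilt pair}(2)); since $N\in\Z_N$ by Lemma~\ref{lem: N is tf}, $N$ is finite-dimensional. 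It then remains to see that $\t_N$ is completely meet irreducible in $\tors\Lambda$. By Corollary~\ref{lem: assignment injective}, $\f_N$ contains at most one torsionfree almost torsion module $B$, and $N$ is the critical module associated to it; I would argue that $\f_N$ must contain \emph{exactly} one such $B$ (using that $\f_N$ is functorially finite, so its torsion pair has a Hasse-quiver cover, equivalently a brick label, and invoking Proposition~\ref{simples in the heart}), and then use that $\f_N=\torsf{B}$ together with Proposition~\ref{prop: cmi} to conclude $\t_N$ is completely meet irreducible. This is the step I expect to be the main obstacle: pinning down that functorial finiteness of $\f_N$ forces precisely one torsionfree almost torsion object, and that $\f_N$ is then forced to equal $\torsf B$ rather than something strictly larger.

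\textbf{Step 3: $\Ri(\Lambda)\cap\mod\Lambda\subseteq\tau^-\text{-}\mathbf{rigid}(\Lambda)$.} Let $N\in\Ri(\Lambda)\cap\mod\Lambda$; it is a finite-dimensional grain, so $\mu_N$ is rigid, which by Proposition~\ref{rem: small cosilt pair}(1) says exactly that (the indecomposable module) $N$ satisfies the $\tau^-$-rigidity condition $\Hom\Lambda(\tau^-N,N)=0$, i.e. $N\in\tau^-\text{-}\mathbf{rigid}(\Lambda)$. Combining the three steps closes the cycle and gives the triple equality; the ``moreover'' statement $\F_N=\Cogen N$ for $N\in\tau^-\text{-}\mathbf{rigid}(\Lambda)$ was already obtained in Step~1.
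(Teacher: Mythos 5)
There are two genuine gaps, both at places you yourself flag as delicate. In Step 1, the identification of $\f_N$ with the torsionfree class of the Bongartz-type completion is incorrect: the completion $C$ provided by the dual of \cite[Thm.~2.10]{AdachiIyamaReiten:14} has associated torsionfree class $\{M \in \mod{\Lambda} \mid \Hom{\Lambda}(\tau^-N,M)=0\}$, which contains $\mathrm{Sub}\,N=\f_N$ but is in general strictly larger (e.g.\ over $k(1\to 2)$ with $N$ the simple projective, it contains the projective-injective while $\mathrm{Sub}\,N$ does not); the inclusion $\mathrm{Sub}\,N\subseteq \mathrm{Sub}\,C$ you invoke only gives one direction. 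The paper instead gets functorial finiteness of $\f_N=\mathrm{Sub}\,N$ directly from Auslander--Smal{\o} \cite[Thm.~5.10]{AuslanderSmalo:81}. Relatedly, the ``moreover'' clause does not simply drop out of this: the inclusion $\varinjlim\f_N\subseteq\Cogen{N}$ is not formal, and the paper obtains $\F_N=\Cogen{N}$ by noting that $({}^{\perp_0}N,\Cogen{N})$ is a torsion pair in $\Mod{\Lambda}$ restricting to $(\t_N,\f_N)$ and invoking the uniqueness of such an extension for functorially finite torsion pairs \cite{Vitoria:20}. Your argument would only become correct if you replaced the Bongartz-type completion by the completion \emph{inside} $\mathrm{Sub}\,N$, whose existence is essentially the Auslander--Smal{\o} fact you are trying to avoid.

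In Step 2, the core of the lemma --- that $\t_N$ is completely meet irreducible for a finite-dimensional grain $N$ --- is exactly the step you leave open (``the main obstacle''). Your proposed route requires $\f_N=\torsf{B}$ for a brick $B$; this is true, but in the paper it is essentially the content of Proposition~\ref{Prop: DIJ dual}, proved \emph{after} this lemma via the reject sequence and Lemma~\ref{lem: envelope and las}, so it cannot be assumed here. Also note that existence plus uniqueness of a cover of $(\t_N,\f_N)$ alone would not give complete meet irreducibility. The paper's argument sidesteps $\f_N=\torsf{B}$ entirely: for every torsion class $\u'\supsetneq\t_N$ it uses \cite[Thm.~3.1]{DemonetIyamaJasso:19} to produce a cover $(\u,\v)$ of $(\t_N,\f_N)$ with $\u\subseteq\u'$, identifies covers with torsionfree, almost torsion modules in $\f_N$ via \cite{BarnardCarrollZhu:19}, and then applies Corollary~\ref{lem: assignment injective} to conclude there is only one such module, hence a unique cover lying below every $\u'$ --- which is precisely complete meet irreducibility. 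For the record, your finiteness argument in Step 2 ($\Z_N$ finite by Proposition~\ref{rem: small cosilt pair}, $N\in\Z_N$ by Lemma~\ref{lem: N is tf}) and your Step 3 do coincide with the paper's proof.
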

\begin{proof}
By Proposition \ref{rem: small cosilt pair}, we have that an indecomposable module $N\in\mod{\Lambda}$ is $\tau^-$-rigid if and only if it is a \srigid.  Then $\f_N$ coincides with the torsion-free class $\mathrm{Sub}\,N$ in $\mod{\Lambda}$ given by submodules of finite direct sums of $N$, which is functorially finite by \cite[Thm.~5.10]{AuslanderSmalo:81}. Moreover, $\t_N$ is completely meet irreducible by the dual version of \cite[Thm.~4.8]{DemonetIyamaReadingReitenThomas:23}. We include the argument for the reader's convenience. Since $\f_N \neq 0$, we have that $\t_N\neq \mod{\Lambda}$.  For any $(\u', \v')\in\tors{A}$ with $\t_N \subset \u'$, we can apply \cite[Thm.~3.1]{DemonetIyamaJasso:19} to obtain a torsion pair $(\u, \v)$ with $\t_N \subset \u \subseteq \u'$ with $(\u, \v)$ covering $(\t_N, \f_N)$ in $\tors{\Lambda}$.  By \cite{BarnardCarrollZhu:19}, the torsion pairs covering $(\t_N, \f_N)$ are parametrised by the torsionfree, almost torsion modules in $\f_N$.  Therefore there is a torsionfree, almost torsion module $B$ in $\f_N$ corresponding to this cover relation and it is unique by Corollary \ref{lem: assignment injective}.  This means that $(\u, \v)$ is the unique cover of $(\t_N, \f_N)$ and it is contained in any other torsion class $\u'$ with  $\t_N \subset \u'$. So we have shown that $(\t_N, \f_N)$ is completely meet-irreducible in $\tors{\Lambda}$.  That is, we have $\tau^-\!\text{-}\mathbf{rigid}(\Lambda) = \Ri(\Lambda) \,\cap\, \mod{\Lambda}$.

For the second equality, we have already seen the inclusion ``$\subseteq$''. Suppose $N$ is a \srigid\ and $\f_N$ is functorially finite.  By Proposition \ref{rem: small cosilt pair}, we have that $\Z_N$ is a finite set of finite-dimensional modules.  By Lemma \ref{lem: N is tf}, we have that $N$ is finite-dimensional and hence $\tau^-\!\text{-}${rigid}.
Moreover, since $\f_N$ is functorially finite, there is a unique extension of $(\t_N, \f_N)$ to $\Mod{\Lambda}$ by \cite{Vitoria:20} and $(\T_N, \F_N)$ must therefore coincide with $({}^{\perp_0}N, \Cogen{N})$.
\end{proof}

\begin{proposition}\label{Prop: DIJ dual}
Let $\Lambda$ be a finite-dimensional algebra.  Let $\mathsf{f}^-\!\text{-}\mathbf{brick}(\Lambda)$ denote the set of bricks $B$ with $\torsf{B}$ functorially finite.  There is a bijection 
	\[ \tau^-\!\text{-}\mathbf{rigid}(\Lambda) \to \mathsf{f}^-\!\text{-}\mathbf{brick}(\Lambda)\]
given by $N \mapsto S_N$ where $S_N$ is the kernel of the reject sequence  in Definition \ref{Def: reject} and coincides with the socle $\Soc_EN$ of $N$ over its endomorphism ring $E$. Moreover, this bijection is the restriction of the bijection in Theorem \ref{Thm: brick-critical correspondence} to finite-dimensional modules.
\end{proposition}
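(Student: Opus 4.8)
The plan is to derive the statement from Theorem~\ref{Thm: brick-critical correspondence}, Lemma~\ref{lem: ff tau} and Lemma~\ref{lem: envelope and las}; the only genuinely new point is the identification of the brick $B_N$ with the submodule $S_N$ appearing in the reject sequence.

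First I would observe that the bijection itself is formal. By Lemma~\ref{lem: ff tau} we have $\tau^-\!\text{-}\mathbf{rigid}(\Lambda)=\Ri(\Lambda)\cap\mod{\Lambda}$, and for $N$ in this set $\f_N=\torsf{B_N}=\mathrm{Sub}\,N$ is functorially finite, so $B_N\in\mathsf{f}^-\!\text{-}\mathbf{brick}(\Lambda)$; hence the bijection $N\mapsto B_N$ of Theorem~\ref{Thm: brick-critical correspondence} restricts to a well-defined injective map $\tau^-\!\text{-}\mathbf{rigid}(\Lambda)\to\mathsf{f}^-\!\text{-}\mathbf{brick}(\Lambda)$. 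For surjectivity, a brick $B$ with $\torsf{B}$ functorially finite has a preimage $N\in\Ri(\Lambda)$ under Theorem~\ref{Thm: brick-critical correspondence} with $\f_N=\torsf{B}$, and since $\f_N$ is then functorially finite, the second equality in Lemma~\ref{lem: ff tau} forces $N\in\tau^-\!\text{-}\mathbf{rigid}(\Lambda)$.

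Next I would identify $B_N$ with $S_N=\Soc_E N$. As $N$ is indecomposable it is nonzero, so $E=\End{\Lambda}(N)$ is a nonzero finite-dimensional algebra and $N$ has finite length as a left $E$-module; hence $S_N=\Ker{\Phi}=\{n\in N\mid(\Rad E)n=0\}=\Soc_E N\neq 0$, using that over a left artinian ring the annihilator of the radical is the socle. Since $S_N\neq 0$, Lemma~\ref{lem: envelope and las} applies: the reject map $g\colon N\to\tilde N$ is an epimorphism and is left almost split in $\Cogen{N}$, which equals $\F_N$ by Lemma~\ref{lem: ff tau}. On the other hand, by Corollary~\ref{lem: assignment injective}, $N$ is the critical module in $\F_N$ associated to the torsionfree, almost torsion module $B_N\in\f_N$, so Proposition~\ref{prop: upgrade tf/t critical}(2), applied to the $(\F_N\cap\F_N^{\perp_1})$-envelope of $B_N$ (which is a $\Prod{C_N}$-envelope), yields a short exact sequence $0\to B_N\to N\to M'\to 0$ in which the map $a\colon N\to M'$ is left almost split in $\F_N$, and $a$ is an epimorphism. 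Now every epimorphism is right-cancellable, hence left minimal, and any two left minimal left almost split morphisms out of $N$ in $\F_N$ are isomorphic; therefore there is an isomorphism $\tilde N\to M'$ carrying $g$ to $a$, so $S_N=\Ker{g}=\Ker{a}=B_N$ as submodules of $N$. Combined with the first step, this shows that the bijection $\tau^-\!\text{-}\mathbf{rigid}(\Lambda)\to\mathsf{f}^-\!\text{-}\mathbf{brick}(\Lambda)$ is given by $N\mapsto S_N=\Soc_E N$ and is the restriction to $\mod{\Lambda}$ of the bijection in Theorem~\ref{Thm: brick-critical correspondence}.

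The main obstacle I anticipate is the comparison of these two short exact sequences: one has to be sure that the left almost split map provided by the reject construction (Lemma~\ref{lem: envelope and las}, a priori in $\Cogen{N}$) and the left almost split map coming from the critical-module description (Proposition~\ref{prop: upgrade tf/t critical}(2), in $\F_N$) live in the same category and are both epimorphic, so that the uniqueness of left minimal left almost split morphisms can be invoked; the ambient identifications $\Cogen{N}=\F_N$ and $S_N=\Soc_E N$ are then routine consequences of Lemma~\ref{lem: ff tau} and standard module theory.
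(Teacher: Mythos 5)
Your proof is correct and takes essentially the same route as the paper: restrict the bijection of Theorem~\ref{Thm: brick-critical correspondence} via Lemma~\ref{lem: ff tau}, identify $S_N$ with $B_N$ using Lemma~\ref{lem: envelope and las}, Corollary~\ref{lem: assignment injective} and Proposition~\ref{prop: upgrade tf/t critical}(2), and identify $S_N$ with $\Soc_E N$ from the fact that $E$ is semiprimary. The only (harmless) difference is that you obtain $S_N\cong B_N$ by comparing two left minimal left almost split epimorphisms out of $N$, whereas the paper applies Proposition~\ref{prop: upgrade tf/t critical}(2) directly to the reject sequence and then uses the uniqueness of the torsionfree, almost torsion module attached to the critical module $N$.
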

\begin{proof}
First we show that the bijection in Theorem \ref{Thm: brick-critical correspondence} restricts to a bijection between $\tau^-\!\text{-}\mathbf{rigid}(\Lambda)$ and $\mathsf{f}^-\!\text{-}\mathbf{brick}(\Lambda)$.  By Lemma \ref{lem: ff tau}, we have that $\tau^-\!\text{-}\mathbf{rigid}(\Lambda) = \Ri(A) \cap \mod{A}$ and the image of the restriction of the bijection to $\tau^-\!\text{-}\mathbf{rigid}(\Lambda)$ is equal to $\mathsf{f}^-\!\text{-}\mathbf{brick}(\Lambda)$.

Next we show that, for $N\in \tau^-\!\text{-}\mathbf{rigid}(\Lambda)$, we have $S_N \cong B_N$.  By Lemma \ref{lem: envelope and las}, the map $g$ in the reject sequence is left almost split in $\F_N = \Cogen{N}$, and $N$ is critical in $\F_N$ by Corollary \ref{lem: assignment injective}.  Thus $S_N$ is the torsionfree, almost torsion corresponding to $N$ by Proposition \ref{prop: upgrade tf/t critical}(2), i.e., $S_N \cong B_N$.

Finally, note that the endomorphism ring $E$ of the finite-dimensional module $N$ is a semiprimary ring, which entails that $\Soc_EN$ consists of the elements of $N$ that are  annihilated by the Jacobson radical $\Rad(E)$. Thus $\Soc_EN=S_N$.
\end{proof}

We end this section by discussing when 
$B_N$ is the kernel of the reject sequence.  Indeed we do not know of any examples where this is not the case. A similar result has been obtained independently in \cite{EbrahimiNasrIsfahani:24+}.
\begin{proposition}\label{prop: reject}
Suppose $N\in\Zg{A}$ is a \srigid\  over a left artinian ring $A$.  Following the notation of Definition \ref{Def: reject}, we assume that $S_N$ is non-zero and finitely presented. Then the following statements hold.
\begin{enumerate}
\item $S_N$ is the unique torsionfree, almost torsion with respect to $(\t_N, \f_N)$.
\item $S_N$ is torsionfree, almost torsion with respect to $(\T_N, \F_N)$ and the reject sequence coincides with the short exact sequence in Proposition \ref{prop: upgrade tf/t critical}(2).
\item $N$ is the critical module in $\F_N$ associated to $S_N$ under the bijection in Theorem \ref{thm: upgrade}.
\end{enumerate}
If, in addition, 
$N$ is contained in $\Ri(A)$, then $S_N$ is isomorphic to $B_N$.
\end{proposition}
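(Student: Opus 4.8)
The plan is to reduce all four assertions to the single statement that $S_N$ is torsionfree, almost torsion with respect to $(\t_N,\f_N)$ in $\mod{A}$, and then to invoke the earlier machinery. Granting that statement, the uniqueness in (1) would follow from Corollary~\ref{lem: assignment injective}; Proposition~\ref{simples in the heart}(1) would transport it to the assertion that $S_N$ is torsionfree, almost torsion with respect to $(\T_N,\F_N)$; and then, since $f\colon S_N\hookrightarrow N$ is a $\Prod{C_N}$-envelope of $S_N$ by Lemma~\ref{lem: envelope and las} (using $S_N\neq 0$ and $\F_N\cap\F_N^{\perp_1}=\Prod{C_N}$, cf.\ \cite[Rem.~3.4]{Angeleri:18}), I would use uniqueness of envelopes together with Proposition~\ref{prop: upgrade tf/t critical}(2) to identify the reject sequence with the short exact sequence appearing there, obtaining (2) together with the criticality of $N$ in $\F_N$. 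For (3), Lemma~\ref{lem: N is tf} gives $N\in\Z_N=\Prod{C_N}\cap\Zg{A}$ and $\Hom{A}(S_N,N)\neq 0$ (it contains $f$), so Theorem~\ref{thm: upgrade} would identify $N$ as the critical module assigned to $S_N$. Finally, if $N\in\Ri(A)$ then $\f_N=\torsf{B_N}$, so $B_N$ is the unique torsionfree, almost torsion module in $\f_N$ by Lemma~\ref{lem: brick unique tf/t}, and comparing with (1) yields $S_N\cong B_N$.

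Thus the crux is to check conditions (1) and (2) of Definition~\ref{almost} for $B=S_N$ in $\mod{A}$ and the torsion pair $(\t_N,\f_N)$. Writing $E=\End{A}(N)$, which is local since $N\in\Zg{A}$, I would use that $S_N=\{x\in N\mid \Rad(E)\,x=0\}$ by construction, and that every homomorphism $S_N\to N$ factors through the envelope $f$ (as $N\in\Prod{C_N}$), hence equals $m|_{S_N}$ for some $m\in E$; such a map is an isomorphism onto $S_N$ when $m$ is invertible and is zero when $m\in\Rad(E)$. Condition (1) is then straightforward: $S_N\in\Cogen{N}\cap\mod{A}=\f_N$, and any nonzero map from a proper quotient $S_N/K$ (with $K\neq 0$) to $N$ would pull back to a nonzero, non-injective map $S_N\to N$, which cannot exist; hence every proper quotient of $S_N$ lies in ${}^{\perp_0}N\cap\mod{A}=\t_N$. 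For condition (2), given an exact sequence $0\to S_N\to F\to M\to 0$ in $\mod{A}$ with $F\in\f_N$, I would use that $F$ has finite length and $F\in\Cogen{N}$ to embed $F\hookrightarrow N^{n}$ for some finite $n$; restricting to $S_N$, the $n$ components $S_N\to N$ are each either zero or an isomorphism onto $S_N$, and at least one is the latter since $S_N\hookrightarrow N^{n}$ is injective; composing $F\hookrightarrow N^{n}$ with a suitable automorphism of $N^{n}$ (normalising that coordinate so the component becomes $f$, then shearing away the others) I may assume $S_N$ sits inside $N^{n}=N\times N^{n-1}$ as $S_N\times 0$. Then $M\cong F/S_N$ embeds into $N^{n}/(S_N\times 0)=\tilde N\times N^{n-1}$, and since $\tilde N\subseteq N^{\Rad(E)}$ lies in $\Cogen{N}$ and $\Cogen{N}$ is closed under products, this forces $M\in\Cogen{N}\cap\mod{A}=\f_N$, as required.

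The hard part will be condition (2): there is no general reason for the torsionfree class $\f_N$ to be closed under cokernels of monomorphisms out of $S_N$, and the argument above leans essentially on the rigid position of $S_N$ inside $N$—namely that, being annihilated by $\Rad(E)$ while $f$ is an envelope, $S_N$ admits, up to an automorphism of $N$, only the canonical embedding into $N$—together with $\tilde N\in\Cogen{N}$. Once condition (2) is secured, the remaining steps are a formal assembly of Lemmas~\ref{lem: envelope and las} and~\ref{lem: N is tf}, Corollary~\ref{lem: assignment injective}, Propositions~\ref{simples in the heart}(1) and~\ref{prop: upgrade tf/t critical}(2), Theorem~\ref{thm: upgrade}, and Lemma~\ref{lem: brick unique tf/t}.
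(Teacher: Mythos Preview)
Your proposal is correct and follows the same global architecture as the paper: first verify that $S_N$ is torsionfree, almost torsion for $(\t_N,\f_N)$, then read off (1)--(3) and the final statement from Corollary~\ref{lem: assignment injective}, Proposition~\ref{simples in the heart}(1), Lemma~\ref{lem: envelope and las}, Proposition~\ref{prop: upgrade tf/t critical}(2), Theorem~\ref{thm: upgrade} and Lemma~\ref{lem: brick unique tf/t}.

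The genuine difference lies in how you check the two conditions of Definition~\ref{almost}. For condition~(1), both arguments rest on the same fact---every map $S_N\to N$ is either zero or an isomorphism onto $S_N$---but you extract this directly from the description $S_N=\{x\in N\mid \Rad(E)x=0\}$ together with the envelope property, whereas the paper obtains the analogous conclusion via the diagram combining the envelope $f$ and the left almost split map $g$. For condition~(2) the approaches diverge more substantially. The paper forms the pushout of $0\to S_N\to F\to M\to 0$ along $f$ to get $0\to N\to L\to M\to 0$, observes $L\in\Cogen{N}$, and then uses the left almost split property of $g$ to force $N\to L$ to split, so $M$ is a summand of $L$. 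You instead exploit that $F$ has finite length to embed $F\hookrightarrow N^{n}$ with $n$ finite, normalise via an automorphism of $N^{n}$ so that $S_N$ lands in the first factor, and then read off $M\hookrightarrow\tilde N\times N^{n-1}\in\Cogen{N}$. Your route is more elementary and hands-on (no pushouts, no appeal to the left almost split property at this step), at the cost of needing the reduction to a finite power of $N$; the paper's argument is slightly more conceptual and works uniformly with an arbitrary index set. Both are valid and of comparable length.
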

\begin{proof}
First note that the final statement follows directly from Lemma \ref{lem: brick unique tf/t} and statement (1).

(1) By definition $S_N\in \f_N$.  Suppose $q' \colon S_N \to Q'$ is a quotient of $S_N$ and consider the composition $q \colon S_N \to Q$ of $q'$ with the canonical epimorphism to the torsionfree part of $Q'$.  We will show that $q$ is either a monomorphism or zero.  In the first case, we can conclude that $q'$ is not a proper epimorphism and, in the second case, we can conclude that $Q'$ is contained in $\t_N$.  Hence we will have shown condition (1) of Definition \ref{almost} holds.

Since $Q$ is in $\f_N = \Cogen{N} \cap \mod{A}$, we have the solid arrows in the following diagram where $e$ is some monomorphism and $m\in\mathbb{N}$:
	\[\xymatrix{0 \ar[r] & S_N \ar[d]^{q} \ar[r]^{f} & N \ar@{-->}[d]^{f'} \ar[r]^{g}& \tilde{N} \ar[r] \ar@{-->}[dl]^{g'} &0\\
	0 \ar[r] & Q \ar[r]^{e}&N^m & &}\]
We note that $N\in \F_N \cap \F_N^{\perp_1}$ by Lemma \ref{lem: N is tf}.  By Lemma \ref{lem: envelope and las}, there exists $f'$ such that $eq = f'f$ because $f$ is an $(\F_N \cap \F_N^{\perp_1})$-envelope of $S_N$.  If $f'$ is a monomorphism, then so is $eq = f'f$ and hence $q$ is a monomorphism.  If $f'$ is not a monomorphism, then it is certainly not a split monomorphism.  Thus, by Lemma \ref{lem: envelope and las}, we have that there exists $g'$ such that $f' = g'g$ since $g$ is a left almost split morphism in $\F_N$.  But then $eq = f'f = g'gf = 0$, which implies that $q = 0$ because $e$ is a monomorphism.  We have shown that (1) of Definition \ref{almost} holds.

Next we show that condition (2) of Definition \ref{almost} holds.  Consider $F\in \f_N$ and a short exact sequence $0 \to S_N \to F \to M \to 0$. We aim to show that $M\in\f_N$.  Pushing out along $f$ gives us the following commutative diagram:
	\[\xymatrix{ 0 \ar[r] & S_N \ar[r]^{l} \ar[d]^{f} & F \ar[r] \ar[d] & M \ar@{=}[d] \ar[r] & 0\\
	0 \ar[r] & N  \ar[r]_{m} & L \ar[r] & M \ar[r] & 0
	}\]
Now $F \in \f_N \subseteq \Cogen{N}$ and $\tilde{N}\in\Cogen{N}$, so $L$ is contained in $\Cogen{N}$.  Consider a monomorphism $e\colon L \to N^I$ for some set $I$.  If $em$ is not a split monomorphism, then there exists $g'$ such that $em = g'g$ by Lemma \ref{lem: envelope and las}.  But then $g$ would be a monomorphism, which contradicts our assumption that $S_N$ is non-zero.  Thus $em$ is a split monomorphism and hence the bottom short exact sequence in the above diagram splits. It follows that $M$ is a direct summand of $L\in \Cogen{N}$ and hence $M \in \f_N$. We have shown that (2) of Definition \ref{almost} holds.

The uniqueness of $S_N$ and statements (2) and (3) follow from Corollary~\ref{lem: assignment injective}.
\end{proof}

\appendix
\section{The Ziegler spectrum}\label{App: Zg}
\subsection{The spectrum of a locally coherent Grothendieck category}\label{Sec: Zg loc coh}

In this section we will give an overview of the injective spectrum of a locally coherent Grothendieck category introduced in concurrent articles by Herzog \cite{Herzog:97} and Krause \cite{Krause:97}.  The  spectrum is a topological space whose points are isomorphism classes of indecomposable injective objects and whose topology is controlled by certain localising subcategories of the category.  We will keep definitions and explanations brief and refer the reader to the original articles for details.  

\begin{definition}\cite{Roos:69}
Let $\G$ be a Grothendieck abelian category.  An object $M$ in $\G$ is called \textbf{coherent} if it is finitely presented and every finitely generated subobject of $M$ is finitely presented.  The category $\G$ is called \textbf{locally coherent} if every object of $\G$ is a direct limit of coherent objects.
\end{definition}

In a locally coherent category $\G$, the full subcategory $\fp{\G}$ of finitely presented objects coincides with the full subcategory of coherent objects and, moreover, this subcategory is an exact abelian subcategory of $\G$; cf. \cite[Thm.~1.6]{Herzog:97}.

The following examples of locally coherent Grothendieck categories  all arise in this article.  The second example will be essential in Section \ref{Sec: Zg derived} and the third will be essential in Section \ref{Sec: Zg modules}.

\begin{example}\label{Ex: loc coh cats}\begin{enumerate}
\item Let $\A$ be a skeletally small additive category.  The category $\Mod{\A}$ of additive functors from $\A^\op$ to the category $\Ab$ of abelian groups is locally coherent if and only if $\A$ has pseudo-kernels.  Dually the category $\Modop{\A}$ is locally coherent if and only if $\A$ has pseudo-cokernels.  See, for example, the proof of \cite[Prop.~2.1]{Herzog:97}.

\item Let $\C$ be a skeletally small triangulated category.  Then $\Mod{\C}$ is a locally coherent Grothendieck category by (1).  

\item Let $R$ be a ring and let $\mod{R}$ be the category of finitely presented left $R$-modules.  Then the functor category $\Modop{\mod{R}}$ is a locally coherent Grothendieck category by (1).
\end{enumerate}
\end{example}

\begin{definition} Let $\G$ be a Grothendieck category.
A torsion pair $(\T, \F)$ in $\G$ is called \textbf{hereditary} if $\T$ is closed under subobjects. The subcategory $\T$ is then a \textbf{localising subcategory}, i.e.~one
 can  form the quotient category $\G/\T$, and the canonical quotient functor $q \colon \G \to \G/\T$ admits a right adjoint functor $s \colon \G/\T \to \G$, called the \textbf{section functor}. \end{definition}

In the next theorem we see that in a locally coherent Grothendieck category the hereditary torsion classes of finite type preserve the locally coherent structure.

\begin{theorem}{\cite{Herzog:97, Krause:97}}\label{Thm: serre bij}
Let $\G$ be a locally coherent Grothendieck category.  Let $\T$ be a hereditary torsion class of finite type in $\G$. The subcategory $\T$ and quotient category $\G / \T$ are both locally coherent Grothendieck categories.
\end{theorem}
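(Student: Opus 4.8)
The plan is to reduce everything to the standard characterisation that a Grothendieck category is locally coherent if and only if it is locally finitely presented and its full subcategory of finitely presented objects is an abelian subcategory closed under kernels (so that the inclusion into the ambient category is exact); the "only if" direction is the content of the remark following Example~\ref{Ex: loc coh cats}, and the "if" direction is elementary, since in a locally finitely presented Grothendieck category the finitely presented objects are always closed under finite coproducts and cokernels, and closure under kernels then forces every finitely generated subobject of a finitely presented object to be finitely presented. Throughout I would use that $\T$ being of finite type means that the torsionfree class $\F$, equivalently the torsion radical $t$, commutes with direct limits, and — by the theory of hereditary torsion pairs of finite type as developed in \cite{Herzog:97,Krause:97} — that this is equivalent to the section functor $s\colon\G/\T\to\G$ commuting with direct limits. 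These are the only two places where the finite-type hypothesis is genuinely used.

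For $\T$: put $\mathcal{S}:=\T\cap\fp{\G}$. First I would note that $\T$, being a localising subcategory of a Grothendieck category, is itself a Grothendieck category, and that, since $\T$ is closed under direct limits formed in $\G$, direct limits in $\T$ agree with those in $\G$. In particular every object of $\mathcal{S}$ is finitely presented in $\T$. Conversely, every $M\in\T$ is the direct limit of its finitely generated subobjects, which lie in $\T$ (as $\T$ is closed under subobjects) and in $\fp{\G}$ (as $\G$ is locally coherent), hence in $\mathcal{S}$. Thus $\mathcal{S}$ is a generating set of finitely presented objects of $\T$, every object of $\T$ is a direct limit of objects of $\mathcal{S}$, and an object of $\T$ that is finitely presented in $\T$ is a retract of an object of $\mathcal{S}$, hence lies in $\mathcal{S}$ because a Serre subcategory of $\fp{\G}$ is closed under retracts. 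Therefore $\fp{\T}=\mathcal{S}$, which is abelian and closed under kernels because it is a Serre subcategory of the abelian category $\fp{\G}$; so $\T$ is locally coherent.

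For $\G/\T$: by Gabriel's localisation theory $\G/\T$ is a Grothendieck category and $q\colon\G\to\G/\T$ is exact, essentially surjective, and preserves all colimits. Since $s$ commutes with direct limits, the adjunction isomorphism $\Hom{\G/\T}(qN,-)\cong\Hom{\G}(N,s(-))$ shows that $q$ carries finitely presented objects of $\G$ to finitely presented objects of $\G/\T$; as every object of $\G/\T$ is $q$ of a direct limit of objects of $\fp{\G}$, the category $\G/\T$ is locally finitely presented with generating set $q(\fp{\G})$. It then remains to show that $\fp{(\G/\T)}$ is closed under kernels. For this I would invoke the Serre-quotient calculus: a morphism $qX'\to qY'$ with $X',Y'\in\fp{\G}$ is represented by a roof $X'\leftarrow X''\to Y'$ whose backward leg has torsion kernel and cokernel; by the finite-type hypothesis $X''$ can be chosen in $\fp{\G}$, and then the kernel of $qX'\to qY'$ is $q$ applied to the kernel of $X''\to Y'$, which is finitely presented in $\G$ since $\G$ is locally coherent. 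Hence $\fp{(\G/\T)}$ is abelian and closed under kernels, so $\G/\T$ is locally coherent. Equivalently, one may invoke that $q$ induces an equivalence $\fp{\G}/\mathcal{S}\xrightarrow{\ \sim\ }\fp{(\G/\T)}$ together with the fact that a Serre quotient of an abelian category is abelian.

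I expect the quotient half to be the main obstacle, and within it the verification that finitely presented objects of $\G/\T$ are closed under kernels: this is precisely the step forcing one to combine the roof description of morphisms in $\G/\T$, the finite-type hypothesis on $\T$, and the local coherence of $\G$, whereas the treatment of $\T$ and all of the Grothendieck-category bookkeeping is routine.
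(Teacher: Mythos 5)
The paper does not prove this statement at all: it is imported verbatim from Herzog and Krause, so your proposal can only be measured against their arguments, whose general route (pass to the Serre subcategory $\mathcal{S}:=\T\cap\fp{\G}$ of $\fp{\G}$ and identify $\fp{(\G/\T)}$ with $\fp{\G}/\mathcal{S}$) you do follow. There is, however, a genuine gap in the half you declare routine. You assert that every $M\in\T$ is the directed union of its finitely generated subobjects and that these lie in $\fp{\G}$ ``as $\G$ is locally coherent''. Local coherence only guarantees that finitely generated subobjects of \emph{coherent} objects are coherent; it does not make finitely generated objects finitely presented. Over a coherent non-noetherian ring $R$ (say $R=k[x_1,x_2,\dots]$ and the cyclic module $R/(x_1,x_2,\dots)$) there are finitely generated modules that are not finitely presented, so already for $\T=\Mod{R}$ your argument would prove a false statement. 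Hence you have not shown that $\mathcal{S}$ generates $\T$, and, contrary to your claim that finite type is used only in the quotient half, this is exactly where the finite-type hypothesis is indispensable: since $\F$ is closed under direct limits, the torsion radical $t$ commutes with direct limits, so writing $M=\varinjlim C_i$ with $C_i$ coherent gives $M=t(M)=\varinjlim t(C_i)$, and each $t(C_i)$ is the directed union of its finitely generated subobjects, which are torsion and coherent because they are finitely generated subobjects of the coherent object $C_i$. With this repair the rest of your analysis of $\T$ (that $\fp{\T}=\mathcal{S}$, a Serre subcategory of $\fp{\G}$, hence abelian and closed under kernels) goes through.

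The same missing fact undermines the step you yourself single out in the quotient half, which your sketch only asserts in the clause ``by the finite-type hypothesis the apex can be chosen in $\fp{\G}$''. To shrink the apex to a coherent subobject $X_0\subseteq X'$ with $X'/X_0\in\T$, one writes the given subobject with torsion quotient as the directed union of its coherent subobjects $X_j$ and uses that $t$ commutes with direct limits, together with the fact that each $(X'/X_j)/t(X'/X_j)$ is finitely generated with vanishing colimit, to find $j$ with $X'/X_j\in\T$; finite presentability of $X_j$ is then needed to factor the map to $Y'$ modulo a \emph{coherent} torsion subobject of $Y'$. None of this is in your proposal, and it relies on the very generation statement that was misjustified above. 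Likewise, ``finite type implies the section functor preserves direct limits'' is itself one of the central lemmas of Herzog--Krause rather than an independent black box, so citing it hides part of the content. In short: the skeleton is the right one, but the two load-bearing steps --- generation of $\T$ by $\mathcal{S}$ and the coherent-roof argument for kernels in $\G/\T$ --- are, respectively, incorrectly justified and missing.
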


\begin{notation}\label{Not: Spec}
Let $\G$ be a locally coherent Grothendieck category.  We denote by $\Sp{\G}$ the set of isomorphism classes of indecomposable injective objects in $\G$.  

For an object $C$ of $\fp{\G}$, we fix the notation 
	\[\open{C} = \{E \in \Sp{\G} \mid \Hom{\G}(C, E) \neq 0 \}.\] 

\end{notation}

We will reserve the notation $\Sp{\G}$ for when we are considering the set together with the topology described in the next theorem. At other points in the main body of article we  use the notation $\InjInd{R}$ for the set of isomorphism classes of indecomposable injective objects in a module category $\Mod{R}$ (considered without the topology).

\begin{theorem}{\cite{Herzog:97, Krause:97}}\label{Thm: fundamental Zg}
The collection of subsets of $\Sp{\G}$ of the form $\open{C}$ where $C$ is a finitely presented object of $\G$ form a basis of open sets for a topology on $\Sp{\G}$.  The set $\Sp{\G}$ endowed with this topology is called the \textbf{spectrum} of $\G$. 

Moreover, the following statements hold for the topological space $\Sp{\G}$: \begin{enumerate}
\item There is a bijection between the closed subsets $\I$ of $\Sp{\G}$ and the hereditary torsion pairs $(\T, \F)$ of finite type in $\G$ given by the assignments
	\[\I \mapsto ({}^{\perp_0}\I, \Cogen{\I}) \hspace{10mm} (\T, \F) \mapsto \F \cap \Sp{\G}. \]
	
\item Let $\I$ be a closed subset of $\Sp{\G}$ and let $\T = {}^{\perp_0}\I$ be  the corresponding hereditary torsion class of finite type.  Then the section functor $s \colon \G /\T \to \G$ induces a homeomorphism from the spectrum $\Sp{\G /\T}$ of $\G/\T$ to the closed subset $\I$ of $\Sp{\G}$ endowed with the subspace topology, with its inverse given by the quotient functor $q \colon \Sp{\G} \to \Sp{\G/\T}$.
\end{enumerate}
\end{theorem}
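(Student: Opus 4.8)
The plan is to organise the argument around the classical Gabriel correspondence between Serre subcategories of the abelian category $\fp{\G}$ and hereditary torsion classes of finite type in $\G$, and to show that indecomposable injectives \emph{detect} Serre subcategories. Two facts about an indecomposable injective object $E$ of a Grothendieck category will be used throughout: $E$ is uniform (if it contained a coproduct $A\oplus B$ of nonzero subobjects, then $E$ would split off $E(A)\oplus E(B)$, contradicting indecomposability), and $\Hom{\G}(-,E)$ is exact, so that $\mathcal{P}_E:=\{C\in\fp{\G}\mid\Hom{\G}(C,E)=0\}$ is a Serre subcategory of $\fp{\G}$; moreover $\End{\G}(E)$ is local, which is what makes $\mathcal{P}_E$ behave like a \emph{meet-prime} among Serre subcategories. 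With this in mind I would first set up the topology: declare a subset of $\Sp{\G}$ to be closed exactly when it has the form $V(\S):=\{E\mid\Hom{\G}(C,E)=0\text{ for all }C\in\S\}$ for a Serre subcategory $\S\subseteq\fp{\G}$, equivalently when its complement is $\bigcup_{C\in\S}\open{C}$. Then $\emptyset=V(0)$, and $\Sp{\G}=V(\fp{\G})$ because $\G$ has a generating set of finitely presented objects, so every nonzero injective receives a nonzero map from some $C\in\fp{\G}$; arbitrary intersections are handled by $\bigcap_jV(\S_j)=V(\langle\bigcup_j\S_j\rangle)$. The one substantial axiom is $V(\S_1)\cup V(\S_2)=V(\S_1\cap\S_2)$: the inclusion $\subseteq$ is trivial, while for $\supseteq$ one argues that if $E$ lies in neither $V(\S_1)$ nor $V(\S_2)$ then, using uniformity of $E$ together with exactness and local coherence to produce finitely presented subquotients of objects of $\S_1$ and of $\S_2$ with a common nonzero image inside $E$, one finds $U\in\S_1\cap\S_2$ with $\Hom{\G}(U,E)\neq 0$. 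Once the topology is in place, $\{\open{C}\}_{C\in\fp{\G}}$ is a basis by construction, since $\open{C}$ is the complement of $V(\langle C\rangle)$ and every open set is visibly a union of such sets.

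For part (1), the assignment $\S\mapsto V(\S)$ is surjective onto closed sets by definition, and injective because $\G$ is locally coherent: every nonzero object of $\fp{\G}$ is detected by an indecomposable injective, so $\S$ is recovered from $\I=V(\S)$ as $\{C\in\fp{\G}\mid\Hom{\G}(C,E)=0\text{ for all }E\in\I\}$. Composing with Gabriel's bijection $\S\leftrightarrow\varinjlim\S$ between Serre subcategories of $\fp{\G}$ and hereditary torsion classes of finite type in $\G$ (with inverse $\T\mapsto\T\cap\fp{\G}$) yields the stated correspondence; writing $\T=\varinjlim\S$ and $\F=\T^{\perp_0}$, one has $\F=\S^{\perp_0}$ (the torsion-free class of the torsion pair generated by $\S$), so that $\F\cap\Sp{\G}=V(\S)$ on the nose. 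It remains to identify $\F$ with $\Cogen{\I}$: the inclusion $\supseteq$ holds since $\I\subseteq\F$ and $\F$ is closed under products and subobjects, and $\subseteq$ follows from the fact that for the hereditary torsion pair $(\T,\F)$ the injective envelope of a torsion-free object is again torsion-free, together with the fact that in a locally coherent Grothendieck category the indecomposable injectives form a cogenerating set.

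For part (2), Theorem~\ref{Thm: serre bij} gives that $\G/\T$ is again a locally coherent Grothendieck category, and its finitely presented objects are the images under $q$ of the finitely presented objects of $\G$. The section functor $s$ is a fully faithful right adjoint that preserves injectives, and the unit $\mathrm{id}\to sq$ is an isomorphism on $\F$; standard localisation theory then shows that $E\mapsto qE$ and $E'\mapsto sE'$ are mutually inverse bijections between $\I$ (the indecomposable injectives of $\G$ lying in $\F$) and $\Sp{\G/\T}$. To promote this to a homeomorphism, compute for $C\in\fp{\G}$ and $E\in\I$ with $E'=qE$ that $\Hom{\G/\T}(qC,E')\cong\Hom{\G}(C,sE')\cong\Hom{\G}(C,E)$, using the adjunction and $sqE\cong E$; hence the basic open $\open{qC}$ of $\Sp{\G/\T}$ corresponds to $\open{C}\cap\I$ under the bijection, and since every finitely presented object of $\G/\T$ is of the form $qC$, the subspace topology on $\I$ and the spectral topology on $\Sp{\G/\T}$ coincide.

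The step I expect to be the main obstacle is the clean verification that the closed sets are stable under finite unions — that is, extracting from the local endomorphism ring and uniformity of an indecomposable injective, together with local coherence, the meet-prime behaviour of $\mathcal{P}_E$. A close second is the recovery statement $\F=\Cogen{\I}$ in part (1), which is precisely the point where one genuinely needs that a locally coherent Grothendieck category has enough indecomposable injectives. Both are the technical cores of the arguments of Herzog~\cite{Herzog:97} and Krause~\cite{Krause:97}, and a detailed proof would essentially follow theirs.
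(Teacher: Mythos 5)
The paper does not actually prove this theorem: it is quoted from Herzog \cite{Herzog:97} and Krause \cite{Krause:97}, so there is no internal proof to compare against. Your proposal reconstructs exactly the argument of those sources --- closed sets as $V(\S)$ for Serre subcategories $\S\subseteq\fp{\G}$, the Gabriel correspondence $\S\leftrightarrow\varinjlim\S$ with hereditary torsion pairs of finite type, and for part (2) the localisation $q\colon\G\to\G/\T$ with its section $s$, the adjunction isomorphism $\Hom{\G/\T}(qC,qE)\cong\Hom{\G}(C,sqE)\cong\Hom{\G}(C,E)$ and the fact that every finitely presented object of $\G/\T$ is of the form $qC$ with $C\in\fp{\G}$. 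That is the right route, and part (2) as you present it is essentially complete. Deferring the finite-union axiom (the meet-prime behaviour of $\{C\in\fp{\G}\mid\Hom{\G}(C,E)=0\}$ for $E$ indecomposable injective, via uniformity and coherence of finitely generated subobjects of coherent objects) to the original papers is reasonable here, since that is precisely their technical core and the paper itself treats the theorem as a black box.

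Two places in part (1) where your stated justification does not quite close as written. First, both the recovery of $\S$ from $\I=V(\S)$ and the inclusion $\F\subseteq\Cogen{\I}$ need indecomposable injectives \emph{lying in $\F$}, and ``indecomposable injectives detect/cogenerate'' in $\G$ does not give that directly: a finitely generated torsion-free object may have all its simple quotients torsion, so its obvious map to an indecomposable injective can land outside $\I$. The standard fix --- which is implicit in your part (2) and should be invoked already here --- is to pass to $\G/\T$: there every nonzero finitely generated object maps nontrivially to an indecomposable injective (maximal proper subobject, injective envelope of the simple quotient), and applying the fully faithful, product-preserving section $s$ produces objects of $\I$, while the unit $F\to sqF$ is a monomorphism for torsion-free $F$; this yields both $\S=\{C\in\fp{\G}\mid\Hom{\G}(C,\I)=0\}$ and $\F\subseteq\Cogen{\I}$. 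Relatedly, the unit $\mathrm{id}\to sq$ is an isomorphism only on the $\T$-closed objects, not on all of $\F$ as you assert; your actual use (on torsion-free injectives) is fine. Finally, a cosmetic slip: $V(0)=\Sp{\G}$ and $V(\fp{\G})=\emptyset$, not the other way around --- the justification you attach is the correct one for the latter equality.
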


Since the hereditary torsion-free classes of finite type can be described by closure conditions (they are exactly those full subcategories that are closed under products, subobjects, extensions, injective envelopes and filtered colimits), we can consider the smallest such subcategory containing a given object.  That is, for any $M$ in $\G$, we take the hereditary torsion-free class of finite type
	\[ \F_M := \bigcap_{\F \in \mathscr{F}_M} \F \]
where $\mathscr{F}_M$ is the set of all hereditary torsion-free classes of finite type that contain $M$.  We are interested in injective objects $E$ that cogenerate the torsion pair $(\T_E, \F_E)$; we will refer to such an object as an \textbf{elementary cogenerator}.  The following theorem characterises such injective objects.

\begin{proposition}\label{Prop: ele cogen}
Let $\G$ be a locally coherent Grothendieck category and let $E$ be an injective object of $\G$.  Consider the torsion pair $(\T_E, \F_E)$ and denote the quotient functor by $q \colon \G \to \G/\T_E$.  The following statements are equivalent. \begin{enumerate}
\item The set $\Sp{\G} \cap \Prod{E}$ is a closed subset of $\Sp{\G}$.
\item The torsion pair $({}^{\perp_0}E, \Cogen{E})$ is of finite type.
\item The equality $(\T_E, \F_E) = ({}^{\perp_0}E, \Cogen{E})$ holds.
\item The object $qE$ is an injective cogenerator of $\G/\T_E$.
\end{enumerate}
\end{proposition}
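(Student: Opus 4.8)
The plan is to prove $(2)\Leftrightarrow(3)$ directly and then close the loop $(3)\Rightarrow(1)\Rightarrow(4)\Rightarrow(2)$, using the correspondence of Theorem~\ref{Thm: fundamental Zg} between closed subsets of $\Sp{\G}$ and hereditary torsion pairs of finite type, together with Theorem~\ref{Thm: serre bij} and the quotient and section functors $q\colon\G\to\G/\T$, $s\colon\G/\T\to\G$ attached to a localising subcategory $\T$.

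First, two routine reductions. For any injective object $E$ the class $\Cogen{E}$ is a hereditary torsion-free class: it is closed under products and subobjects by definition, and under extensions and injective envelopes because $E$ is injective. It contains $E$, and it is contained in every torsion-free class containing $E$; in particular $\Cogen{E}\subseteq\F_E$. Hence $(\T_E,\F_E)=({}^{\perp_0}E,\Cogen{E})$ holds exactly when $\Cogen{E}$ is of finite type: if it is, then $\Cogen{E}$ is a hereditary torsion-free class of finite type containing $E$, so minimality of $\F_E$ forces $\F_E\subseteq\Cogen{E}$ and thus equality; the converse is clear because $\F_E$ is of finite type. This proves $(2)\Leftrightarrow(3)$. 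Moreover $\Prod{E}\cap\Sp{\G}=\Cogen{E}\cap\Sp{\G}$: the inclusion $\subseteq$ is obvious, and if $N\in\Sp{\G}$ embeds into some $E^I$ then, being injective, $N$ splits off $E^I$, so $N\in\Prod{E}$. With this, $(3)\Rightarrow(1)$ is immediate: under $(3)$ the pair $({}^{\perp_0}E,\Cogen{E})=(\T_E,\F_E)$ is hereditary of finite type, so Theorem~\ref{Thm: fundamental Zg}(1) shows that $\Cogen{E}\cap\Sp{\G}=\Prod{E}\cap\Sp{\G}$ is closed.

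For $(1)\Rightarrow(4)$, suppose $\I:=\Prod{E}\cap\Sp{\G}$ is closed and let $(\T,\F):=({}^{\perp_0}\I,\Cogen{\I})$ be the corresponding hereditary torsion pair of finite type (Theorem~\ref{Thm: fundamental Zg}(1)), so $\F\cap\Sp{\G}=\I$; by Theorem~\ref{Thm: serre bij} the quotient $\G/\T$ is again locally coherent Grothendieck, and by Theorem~\ref{Thm: fundamental Zg}(2) the section functor identifies $\Sp{\G/\T}$ with $\I$. The central claim is that $\T=\T_E$, equivalently $\F=\Cogen{E}$. Writing $E=E_\T\oplus E_\F$ with $E_\T\in\T$ (injective, since $\T$ is localising) and $E_\F\in\F$, and noting $E_\F\in\F=\Cogen{\I}$, it suffices to see $E_\T=0$; any indecomposable injective summand of the $\Prod{E}$-object $E_\T$ would belong to $\Prod{E}\cap\Sp{\G}=\I\subseteq\F$ and simultaneously to $\T$, hence be zero, so the task reduces to ruling out a nonzero injective object of $\Prod{E}$ with no indecomposable injective summand — and this is exactly where the Herzog--Krause theory of the spectrum of a locally coherent Grothendieck category enters, through $\G/\T$ and the identification $\Sp{\G/\T}\cong\I$. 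Granting $\F=\Cogen{E}$: since $E\in\F_E$ is torsion-free and injective it is $\T_E$-closed, so $qE$ is injective in $\G/\T$ with $sqE\cong E$; and $qE$ is a cogenerator, for if $0\ne Z\in\G/\T$ then $sZ$ is a nonzero object of $\F=\Cogen{E}$, hence admits a nonzero morphism to $E$, and therefore
\[\Hom{\G/\T}(Z,qE)\cong\Hom{\G/\T}(qsZ,qE)\cong\Hom{\G}(sZ,sqE)\cong\Hom{\G}(sZ,E)\ne 0,\]
using $qsZ\cong Z$, the adjunction $q\dashv s$ and $sqE\cong E$. Since $qE$ is injective, this makes it a cogenerator of $\G/\T$, which is $(4)$.

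Finally, $(4)\Rightarrow(2)$: if $qE$ is an injective cogenerator of $\G/\T_E$, then for every $Y\in\F_E$ there is a monomorphism $qY\hookrightarrow(qE)^I$ in $\G/\T_E$; applying the section functor $s$ (left exact and product-preserving) and using $sqE\cong E$ gives a monomorphism $sqY\hookrightarrow E^I$, while the unit $Y\to sqY$ is a monomorphism because $Y$ is torsion-free. Hence $Y\in\Cogen{E}$, so $\F_E\subseteq\Cogen{E}$; together with $\Cogen{E}\subseteq\F_E$ this yields $\Cogen{E}=\F_E$, which is of finite type, i.e.\ $(2)$. The main obstacle in the whole argument is the claim $\F=\Cogen{E}$ (equivalently $E_\T=0$) used in $(1)\Rightarrow(4)$: it asserts that an injective object is already cogenerated by the indecomposable injectives appearing in its powers, and this genuinely relies on the structure of the injective spectrum of a locally coherent Grothendieck category rather than on formal torsion-theoretic bookkeeping.
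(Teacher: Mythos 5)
Your equivalences $(2)\Leftrightarrow(3)$, $(3)\Rightarrow(1)$ and $(4)\Rightarrow(2)$ are correct and carefully argued (the observation that $\Cogen{E}$ is automatically a hereditary torsion-free class for injective $E$, and the use of the unit $Y\to sqY$ together with $sqE\cong E$ in $(4)\Rightarrow(2)$, are exactly right). The problem is the link $(1)\Rightarrow(4)$, which you yourself flag and then bypass with ``Granting $\F=\Cogen{E}$''. This is a genuine gap, not a routine omission: the claim that $E_\T=0$, equivalently that $E\in\Cogen{\Prod{E}\cap\Sp{\G}}$ once this set is closed, is precisely the nontrivial content of the implication $(1)\Rightarrow(2)$ (it is exactly the statement that $E$ is an elementary cogenerator). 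Moreover, the reduction you propose --- ``ruling out a nonzero injective object of $\Prod{E}$ with no indecomposable injective summand'' --- cannot succeed as stated: such objects exist in locally coherent Grothendieck categories in general (take $\G=\Modop{\mod{R}}$ and $\ten N$ for a superdecomposable pure-injective $R$-module $N$), so the closedness of $\I=\Prod{E}\cap\Sp{\G}$ must enter the argument in an essential way, and you do not say how. A more promising route is to use that $(\T,\F)=({}^{\perp_0}\I,\Cogen{\I})$ is of finite type, so that $E_\T\neq 0$ would yield a nonzero coherent $C\in\T\cap\fp{\G}$ with $\Hom{\G}(C,E)\neq 0$ while $\open{C}\cap\I=\emptyset$; but turning this into a contradiction still requires a Herzog--Krause ``enough indecomposable points'' lemma that you would need to quote or prove.

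Because of how you routed the cycle, this single gap leaves two of the four required implications unestablished: nothing is deduced from $(1)$, and $(4)$ is not deduced from $(2)$ or $(3)$. Note that the second defect is easily repaired independently of the first: $(3)\Rightarrow(4)$ follows directly, since under $(3)$ every injective object of $\G/\T_E$ has the form $qE'$ with $E'$ injective in $\F_E=\Cogen{E}$, hence $E'\in\Prod{E}$ and $qE'\in\Prod{qE}$; this is how the paper argues, via the equivalence $q\colon\Inj{\G}\cap\F_E\to\Inj{\G/\T_E}$. With that, the loop $(2)\Leftrightarrow(3)\Rightarrow(4)\Rightarrow(2)$ and $(3)\Rightarrow(1)$ close, and the only remaining item is the hard implication $(1)\Rightarrow(2)$, which the paper imports wholesale from the mutually-inverse clause of Theorem~\ref{Thm: fundamental Zg}. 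Your write-up should either do the same explicitly or supply the missing lemma; as it stands, the proof is incomplete at its acknowledged ``main obstacle''.
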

\begin{proof}
$(1)\Leftrightarrow(2)$ follows from Theorem \ref{Thm: fundamental Zg}, and $(2) \Leftrightarrow (3)$ is clear.  $(3)\Leftrightarrow(4)$ follows from the fact that $q$ induces an equivalence of categories $q \colon \Inj{\G} \cap \F \to \Inj{\G/\T_E}$.
\end{proof}

\begin{remark}\label{prods}{\rm If $E$ is an elementary cogenerator in $\G$ and $\Z=\Sp{\G} \cap \Prod{E}$ is the associated closed subset of $\Sp\G$, then $\Prod{E}=\Prod{\Z}$.
Indeed, the bijection in Theorem~\ref{Thm: fundamental Zg}(1) maps $\Cogen E$ to the closed subset $\Z$. Composing with  the inverse bijection, we obtain $\Cogen E=\Cogen \Z$. In particular, the two classes have the same injectives.}
\end{remark}

\subsection{Purity and the Ziegler spectrum of the derived category}\label{Sec: Zg derived}

In Example \ref{Ex: loc coh cats}(2) we saw that a locally coherent Grothendieck category $\Mod{\C}$ can be associated to any small triangulated category $\C$.  Following \cite{Krause:02}, we can use the category $\Mod{\B^\c}$ to define the Ziegler spectrum of any compactly generated triangulated category $\B$ where $\B^\c$ denotes the full subcategory of compact objects in $\C$.  

In this article we consider the Ziegler spectrum of the derived category $\Der{R}$ of the category $\Mod{R}$ of left $R$-modules over a ring $R$.  Throughout this section $R$ will denote an associative ring and $\D^c$ will denote the full subcategory of compact objects in $\Der{R}$.

\begin{definitions}\label{Def: Purity Der}\begin{enumerate}
\item The functor $\y \colon \Der{R} \to \Mod{\D^\c}$ given by the assignments 
	\[X \mapsto \Hom{\Der{R}}(-, X)|_{\D^\c} \hspace{10mm} \text{ and } \hspace{10mm} f \mapsto \Hom{\Der{R}}(-, f)|_{\D^\c}\]
will be referred to as the \textbf{restricted Yoneda functor}.

\item A subcategory $\C$ of $\Der{R}$ is called \textbf{definable} if it is closed under products, pure subobjects and pure epimorphic images.  

\item  A pure-injective object $N$ in $\Der{R}$ is called an \textbf{elementary cogenerator} if $\y N$ is an elementary cogenerator in $\Mod{\D^\c}$.
\end{enumerate}
\end{definitions}

The definition of a definable subcategory given in (2) is adopted from \cite{LakingVitoria:20}, and it is equivalent to the original definition from \cite{Krause:02}. In particular, a subcategory $\C$ of $\Der{R}$ is definable provided it is determined  by the vanishing of a family of  representable functors, that is, $\C= \bigcap_{i\in I}\Ker{F_i}$ where $F_i= \Hom{\Der{R}}(X_i, -)|_{\D^\c}$ with $X_i\in {\D^\c}$ for  $i\in I$.

Krause showed in \cite{Krause:00} that the injective objects of $\Mod{\D^c}$ coincide with the objects of the form $\y X$ where $X$ is a pure-injective object of $\Der{R}$. This leads to the following notation, which is further justified by the subsequent theorem.  
\begin{notation} We denote the set of isomorphism classes of indecomposable pure-injective objects in $\Der{R}$ by $\Zg{\Der{R}}$.  \end{notation}

\begin{theorem}{\cite[Fundamental Correspondence]{Krause:02}}\label{Thm: Zg Der}
The subsets of $\Zg{\Der{R}}$ of the form $\C \cap \Zg{\Der{R}}$ where $\C$ is a definable subcategory of $\Der{R}$ satisfy the axioms for the closed sets of a topology on $\Zg{\Der{R}}$.  We refer to the set $\Zg{\Der{R}}$ endowed with this topology as the \textbf{Ziegler spectrum of $\Der{R}$}.
Moreover, the restricted Yoneda functor $\y$ induces a homeomorphism 
	\[ \y \colon \Zg{\Der{R}} \to \Sp{\Mod{\D^\c}}.\]
\end{theorem}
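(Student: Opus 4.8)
The plan is to derive the statement from the corresponding fact for the locally coherent Grothendieck category $\Mod{\D^\c}$ (Theorem~\ref{Thm: fundamental Zg}, applicable by Example~\ref{Ex: loc coh cats}(2)) by transporting it along the restricted Yoneda functor $\y$. Concretely, I would prove two things: (i) that $\y\colon\Zg{\Der{R}}\to\Sp{\Mod{\D^\c}}$ is a bijection of underlying sets; and (ii) that a subset of $\Zg{\Der{R}}$ has the form $\C\cap\Zg{\Der{R}}$ for a definable subcategory $\C\subseteq\Der{R}$ if and only if its image under $\y$ is a closed subset of $\Sp{\Mod{\D^\c}}$. Granting (i) and (ii), the family $\{\C\cap\Zg{\Der{R}}\mid\C\text{ definable}\}$ is precisely the family of preimages under $\y$ of closed subsets of $\Sp{\Mod{\D^\c}}$; it therefore satisfies the closed-set axioms, being the closed sets of the initial topology along $\y$, and $\y$ is a continuous bijection for this topology, hence a homeomorphism. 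The tools I would take for granted are that $\y$ is a homological functor sending pure-exact triangles to short exact sequences and products to products, and Krause's theorem \cite{Krause:00} that the injective objects of $\Mod{\D^\c}$ are exactly the objects $\y X$ with $X$ pure-injective in $\Der{R}$, together with the isomorphism $\Hom{\Der{R}}(X,Y)\cong\Hom{\Mod{\D^\c}}(\y X,\y Y)$ valid whenever $Y$ is pure-injective.

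For (i): the two facts just recalled say that $\y$ restricts to an equivalence between the full subcategory $\mathrm{PInj}(\Der{R})$ of pure-injective objects of $\Der{R}$ and the category $\Inj{\Mod{\D^\c}}$ of injective objects of $\Mod{\D^\c}$; an equivalence of categories induces a bijection on isomorphism classes of indecomposable objects, and indecomposable pure-injectives and indecomposable injectives are exactly the points of $\Zg{\Der{R}}$ and $\Sp{\Mod{\D^\c}}$, so (i) follows. For (ii) I would invoke the finite-type correspondence between definable subcategories of the compactly generated triangulated category $\Der{R}$ and Serre subcategories of $\fp{\Mod{\D^\c}}$: a definable $\C$ corresponds to a Serre subcategory $\mathcal{S}_\C$, which generates a hereditary torsion class of finite type in $\Mod{\D^\c}$ whose torsion-free class I call $\F_\C$; one then checks that for pure-injective $X$ one has $X\in\C$ if and only if $\y X\in\F_\C$, which is the point at which one uses that $\y X$ is injective, so that $\Hom{\Mod{\D^\c}}(G,\y X)$ for $G\in\mathcal{S}_\C$ computes the relevant cokernel of $\y$ applied to a map between compact objects. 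Combined with Theorem~\ref{Thm: fundamental Zg}(1), this yields $\y(\C\cap\Zg{\Der{R}})=\F_\C\cap\Sp{\Mod{\D^\c}}$, a closed set; conversely every closed subset of $\Sp{\Mod{\D^\c}}$ is $\F\cap\Sp{\Mod{\D^\c}}$ for a hereditary torsion-free class of finite type $\F$ arising from a Serre subcategory, hence from a definable subcategory of $\Der{R}$, so it is of the required form.

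The step I expect to be the main obstacle is the correspondence between definable subcategories of $\Der{R}$ and Serre subcategories of $\fp{\Mod{\D^\c}}$ on which (ii) rests; concretely, the nontrivial content is that definable subcategories are cut out by coherent functors and that a finite union of closed sets is again closed, i.e.\ that an indecomposable pure-injective lying in the definable closure of $\C_1\cup\C_2$ already lies in $\C_1$ or in $\C_2$. This is the triangulated-category counterpart of Ziegler's argument with pp-pairs: given coherent functors $F_1$, $F_2$ with $F_i$ vanishing on $\C_i$ but $F_i(N)\neq0$, one has to manufacture a single coherent functor vanishing on $\C_1\cup\C_2$ but not on $N$, and it is exactly here that the finite presentability (local coherence) of the functor category is essential. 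Everything else — closure of definable subcategories under arbitrary intersections, and the formal deductions of the topology axioms and of the homeomorphism assertion from (i) and (ii) — is routine.
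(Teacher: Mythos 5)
The paper does not prove this statement at all: it is quoted verbatim, with attribution, as Krause's Fundamental Correspondence \cite{Krause:02}, so there is no internal proof to compare against. Your outline is, in essence, a reconstruction of the argument in that cited source: one transports the Herzog--Krause topology on $\Sp{\Mod{\D^\c}}$ (Theorem \ref{Thm: fundamental Zg}, applicable since $\Mod{\D^\c}$ is locally coherent) along the restricted Yoneda functor $\y$, using that $\y$ restricts to an equivalence between pure-injective objects of $\Der{R}$ and injective objects of $\Mod{\D^\c}$ \cite{Krause:00}, together with the correspondence between definable subcategories of $\Der{R}$ and Serre subcategories of $\fp{\Mod{\D^\c}}$ (equivalently hereditary torsion pairs of finite type), the membership criterion $X\in\C \Leftrightarrow \y X\in\F_\C$ for pure-injective $X$ being checked via injectivity of $\y X$. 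As a sketch this is sound, and the two steps you single out as the main obstacles (full faithfulness of $\y$ on pure-injectives and the definable--Serre correspondence, including the finite-union/pp-pair style argument) are exactly the substantive content of \cite{Krause:02}, which is why the authors simply cite it rather than reprove it.
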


\subsection{Purity and the Ziegler spectrum of the module category}\label{Sec: Zg modules}

In Example \ref{Ex: loc coh cats}(3) we saw that a locally coherent Grothendieck category $\G_R := \Mod{\mod{R}^\op}$ can be associated to an associative ring $R$.  Following \cite{Herzog:97}, we can use the category $\G_R$ to define the Ziegler spectrum of $\Mod{R}$.  Throughout this section $R$ will denote a fixed associative ring.

\begin{definitions}\begin{enumerate}
\item The functor $\ten \colon \Mod{R} \to \G_R$ given by the assignments 
	\[X \mapsto (-\otimes_RX) \hspace{10mm} \text{ and } \hspace{10mm} f \mapsto (-\otimes_Rf)\]
will be referred to as the \textbf{restricted Yoneda functor}.

\item  A pure-injective module $N$ in $\Mod{R}$ is called an \textbf{elementary cogenerator} if $\ten N$ is an elementary cogenerator in $\G_R$. 
\end{enumerate}
\end{definitions}

The injective objects of $\G_R$ coincide with the objects of the form $\ten X$ where $X$ is a pure-injective module in $\Mod{R}$ (by \cite[Prop.~1.2]{GrusonJensen:81}, see also \cite[Prop.~4.1]{Herzog:97}). This leads to the following notation, which is further justified by the subsequent theorem.  
\begin{notation} We denote the set of isomorphism classes of indecomposable pure-injective objects in $\Mod{R}$ by $\Zg{R}$.  \end{notation}

\begin{theorem}{\cite{Crawley-Boevey:98}, \cite{Herzog:97}}\label{Thm: Zg Mod}
The subsets of $\Zg{R}$ of the form $\C \cap \Zg{R}$ where $\C$ is a definable subcategory of $\Mod{R}$ satisfy the axioms for the closed sets of a topology on $\Zg{R}$.  We refer to the set $\Zg{R}$ endowed with this topology as the \textbf{Ziegler spectrum of $\Mod{R}$}.
Moreover, the restricted Yoneda functor $\ten$ induces a homeomorphism 
	\[ \ten \colon \Zg{R} \to \Sp{{\G_R}}.\] In particular, a pure-injective module $N$ is an elementary cogenerator if and only if $\Prod N\cap\ \Zg{R}$ is a closed subset of $ \Zg{R}$.
\end{theorem}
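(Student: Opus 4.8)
The plan is to deduce the statement from the theory of the locally coherent Grothendieck category $\G_R=\Mod{\mod{R}^\op}$ recalled in Section~\ref{Sec: Zg loc coh}, transported along the tensor embedding $\ten\colon\Mod{R}\to\G_R$. First I would record the purity properties of $\ten$: it preserves direct products and filtered colimits, it is a full embedding, and a short exact sequence in $\Mod{R}$ is pure-exact if and only if its image under $\ten$ is a short exact sequence in $\G_R$ (this is classical; see \cite[Prop.~1.2]{GrusonJensen:81} and \cite[Prop.~4.1]{Herzog:97}). Two consequences I would extract are: a module $N$ is pure-injective exactly when $\ten N$ is an injective object of $\G_R$, and every injective of $\G_R$ is isomorphic to $\ten N$ for a pure-injective $N$ that is unique up to isomorphism. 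Since $\ten$ is a full embedding it reflects indecomposability and direct summands, so on underlying sets it restricts to a bijection $\ten\colon\Zg{R}\to\Sp{\G_R}$.

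Next I would match the two topologies. By Theorem~\ref{Thm: fundamental Zg}(1), the closed subsets of $\Sp{\G_R}$ are precisely the sets $\F\cap\Sp{\G_R}$ where $(\T,\F)$ is a hereditary torsion pair of finite type in $\G_R$; by Theorem~\ref{Thm: serre bij} and the finite-type condition this amounts to $\T=\varinjlim\S$ for a Serre subcategory $\S$ of $\fp{\G_R}$. On the other side, the closed subsets of $\Zg{R}$ are by definition the sets $\C\cap\Zg{R}$ with $\C$ a definable subcategory of $\Mod{R}$. The bridge is the classical bijection $\C\mapsto\S_\C:=\{F\in\fp{\G_R}\mid FM=0\text{ for all }M\in\C\}$ between definable subcategories of $\Mod{R}$ and Serre subcategories of $\fp{\G_R}$; using the purity properties of $\ten$ one checks that if $\F$ is the finite-type hereditary torsion-free class cut out by $\S_\C$, then $\ten$ sends $\C\cap\Zg{R}$ onto $\F\cap\Sp{\G_R}$. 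This shows $\ten$ is a homeomorphism, and in particular that the family $\{\C\cap\Zg{R}\}$ does satisfy the axioms for the closed sets of a topology.

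For the final clause, I would argue that $N$ is an elementary cogenerator in $\Mod{R}$ if and only if $\ten N$ is an elementary cogenerator in $\G_R$, which by Proposition~\ref{Prop: ele cogen} is equivalent to $\Prod{\ten N}\cap\Sp{\G_R}$ being closed in $\Sp{\G_R}$. Because $\ten$ preserves products and, being a full embedding, reflects summands and indecomposability, the homeomorphism $\ten$ identifies $\Prod{\ten N}\cap\Sp{\G_R}$ with $\Prod{N}\cap\Zg{R}$; hence the former is closed precisely when the latter is, which is exactly what is claimed.

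The step I expect to be the main obstacle is the second one: promoting the set bijection $\ten\colon\Zg{R}\to\Sp{\G_R}$ to a homeomorphism. This is where one needs the precise dictionary between definable subcategories of $\Mod{R}$, Serre subcategories of $\fp{\G_R}$, and finite-type hereditary torsion pairs in $\G_R$, together with the verification that $\ten$ intertwines the corresponding families of closed sets. All of this is standard, going back to Crawley-Boevey \cite{Crawley-Boevey:98} and Herzog \cite{Herzog:97}, but it requires handling the purity properties of $\ten$ and the finite-type condition with care.
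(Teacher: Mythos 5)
The paper gives no proof of this theorem---it is quoted from Crawley-Boevey and Herzog---and your proposal reconstructs precisely the standard argument of those sources: transporting the closed-set/finite-type-torsion-pair dictionary of Theorem~\ref{Thm: fundamental Zg} (together with Theorem~\ref{Thm: serre bij}) along the Gruson--Jensen tensor embedding, whose purity properties you state correctly, and deducing the final clause from Proposition~\ref{Prop: ele cogen} via the fact that $\ten$ preserves products and reflects summands; this is correct and is essentially the same approach the paper points to. The only caveat is that the ``classical bijection'' you invoke between definable subcategories of $\Mod{R}$ and Serre subcategories of $\fp{\G_R}$ is itself the core content of the cited result, so your argument is a faithful reconstruction resting on that input rather than an independent proof---which is all that can reasonably be asked here.
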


\bibliographystyle{plain}
\bibliography{refs}

\begin{thebibliography}{10}

\bibitem{AdachiIyamaReiten:14}
Takahide Adachi, Osamu Iyama, and Idun Reiten.
\newblock {$\tau$}-tilting theory.
\newblock {\em Compos. Math.}, 150(3):415--452, 2014.

\bibitem{Angeleri:18}
Lidia Angeleri~H\"{u}gel.
\newblock On the abundance of silting modules.
\newblock In {\em Surveys in representation theory of algebras}, volume 716 of
  {\em Contemp. Math.}, pages 1--23. Amer. Math. Soc., [Providence], RI, [2018]
  \copyright 2018.

\bibitem{AHL}
Lidia Angeleri~H\"ugel, Ivo Herzog, and Rosanna Laking.
\newblock Simples in a cotilting heart, preprint 2022.

\bibitem{AngeleriLakingSentieri:24+}
Lidia Angeleri~H\"{u}gel, Rosanna Laking, and Francesco Sentieri.
\newblock Mutation of torsion pairs for finite dimensional algebras, 2024.

\bibitem{ALSV}
Lidia Angeleri~H\"ugel, Rosanna Laking, Jan {S}{t}ov\'{\i}{c}ek, and Jorge
  Vit\`oria.
\newblock Mutation and torsion pairs, preprint 2022.

\bibitem{AngeleriMarksVitoria:16}
Lidia Angeleri~H\"{u}gel, Frederik Marks, and Jorge Vit\'{o}ria.
\newblock Silting modules.
\newblock {\em Int. Math. Res. Not. IMRN}, 2016(4):1251--1284, 2016.

\bibitem{AngeleriMarksVitoria:17}
Lidia Angeleri~H\"{u}gel, Frederik Marks, and Jorge Vit\'{o}ria.
\newblock Torsion pairs in silting theory.
\newblock {\em Pacific J. Math.}, 291(2):257--278, 2017.

\bibitem{AngeleriSentieri:23+}
Lidia Angeleri~H\"{u}gel and Francesco Sentieri.
\newblock Wide coreflective subcategories and torsion pairs, preprint 2023.

\bibitem{Asai:20}
Sota Asai.
\newblock Semibricks.
\newblock {\em Int. Math. Res. Not. IMRN}, (16):4993--5054, 2020.

\bibitem{Asai:21}
Sota Asai.
\newblock The wall-chamber structures of the real {G}rothendieck groups.
\newblock {\em Adv. Math.}, 381:Paper No. 107615, 44, 2021.

\bibitem{AuslanderSmalo:81}
M.~Auslander and Sverre~O. Smal\o.
\newblock Almost split sequences in subcategories.
\newblock {\em J. Algebra}, 69(2):426--454, 1981.

\bibitem{BarnardCarrollZhu:19}
Emily Barnard, Andrew Carroll, and Shijie Zhu.
\newblock Minimal inclusions of torsion classes.
\newblock {\em Algebr. Comb.}, 2(5):879--901, 2019.

\bibitem{Beilinson:78}
A.~A. Beilinson.
\newblock Coherent sheaves on {${\bf P}\sp{n}$} and problems in linear algebra.
\newblock {\em Funktsional. Anal. i Prilozhen.}, 12(3):68--69, 1978.

\bibitem{BeilinsonBernsteinDeligne:82}
A.~A. Beilinson, J.~Bernstein, and P.~Deligne.
\newblock Faisceaux pervers.
\newblock In {\em Analysis and topology on singular spaces, {I} ({L}uminy,
  1981)}, volume 100 of {\em Ast\'{e}risque}, pages 5--171. Soc. Math. France,
  Paris, 1982.

\bibitem{Bondal:89}
A.~I. Bondal.
\newblock Representations of associative algebras and coherent sheaves.
\newblock {\em Izv. Akad. Nauk SSSR Ser. Mat.}, 53(1):25--44, 1989.

\bibitem{BreazPop:17}
Simion Breaz and Flaviu Pop.
\newblock Cosilting modules.
\newblock {\em Algebr. Represent. Theory}, 20(5):1305--1321, 2017.

\bibitem{Bridgeland:17}
Tom Bridgeland.
\newblock Scattering diagrams, {H}all algebras and stability conditions.
\newblock {\em Algebr. Geom.}, 4(5):523--561, 2017.

\bibitem{BrustleSmithTreffinger:19}
Thomas Br\"{u}stle, David Smith, and Hipolito Treffinger.
\newblock Wall and chamber structure for finite-dimensional algebras.
\newblock {\em Adv. Math.}, 354:106746, 31, 2019.

\bibitem{Crawley-Boevey:94}
William Crawley-Boevey.
\newblock Locally finitely presented additive categories.
\newblock {\em Comm. Algebra}, 22(5):1641--1674, 1994.

\bibitem{Crawley-Boevey:98}
William Crawley-Boevey.
\newblock Infinite-dimensional modules in the representation theory of
  finite-dimensional algebras.
\newblock In {\em Algebras and modules, {I} ({T}rondheim, 1996)}, volume~23 of
  {\em CMS Conf. Proc.}, pages 29--54. Amer. Math. Soc., Providence, RI, 1998.

\bibitem{Dean:19}
Samuel Dean.
\newblock Duality and contravariant functors in the representation theory of
  {A}rtin algebras.
\newblock {\em J. Algebra Appl.}, 18(6):1950111, 27, 2019.

\bibitem{DemonetIyamaJasso:19}
Laurent Demonet, Osamu Iyama, and Gustavo Jasso.
\newblock {$\tau$}-tilting finite algebras, bricks, and {$g$}-vectors.
\newblock {\em Int. Math. Res. Not. IMRN}, (3):852--892, 2019.

\bibitem{DemonetIyamaReadingReitenThomas:23}
Laurent Demonet, Osamu Iyama, Nathan Reading, Idun Reiten, and Hugh Thomas.
\newblock Lattice theory of torsion classes: beyond $\tau$-tilting theory.
\newblock {\em To appear: Trans. AMS}, 2023.

\bibitem{EbrahimiNasrIsfahani:24+}
Ramin Ebrahimi and Alireza Nasr-Isfahani.
\newblock Semibrick-cosilting correspondence, 2024.

\bibitem{GarkushaPrest:05}
Grigory Garkusha and Mike Prest.
\newblock Triangulated categories and the {Z}iegler spectrum.
\newblock {\em Algebr. Represent. Theory}, 8(4):499--523, 2005.

\bibitem{GoebelTrlifaj:12}
R\"{u}diger G\"{o}bel and Jan Trlifaj.
\newblock {\em Approximations and endomorphism algebras of modules. {V}olume
  1}, volume~41 of {\em De Gruyter Expositions in Mathematics}.
\newblock Walter de Gruyter GmbH \& Co. KG, Berlin, extended edition, 2012.
\newblock Approximations.

\bibitem{GrusonJensen:81}
L.~Gruson and C.~U. Jensen.
\newblock Dimensions cohomologiques reli\'{e}es aux foncteurs
  {$\varprojlim^{(i)}$}.
\newblock In {\em Paul {D}ubreil and {M}arie-{P}aule {M}alliavin {A}lgebra
  {S}eminar, 33rd {Y}ear ({P}aris, 1980)}, volume 867 of {\em Lecture Notes in
  Math.}, pages 234--294. Springer, Berlin-New York, 1981.

\bibitem{HappelReitenSmalo:96}
Dieter Happel, Idun Reiten, and Sverre~O. Smal\o.
\newblock Tilting in abelian categories and quasitilted algebras.
\newblock {\em Mem. Amer. Math. Soc.}, 120(575):viii+ 88, 1996.

\bibitem{Herzog:97}
Ivo Herzog.
\newblock The {Z}iegler spectrum of a locally coherent {G}rothendieck category.
\newblock {\em Proc. London Math. Soc. (3)}, 74(3):503--558, 1997.

\bibitem{IyamaYang:18}
Osamu Iyama and Dong Yang.
\newblock Silting reduction and {C}alabi-{Y}au reduction of triangulated
  categories.
\newblock {\em Trans. Amer. Math. Soc.}, 370(11):7861--7898, 2018.

\bibitem{King:94}
A.~D. King.
\newblock Moduli of representations of finite-dimensional algebras.
\newblock {\em Quart. J. Math. Oxford Ser. (2)}, 45(180):515--530, 1994.

\bibitem{KoenigYang:14}
Steffen Koenig and Dong Yang.
\newblock Silting objects, simple-minded collections, {$t$}-structures and
  co-{$t$}-structures for finite-dimensional algebras.
\newblock {\em Doc. Math.}, 19:403--438, 2014.

\bibitem{Krause:97}
Henning Krause.
\newblock The spectrum of a locally coherent category.
\newblock {\em J. Pure Appl. Algebra}, 114(3):259--271, 1997.

\bibitem{Krause:00}
Henning Krause.
\newblock Smashing subcategories and the telescope conjecture---an algebraic
  approach.
\newblock {\em Invent. Math.}, 139(1):99--133, 2000.

\bibitem{Krause:02}
Henning Krause.
\newblock Coherent functors in stable homotopy theory.
\newblock {\em Fund. Math.}, 173(1):33--56, 2002.

\bibitem{Laking:20}
Rosanna Laking.
\newblock Purity in compactly generated derivators and t-structures with
  {G}rothendieck hearts.
\newblock {\em Math. Z.}, 295(3-4):1615--1641, 2020.

\bibitem{LakingVitoria:20}
Rosanna Laking and Jorge Vit\'{o}ria.
\newblock Definability and approximations in triangulated categories.
\newblock {\em Pacific J. Math.}, 306(2):557--586, 2020.

\bibitem{MarksVitoria:18}
Frederik Marks and Jorge Vit\'{o}ria.
\newblock Silting and cosilting classes in derived categories.
\newblock {\em J. Algebra}, 501:526--544, 2018.

\bibitem{MarksZvonareva:23}
Frederik Marks and Alexandra Zvonareva.
\newblock Lifting and restricting t-structures.
\newblock {\em Bull. London Math. Soc.}, 2023.

\bibitem{Polishchuk:07}
A.~Polishchuk.
\newblock Constant families of {$t$}-structures on derived categories of
  coherent sheaves.
\newblock {\em Mosc. Math. J.}, 7(1):109--134, 167, 2007.

\bibitem{PsaroudakisVitoria:18}
Chrysostomos Psaroudakis and Jorge Vit\'{o}ria.
\newblock Realisation functors in tilting theory.
\newblock {\em Math. Z.}, 288(3-4):965--1028, 2018.

\bibitem{Roos:69}
Jan~Erik Roos.
\newblock Locally {N}oetherian categories and generalized strictly linearly
  compact rings. {A}pplications.
\newblock In {\em Category {T}heory, {H}omology {T}heory and their
  {A}pplications, {II} ({B}attelle {I}nstitute {C}onference, {S}eattle,
  {W}ash., 1968, {V}ol. {T}wo)}, pages 197--277. Springer, Berlin, 1969.

\bibitem{Saorin:2017}
Manuel Saor\'{\i}n.
\newblock On locally coherent hearts.
\newblock {\em Pacific J. Math.}, 287(1):199--221, 2017.

\bibitem{Sentieri:22}
Francesco Sentieri.
\newblock A brick version of a theorem of auslander.
\newblock {\em Nagoya Mathematical Journal}, page 1–19, 2022.

\bibitem{vitoria}
Jorge Vit\'{o}ria.
\newblock Quantity vs. size in representation theory.
\newblock {\em Bol. Soc. Port. Mat.}, (77):151--166, 2019.

\bibitem{Woolf:10}
Jonathan Woolf.
\newblock Stability conditions, torsion theories and tilting.
\newblock {\em J. Lond. Math. Soc. (2)}, 82(3):663--682, 2010.

\bibitem{ZhangWei:17}
Peiyu Zhang and Jiaqun Wei.
\newblock Cosilting complexes and {AIR}-cotilting modules.
\newblock {\em J. Algebra}, 491:1--31, 2017.

\end{thebibliography}

\end{document}